\numberwithin{equation}{section}
\newtheorem{theorem}{Theorem}[section]
\newtheorem{proposition}{Proposition}[section]
\newtheorem{lemma}{Lemma}[section]
\newtheorem{remark}{Remark}[section]
\renewcommand{\epsilon}{\varepsilon}
\newcommand{\abs}[1]{\left\vert #1\right\vert}
\newcommand{\1}[1]{{\mathbf 1}{\{#1\}}}
\newcommand{\R}{\mathbb{R}}
\newcommand{\N}{\mathbb{N}}
\newcommand{\Z}{\mathbb{Z}}
\newcommand{\PR}{\mathbb{P}}
\newcommand{\ES}{\mathbb{E}}
\newcommand{\C}{\mathcal{C}}
\title[]{The speed of a biased random walk on a percolation cluster at high density}
\date{}
\author[A.~Fribergh]{Alexander FRIBERGH}
\address{Universit\'e de Lyon;
Universit\'e Lyon 1;
INSA de Lyon, F-69621;
Ecole Centrale de Lyon;
CNRS, UMR5208, Institut Camille Jordan,
43 blvd du 11 novembre 1918,
F-69622 Villeurbanne-Cedex, France} 
\email{fribergh@math.univ-lyon1.fr}
\keywords{Random walk in random conductances, Percolation cluster, Electrical networks, Kalikow} \subjclass[2000]{primary 60K37;
secondary 60J45, 60D05}
\begin{document}

\begin{abstract}

We study the speed of a biased random walk on a percolation cluster on $\Z^d$ in function of the percolation parameter $p$. We obtain a first order expansion of the speed at $p=1$ which proves that percolating slows down the random walk at least in the case where the drift is along a component of the lattice.

\end{abstract}

\maketitle

\section{Introduction}

Random walks in reversible random environments are an important subfield of random walks in random media. In the last few years a lot of work has been done to understand these models on $\Z^d$, one of the most challenging being the model of reversible random walks on percolation clusters, which has raised many questions.

In this model, the walker is restrained to a locally inhomogeneous graph, making it difficult to transfer any method used for elliptic random walks in random media. In the beginning results concerned simple random walks, the question of recurrence and transience (see~\cite{GKZ}) was first solved and latter on a quenched invariance principles was proved in~\cite{BB} and~\cite{MP}. More recently new results (e.g.~\cite{Mathieu} and~\cite{BBHK}) appeared, but still under the assumption that the walker has no global drift.

The case of drifted random walks on percolation cluster features a very interesting phenomenon which was first described in the theoretical physics literature (see~\cite{Dhar} and~\cite{DS}), as the drift increases the model switches from a ballistic to a sub-ballistic regime. From a mathematical point of view, this conjecture was partially addressed in~\cite{Berger} and~\cite{Sznitman}. This slowdown is due to the fact that the percolation cluster contains arbitrarily large parts of the environment which act as traps for a biased random walk. This phenomenon, and more, is known to happen on inhomogeneous Galton-Watson trees, cf.~\cite{LPP},~\cite{BH} and~\cite{FG}.

Nevertheless this model is still not well understood and many questions remain open, the most famous being the existence and the value of a critical drift for the expected phase transition. Another question of interest is the dependence of the limiting velocity with respect to the parameters of the problem i.e.~the percolation parameter and the bias. This last question is not specific to this model, but understanding in a quantitative, or even qualitative way, the behaviour of speed of random walks in random media seems to be a difficult problem and very few results are currently available on $\Z^d$ (see~\cite{Sabot}).

In this article we study the dependence of the limiting velocity with respect to the percolation parameter around $p=1$. We try to adapt the methods used in~\cite{Sabot} which were introduced to study environments subject to small perturbations in a uniformly-elliptic setting. For biased-random walk on a percolation cluster of high density, the walk is subject to rare but arbitrarily big pertubations so that the problem is very different and appears to be more difficult.

The methods rely mainly on a careful study of Kalikow's auxilary random walk which is known to be linked to the random walks in random environments (see~\cite{Zeitouni} and~\cite{SZ2}) and also to the limiting velocity of such walks when it exists (see~\cite{Sabot}). Our main task is to show that the unbounded effects of the removal of edges, once averaged over all configurations, is small. This will enable us to consider Kalikow's auxilary random walk as a small perturbation of the biased random walk on $\Z^d$. As far as we know it is the first time such methods are used to study a random conductance model or even non-elliptic random walks in random media.

\section{The model }

The models presented in~\cite{Berger} and~\cite{Sznitman} are slightly different, we choose to consider the second one as it is a bit more general, since it allows the drift to be in any direction. Nevertheless all the following can be adapted without any difficulty to the model described in~\cite{Berger}.

Let us describe the environment, we consider the set of edges $E(\Z^d)$ of the lattice $\Z^d$ for some $d\geq 2$. We fix $p \in (0,1)$ and perform a Bernoulli bond-percolation, that is we pick a random configuration $\omega \in \Omega:=\{0,1\}^{E(\Z^d)}$ where each edge has probability $p$ (resp.~$1-p$) of being open (resp.~closed) independently of all other edges. Let us introduce the corresponding measure 
\[
{\bf P}_{p}= (p \delta_1 + (1-p) \delta_0)^{\otimes E(\Z^d)}.
\]

Hence an edge $e$ will be called open (resp.~closed) in the configuration $\omega$ if $\omega(e)=1$ (resp.~$\omega(e)=0$). This naturally induces a subgraph of $\Z^d$ which will be denoted $\omega$ and it also yields a partition of $\Z^d$ into open clusters. 

It is classical in percolation that for $p>p_c(d)$, where $p_c(d)\in (0,1)$ denotes the critical percolation probability of $\Z^d$ (see~\cite{Grimmett}), we have a unique infinite open cluster $K_{\infty}(\omega)$, ${\bf P}_p$-a.s.. The corresponding set of configuration is denoted by $\Omega_0$. Moreover the following event has positive ${\bf P}_p$-probability 
\[
\mathcal{I}=\{ \text{there is a unique infinite cluster $K_{\infty}(\omega)$ and it contains $0$} \}.
\]

In order to define the random walk, we introduce a bias $\ell=\lambda \vec \ell$ of strengh $\lambda >0$ and a direction $\vec \ell$ which is in the unit sphere with respect to the Euclidian metric of $\R^d$. On a configuration $\omega \in \Omega$, we consider the Markov chain of law $P_x^{\omega}$ on $\Z^d$ with transition probabilities $p^{\omega}(x,y)$ for $x,y\in \Z^d$ defined by
\begin{enumerate}
\item $X_0=x$, $P_x^{\omega}$-a.s.,
\item $p^{\omega}(x,x)=1$, if $x$ has no neighbour in $\omega$,
\item $\displaystyle{p^{\omega}(x,y) =\frac{c^{\omega}(x,y)}{\sum_{z \sim x} c^{\omega}(x,z)}}$,
\end{enumerate}
where $x\sim y$ means that $x$ and $y$ are adjacent in $\Z^d$ and also we set
\[
\text{for all $x,y \in \Z^d$,} \qquad \ c^{\omega}(x,y)=\begin{cases}
                                            e^{(y+x)\cdot\ell} & \text{ if } x\sim y \text{ and } \omega(\{x,y\})=1, \\
                                            0           & \text{ otherwise.}\end{cases} 
\]                                           

We see that this Markov chain is reversible with invariant measure given by 
\[
\pi^{\omega}(x)=\sum_{y \sim x} c^{\omega}(x,y).
\]  

Let us call $c^{\omega}(x,y)$ the conductance between $x$ and $y$ in the configuration $\omega$, this is natural because of the links existing between reversible Markov chains and electrical networks. We will be making extensive use of this relation and we refer the reader to~\cite{DoyleSnell} and~\cite{LP} for a further background. Moreover for an edge $e=[x,y]\in E(\Z^d)$, we denote $c^{\omega}(e)= c^{\omega}(x,y)$ and $r^{\omega}(e)=1/c^{\omega}(e)$.
 
Finally the annealed law of the biased random walk on the infinite percolation cluster will be the semi-direct product $\PR_p = {\bf P}_p[\,\cdot \mid \mathcal{I}\,] \times  P_0^{\omega}[\,\cdot\,]$.

The starting point of our work is the existence of a constant limiting velocity which was proved in~\cite{Sznitman} and with some additional work Sznitman managed to obtain the following result
\begin{theorem}
\label{LLN}
For any $d\geq 2$, $p\in (p_c(d),1)$ and any $\ell\in \R^d_*$, there exists $v_{\ell}(p) \in \R^d$ such that
\[
\text{for $\omega$}-{\bf P}_p[\,\cdot \mid \mathcal{I}\,]-\text{a.s.},\ \lim_{n \to \infty} \frac{X_n}n = v_{\ell}(p),\ P_{0}^{\omega}-\text{a.s.}.
\]

Moreover there exist $\lambda_1(p,d,\ell),\lambda_2(p,d,\ell) \in \R_+$ such that
\begin{enumerate}
\item for $\lambda=\ell \cdot \vec \ell < \lambda_1(p,d,\ell)$, we have $v_{\ell}(p) \cdot \vec \ell >0$,
\item for $\lambda=\ell \cdot \vec \ell > \lambda_2(p,d,\ell)$, we have $v_{\ell}(p)=0$.
\end{enumerate}
\end{theorem}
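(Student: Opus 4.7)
The plan is to build the proof around a renewal structure adapted to the bias direction $\vec\ell$, in the spirit of Sznitman--Zerner. First I would define regeneration times $(\tau_k)_{k\geq 1}$ by declaring $\tau$ to be a regeneration time if $X_{\tau}\cdot\vec\ell$ exceeds $X_n\cdot\vec\ell$ for all $n<\tau$ and $X_n\cdot\vec\ell>X_\tau\cdot\vec\ell$ for all $n>\tau$. The construction must be performed on an enlarged probability space because the walk cannot always see whether it will backtrack; I would therefore add independent coins used to attempt regenerations at successive record maxima of $X\cdot\vec\ell$. The key structural statement is that under $\PR_p$ the pieces $(X_{\tau_{k+1}}-X_{\tau_k},\tau_{k+1}-\tau_k)_{k\geq 1}$ form an i.i.d.\ sequence, because the part of the cluster already explored is frozen in the half-space $\{x\cdot\vec\ell \leq X_{\tau_k}\cdot\vec\ell\}$ and the walk never revisits it, so conditioning on the past leaves the forward environment fresh. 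Once this is established, the existence of $v_\ell(p)$ follows from the standard SLLN argument: $v_\ell(p)=\ES[X_{\tau_2}-X_{\tau_1}]/\ES[\tau_2-\tau_1]$ whenever the numerator is well defined (possibly with denominator $+\infty$, giving $v_\ell(p)=0$).

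The first nontrivial input is that $\tau_1<\infty$ almost surely, i.e.\ the walk produces infinitely many regeneration levels. This I would obtain from a directional transience estimate: because the conductance weights $c^\omega(x,y)=e^{(x+y)\cdot\ell}$ grow exponentially in $\vec\ell$, the effective resistance from $0$ to the hyperplane $\{x\cdot\vec\ell\leq -n\}$ in the infinite cluster grows exponentially fast, and standard electrical-network arguments combined with the uniqueness of $K_\infty$ give $X_n\cdot\vec\ell\to +\infty$ almost surely. Combined with the coin-flip construction, this guarantees infinitely many regenerations.

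For assertion (1), I would show $\ES[\tau_2-\tau_1]<\infty$ when $\lambda$ is small. The natural route is to compare with the unbiased case (where the walk is diffusive by \cite{BB,MP}) via a perturbative argument: for $\lambda$ small the inter-regeneration displacement has enough moments because tails are controlled by the mixing properties of Bernoulli percolation on $\Z^d$ restricted to $K_\infty$, plus a Carne--Varopoulos type tail bound on the time spent in finite excursions. This yields $v_\ell(p)\cdot\vec\ell>0$.

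For assertion (2), the mechanism is trapping: a supercritical percolation cluster contains, with positive density, "dead-end" subgraphs of arbitrary depth $L$ attached to $K_\infty$ that point in the direction $-\vec\ell$. The hitting time of $K_\infty$ after entering such a trap of depth $L$ is of order $e^{2\lambda L}$ by a commute-time/effective-resistance computation, while ${\bf P}_p$-a.s.\ the cluster contains traps of depth $L$ with probability decaying only like $e^{-c(p)L}$. Hence for $\lambda>\lambda_2$ with $2\lambda>c(p)/\dim$-dependent threshold, the annealed time spent in a single trap has infinite expectation, which forces $\ES[\tau_2-\tau_1]=+\infty$ and therefore $v_\ell(p)=0$.

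The main obstacle, by a wide margin, is the construction and i.i.d.\ property of the regeneration times: the cluster below a record point can be arbitrarily complicated and the walk, being reversible on a random non-elliptic graph, has no uniform ellipticity to draw on. Making the coin-flip construction compatible with the conditioning on $\mathcal{I}$ and proving that regeneration happens infinitely often without losing the independence of the forward environment is the delicate step; once this is in place, (1) and (2) reduce to the moment estimate and the trap estimate sketched above.
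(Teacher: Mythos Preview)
The paper does not prove this theorem at all: it is quoted verbatim as a known result from \cite{Sznitman}, and serves only as the starting point for the paper's own work on the expansion of $v_\ell(p)$ near $p=1$. There is therefore no ``paper's own proof'' to compare against.

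That said, your outline is broadly faithful to how the cited result is actually established in \cite{Sznitman}. Sznitman does construct a regeneration structure in the direction $\vec\ell$, proves directional transience using the exponential growth of the conductances together with electrical-network estimates, and obtains the i.i.d.\ property of inter-regeneration pieces despite the lack of uniform ellipticity; the zero-speed regime for large $\lambda$ is indeed produced by traps whose expected exit time beats their rarity. Two places where your sketch is thinner than the actual argument: first, the small-$\lambda$ ballisticity in \cite{Sznitman} is not obtained by perturbing off the diffusive unbiased walk but by a direct moment bound on $\tau_2-\tau_1$ using conductance/resistance estimates on the cluster; second, the regeneration construction in the non-elliptic percolation setting requires more than ``coin flips at record maxima''---one has to work with local cluster events to manufacture the needed independence, and this is where most of the effort in \cite{Sznitman} goes. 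Your identification of this as the main obstacle is correct.
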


Our main result is a first order expansion of the limiting velocity with respect to the percolation parameter at $p=1$. As in~\cite{Sabot}, the result depends on certain Green functions defined for a configuration $\omega$ as
\[
\text{for any $x,y \in \Z^d$,}\qquad  G^{\omega}(x,y):=E_x^{\omega}\Bigl[ \sum_{n\geq 0} \1{X_n \in y}\Bigr].
\]

Before stating our main theorem we recall that $v_{\ell}(1)=\sum_{e\in \nu} p(e) e$, where $\omega_0$ is the environment at $p=1$, $p(e)=p^{\omega_0}(0,e)$, and $\nu$ is the set of unit vectors of $\Z^d$.

\begin{theorem}{\label{asymptoticspeed}}
For $d\geq 2$, $p\in (p_c(d),1)$ and for any $\ell\in \R^d_*$, we have
\[
v_{\ell}(1-\epsilon)=v_{\ell}(1)-\epsilon \sum_{e\in \nu} (v_{\ell}(1)\cdot e)(G^{\omega^{e}_0}(0,0)-G^{\omega^{e}_0}(e,0))(v_{\ell}(1)-d_e)+o(\epsilon),
\]
where for any $e\in \nu$ we denote 
\[
\text{for $f\in E(\Z^d)$,}\qquad \omega^{e}_0(f)=\1{f\neq e} \text{ and }d_e=\sum_{e'\in \nu} p^{\omega_0^{e}}(0,e')e',
\]
are respectively the environment where only the edge $[0,e]$ is closed and its corresponding mean drift at $0$.
\end{theorem}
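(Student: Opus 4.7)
The plan is to adapt the perturbative framework of Sabot~\cite{Sabot}, built around Kalikow's auxiliary random walk, to our non-elliptic percolation setting. In Sabot's approach, the first-order expansion of the velocity at a reference environment follows from a representation of $v_\ell(p)$ as a ratio of expectations of quenched local drifts weighted by Green functions of the walk. The heart of the work is therefore to establish, for $p$ close to $1$, a Kalikow-type representation of the form
\[
v_\ell(p)=\frac{\ES_p\bigl[G^\omega(0,0)\,d^\omega(0)\,\mathbf 1_{\mathcal I}\bigr]}{\ES_p\bigl[G^\omega(0,0)\,\mathbf 1_{\mathcal I}\bigr]}+o(1-p),
\]
where $d^\omega(0)=\sum_{e\in\nu}p^\omega(0,e)\,e$ denotes the quenched local drift at the origin. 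Establishing such a representation in the non-elliptic framework of percolation is the main analytic content of the paper, and corresponds precisely to the ``careful study of Kalikow's auxiliary walk'' announced in the introduction.

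Once such a representation is available, the first-order expansion of $v_\ell(1-\epsilon)$ is obtained by expanding both expectations in $\epsilon$ using the Bernoulli product structure of $\mathbf P_{1-\epsilon}$. To first order in $\epsilon$, only configurations with a single closed edge need to be retained, since having two or more prescribed edges simultaneously closed occurs with probability $O(\epsilon^2)$. Because $d^\omega(0)$ depends only on the $2d$ edges incident to the origin, configurations in which the single closed edge lies elsewhere produce numerator and denominator contributions both proportional to $v_\ell(1)$, and these cancel identically when the ratio is formed. Only the $2d$ terms corresponding to closed edges of the form $[0,e]$, $e\in\nu$, remain; an algebraic rearrangement using the reversibility relation $\pi^\omega(x)p^\omega(x,y)=c^\omega(x,y)$ and the one-step Markov decomposition for $G^{\omega_0^e}(0,0)$ then produces the explicit coefficient $(v_\ell(1)\cdot e)\bigl(G^{\omega_0^e}(0,0)-G^{\omega_0^e}(e,0)\bigr)$ in front of $(v_\ell(1)-d_e)$ announced in the theorem.

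The main obstacle is the control of $G^\omega(0,0)$ under $\mathbf P_{1-\epsilon}$. Because the model is not uniformly elliptic, configurations may contain dead-end traps of arbitrary depth near the origin, and $G^\omega(0,0)$ is correspondingly an unbounded random variable. Both the validity of the Kalikow representation above and the justification of the $o(\epsilon)$ remainder in the Bernoulli expansion rely on showing that the contribution of trap configurations to the relevant expectations is negligible, uniformly in $\epsilon$. Quantitatively, one must balance the exponentially small (in $k$) probability under $\mathbf P_{1-\epsilon}$ of a trap of depth $k$ near the origin against the at most exponentially large (in $k$) contribution of such a trap to $G^\omega(0,0)$; when $\epsilon$ is sufficiently small, the first exponential dominates, yielding both the Kalikow representation and the required differentiability at $\epsilon=0$. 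These trap estimates are the novel non-elliptic input of the paper and are what make the uniformly-elliptic methods of~\cite{Sabot} insufficient on their own.
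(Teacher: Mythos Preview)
Your proposal rests on a representation
\[
v_\ell(p)=\frac{\ES_p\bigl[G^\omega(0,0)\,d^\omega(0)\,\mathbf 1_{\mathcal I}\bigr]}{\ES_p\bigl[G^\omega(0,0)\,\mathbf 1_{\mathcal I}\bigr]}+o(1-p),
\]
but this is not the Kalikow representation and the paper does not establish anything of this form. The actual link to the velocity (Proposition~\ref{speed_green}) is
\[
v_\ell(1-\epsilon)=\lim_{\delta\to 1}\frac{\sum_{z\in\Z^d}G_\delta^{\widehat p_\delta^\epsilon}(0,z)\,\widehat d_\delta^\epsilon(z)}{\sum_{z\in\Z^d}G_\delta^{\widehat p_\delta^\epsilon}(0,z)},\qquad
\widehat d_\delta^\epsilon(z)=\frac{{\bf E}\bigl[\1{\mathcal I}G_\delta^\omega(0,z)\,d^\omega(z)\bigr]}{{\bf E}\bigl[\1{\mathcal I}G_\delta^\omega(0,z)\bigr]},
\]
so the velocity is a barycenter over \emph{all} $z\in\Z^d$ of Kalikow drifts, each weighted by ${\bf E}[\1{\mathcal I}G_\delta^\omega(0,z)]$. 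There is no reduction to $z=0$: the environment law is not translation-invariant once conditioned on $\mathcal I$ and anchored at $0$, and the weights themselves depend on $z$ and on $\epsilon$.

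Because of this, your sketch misses the two structural difficulties that drive the paper. First, one needs $\widehat d_\delta^\epsilon(z)=d_\emptyset+O(\epsilon)$ \emph{uniformly in $z$ and in $\delta$}, which is Proposition~\ref{upperb}. This is not a tail bound on $G^\omega(0,0)$: the issue is to compare ${\bf E}[\1{\mathcal I}G_\delta^\omega(0,z)\mid \mathcal C(z)=A]$ with ${\bf E}[\1{\mathcal I}G_\delta^\omega(0,z)]$ for arbitrary $z$, and $G_\delta^\omega(0,z)=P_0^\omega[T_z<\tau_\delta]G_\delta^\omega(z,z)$ entangles a hitting probability depending on the whole environment with a local return count at $z$. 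The resistance estimate (Proposition~\ref{insertedge}) and the decorrelation lemma (Lemma~\ref{decorrelation}) exist precisely to separate these; a moment bound on $G^\omega(0,0)$ alone does not. Second, for the first-order term the paper must evaluate ${\bf E}_{1-\epsilon}[\1{\mathcal I(\omega^{z,e})}G_\delta^{\omega^{z,e}}(0,z)]/{\bf E}_{1-\epsilon}[\1{\mathcal I}G_\delta^\omega(0,z)]$ up to $o(1)$, which forces a \emph{second} Kalikow-type construction (the environment $\tilde p_{z,e,z+e_+}$ of Section~7.2), Proposition~\ref{quotient_ub}, and the Green-function perturbation of Proposition~\ref{approx_green_function}. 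Your Bernoulli expansion of a single ratio at the origin does not produce these objects, and in particular the claim that ``configurations with the closed edge elsewhere cancel identically'' has no counterpart here: what actually happens is an averaging over $z$ of the $\phi(e)$ and $\psi(e)$ contributions in~(\ref{this_is_long}), followed by the algebraic simplification of Lemma~\ref{simplify_tech}.
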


\begin{proposition}
\label{incr_speed}
Let us denote $J^e=G^{\omega_0}(0,0)-G^{\omega_0}(e,0)$ for $e\in \nu$. We can rewrite the first term of the expansion in the following way
\[
v_{\ell}'(1)=\sum_{e\in \nu} (v_{\ell}(1)\cdot e)  \frac{p(e) J^e}{1-p(e)J^e-p(-e)J^{-e}}(e-v_{\ell}(1)),
\]
so that if for $e\in \nu$ such that $v_{\ell}(1) \cdot e >0$ we have $v_{\ell}(1) \cdot e \geq \abs{\abs{v_{\ell}(1)}}_2^2$ then
\[
v_{\ell}(1)\cdot v_{\ell}'(1)>0,
\]
which in words means that the percolation slows down the random walk at least at $p=1$.

The previous condition is verified for example in the following cases
\begin{enumerate}
\item $\vec \ell \in \nu$, i.e.~when the drift is along a component of the lattice,
\item $\ell = \lambda \vec \ell$, where $\lambda<\lambda_c(\vec \ell)$ for some $\lambda_c(\vec \ell)>0$, i.e.~when the drift is weak.
\end{enumerate}
\end{proposition}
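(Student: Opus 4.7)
The plan is to reduce the rewriting to an algebraic identity for $\tilde J^e := G^{\omega_0^e}(0,0) - G^{\omega_0^e}(e,0)$ in terms of $J^e$ and $J^{-e}$, then to read off the sign of $v_\ell(1)\cdot v_\ell'(1)$, and finally to verify the hypothesis in the two listed cases. The first (routine) step is to make $d_e$ explicit: since $p^{\omega_0^e}(0,e')=p(e')/(1-p(e))$ for $e'\in\nu\setminus\{e\}$, a direct computation gives $d_e=(v_\ell(1)-p(e)\,e)/(1-p(e))$ and hence $v_\ell(1)-d_e=(p(e)/(1-p(e)))(e-v_\ell(1))$. Substituting into the expansion of Theorem~\ref{asymptoticspeed} yields
\[
v_\ell'(1)=\sum_{e\in\nu}(v_\ell(1)\cdot e)\,\frac{p(e)\tilde J^e}{1-p(e)}\,(e-v_\ell(1)),
\]
so the proposed rewriting reduces to establishing
\[
\tilde J^e = \frac{(1-p(e))\,J^e}{1-p(e)J^e-p(-e)J^{-e}}.
\]

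This identity is the main obstacle. I would derive it from the resolvent identity $\tilde G=G+G(\tilde P-P)\tilde G$, where $\tilde P=P^{\omega_0^e}$ and $P=P^{\omega_0}$. Since $\tilde P-P$ is supported on the two rows indexed by $0$ and $e$, the identity takes the form $\tilde G(x,y)=G(x,y)+G(x,0)B_0(y)+G(x,e)B_e(y)$ with $B_z(y)=\sum_w(\tilde P-P)(z,w)\tilde G(w,y)$. The key simplification is that the first-step equation for $\tilde G$ at $0$ gives $\sum_{e'\neq e}p(e')\tilde G(e',y)=(1-p(e))(\tilde G(0,y)-\delta_{0,y})$, which collapses $B_0(y)$ to $p(e)[\tilde G(0,y)-\tilde G(e,y)-\delta_{0,y}]$; symmetrically $B_e(y)=p(-e)[\tilde G(e,y)-\tilde G(0,y)-\delta_{e,y}]$. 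Setting $y=0$, subtracting the equations for $\tilde G(0,0)$ and $\tilde G(e,0)$, and using the translation invariance of the walk on $\omega_0$ (so that $G(e,e)=G(0,0)$ and $G(0,e)-G(0,0)=-J^{-e}$), I obtain a single linear equation in $\tilde J^e$ whose solution is precisely the displayed formula.

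Once the rewriting is in hand, the positivity analysis is short. Combining with $v_\ell(1)\cdot(e-v_\ell(1))=v_\ell(1)\cdot e-\|v_\ell(1)\|_2^2$,
\[
v_\ell(1)\cdot v_\ell'(1)=\sum_{e\in\nu}(v_\ell(1)\cdot e)\,\frac{p(e)J^e}{1-p(e)J^e-p(-e)J^{-e}}\,(v_\ell(1)\cdot e-\|v_\ell(1)\|_2^2).
\]
The denominators are positive, since $\tilde J^e>0$ (by the maximum principle $\tilde G(e,0)<\tilde G(0,0)$) and $(1-p(e))J^e>0$ force the denominator to have the same sign as the numerator. Pair $e$ with $-e$: the hypothesis makes the $e$-contribution non-negative, while for $-e$ both $v_\ell(1)\cdot(-e)$ and $v_\ell(1)\cdot(-e)-\|v_\ell(1)\|_2^2$ are strictly negative, so the $-e$-contribution is strictly positive; hence $v_\ell(1)\cdot v_\ell'(1)>0$. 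For case~(1) with $\vec\ell\in\nu$, lattice symmetries perpendicular to $\vec\ell$ force $v_\ell(1)=v\vec\ell$ with $v=\sinh(\lambda)/(\cosh(\lambda)+d-1)\in(0,1)$, and $\vec\ell$ is the only $e\in\nu$ with $v_\ell(1)\cdot e>0$, for which $v\ge v^2$ holds trivially. For case~(2), a Taylor expansion at $\lambda=0$ gives $v_\ell(1)=(\lambda/d)\vec\ell+O(\lambda^2)$, so $v_\ell(1)\cdot e$ is of order $\lambda$ whenever $\vec\ell\cdot e>0$ (while coordinates with $\vec\ell\cdot e=0$ give $v_\ell(1)\cdot e=0$ by the reflection across the hyperplane perpendicular to $e$), whereas $\|v_\ell(1)\|_2^2=O(\lambda^2)$; the inequality therefore holds for all $\lambda$ smaller than some threshold $\lambda_c(\vec\ell)>0$.
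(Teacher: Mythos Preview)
Your proof is correct and follows essentially the same route as the paper: both use the resolvent identity $\tilde G=G+G(\tilde P-P)\tilde G$ to derive the linear equation $\tilde J^e(1-p(e)J^e-p(-e)J^{-e})=(1-p(e))J^e$, then pair $e$ with $-e$ to read off the sign of $v_\ell(1)\cdot v_\ell'(1)$. The only notable cosmetic differences are that you collapse $B_0,B_e$ via the first-step equation (the paper instead expands $p^e(e')-p^\emptyset(e')$ explicitly), and you deduce positivity of the denominator from $\tilde J^e>0$ rather than from the escape-probability identity $G^{\omega_0}(0,0)^{-1}=\sum_{e'}p(e')P_{e'}^{\omega_0}[T_0^+=\infty]$ used in the paper.
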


\begin{remark}
 The property of Proposition~\ref{incr_speed} is expected to hold for any drift, but we were unable to carry our the computations. More generaly the previous should be true in a great variety of cases, in particular one could hope it holds in the whole supercritical regime. For a somewhat related conjecture, see~\cite{Chen}.
 \end{remark}
 
\begin{remark}
Another natural consequence which is not completely obvious to prove is that the speed is positive for $p$ close enough to $1$.
\end{remark}

\begin{remark}
Finally, this result can give some insight on the dependence of the speed with respect to the bias. Indeed, fix a bias $\ell$ and some $\mu>1$, Theorem~\ref{asymptoticspeed} implies that for $\epsilon_0=\epsilon_0(\ell , \mu)>0$ small enough we have
\[
v_{\mu \ell}(1-\epsilon)\cdot \vec \ell>v_{\ell}(1-\epsilon)\cdot \vec \ell \text{ for } \epsilon<\epsilon_0.
\]
\end{remark}

Before turning to the proof of this result, we introduce some further notations. Let us also point out that we will refer to the percolation parameter as $1-\epsilon$ instead of $p$ and assume 
\begin{equation}
\label{ass_eps}
\epsilon<  1/2,
\end{equation}
in particular we have $1-\epsilon>p_c(d)$ for all $d\geq 2$.

We denote by $\{x \leftrightarrow y \}$ the event that $x$ and $y$ are connected in $\omega$. If we want to emphasize the configuration we will use $\{ x\stackrel{\omega}{\leftrightarrow} y \}$. Accordingly, let us denote $K^{\omega}(x)$ the cluster (or connected component) of $x$ in $\omega$.

Given a set $V$ of vertices of $\Z^d$ we denote by $\abs{V}$ its cardinality, by $E(V)=\{ [x,y] \in E(\Z^d)\mid~x,y\in V\}$ its edges and 
\[
\partial V= \{x \in V \mid y\in \Z^d \setminus V,~x\sim y\},~\partial_E V= \{[x,y] \in E(\Z^d)\mid~x\in V,\ y \notin V \},
\]
and also for $B$ a set of edges of $E(\Z^d)$ we denote
\[
\partial B=\{x \mid ~\exists~y,z,~[x,y] \in B, [x,z] \notin B\},~\partial_E B=\{ [x,y] \mid x \in \partial B, y\notin V(B)\},
\]
where $V(B)=\{x \in \Z^d \mid ~\exists y \in \Z^d~[x,y]\in B\}$.

Given a subgraph $G$ of $\Z^d$ containing all vertices of $\Z^d$, we denote $d_G(x,y)$ the graph distance in $G$ induced by $\Z^d$ between $x$ and $y$, moreover if $x$ and $y$ are not connected in $G$ we set $d_G(x,y)=\infty$. In particular $d_{\omega}(x,y)$ is the distance in the percolation cluster if $\{x\leftrightarrow y\}$. Moreover for $x\in G$ and $k \in \N$, we denote the ball of radius $k$ by
\[
B_G(x,k)=\{y\in G,\ d_G(x,y) \leq k\} \text{ and } B_G^E(x,k)=E(B_G(x,k)),
\]
where we will omit the subscript when $G=\Z^d$.

Let us denote by $(e^{(i)})_{i=1\ldots d}$ an orthonormal basis of $\Z^d$ such that $e^{(1)}\cdot\vec \ell \geq e^{(2)}\cdot \vec \ell \geq \cdots \geq e^{(d)}\cdot \vec \ell \geq 0$, in particular we have $e^{(1)}\cdot\vec \ell\geq  1 /\sqrt d$.

In order to control volume growth let us define $\rho_d$ such that 
\[
\text{for all $r\geq 1$,}\qquad \abs{B(x,r)}\leq \rho_d r^d \text{ and } \abs{\partial B(x,r)} \leq \rho_d r^{d-1}.
\]

We will need to modify the configuration of the percolation cluster at certain vertices. So given $A_1, A_2 \in E(\Z^d)$, $B_1 \subset A_1$ and $B_2\subset A_2$, let us denote $\omega^{A_1,B_1}_{A_2,B_2}$ the configuration such that
\begin{enumerate}
\item $\omega^{A_1,B_1}_{A_2,B_2}([x,y])= \omega([x,y])$,  if  $[x,y] \notin A_1 \cup A_2$,
\item $\omega^{A_1,B_1}_{A_2,B_2}([x,y])=\1{[x,y] \notin B_1}$, if $[x,y] \in A_1$,
\item $\omega^{A_1,B_1}_{A_2,B_2}([x,y])=\1{[x,y] \notin B_2}$, if $[x,y] \in A_2 \setminus A_1$,
\end{enumerate}
which in words means that we impose that the set of closed edges  (in $\omega^{A_1,B_1}_{A_2,B_2}$) of $A_1$ (resp.~$A_2$) is exactly $B_1$ (resp.~$B_2$), and in case of an intersection between $A_1$ and $A_2$ the condition imposed by $A_1$ is most important. Furthermore given $A\in E(\Z^d)$ and $B\subset A$ the configurations $\omega^{A,B}$ and $\omega_{A,B}$ are such that
\begin{enumerate}
\item $\omega^{A,B}([x,y])=\omega_{A,B}([x,y])= \omega([x,y])$,  if  $[x,y] \notin A$,
\item $\omega^{A,B}([x,y])=\omega_{A,B}([x,y])=\1{[x,y] \notin B}$, if $[x,y] \in A$,
\end{enumerate}
that is equal to $\omega$ everywhere except on the edges of $A$. The closed edges of $A$ (in the configuration $\omega^{A,B}$ or $\omega_{A,B}$) being exactly those in $B$. 

 For $k_1,k_2\geq 1$, $z_1,z_2\in \Z^d$, $B_1 \subset B^E(0,k_1)$ and $B_2 \subset B^E(0,k_2)$, we introduce
 \begin{equation}
\label{notation_omega}
\omega^{(z_1,k_1),B_1}_{(z_2,k_1),B_2}:=\omega^{B^E(z_1,k_1),z_1+B_1}_{B^E(z_2,k_2),z_2+B_2} \text{ and for $B_1,B_2\subset \nu$, } \omega^{z_1,B_1}_{z_2,B_2}:=\omega^{(z_1,1),z_1+B_1}_{(z_2,1),z_2+B_2},
\end{equation}
 to describe configurations modified on balls. We define the same type of notations without the subscript or the superscript in the natural way.

Moreover we will use shortened notations when we impose that all edges of a certain set are open (resp.~closed), for example
 \begin{equation}
\label{notation_omega1}
\omega^{A,1}:=\omega^{A,\emptyset}\text{ and }\omega^{A,0}:=\omega^{A,A},
\end{equation}
 to denote in particular the special cases where all (resp.~no) edges of $A$ are open. We will use of combinations of these notations, for example, $\omega^{(z,k),1}:=\omega^{B^E(z,k),\emptyset}$.

In connection with that, for a given configuration $\omega\in \Omega$, we call configuration of $z$ and denote
\[
\C(z)=\{e \in \nu,\ \omega([z,z+e])=0\},
\]
the set of closed edges adjacent to $z$.

Hence we can denote $e\in \nu$ and $A\subset \nu$
\begin{equation}
\label{shorter}
p^{A}(e)=p^{\omega^{0,A}}(0,e),~c(e)=c^{\omega^{0,1}}(e) \text{ and } \pi^A=\pi^{\omega^{0,A}}(0),
\end{equation}
this means for example $p^{A}$ is the transition probability along the edge $e$ under the configuration $A$.

Furthermore the pseudo elliptic constant $\kappa_0=\kappa_0(\ell,d)>0$ will denote
\begin{equation}
\label{kap0}
\kappa_0=\min_{A\subset\nu, A\neq \nu, e\notin A} p^{A}(e),
\end{equation} 
which is the minimal non-zero transition probability.

Similarly we fix $\kappa_1=\kappa_1(\ell,d)>0$ such that
\begin{equation}
\label{kap1}
\frac 1 {\kappa_1} \pi^{\omega^{z,A}}(z) \leq e^{2\lambda z. \vec \ell} \leq \kappa_1\pi^{\omega^{z,A}}(z),
\end{equation}
 for any $A\subset \nu$, $A\neq \nu$ and $z\in\Z^d$.

Finally $\tau_{\delta}$ will denote a geometric random variable of parameter $1-\delta$ independent of the random walk and the environment moreover for $A\subset \Z^d$ set
\[
T_A=\inf \{ n \geq 0,~X_n\in A\} \text{ and } T_A^+=\inf \{ n \geq 1,~X_n\in A\},
\]
and for $z\in \Z^d$ we denote $T_z$ (resp. $T_z^+$) for $T_{\{z\}}$ (resp. $T_{\{z\}}^+$).

Concerning constants we choose to denote them by $C_i$ for global constants, or $\gamma_i$ for local constants and will implicitely be supposed to be in $(0,\infty)$. Their dependence with respect to $d$ and $\ell$ will not always be specified.

Let us present the structure of the paper. In Section~\ref{s_kalikow}, we will introduce the central tool for the computation of the expansion of the speed: Kalikow's environment and link it to the asymptotic speed. Then, we will concentrate on getting the continuity of the speed, mathematically the problem is simply reduced to giving upper bounds on quantities depending on Green functions, on a more heuristical level our aim is to understand the slowdown induced by unlikely configurations where \lq \lq traps\rq \rq appear. Since getting the upper bound is a rather complicated and technical matter we will first give a quick sketch, as soon as further notations are in place, and try to motivate our approach at the end of the next section.

In Section~\ref{sect_res} and Section~\ref{sect_perco}, we will respectively give estimates on the behaviour of the random walk near traps and on the probability of appearence of such traps in the percolation cluster. Then in Section~\ref{sect_cont} we will put together the previous results to prove the continuity of the speed.

 The proof of Theorem~\ref{asymptoticspeed} will be done in Section~\ref{s_deriv}. In order to obtain the first order expansion, the task is essentially similar to obtaining the continuity, but the computations are much more involved and will partly be postponed to Section~\ref{s_tech}.

 Finally Proposition~\ref{incr_speed} is proved in Section~\ref{s_simplification}.

\section{Kalikow's auxiliary random walk}
\label{s_kalikow}

We denote for $x,y\in \Z^d$, $P$ a Markov operator and $\delta<1$, the Green function of the random walk killed at geometric rate $1-\delta$ by 
\[
G_{\delta}^P(x,y):=E_x^{P}\Bigl[\sum_{k=0}^{\infty}\delta^k\1{X_k=y}\Bigr]\text{ and } G_{\delta}^{\omega}(x,y):=G_{\delta}^{P^{\omega}}(x,y),
\]
where $P^{\omega}$ is the Markov operator associated with the random walk in the environment $\omega$.

Then we introduce the so-called Kalikow environment associated with the point $0$ and the environment ${\bf P}_{1-\epsilon}[\,\cdot \mid \mathcal{I}\,]$, which is given for $z,y\in \Z^d$, $\delta<1$ and $e\in \nu$ by
\[
\widehat{p}_{\delta}^{\epsilon}(z,z+e)= \frac{{\bf E}_{1-\epsilon}[G_{\delta}^{\omega}(0,z) p^{\omega}(z,z+e)|\mathcal{I}]}{{\bf E}_{1-\epsilon}[G_{\delta}^{\omega}(0,z)|\mathcal{I}]}.
\]

The familly $(\widehat{p}_{\delta}^{\epsilon}(z,z+e))_{z\in \Z^d, e\in \nu}$ defines transition probabilities of a certain Markov chain on $\Z^d$. It is called Kalikow's auxiliary random walk and its first appearence in a slightly different form goes back to~\cite{Kalikow}.

This walk has proved to be useful because it links the annealed expectation of a Green function of a random walk in random media to the Green function of a Markov chain. This result is summarized in the following proposition.

\begin{proposition}
\label{kalikow_0}
For $z\in \Z^d$ and $\delta<1$, we have 
\[
{\bf E}_{1-\epsilon}\Bigl[G_{\delta}^{\omega}(0,z)|\mathcal{I}\Bigr]=G_{\delta}^{\widehat{p}_{\delta}^{\epsilon}}(0,z).
\]
\end{proposition}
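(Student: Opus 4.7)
The plan is to identify $g(z) := \mathbf{E}_{1-\epsilon}[G_\delta^\omega(0,z) \mid \mathcal{I}]$ as the unique bounded solution of the same fixed-point equation that characterizes $G_\delta^{\widehat{p}_\delta^\epsilon}(0,z)$, and the definition of $\widehat{p}_\delta^\epsilon$ as a suitable conditional expectation has been arranged precisely so that this works.

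First I would recall the standard one-step (last-step) decomposition for the quenched Green function killed at rate $1-\delta$:
\[
G_\delta^\omega(0,z) = \mathbf{1}\{z=0\} + \delta \sum_{y \sim z} G_\delta^\omega(0,y)\, p^\omega(y,z),
\]
valid $\mathbf{P}_{1-\epsilon}[\cdot \mid \mathcal{I}]$-a.s., where the sum is finite since there are only finitely many neighbours of $z$ in $\mathbb{Z}^d$. Averaging this identity under $\mathbf{E}_{1-\epsilon}[\cdot \mid \mathcal{I}]$ (Fubini is fine because $0 \le G_\delta^\omega(0,y)p^\omega(y,z) \le 1/(1-\delta)$) yields
\[
g(z) = \mathbf{1}\{z=0\} + \delta \sum_{y \sim z} \mathbf{E}_{1-\epsilon}\!\left[G_\delta^\omega(0,y)\, p^\omega(y,z) \,\middle|\, \mathcal{I}\right].
\]
By the very definition of $\widehat{p}_\delta^\epsilon$, each numerator equals $\widehat{p}_\delta^\epsilon(y,z)\, g(y)$ whenever $g(y) > 0$ (and the corresponding term vanishes otherwise since then $G_\delta^\omega(0,y) = 0$ almost surely). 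Hence
\[
g(z) = \mathbf{1}\{z=0\} + \delta \sum_{y \sim z} \widehat{p}_\delta^\epsilon(y,z)\, g(y).
\]

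The function $z \mapsto G_\delta^{\widehat{p}_\delta^\epsilon}(0,z)$ satisfies exactly the same last-step identity, so the difference $h(z) := g(z) - G_\delta^{\widehat{p}_\delta^\epsilon}(0,z)$ is a bounded solution of $h(z) = \delta \sum_{y} \widehat{p}_\delta^\epsilon(y,z)\, h(y)$. Iterating this $n$ times produces a factor $\delta^n$, and since $h$ is uniformly bounded by $2/(1-\delta)$, letting $n \to \infty$ forces $h \equiv 0$. (Equivalently: $g$ and $G_\delta^{\widehat{p}_\delta^\epsilon}(0,\cdot)$ both lie in $\ell^\infty(\mathbb{Z}^d)$ and both are fixed points of the strict contraction $f \mapsto \mathbf{1}\{\cdot = 0\} + \delta\, \widehat{p}_\delta^\epsilon{}^{*} f$ on $\ell^\infty$, with operator norm $\delta < 1$.) This gives the claimed equality.

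The only substantive point to verify is that $\widehat{p}_\delta^\epsilon(z,\cdot)$ is a genuine probability distribution and that the formal manipulation above is meaningful, which amounts to checking that the denominator $\mathbf{E}_{1-\epsilon}[G_\delta^\omega(0,z) \mid \mathcal{I}]$ is strictly positive at every $z \in \mathbb{Z}^d$. This is immediate since on the event $\mathcal{I}$ the origin lies in a unique infinite cluster $K_\infty(\omega)$, so every $z \in \mathbb{Z}^d$ has positive conditional probability to be connected to $0$, in which case $G_\delta^\omega(0,z) > 0$. Once this is in place, no real obstacle remains: the argument is essentially a one-line application of the definition of $\widehat{p}_\delta^\epsilon$ combined with uniqueness of the Green function for a geometrically killed Markov chain.
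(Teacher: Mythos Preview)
Your approach is the standard one and is what the paper has in mind (it simply defers to Proposition~1 of Sabot). The last-step decomposition of $G_\delta^\omega(0,\cdot)$, followed by taking the annealed expectation and invoking the very definition of $\widehat{p}_\delta^\epsilon$, is exactly the right mechanism.

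There is, however, a genuine gap in your uniqueness step. You assert that $f \mapsto \delta\,\widehat{p}_\delta^\epsilon{}^{*} f$ is a contraction on $\ell^\infty$ with operator norm $\delta$. This is false in general: the $\ell^\infty$-norm of $\widehat{p}^{*}$ is $\sup_z \sum_{y\sim z} \widehat{p}(y,z)$, a sum of transition probabilities \emph{into} $z$, and for a drifted chain this can exceed $1$. Equivalently, after iterating to $h(z)=\delta^n \sum_y h(y)\,\widehat{p}^{\,n}(y,z)$ you would need $\sum_y \widehat{p}^{\,n}(y,z)$ bounded in $n$, which you have not shown (and which is not generally true).

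The fix is immediate: argue in $\ell^1$ instead of $\ell^\infty$. Since $\sum_z G_\delta^\omega(0,z)=1/(1-\delta)$ almost surely, both $g$ and $G_\delta^{\widehat{p}}(0,\cdot)$ lie in $\ell^1$ with total mass $1/(1-\delta)$, hence $h\in\ell^1$. Because $\widehat{p}$ is stochastic, summing over $z$ first gives $\|h\widehat{p}\|_1 \le \|h\|_1$, so from $h=\delta\,h\widehat{p}$ one gets $\|h\|_1 \le \delta\,\|h\|_1$ and thus $h\equiv 0$. With this correction your proof is complete.
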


The proof of this result can be directly adapted from the proof of Proposition 1 in~\cite{Sabot}. We emphasize that in the case $\delta<1$, the uniform ellipticity condition is not needed.

Using the former property we can link the Kalikow's auxiliary random walk to the speed of our RWRE through the following proposition.
\begin{proposition}
\label{speed_green}
For any $0<\epsilon<1-p_c(\Z^d)$, we have
\[
\lim_{\delta \to 1}\frac{\sum_{z\in \Z^d} G_{\delta}^{\widehat{\omega}_{\delta}^{\epsilon}}(0,z)\widehat{d}_{\delta}^{\epsilon}(z)}{\sum_{z\in \Z^d} G_{\delta}^{\widehat{\omega}_{\delta}^{\epsilon}}(0,z)}=\lim_{\delta \to 1} \frac{\ES[X_{\tau_{\delta}}]}{\ES[\tau_{\delta}]}=v_{\ell}(1-\epsilon),
\]
where $\widehat{d}_{\delta}^{\epsilon}(z)=\displaystyle{\sum_{e\in \nu } \widehat{p}_{\delta}^{\epsilon}(z,z+e)e}$.
\end{proposition}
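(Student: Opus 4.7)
The plan is to establish the two equalities separately: the first is an algebraic identity that follows from Proposition~\ref{kalikow_0} by Abel summation, and the second transfers the law of large numbers of Theorem~\ref{LLN} to the geometric sampling scheme. Throughout, $\ES$ denotes the full expectation under $\PR_{1-\epsilon}$ tensorized with the independent geometric variable $\tau_\delta$, and $\ES_{1-\epsilon}$ the expectation under $\PR_{1-\epsilon}$ alone.

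For the first equality, fix $\omega \in \Omega_0$ and set $d^\omega(z)=\sum_e p^\omega(z,z+e)e$. Summation by parts (justified by $|X_k|\leq k$, $X_0=0$, and $\delta<1$) yields
\[
\sum_{z\in\Z^d} G_\delta^\omega(0,z)\,d^\omega(z)\;=\;E_0^\omega\Bigl[\sum_{k\geq 0}\delta^k (X_{k+1}-X_k)\Bigr]\;=\;\frac{1-\delta}{\delta}\sum_{k\geq 0}\delta^k E_0^\omega[X_k],
\]
while $\sum_z G_\delta^\omega(0,z)=1/(1-\delta)$. Averaging under ${\bf P}_{1-\epsilon}[\,\cdot\mid\mathcal{I}]$ and using Proposition~\ref{kalikow_0} in the form ${\bf E}_{1-\epsilon}[G_\delta^\omega(0,z)\,p^\omega(z,z+e)\mid\mathcal{I}]=G_\delta^{\widehat{p}_\delta^\epsilon}(0,z)\widehat{p}_\delta^\epsilon(z,z+e)$ (which is the defining identity of $\widehat{p}_\delta^\epsilon$ combined with Proposition~\ref{kalikow_0}), and invoking $\ES[X_{\tau_\delta}]=(1-\delta)\sum_k\delta^k\ES_{1-\epsilon}[X_k]$ together with $\ES[\tau_\delta]=\delta/(1-\delta)$, I obtain
\[
\sum_z G_\delta^{\widehat{p}_\delta^\epsilon}(0,z)\widehat{d}_\delta^\epsilon(z)\;=\;\tfrac{1}{\delta}\ES[X_{\tau_\delta}]\quad\text{and}\quad \sum_z G_\delta^{\widehat{p}_\delta^\epsilon}(0,z)\;=\;\tfrac{1}{1-\delta}\;=\;\tfrac{1}{\delta}\ES[\tau_\delta],
\]
so taking the ratio proves the first equality.

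For the second equality I invoke Theorem~\ref{LLN}: $X_n/n\to v_\ell(1-\epsilon)$, $\PR_{1-\epsilon}$-almost surely. The deterministic bound $|X_n|\leq n$ allows dominated convergence to upgrade this to $\ES_{1-\epsilon}[X_k]/k\to v_\ell(1-\epsilon)$ componentwise as $k\to\infty$. Given $\eta>0$, pick $K$ such that $|\ES_{1-\epsilon}[X_k]-k v_\ell(1-\epsilon)|\leq \eta k$ for every $k\geq K$. Splitting $\ES[X_{\tau_\delta}]=\sum_k(1-\delta)\delta^k\ES_{1-\epsilon}[X_k]$ at $K$, the head $k<K$ contributes $O(K)$ uniformly in $\delta$, while the tail equals $v_\ell(1-\epsilon)(\ES[\tau_\delta]-O(K))+O(\eta\,\ES[\tau_\delta])$; dividing by $\ES[\tau_\delta]=\delta/(1-\delta)\to\infty$ and letting first $\delta\to 1$ and then $\eta\downarrow 0$ yields $\ES[X_{\tau_\delta}]/\ES[\tau_\delta]\to v_\ell(1-\epsilon)$.

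No serious obstacle is expected here. The only technical point is the Fubini interchange when averaging the Abel sum in the $\omega$-variable, which is harmless for $\delta<1$ since $\sum_z G_\delta^\omega(0,z)\leq 1/(1-\delta)$ and $\|d^\omega(z)\|\leq 1$ make everything absolutely summable; the analogous interchange between $\tau_\delta$ and $(X_n)$ in the second step is likewise automatic from their independence. The substantive content of the proposition is genuinely Proposition~\ref{kalikow_0}, which is why this is presented as an immediate corollary.
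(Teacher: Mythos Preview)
Your proof is correct and follows the same approach the paper defers to (the paper does not spell out the argument but refers to the proof of Proposition~2 in~\cite{Sabot}, which proceeds exactly as you do). In fact you establish the stronger identity that the ratio equals $\ES[X_{\tau_\delta}]/\ES[\tau_\delta]$ for every $\delta<1$, not just in the limit; this is consistent with how the paper later uses the proposition.
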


Let $C_{\delta}^{\epsilon}$ be the convex hull of all $\widehat{d}_{\delta}^{\epsilon}(z)$ for $z\in \Z^d$, then an immediate consequence of the previous proposition is the following
\begin{proposition}{\label{accu}}
For $\epsilon>0$ we have that $v_{\ell}(1-\epsilon)$ is an accumulation point of $C_{\delta}^{\epsilon}$ as $\delta$ goes to 1.
\end{proposition}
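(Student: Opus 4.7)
The plan is to read Proposition~\ref{accu} as an essentially formal consequence of Proposition~\ref{speed_green}. The key observation is that for each $\delta<1$ the middle quantity in Proposition~\ref{speed_green},
\[
S_\delta:=\frac{\sum_{z\in\Z^d}G_\delta^{\widehat\omega_\delta^\epsilon}(0,z)\,\widehat d_\delta^\epsilon(z)}{\sum_{z\in\Z^d}G_\delta^{\widehat\omega_\delta^\epsilon}(0,z)}=\sum_{z\in\Z^d}\mu_\delta(z)\,\widehat d_\delta^\epsilon(z),
\]
is a probability-weighted average of the family $(\widehat d_\delta^\epsilon(z))_{z\in\Z^d}$ with weights $\mu_\delta(z):=G_\delta^{\widehat\omega_\delta^\epsilon}(0,z)/\sum_y G_\delta^{\widehat\omega_\delta^\epsilon}(0,y)$. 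These weights do define a probability measure on $\Z^d$, since $\delta<1$ forces $\sum_y G_\delta^{\widehat\omega_\delta^\epsilon}(0,y)\leq 1/(1-\delta)<\infty$. In particular $S_\delta$ lies in the closed convex hull of $\{\widehat d_\delta^\epsilon(z):z\in\Z^d\}$.

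To pass from the closed hull to $C_\delta^\epsilon$ itself I would use a routine truncation. Each $\widehat d_\delta^\epsilon(z)=\sum_{e\in\nu}\widehat p_\delta^\epsilon(z,z+e)\,e$ is a sub-convex combination of the elements of $\nu$, so it lies in the fixed compact convex set $K:=\mathrm{conv}(\{0\}\cup\nu)$ and in particular $|\widehat d_\delta^\epsilon(z)|\leq 1$ uniformly in $z$ and $\delta$. For any $\eta>0$ choose $N=N(\delta,\eta)$ with $\mu_\delta(B(0,N)^c)<\eta$; then
\[
\widetilde S_{\delta,\eta}:=\sum_{|z|\leq N}\frac{\mu_\delta(z)}{\mu_\delta(B(0,N))}\,\widehat d_\delta^\epsilon(z)
\]
is a genuine finite convex combination of the $\widehat d_\delta^\epsilon(z)$, hence lies in $C_\delta^\epsilon$, and the uniform bound above yields $|\widetilde S_{\delta,\eta}-S_\delta|\leq C\eta$ for an absolute constant $C$.

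Combining these ingredients with Proposition~\ref{speed_green}, which gives $S_\delta\to v_\ell(1-\epsilon)$ as $\delta\to 1$, finishes the argument by a diagonal choice: for each $n\geq 1$ pick $\delta_n$ close enough to $1$ so that $|S_{\delta_n}-v_\ell(1-\epsilon)|<1/(2n)$ and set $\eta:=1/(2Cn)$ to produce $c_n:=\widetilde S_{\delta_n,\eta}\in C_{\delta_n}^\epsilon$ with $|c_n-v_\ell(1-\epsilon)|<1/n$. Sending $\delta_n\to 1$ exhibits $v_\ell(1-\epsilon)$ as an accumulation point of $C_\delta^\epsilon$ as $\delta\to 1$. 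There is no real obstacle in this plan: the statement is a formal corollary of Proposition~\ref{speed_green}, with the only mild subtlety being the passage from an infinite to a finite convex combination, which is handled by the truncation above thanks to the uniform boundedness of $\widehat d_\delta^\epsilon$.
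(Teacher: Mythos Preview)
Your argument is correct and is exactly the intended one: the paper does not give an independent proof but simply points to Proposition~2 in \cite{Sabot}, where precisely this observation is made---$S_\delta$ is a (countable) convex combination of the $\widehat d_\delta^\epsilon(z)$, hence lies in $\overline{C_\delta^\epsilon}$, and $S_\delta\to v_\ell(1-\epsilon)$ by Proposition~\ref{speed_green}. Your truncation step to pass from the closed hull to $C_\delta^\epsilon$ itself is the only detail not made explicit there, and you handle it cleanly; one could equally note that $S_\delta\in\overline{C_\delta^\epsilon}$ together with $S_\delta\to v_\ell(1-\epsilon)$ already gives, for any open neighbourhood $U$ of $v_\ell(1-\epsilon)$ and $\delta$ close to $1$, a point of $C_\delta^\epsilon$ in $U$, which is the accumulation statement.
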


The proofs of both propositions are contained in the proof of Proposition 2 in~\cite{Sabot} and rely only on the existence of a limiting velocity, which is a consequence of Theorem~\ref{LLN}.

In order to ease notations we will from time to time drop the dependence with respect to $\epsilon$ of the expectation ${\bf E}_{1-\epsilon}[\,\cdot\,]$.

Let us now give a quick sketch of the proof of the continuity of the speed. A way of understanding $\widehat{d}_{\delta}^{\epsilon}(z)$ is to decompose the expression of Kalikow's drift according to the possible configurations at $z$
\begin{align}
\label{drift}
\widehat{d}_{\delta}^{\epsilon}(z)&=\sum_{e\in \nu} \sum_{A \subset \nu} \frac{{\bf E}\Bigl[\1{\mathcal{I}}\1{\C(z)=A}G_{\delta}^{\omega}(0,z) p^{\omega}(z,z+e)  e \Bigr]}{{\bf E}\Bigl[\1{\mathcal{I}}G_{\delta}^{\omega}(0,z)\Bigr]}\\ \nonumber
                                &=\sum_{A \subset \nu,~A\neq \nu} \frac{{\bf E}\Bigl[\1{\mathcal{I}}\1{\C(z)=A}G_{\delta}^{\omega}(0,z)\Bigr]}{{\bf E}\Bigl[\1{\mathcal{I}}G_{\delta}^{\omega}(0,z)\Bigr]}d_{A} \\ \nonumber
                                &=\sum_{A \subset \nu,~A\neq \nu} {\bf P}[\C(z)=A]\frac{{\bf E}\Bigl[\1{\mathcal{I}}G_{\delta}^{\omega}(0,z)|\C(z)=A\Bigr]}{{\bf E}\Bigl[\1{\mathcal{I}}G_{\delta}^{\omega}(0,z)\Bigr]}d_{A},
\end{align}
where $d_{A}=\displaystyle{\sum_{e\in A } p^{A}(e)e}$ is the drift under the configuration $A$.

Since $ {\bf P}[\C(z)=A]\sim\epsilon^{\abs{A}}$ for any $A\in \nu$, if we want to find the limit of $\widehat{d}_{\delta}^{\epsilon}(z)$ as $\epsilon$ goes to $0$, it is natural to conjecture that the term corresponding to $\{\C(z)=\emptyset\}$ is dominant in~(\ref{drift}). For this, recalling the notations from~(\ref{notation_omega}), we may find an upper bound on
\begin{equation}
\label{intro_quotient}
\frac{{\bf E}\Bigl[\1{\mathcal{I}}G_{\delta}^{\omega}(0,z)|\C(z)=A\Bigr]}{{\bf E}\Bigl[\1{\mathcal{I}}G_{\delta}^{\omega}(0,z)\Bigr]}=\frac{{\bf E}\Bigl[\1{\mathcal{I}(\omega^{z,A})}G_{\delta}^{\omega^{z,A}}(0,z)\Bigr]}{{\bf E}\Bigl[\1{\mathcal{I}}G_{\delta}^{\omega}(0,z)\Bigr]},
\end{equation}
for $z\in \Z^d$, $A\in \{0,1\}^{\nu}\setminus \nu$ and $\delta<1$ which is uniform in $z$ for $\delta$ close to 1, to be able to apply Proposition~\ref{accu} and show that $\abs{v_{\ell}(1-\epsilon)-d_{\emptyset}}=O(\epsilon)$.

 Let us show why the terms in~(\ref{intro_quotient}) are upper bounded. It is easy to see that the denominator is greater than $\gamma_1{\bf E}[\1{\mathcal{I}(\omega^{z,1})}G_{\delta}^{\omega^{z,1}}(0,z)]$, so we essentially need to show that closing some edges adjacent to $z$ cannot increase the quantity appearing in~(\ref{intro_quotient}) by a huge amount. That is: for $A\subset \nu $,
\begin{equation}
\label{intro_eq}
{\bf E}\Bigl[\1{\mathcal{I}(\omega^{z,A})}G_{\delta}^{\omega^{z,A}}(0,z)\Bigr]\leq \gamma_2 {\bf E}\Bigl[\1{\mathcal{I}(\omega^{z,1})}G_{\delta}^{\omega^{z,1}}(0,z)\Bigr].
\end{equation}

In order to show that closing edges cannot have such a tremendous effect, let us first remark that the Green function can be written as $G_{\delta}^{\omega}(0,z)=P^{\omega}_0[T_z<\tau_{\delta}]G_{\delta}^{\omega}(z,z)$. When we close some edges we might create a trap, for example a long \lq \lq corridor\rq \rq can be transformed into a \lq\lq dead-end\rq\rq and this effect can, in the quenched setting, increase arbitrarily $G_{\delta}^{\omega}(z,z)$, the number of returns to $z$ .

 The first step is to quantify this effect, we will essentially show in Section~\ref{sect_res} that $G_{\delta}^{\omega^{z,\nu \setminus A}}(z,z)\leq  \gamma_3G_{\delta}^{\omega^{z,1}}(z,z) + L_z(\omega)$ (see Proposition~\ref{insertedge}) where $L_z(\omega)$ is in some sense, to be defined later, a \lq \lq local\rq \rq quantity around $z$ (see Proposition~\ref{threeprop} and Proposition~\ref{perco}). With this random variable we try to quantify how far from $z$ the random walk has to go to find a \lq\lq regular\rq\rq environment without traps where the effect of the modification around $z$ is \lq\lq forgotten\rq\rq. In this upper bound, we may get rid of the term $G_{\delta}^{\omega^{z,1}}(z,z)$ which is, once multiplied by  $\1{\mathcal{I}}P^{\omega}_0[T_z<\tau_{\delta}]\leq \1{\mathcal{I}(\omega^{z,1})}P^{\omega^{z,1}}_0[T_z<\tau_{\delta}]$, of the same type as the terms on the right-hand side of~(\ref{intro_eq}).

The second step is to understand how the \lq \lq local\rq \rq quantity $L_z$ is correlated with the hitting probability. The intuition here is that the hitting probability depends on the environment as a whole but that a very local modification of the environment cannot change tremendously the value of the hitting probability. On a more formal level this corresponds to (see Lemma~\ref{decorrelation}) ${\bf E}[\1{\mathcal{I}}P^{\omega}_0[T_z<\tau_{\delta}] L_z(\omega)]\leq \gamma_4 {\bf E}[\1{\mathcal{I}}P^{\omega}_0[T_z<\tau_{\delta}]] $ where $\gamma_4$ is some moment of $L_z$, which is a sufficient upper bound.

Before turning to the proof, we emphasize that the aim of Section~\ref{sect_res}  and Section~\ref{sect_perco}, is mainly to introduce the so-called \lq\lq local\rq\rq quantities, which is done at the beginning of Section~\ref{sect_res}, and prove some properties on these quantities, see Proposition~\ref{insertedge}, Proposition~\ref{perco} and Proposition~\ref{threeprop}. The corresponding proofs are essentially unrelated to the rest of the paper and may be skipped in a first reading, to concentrate on the actual proof of the continuity which is in Section~\ref{sect_cont}.

\section{Resistance estimates}
\label{sect_res}

In this section we shall introduce some elements of electrical networks theory (see~\cite{LP}) to estimate the variations on the diagonal of the Green function induced by a local modification of the state of the edges around a vertex $x$. Our aim is to show that we can get efficient upper bounds using only the local shape of the environment. 

Let us denote the effective resistance between $x\in \Z^d$ and a subgraph $H'$ of a certain finite graph $H$ by $R^{H}(x\leftrightarrow H')$. Denoting $V(H')$ the vertices of $H'$, it can be defined through Thomson's principle (see~\cite{LP})
\[
R^{H}(x\leftrightarrow H') =\inf \Bigl\{\sum_{e\in H} r(e)\theta^2(e), \theta(\cdot) \text{ is a unit flow from $x$ to } V(H') \Bigr\},
\]
and this infimum is reached for the current flow from $x$ to $V(H')$. Under the environment $\omega$, we will denote the resistance between $x$ and $y$ by $R^{\omega}(x\leftrightarrow y)$. 

For a fixed $\omega \in \Omega$, we add a cemetary point $\Delta$ which is linked to any vertex $x$ of $K_{\infty}(\omega)$ with a conductance such that at $x$ the probability of going to $\Delta$ is $1-\delta$ and denote the associated weighted graph by $\omega(\delta)$. We denote $\pi^{\omega(\delta)}(x)$ the sum of the conductances of edges adjacent to $x$ in $\omega(\delta)$ and we define $R^{\omega(\delta)}(x \leftrightarrow \Delta)$ to be the limit of $R^{\omega(\delta)}(x\leftrightarrow \omega \setminus \omega_n)$ where $\omega_n$ is any increasing exhaustion of subgraphs of $\omega$. We emphasize that the $R^{\omega(\delta)}(x\leftrightarrow \omega \setminus \omega_n)$ is well defined for $n$ large enough since $x\in \omega_n$ for $n$ large enough. In this setting we have,
\begin{equation}
\label{resdelta}
\pi^{\omega(\delta)}(x)=\frac{\pi^{\omega}(x)}{\delta} \text{ and } r^{\omega(\delta)}([x,\Delta])= \frac 1 {\pi^{\omega(\delta)}(x)} \frac 1 {1- \delta} = \frac 1 {\pi^{\omega}(x)}\frac{\delta}{1-\delta}.
\end{equation}

We emphasize that changing the state of an edge $[x,y]$ changes the values of $r^{\omega(\delta)}([x,\Delta])$ and $r^{\omega(\delta)}([y,\Delta])$. It can nevertheless be noted that Rayleigh's monotonicity principle (see~\cite{LP}) is preserved, i.e.~if we increase (resp.~decrease) the resistance of one edge any effective resistance in the graph also increases (resp.~decreases). 

There is no ambiguity to simplify the notations by setting $R^{\omega}(x \leftrightarrow \Delta):=R^{\omega(\delta)}(x \leftrightarrow \Delta)$ for $x\in \Z^d$ and $r^{\omega}(e):=r^{\omega(\delta)}(e)$ for $e$ any edge of $\omega(\delta)$. It is classic (and can be found as an exercice in chapter 2 of~\cite{LP}) that
\begin{lemma}
\label{lemgreen}
For any $\delta<1$, we have
\[
G_{\delta}^{\omega}(x,x)=\pi^{\omega(\delta)}(x) R^{\omega}(x \leftrightarrow \Delta),
\]
for any $\omega\in \Omega_0$, i.e. if there exists a unique infinite cluster.
\end{lemma}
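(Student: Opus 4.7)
The plan is to identify $G_{\delta}^{\omega}(x,x)$ with the Green function of a genuine (non-killed) reversible random walk on the extended weighted graph $\omega(\delta)$ and then invoke the classical identity linking diagonal Green functions to effective resistances to a sink.

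First I would check that the random walk on $\omega$ killed at geometric rate $1-\delta$ is distributionally identical to the random walk on $\omega(\delta)$ absorbed on hitting $\Delta$. Using~(\ref{resdelta}), the transition probability from $x\in K_\infty(\omega)$ to a neighbor $y\sim x$ in $\omega(\delta)$ equals
\[
\frac{c^{\omega}(x,y)}{\pi^{\omega(\delta)}(x)}=\frac{c^{\omega}(x,y)}{\pi^{\omega}(x)/\delta}=\delta\, p^{\omega}(x,y),
\]
while the transition probability from $x$ to $\Delta$ equals $c^{\omega(\delta)}([x,\Delta])/\pi^{\omega(\delta)}(x)=(1-\delta)$. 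Consequently, if $N_x$ denotes the number of visits to $x$ before absorption at $\Delta$ under the walk on $\omega(\delta)$ started at $x$, then $G_{\delta}^{\omega}(x,x)=E_x^{\omega(\delta)}[N_x]$.

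Next I would apply two classical identities from electrical network theory. By the strong Markov property, $N_x$ is geometric under $P_x^{\omega(\delta)}$, so
\[
G_{\delta}^{\omega}(x,x)=E_x^{\omega(\delta)}[N_x]=\frac{1}{P_x^{\omega(\delta)}[T_{\Delta}<T_x^+]}.
\]
Then the standard formula relating escape probabilities to effective resistance (Proposition 2.3 in~\cite{LP}) gives
\[
P_x^{\omega(\delta)}[T_{\Delta}<T_x^+]=\frac{1}{\pi^{\omega(\delta)}(x)\, R^{\omega(\delta)}(x\leftrightarrow \Delta)},
\]
and combining the two yields the claimed identity, using the shorthand $R^{\omega}(x\leftrightarrow \Delta):=R^{\omega(\delta)}(x\leftrightarrow \Delta)$.

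The only non-trivial point, which I would address by a short approximation argument, is that $\omega$ is infinite: the escape-probability identity is stated for finite networks, so one applies it on the finite networks obtained by joining $\Delta$ to each vertex of $\omega_n$ (with $\omega_n$ the exhausting finite subgraphs of $\omega$ used to define $R^{\omega(\delta)}(x\leftrightarrow \Delta)$), and then passes to the limit. Monotone convergence handles $E_x[N_x]$ (restricted to excursions contained in $\omega_n$), while the definition of $R^{\omega}(x\leftrightarrow \Delta)$ as the limit of $R^{\omega(\delta)}(x\leftrightarrow\omega\setminus\omega_n)$ handles the right-hand side, so the finite-graph identity passes to the limit. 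This is essentially the only subtlety; everything else is a direct quotation of classical facts.
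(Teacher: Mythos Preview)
Your proposal is correct and is precisely the classical argument the paper has in mind: the paper does not give its own proof of Lemma~\ref{lemgreen}, it simply states the result as standard and refers to an exercise in Chapter~2 of~\cite{LP}. Your identification of the $\delta$-killed walk with the walk on $\omega(\delta)$ absorbed at $\Delta$, the geometric law of $N_x$, the escape-probability/effective-resistance formula, and the exhaustion to handle the infinite graph together constitute exactly that exercise.
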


Hence to understand, in a rough sense, how closing edges might increase the number of returns at $z$, we can concentrate on understanding the effect of closing edges on the effective resistance. By Rayleigh's monotonicity principle, given a vertex $x$, the configuration in $A=B^E(x,r)$ which has the lowest resistance between any point and $\Delta$ is the one where all edges are open. Hence, for configurations $B \subset A$, we want to get an upper bound $R^{\omega^{A,B}}(x\leftrightarrow \Delta)$ in terms of $R^{\omega^{A,\emptyset}}(x\leftrightarrow \Delta)$ and of \lq\lq local\rq \rq quantities.

Let us begin with a heuristic description of configurations which are likely to increase strongly the number of returns when we close an edge. There are mainly two situations that can occur (see Figure~1)
\begin{enumerate}
\item the vertex $x\in K_{\infty}(\omega)$ is in a long corridor, which is turned into a \lq\lq dead-end\rq\rq if we close only an edge, hence increasing the number of returns,
\item if closing an edge adjacent to $x$ creates a new finite cluster $K$, the number of returns to $x$ can be tremendously increased. Indeed because of the geometrical killing parameter, when the particle gets stuck in $K$ for a long time it may die (i.e.~go to $\Delta$), hence by closing the edge linking $x$ to $K$, we can remove this escape possibility and increase the number of returns to $x$.
\end{enumerate}

\begin{figure}[htpd]
\centering
\epsfig{file=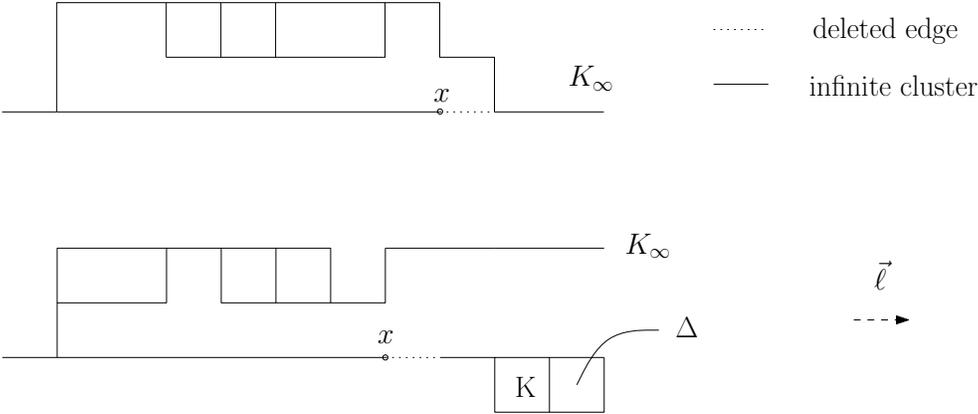, width=13cm}
\caption{Configurations where one deleted edge increases $G_{\delta}(x,x)$}
\end{figure}

We want to find properties of the environment which will quantify how strongly the number of returns will increase for a point in the infinite cluster. In order to find a quantity which controls the effect of the first type of configurations we denote, for any $x\in \Z^d$ and $r\geq 1$, denoting $A=B^E(x,r)$ where $x\in K_{\infty}(\omega)$, we set
\begin{equation}
\label{defL}
M_A(\omega)=\begin{cases}
0 & \text{ if } \forall y\in \partial A, \ y \notin K_{\infty}(\omega^{A,0}), \\
\displaystyle{\max_{y_1,y_2 \in \partial A \cap K_{\infty}(\omega^{A,0})} d_{\omega^{A,0}}(y_1,y_2)} & \text{ otherwise,}
\end{cases}
\end{equation}
which is the maximal distance between vertices of $\partial A\cap K_{\infty}(\omega^{A,0})$ in the infinite cluster of $\omega^{A,0}$. It is important to notice that the notation $K_{\infty}(\omega^{A,0})$ makes sense, since it is classical that ${\bf P}$-a.s.~modifying the states of a finite set of edges does not create multiple infinite clusters.

This quantity will help us give upper bounds on the number of returns to $x$ after having closed some adjacent edges. Indeed even if the \lq\lq best escape way to infinity\rq\rq is closed, $M_A$ tells us in some sense how much more the particle has to struggle to get back onto this good escape route, even though some additional edges are closed.

Let us control the effect the second type of bad configurations has on the expected number of returns to $x\in K_{\infty}(\omega)$. We first want to find out if we are likely to go to $\Delta$ during an excursion into the part we called $K$. For this we introduce a way to measure the size of the biggest finite cluster of $\omega^{A,0}$ which intersects $\partial A$, 
\begin{equation}
\label{defT}
T_A(\omega)=\begin{cases}
                  0 & \text{ if } \forall y\in \partial A,\ y \in K_{\infty}(\omega^{A,0}), \\
                 \displaystyle{\max_{y\in \partial A,\ y\notin K_{\infty}(\omega^{A,0})}} \abs{\partial_E  K_{\omega^{A,0}}(y)} & \text{otherwise,}\end{cases}
                 \end{equation}
 which gives an indication on the time of an excursion into $K$, hence on the probability of going to $\Delta$ during this excursion.

The idea now is to find an alternate place $K'$ close and connected to $x\in K_{\infty}$, from which the particle needs a long time to return to $x$. Thus from this place the particle is likely to go to $\Delta$ before returning to $x$. This means that $K'$ have an effect similar to $K$. This area $K'$ ensures that the number of returns to $x$ cannot be too big even in the case where all the accesses to parts such as $K$ adjacent to $x$ are closed. For this let us denote $\eta\geq 1$ depending on $d$ and $\ell$ such that
\begin{equation}
\label{def_eta}
\text{for all $n\geq 1$,}\qquad e^{2\lambda(\eta-1) n}\geq 3\kappa_1^2 \abs{B(0,n)},
\end{equation}
and $H_A'(\omega)$ the half-space $\{y,\ y\cdot\vec{\ell}\geq x\cdot\vec \ell +\eta T_A(\omega)\}$. From any point of this half-space the particle is very unlikely to return to $x$ in a short time. Indeed to come back from this half-space the particle must go against the drift and for this to happen we have to wait a long amount of time during which the particle is most likely to go to $\Delta$. A relevant quantity to control the effect of the second type of configurations is the distance between $x$ and this half-space, which quantifies the difficulty to reach this half-plane.

In order to define these quantities we need to know the infinite cluster $K_{\infty}$, hence they are not \lq\lq local\rq\rq quantities. Nevertheless we are able to define random variables which are \lq\lq local\rq\rq and fulfill the same functions. 

For $A=B^E(x,r)$, we denote $L_A^1(\omega)$ the smallest integer larger or equal to $r$ such that all $y\in \partial A$ which are connected to $\partial B(x,L_A^1(\omega))$ in $\omega^{A,0}$, are connected to each other using only edges of $B^E(x,L_A^1(\omega))\cap \omega^{A,0}$. 

We always have $L_A^1(\omega)<\infty,$ ${\bf P}_p$-a.s.~by uniqueness of the infinite cluster. Consequently there are two types of vertices in $\partial A$, first those which are not connected to $\partial B(x,L_A^1(\omega))$ in $\omega^{A,0}$ (hence in a finite cluster of $\omega^{A,0}$) and then those which are, the latter being all inter-connected in $B(x,L_A^1(\omega))\cap \omega^{A,0}$.

Set $H_A(\omega)$ to be the half-space $\{y,\ y\cdot\vec{\ell}\geq x\cdot\vec \ell +\eta L_A^1(\omega)\}$ and finally let us define $L_A(\omega)$ the smallest integer larger or equal to $r$ such that 
\begin{enumerate}
\item either $\partial A$ is connected to $H_A(\omega)$ using only edges of $B^E(x,L_A(\omega))\cap \omega^{A,0}$,
\item or $\partial A$ is not connected to $B^E(x,L_A(\omega))$, which can only happen if $\partial A \cap K_{\infty} = \emptyset$,\end{enumerate}
in both cases we can see that 
\begin{equation}
\label{acapk}
\text{on $\partial A \cap K_{\infty} = \emptyset$,} \qquad L_A\leq \min\{n\geq 0,~\partial A \text{ is not connected to } \partial B(x,n)\}
\end{equation}

In order to make the notations lighter we use 
\begin{equation}
\label{notation_L}
L_{z,k}:=L_{B^E(z,k)} \text{ and }L_z=L_{z,1}.
\end{equation}

Using this definition for $L_A$ we get an upper bound for the quantities $M_A$ and $d_{\omega}(x,H_A')$ on the event that $x\in K_{\infty}$ which is the only case we will need to consider. Now we can easily obtain, the proof is left to the reader, the following proposition

\begin{proposition}
\label{threeprop}
For a ball $A=B^E(x,r)$, set $\mathcal{F}_{x,n}$ the $\sigma$-field generated by $\{\omega(e),e\in B^E(x,n)\}$, we have the following
\begin{enumerate}
\item $L_A(\omega)$ does not depend on the state of the edges in $A$,
\item $L_A(\omega)$ is a stopping time with respect to $(\mathcal{F}_{x,n})_{n\geq 0}$, in particular the event $\{L_A(\omega)=k\}$ does not depend on the state of the edges of $B^E(x,k)^c=E(\Z^d)\setminus B^E(x,k)$,
\item $r\leq L_A(\omega)<\infty$, ${\bf P}$-a.s..
\end{enumerate}
\end{proposition}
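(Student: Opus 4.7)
The plan is to verify the three properties in turn; (1) and (3) come almost for free from the definitions, while (2) needs a short preliminary on $L_A^1$. For (1), I would simply observe that $\omega^{A,0}$ is obtained from $\omega$ by forcibly closing every edge of $A$, so it depends on $\omega$ only through the states of edges outside $A$; since $L_A^1$, the half-space $H_A$, and $L_A$ are all defined through connectivity statements in $\omega^{A,0}$, none of them depends on the states of edges inside $A$.

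For (3), $L_A\geq r$ is built into the definition. For finiteness I would split on whether $\partial A$ meets $K_\infty(\omega^{A,0})$. If $\partial A\cap K_\infty(\omega^{A,0})=\emptyset$, every $y\in\partial A$ lies in a finite cluster of $\omega^{A,0}$; for $k$ larger than the radii of these (finitely many) clusters, $\partial A$ is no longer connected to $\partial B(x,k)$, i.e.\ alternative (2) in the definition of $L_A$ is satisfied. Otherwise $\partial A\cap K_\infty(\omega^{A,0})\neq\emptyset$; uniqueness of the infinite cluster (preserved under finite edge modifications) makes $L_A^1$ finite, and since $K_\infty(\omega^{A,0})$ is unbounded in the direction $\vec\ell$ it meets $H_A$, so joining a point of $\partial A\cap K_\infty(\omega^{A,0})$ to $H_A$ by a path inside $K_\infty(\omega^{A,0})$ and taking $k$ large enough to contain that path yields alternative (1).

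The heart of (2) is that $L_A^1$ is itself a stopping time: any path in $\omega^{A,0}$ joining a vertex of $\partial A$ to $\partial B(x,k)$ can be truncated so as to lie inside $B(x,k)$, hence the connectivity statements entering the definition of $L_A^1$ are $\mathcal{F}_{x,k}$-measurable and $\{L_A^1=k\}\in\mathcal{F}_{x,k}$. Next I would establish $L_A\geq L_A^1$: under alternative (1) at level $k$, a path from $\partial A$ to $H_A$ confined to $B(x,k)$ reaches a point $y$ with $(y-x)\cdot\vec\ell\geq\eta L_A^1$ and $|y-x|\leq k$, forcing $\eta L_A^1\leq k$; under alternative (2), no vertex of $\partial A$ is connected in $\omega^{A,0}$ to $\partial B(x,k)$, so the defining condition of $L_A^1$ is vacuously satisfied at level $k$, again giving $L_A^1\leq k$. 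Hence $\{L_A\leq k\}\subseteq\{L_A^1\leq k\}\in\mathcal{F}_{x,k}$, and on $\{L_A^1\leq k\}$ the half-space $H_A$ is already determined from $\mathcal{F}_{x,k}$, so each alternative in the definition of $L_A$ at levels $j\leq k$ becomes a connectivity question inside $B(x,k)$ and is $\mathcal{F}_{x,k}$-measurable. The main subtlety throughout is precisely this interplay between $L_A^1$ and $L_A$: because $H_A$ depends on $L_A^1$, the stopping-time claim for $L_A$ is not immediate and relies on the inequality $L_A\geq L_A^1$ to ensure that $H_A$ is already determined by the time we ask whether $\{L_A\leq k\}$.
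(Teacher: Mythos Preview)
Your proof is correct. The paper itself does not provide a proof of this proposition (it says ``the proof is left to the reader''), so your argument fills in exactly what was omitted. The key step is your observation that $L_A\geq L_A^1$, which is what allows the stopping-time property to go through despite $H_A$ depending on $L_A^1$; this is precisely the subtlety the paper glosses over.
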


The second property is one of the two central properties for what we call a \lq \lq local\rq \rq quantity. Recalling the notations~(\ref{notation_omega}) and~(\ref{notation_omega1}), let us prove the following
\begin{proposition}
\label{insertedge}
Set $A=B^E(x,r)$ with $r\geq 1$, $\delta<1$ and $\omega\in \Omega_0$. Suppose that $y\in K_{\infty}(\omega)$ and $\partial A \cap K_{\infty}(\omega) \neq \emptyset$. We have
\[
R^{\omega}(y\leftrightarrow \Delta)\leq 4 R^{\omega^{A,1}}(y \leftrightarrow \Delta) + C_1L_A(\omega)^{C_2} e^{2\lambda (L_A(\omega)-x\cdot\vec \ell)}, 
\]
where $C_1$ and $C_2$ depend only on $d$ and $\ell$.
\end{proposition}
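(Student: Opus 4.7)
The argument is an application of Thomson's principle: it suffices to exhibit a unit flow $\theta$ from $y$ to $\Delta$ in the network $\omega(\delta)$ whose Dirichlet energy $\mathcal{E}^{\omega(\delta)}(\theta)$ matches the right-hand side of the stated bound. The natural candidate is the unit current flow $\theta^{\ast}$ from $y$ to $\Delta$ in the ``fully opened'' environment $\omega^{A,1}(\delta)$, whose energy equals $R_{1}:=R^{\omega^{A,1}}(y\leftrightarrow\Delta)$; since $\theta^{\ast}$ may traverse edges of $A$ that are closed in $\omega$, we repair it using the explicit escape structure encoded in $L_{A}$.

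Set $L:=L_{A}(\omega)$; by Proposition~\ref{threeprop}, $L$ is determined by edges outside $A$ alone. The hypothesis $\partial A\cap K_{\infty}(\omega)\neq\emptyset$ forces case~(a) of the definition of $L_{A}$, so there exists a connected subgraph $\Sigma\subseteq B^{E}(x,L)\cap\omega^{A,0}$ which contains every vertex of $\partial A\cap K_{\infty}(\omega)$ and reaches a vertex $v^{\ast}$ with $v^{\ast}\cdot\vec\ell\geq x\cdot\vec\ell+\eta L_{A}^{1}(\omega)$. All edges of $\Sigma$ are open in $\omega$, since they lie in $\omega^{A,0}$ and avoid $A$.

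Define the truncated flow by $\hat{\theta}(e):=\theta^{\ast}(e)$ when $e$ is an edge of $\omega(\delta)$ lying neither in $A$ nor among the $\Delta$-edges incident to $V(A)$, and $\hat{\theta}(e):=0$ otherwise. Flow conservation for $\theta^{\ast}$ at vertices of $V(A)$ shows that the divergence of $\hat{\theta}$ vanishes outside $\{y,\Delta\}\cup\partial A$; the residual divergences on $\partial A\cap K_{\infty}(\omega)$ sum in absolute value to the mass $\theta^{\ast}$ was routing through the $V(A)$-$\Delta$-edges, which is at most $1$, while on each finite cluster of $\omega$ meeting $\partial A$ (all contained in $B(x,L_{A}^{1})$ by definition of $L_{A}^{1}$) they sum to zero and can be cancelled by cycle-level flows inside that cluster. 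Introduce a correction $\phi$ that routes the residuals on $\partial A\cap K_{\infty}(\omega)$ through a spanning tree of $\Sigma$ to $v^{\ast}$ and then from $v^{\ast}$ to $\Delta$ via the unit current flow in $\omega(\delta)$ carrying the matching mass, together with the required cluster-level cycles. Then $\theta:=\hat{\theta}+\phi$ is a unit flow from $y$ to $\Delta$ in $\omega(\delta)$.

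For the energy estimate, the inequality $(a+b)^{2}\leq 2a^{2}+2b^{2}$ gives $\mathcal{E}^{\omega(\delta)}(\theta)\leq 2\mathcal{E}^{\omega(\delta)}(\hat{\theta})+2\mathcal{E}^{\omega(\delta)}(\phi)$. Every edge retained by the truncation has identical conductance in $\omega(\delta)$ and $\omega^{A,1}(\delta)$, so $\mathcal{E}^{\omega(\delta)}(\hat{\theta})\leq R_{1}$. For $\phi$: each edge of $B^{E}(x,L)$ has conductance at least $e^{2\lambda(x\cdot\vec\ell-L)}$ from $c^{\omega}(u,v)=e^{(u+v)\cdot\ell}$, while $|B^{E}(x,L)|$ is polynomial of degree $d$ in $L$; unit flow per edge then produces an energy contribution of order $L^{d}e^{2\lambda(L-x\cdot\vec\ell)}$. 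The tail $v^{\ast}\to\Delta$ is bounded by $R^{\omega}(v^{\ast}\leftrightarrow\Delta)$, which by standard transience estimates for the biased walk on the percolation cluster is of order $e^{-2\lambda v^{\ast}\cdot\vec\ell}$ uniformly in $\delta$; thanks to~(\ref{def_eta}), the factor $e^{-2\lambda\eta L}$ coming from $v^{\ast}$ being deep in the drift direction makes this tail negligible against the main exponential. The factor $4$ in the announced bound (rather than the naive $2$) absorbs the mismatch on $\Delta$-edges incident to $V(A)$ together with the overhead of the concrete routing. The main technical hurdle is realising the correction $\phi$ coherently on both $\Sigma$ and each finite cluster meeting $\partial A$, uniformly in $\delta$, and in particular checking that the residual divergences really do split as claimed and never live on vertices from which no $\omega$-path towards $\Delta$ is available.
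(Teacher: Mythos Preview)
Your overall strategy matches the paper's: start from the unit current $i_0$ in $\omega^{A,1}(\delta)$, modify it into a legal flow in $\omega(\delta)$ by redirecting the part that crosses $A$ through the escape structure encoded by $L_A$, and apply Thomson's principle. The paper does exactly this.

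There is, however, a genuine gap in your energy estimate for the ``tail'' $v^\ast\to\Delta$. You assert that $R^{\omega}(v^\ast\leftrightarrow\Delta)$ is $O(e^{-2\lambda v^\ast\cdot\vec\ell})$ uniformly in $\delta$ by ``standard transience estimates for the biased walk on the percolation cluster''. No such quenched estimate is available: the percolation cluster near $v^\ast$ may itself contain traps, and a uniform bound of this type is precisely the kind of statement the proposition is meant to produce. If instead you route the residual mass through the single edge $[v^\ast,\Delta]$, its resistance is $\asymp e^{-2\lambda v^\ast\cdot\vec\ell}\,\delta/(1-\delta)$, which blows up as $\delta\to1$; the correction term in the proposition carries no $\delta$-dependence, so this does not suffice either. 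The crude splitting $(a+b)^2\le 2a^2+2b^2$ cannot rescue this, because the problematic term is not small in absolute size.

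The paper's resolution is the point you are missing. It sends the entire redirected mass $i_0^+(\Delta)$ to $\Delta$ through the \emph{single} edge $[h_1,\Delta]$, and then does not bound that energy absolutely but \emph{compares} it with the energy $i_0$ was already spending on the $\Delta$-edges over $A^+$, which is part of $R^{\omega^{A,1}}(y\leftrightarrow\Delta)$. Because $h_1\in H_A$ lies at depth $\eta L_A^1$ in the drift direction while every $a\in A^+$ satisfies $a\cdot\vec\ell\le x\cdot\vec\ell+L_A^1$, the choice of $\eta$ in~(\ref{def_eta}) gives $r^{\omega}([a,\Delta])\ge 3|A^+|\,r^{\omega}([h_1,\Delta])$, and Cauchy--Schwarz then yields
\[
r^{\omega}([h_1,\Delta])\bigl(i_0^+(\Delta)+i_0([h_1,\Delta])\bigr)^2-\sum_{e\in A^{+,\delta}}r^{\omega}(e)i_0(e)^2-r^{\omega}([h_1,\Delta])i_0([h_1,\Delta])^2\le 3R^{\omega^{A,1}}(y\leftrightarrow\Delta).
\]
This comparison absorbs the $1/(1-\delta)$ factor, and is what produces the constant $4$ rather than any absolute bound on the tail. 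Without it your argument does not close.

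A secondary inaccuracy: the residual divergences of your truncated flow on a finite $\omega$-cluster $K$ meeting $\partial A$ do \emph{not} sum to zero; they sum to the mass $i_0$ was sending to $\Delta$ from $K\setminus\partial A$. The paper handles this by enlarging $A$ to $A^+$ (the union of $B(x,r)$ with all such finite clusters) and rerouting all of $i_0^+(\Delta)$ together, rather than cancelling cluster by cluster.
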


The $4$ appearing is purely arbitrary and could be any constant larger than $1$. Here the correcting term is essentially of the same order as the largest between 
\begin{enumerate}
\item the resistance of paths linking the vertices of $\partial A\cap K_{\infty}(\omega^{A,0})$ inside $B(x,L_A)$,
\item the resistance of paths linking $x$ to $H_A$ inside $B(x,L_A)$.
\end{enumerate}

\begin{proof}
Let us introduce
\[
A^+=B(x,r) \cup \bigcup_{a\in \partial A, a\notin K_{\infty}(\omega^{A,0})}  K_{\omega^{A,0}}(a)\text{ and }A^{+,\delta}=\bigcup_{a\in A^+} \{[a,\Delta]\},
\]
moreover we set
\[
A^-=B(x,r-1) \cup \bigcup_{a\in \partial A, a\notin K_{\infty}(\omega^{A,0})}  K_{\omega^{A,0}}(a)\text{ and }A^{-,\delta}=\bigcup_{a\in A^-} \{[a,\Delta]\}.
\]

Let $\omega_n$ be an exhaustion of $\omega$ and $n_0$ such that $B(x,L_A(\omega))\cap \omega \subset \omega_{n_0}$ and $y$ is connected to $\partial A$ in $\omega_{n_0}$. Set $n\geq n_0$, we denote $\theta(\cdot)$ any unit flow from $y$ to $\omega(\delta) \setminus \omega_n$ using only edges of $\omega_n(\delta)$. By Thomson's principle, we get
\begin{align}
\label{TP}
 & R^{\omega(\delta)}(y\leftrightarrow \omega(\delta) \setminus \omega_n)-R^{\omega^{A,1}(\delta)}(y \leftrightarrow \omega^{A,1}(\delta) \setminus \omega_n^{A,1}) \\ \nonumber
\leq & \sum_{e \in \omega(\delta)} (r^{\omega}(e) \theta(e)^2-r^{\omega^{A,1}}(e) i_0(e)^2),
\end{align}
where $i_0(\cdot)$ denotes the unit current flow from $z$ to $\omega^{A,1}(\delta) \setminus \omega_n^{A,1}$. We want to apply the previous equation with a flow $\theta(\cdot)$ which is close to the current flow $i_0(\cdot)$. Since the latter does not necessarily use only edges of $\omega$ we will need to redirect the part flowing through $A$.

For a vertex $a\in \partial A$, we denote $i_0^A(a)=\sum_{e\in \nu, [a,a+e] \in A} i_0([a,a+e])$ the quantity of current entering $A$ through $a$. Hence we can partition $\partial A$ into
\begin{enumerate}
\item $a_1,\ldots,a_k$ the vertices of $\partial A\cap K_{\infty}(\omega^{A,0})$ such that $i_0^A(a)\geq 0$,
\item $a_{k+1},\ldots,a_l$ the vertices of $\partial A\cap K_{\infty}(\omega^{A,0})$ such that $i_0^A(a)<0$,
\item $a_{l+1},\ldots, a_m$ the vertices of $\partial A\setminus K_{\infty}(\omega^{A,0})$.
\end{enumerate}

Moreover we denote
\[
i_0^+(\Delta)=\sum_{e\in A^{+,\delta}} i_0(e) \text{ and }i_0^-(\Delta)=\sum_{e\in A^{-,\delta}} i_0(e).
\]

Let us first assume $y\in K_{\infty}(\omega^{A,0})$, in particular $y\notin B(x,r-1)$. The following facts are classical (see e.g.~\cite{LP} chapter 2)
\begin{enumerate}
\item for any $e\in E(\Z^d)$, we have $\abs{i_0(e)} \leq 1$,
\item the intensity entering $B(x,r-1)$ is equal to the intensity leaving $B(x,r-1)$, i.e.
\[
\sum_{i\leq k}i_0^A(a_i)=i_0^-(\Delta)-\sum_{j\in[k+1,l]} i_0^A(a_j).
\]
\end{enumerate}

Using the two previous remarks, we see it is possible to find a collection $\nu(i,j)$ with $i\in[1,k]$ and $j\in [k+1,l]\cup \{\Delta\}$ such that
\begin{enumerate}
\item for all $i,j$, we have $\nu(i,j)\in [0,1]$,
\item for all $j\in [k+1,l]$, it holds that $\sum_{i \leq k} \nu(i,j)=-i_0^A(a_j)$,
\item for all $i\in [1,k]$, we have $\sum_{j \in [k+1,l]\cup\{\Delta\}} \nu(i,j)=i_0^A(a_i)$,
\item it holds that $\sum_{i \leq k} \nu(i,\Delta)=i_0^-(\Delta)$,
\end{enumerate}
which should be seen as a way of matching the flow entering and leaving $B(x,r-1)$.

For $i\in[1,k]$ and $j\in [k+1,l]$ we denote $\vec{\mathcal{P}}(i,j)$ one of the directed paths between $a_i$ and $a_j$ in $\omega^{A,0}\cap B^E(x,L_A^1(\omega))$. Let $\vec{\mathcal Q}$ be one of the directed paths from $\partial A$ to $H_A(\omega)$ in $\omega^{A,0}\cap B^E(x,L_A(\omega))$ and let us denote $a_{j_0}$ its starting point and $h_1$ its endpoint. The existence of those paths is ensured by the definitions of $L_A^1$, $L_A$ and $H_A$ and the fact that . Since $\partial A \cap K_{\infty} \neq \emptyset$, then all vertices of $\partial A$ connected to $\partial B^E(x,L_A(\omega))$ are in $K_{\infty}$. Hence  we necessarily have $j_0\leq l$ and $E(A^+)\cap \vec{\mathcal Q} =\emptyset$.

Finally let us notice that the values of the resistances $r^{\omega}([a,\Delta])$ and $r^{\omega^{A,1}}([a,\Delta])$ might differ for $a\in \partial A$ so that to get further simplifications in~(\ref{TP}), it is convenient to redirect the flow using these edges too. We introduce the unique flow (see Figure~2) defined by
\[
\theta_0(\vec e)=\begin{cases} 0  &\text{if } e\in A^{+,\delta},  \\
                               0  &\text{if } e\in E(A^+), \\
                                                  i_0(\vec e)+i_0^+(\Delta)  &\text{if } \vec e = [h_1,\Delta], \\
 i_0(\vec e)+\sum_{i \leq k, j\in [k+1,l]} \nu(i,j) \1{\vec e \in \vec{\mathcal{P}}(i,j)}& \\
~ +\sum_{i\leq k} \nu(i,\Delta) \1{\vec e \in \vec{\mathcal{P}}(i,j_0)} +i_0^+(\Delta)  \1{\vec e \in \vec{\mathcal{Q}}} & \\
~+\sum_{i\leq l} i_0([a_i,\Delta]) \1{\vec e \in \vec{\mathcal{P}}(i,j_0)} &\text{else.}
\end{cases}
\]

\begin{figure}[htpd]
\centering
\epsfig{file=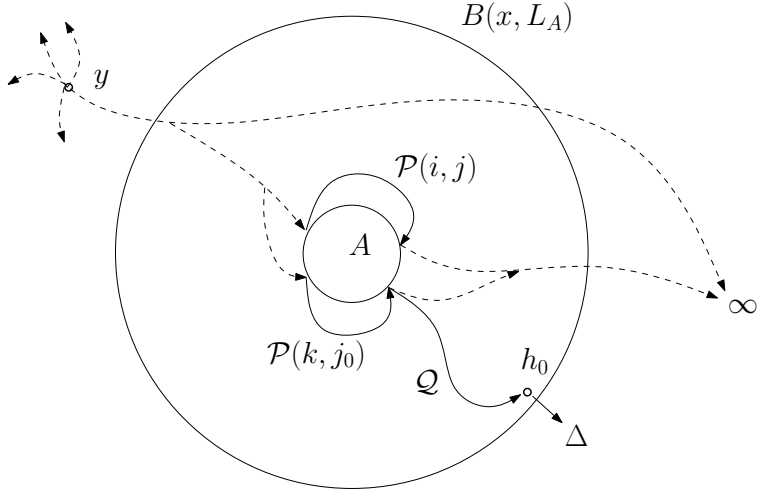, width=10cm}
\caption{The flow $\theta_0(\cdot)$ in the case where $y\in K_{\infty}(\omega^{A,0})$}
\end{figure}

In words, we could say that we have redirected parts of $i_0(\cdot)$ in order to go around $A$ and the flow going from $A$ to $\Delta$ is first sent to $a_{j_0}$, then to $h_1$ and finally to $\Delta$. We have the following properties
\begin{enumerate}
\item $\theta_0(\cdot)$ is a unit flow from $y$ to $\omega(\delta) \setminus \omega_n$,
\item $\abs{\theta_0(e)}\leq 5\abs{\partial A}^2$ for all $e\in E(\Z^d)$,
\item $\theta_0(\cdot)$ coincides with $i_0(\cdot)$ except on the edges of $E(A^+)$, $A^{+,\delta}$, $\mathcal{Q}$, $[h_1,\Delta]$ and $\mathcal{P}(i,j)$ for $i,j\leq k+l$ ,
\item $r^{\omega}(\cdot)$ coincides with $r^{\omega^{A,1}}(\cdot)$ except on the edges of $E(A^+)$ and $A^{+,\delta}$.
\end{enumerate}

Hence recalling~(\ref{TP}) we get
\begin{align} \label{apps_thompson}
 & R^{\omega(\delta)}(y\leftrightarrow \omega(\delta) \setminus \omega_n)-R^{\omega^{A,1}(\delta)}(y \leftrightarrow \omega^{A,1}(\delta) \setminus \omega_n^{A,1}) \\ \nonumber
\leq & \sum_{e\in  \mathcal{P}(i,j) \cup \mathcal{Q}} r^{\omega}(e) (\theta_0(e)^2-i_0(e)^2)+r^{\omega}([h_1,\Delta])(i_0^+(\Delta)+i_0([h_1,\Delta]))^2 \\\nonumber
&\qquad \qquad -\sum_{e\in A^{+,\delta}}r^{\omega}(e)i_0(e)^2-r^{\omega}([h_1,\Delta])i_0([h_1,\Delta])^2 \\\nonumber
\leq &50 \rho_d\abs{\partial A}^6 L_A^{d} e^{2\lambda (L_A-x\cdot\vec \ell)}+r^{\omega}([h_1,\Delta])(i_0^+(\Delta)+i_0([h_1,\Delta]))^2\\\nonumber
&\qquad \qquad -\sum_{e\in A^{+,\delta}}r^{\omega}(e)i_0(e)^2-r^{\omega}([h_1,\Delta])i_0([h_1,\Delta])^2,
\end{align}
where we used that $r^{\omega}(e)\leq e^{2\lambda(L_A-x\cdot\vec \ell)}$ for $e \in \mathcal{P}(i,j) \cup \mathcal{Q}$ and that there are at most $(1+\abs{\partial A}^2)\rho_dL_A^{d}\leq 2 \rho_d\abs{\partial A}^2 L_A^d$ such edges in those paths. These properties being a consequence of the fact that $\mathcal{P}(i,j)$ and $\mathcal{Q}$ are contained in $B^E(x,L_A^1(\omega))$.

Since $\abs{\partial A}\leq \rho_d r^d\leq \rho_d L_A^d$ by the third property of Proposition~\ref{threeprop}, the first term is of the form announced in the proposition, the remaining issue is to control the remaining terms. First, we have by definition
\[
\sum_{e\in A^{+,\delta}}r^{\omega}(e)i_0(e)^2 = \sum_{a\in A^{+}}r^{\omega}([a,\Delta])i_0([a,\Delta])^2,
\]
and since for $a\in K_{\infty}(\omega)$, we have using~(\ref{resdelta}) and~(\ref{kap1}) that
\[
\kappa_1 e^{-2\lambda a\cdot\vec \ell}\frac{\delta}{1-\delta} \geq r^{\omega}([a,\Delta])\geq \frac 1{\kappa_1} e^{-2\lambda a\cdot\vec \ell}\frac{\delta}{1-\delta}.
\]

Furthermore, since for any $a\in A^+$ we have $a\cdot\vec \ell\leq  x\cdot\vec \ell +L_A^1$ and since $h_1\in H_A(\omega)$ we have $h_1\cdot\vec \ell \geq x\cdot \vec \ell + \eta L_A^1 \geq a \cdot \vec \ell +(\eta-1)L_A^1$ so that the definition of $\eta$ at~(\ref{def_eta}) yields 
\[
\frac 1 {\kappa_1}e^{-2\lambda a\cdot\vec \ell}\geq \frac 1 {\kappa_1}e^{-2\lambda h_1\cdot\vec \ell}e^{2\lambda (\eta-1) L_A^1(\omega)} \geq 3\kappa_1\abs{B(0,L_A^1)} e^{-2\lambda h_1\cdot\vec \ell}.
\]

Since $A^+$ is contained in $B(x,L_A^1(\omega))$, the two previous equations yield
\[
r^{\omega}([a,\Delta])\geq 3\kappa_1 \abs{A^+}e^{-2\lambda h_1\cdot\vec \ell}\frac{\delta}{1-\delta}\geq 3\abs{A^+} r^{\omega}([h_1,\Delta]),
\]
and hence
\begin{align}
\label{correct1}
&\sum_{e\in A^{+,\delta}}r^{\omega}(e)i_0(e)^2 +r^{\omega}([h_1,\Delta])i_0([h_1,\Delta])^2 \\ \nonumber
 \geq & r^{\omega}([h_1,\Delta])\Bigl(i_0([h_1,\Delta])^2+3\abs{A^+} \sum_{e\in A^{+,\delta}}i_0(e)^2 \Bigr)\\  \nonumber
 \geq & r^{\omega}([h_1,\Delta])(i_0([h_1,\Delta])^2+3 i_0^+(\Delta)^2),
\end{align}
where we used Cauchy-Schwarz in the last inequality.

Hence the remaining terms in~(\ref{apps_thompson}) verifies if $i_0^+(\Delta)\leq i_0([h_1,\Delta])$
\begin{align*}
&r^{\omega}([h_1,\Delta])(i_0^+(\Delta)+i_0([h_1,\Delta]))^2-\sum_{e\in A^{+,\delta}}r^{\omega}(e)i_0(e)^2-r^{\omega}([h_1,\Delta])i_0([h_1,\Delta])^2 \\
\leq&r^{\omega}([h_1,\Delta])(2i_0([h_1,\Delta]))^2-r^{\omega}([h_1,\Delta])i_0([h_1,\Delta])^2\\
\leq& 3 r^{\omega}([h_1,\Delta])i_0([h_1,\Delta])^2\leq 3 R^{\omega^{A,1}(\delta)}(y \leftrightarrow \omega^{A,1}(\delta) \setminus \omega_n^{A,1}),
\end{align*}
or if $i_0^+(\Delta)> i_0([h_1,\Delta])$, we obtain using~(\ref{correct1})
\begin{align*}
&r^{\omega}([h_1,\Delta])(i_0^+(\Delta)+i_0([h_1,\Delta]))^2-\sum_{e\in A^{+,\delta}}r^{\omega}(e)i_0(e)^2-r^{\omega}([h_1,\Delta])i_0([h_1,\Delta])^2 \\
\leq &r^{\omega}([h_1,\Delta]) \bigl(i_0([h_1,\Delta])^2+2i_0^+(\Delta)i_0([h_1,\Delta]) +i_0^+(\Delta)^2 -(i_0([h_1,\Delta])^2+3 i_0^+(\Delta)^2 )\bigr)\\ \leq& 0.
\end{align*}

In any case we get
\begin{align*}
&r^{\omega}([h_1,\Delta])(i_0^+(\Delta)+i_0([h_1,\Delta]))^2-\sum_{e\in A^{+,\delta}}r^{\omega}(e)i_0(e)^2-r^{\omega}([h_1,\Delta])i_0([h_1,\Delta])^2 \\
\leq &3 R^{\omega^{A,1}(\delta)}(y \leftrightarrow \omega^{A,1}(\delta) \setminus \omega_n^{A,1}),
\end{align*}
 and so we have shown that 
\[
R^{\omega(\delta)}(y\leftrightarrow \omega(\delta) \setminus \omega_n)-4R^{\omega^{A,1}(\delta)}(y \leftrightarrow \omega^{A,1}(\delta) \setminus \omega_n^{A,1})   \leq 50\rho_d^7 L_A^{7d} e^{2\lambda (L_A-x\cdot\vec \ell)},
\]
and letting $n$ go to infinity yields the result in the case where $y\in K_{\infty}(\omega^{A,0})$.

Let us come back to the remaining case where $y \in K_{\infty}(\omega)\setminus K_{\infty}(\omega^{A,0})$. Keeping the same notations, we see that obviously there exists $j_1\leq l$ such that $a_{j_1}\in K_{\infty}(\omega^{A,0})$ which is connected in $\omega$ to $y$ using only vertices of $A^+$, let us denote $\vec{\mathcal{R}}$ path connecting $a_{j_1}$ and $y$ in $\omega \cap A^+$. 

Introducing the flow (see Figure~3) defined by
\[
\theta_0'(\vec e)=\begin{cases}1&\text{ if } \vec e\in \vec{\mathcal{R}}, \\
                                                   0  &\text{ if } e\in A^{+,\delta} \cup E(A^+)\setminus\mathcal{R},  \\
                                                  i_0^+(\Delta)+i_0( [h_1,\Delta])  &\text{ if } \vec e = [h_1,\Delta] \\
 i_0(\vec e)+\sum_{j\leq l} i_0^A(a_j) \1{\vec e \in \vec{\mathcal{P}}(j_1,j)} & \\
~+\sum_{i\leq l} i_0([a_i,\Delta]) \1{\vec e \in \vec{\mathcal{P}}(i,j_0)} \\
~+i_0^-(\Delta) \1{\vec e \in \vec{\mathcal{P}}(j_1,j_0)} +i_0^+(\Delta)  \1{\vec e \in \vec{\mathcal{Q}}} &\text{ else,}
\end{cases}
\]
for which we can get the same properties as for $\theta_0(\cdot)$.

\begin{figure}[htpd]
\centering
\epsfig{file=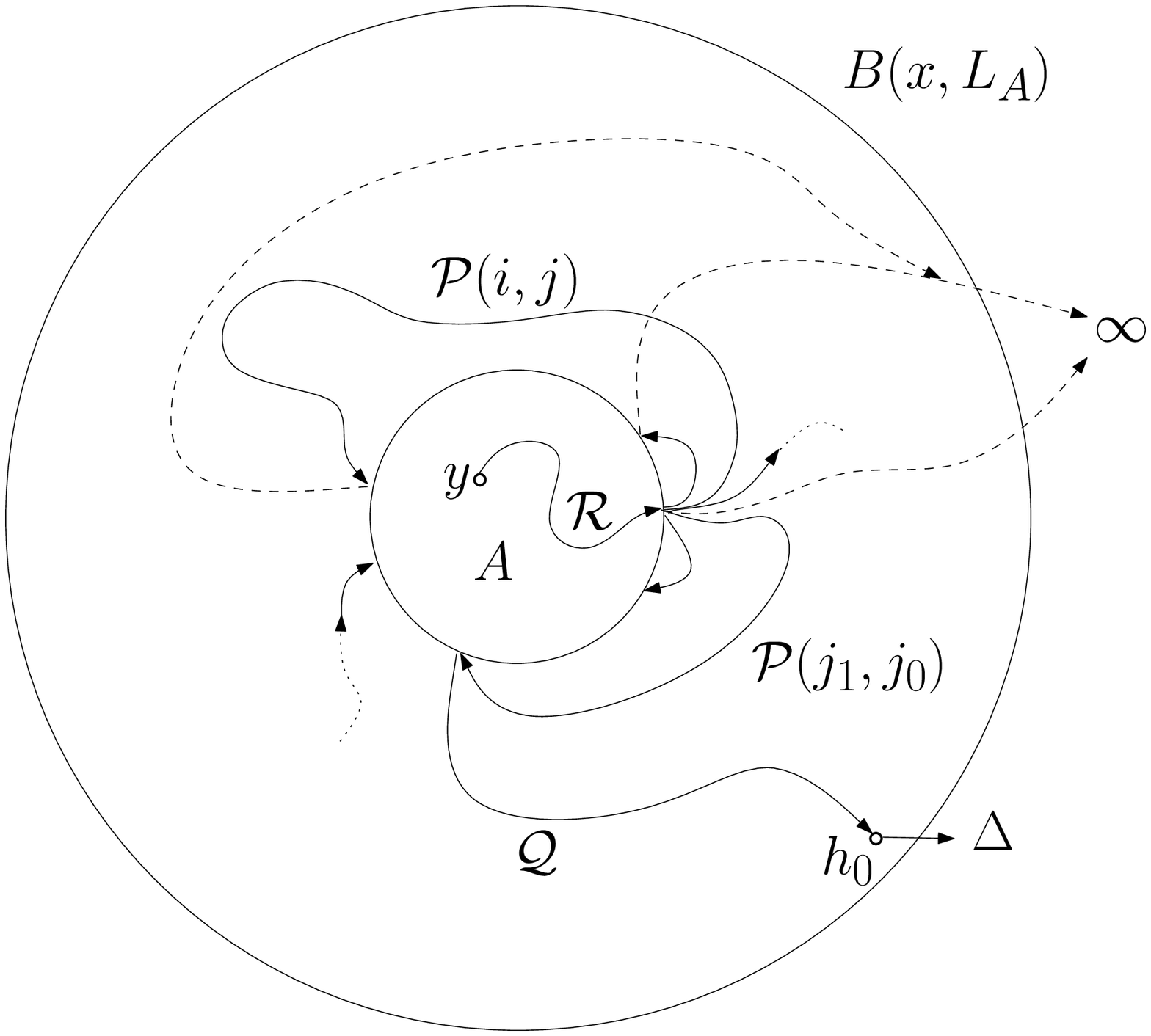, width=8cm}
\caption{The flow $\theta_0'(\cdot)$ in the case where $y \in K_{\infty}(\omega)\setminus K_{\infty}(\omega^{A,0})$}
\end{figure}

 The computation of the energy of $\theta_0'(\cdot)$ is essentially similar to that of $\theta_0(\cdot)$ and we get
\begin{align*}
 & R^{\omega(\delta)}(y\leftrightarrow \omega(\delta) \setminus \omega_n)-R^{\omega^{A,1}(\delta)}(y\leftrightarrow \omega^{A,1}(\delta) \setminus \omega_n^{A,1}) \\ 
\leq &\gamma_1 L_A^{7d} e^{2\lambda (L_A-x\cdot\vec \ell)}+\sum_{e\in \mathcal{R}} r^{\omega}(e) +3R^{\omega^{A,1}(\delta)}(y\leftrightarrow \omega^{A,1}(\delta) \setminus \omega_n^{A,1})\\
\leq &\gamma_2 L_A^{7d} e^{2\lambda (L_A-x\cdot\vec \ell)}+3R^{\omega^{A,1}(\delta)}(y\leftrightarrow \omega^{A,1}(\delta) \setminus \omega_n^{A,1}) ,
\end{align*}
since $\abs{\mathcal{R}}\leq \abs{A^+}\leq \rho_dL_A^d$ and $ r^{\omega}(e)\leq e^{2\lambda(L_A-x\cdot\vec \ell)}$ for $e\in \mathcal{R}$. The result follows.
\end{proof}

We set for $x,y \in \Z^d$ and $Z\subset \Z^d$,
\begin{equation}
\label{fct_green_gen}
G_{\delta,Z}(x,y)=E_x^{\omega}\Bigl[\sum_{k=0}^{T_{Z}} \delta^k \1{X_k=y}\Bigr],
\end{equation}
and similarly we can define $R^{\omega}(x \leftrightarrow Z\cup\Delta)$ to be the limit of $R^{\omega(\delta)}(x\leftrightarrow Z \cup \{\omega(\delta) \setminus \omega_n\})$ where $\omega_n$ is any increasing exhaustion of subgraphs of $\omega$. We can get 
\begin{lemma}
\label{lemgreen2}
For any $\delta<1$, we have for $x,z\in \Z^d$,
\[
G_{\delta,\{z\}}^{\omega}(x,x)=\pi^{\omega(\delta)}(x) R^{\omega}(x \leftrightarrow z\cup\Delta).
\]
\end{lemma}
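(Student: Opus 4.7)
The plan is to run the proof of Lemma~\ref{lemgreen} verbatim, with the set $\{z,\Delta\}$ playing the role played there by $\{\Delta\}$ alone. First, I would use~(\ref{resdelta}) to identify the random walk on the weighted graph $\omega(\delta)$ absorbed at $\{z,\Delta\}$ with the walk on $\omega$ killed at the independent geometric time $\tau_\delta$ and stopped at $T_{\{z\}}$: at every vertex $y\in K_\infty(\omega)\setminus\{z\}$, the $\omega(\delta)$-transition to a neighbour $w$ has probability $\delta p^{\omega}(y,w)$ and the transition to $\Delta$ has probability $1-\delta$. Consequently, for $x\neq z$,
\[
G_{\delta,\{z\}}^{\omega}(x,x)=G_{\{z,\Delta\}}^{\omega(\delta)}(x,x),
\]
where the right-hand side denotes the expected number of visits to $x$ of the walk on $\omega(\delta)$ absorbed at $\{z,\Delta\}$.

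The second step is the classical electrical-network identity (see Chapter~2 of~\cite{LP}): on any reversible weighted graph with an absorbing set $B$ and $x\notin B$, one has
\[
G_B(x,x)=\pi(x)\,R(x\leftrightarrow B).
\]
The short proof I have in mind is the standard one: letting $v$ denote the voltage with $v|_B=0$ and unit current entering at $x$, we have $v(x)=R(x\leftrightarrow B)$, and $v(y)/v(x)=P_y[T_x<T_B]$ on the complement of $B$ by uniqueness of harmonic extensions; a one-step analysis at $x$ then gives $P_x[T_B<T_x^+]=1/(\pi(x)R(x\leftrightarrow B))$, so that the number of visits to $x$ before hitting $B$ is geometric with mean $\pi(x)R(x\leftrightarrow B)$.

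Applying this identity to $\omega(\delta)$ with $B=\{z,\Delta\}$ and combining with the first step yields the lemma, since the resulting resistance coincides by definition with $R^{\omega}(x\leftrightarrow z\cup\Delta)$. As in the proof of Lemma~\ref{lemgreen}, the infiniteness of $\omega$ is handled by first running the argument on the finite networks $\omega_n(\delta)$ with absorbing set $\{z\}\cup(\omega(\delta)\setminus\omega_n)$ and then letting $n\to\infty$: the Green functions converge by monotone convergence and the resistances by the very definition of $R^{\omega}(x\leftrightarrow z\cup\Delta)$ given just before the lemma. I do not foresee any substantive obstacle, as the whole argument is a mechanical adaptation of Lemma~\ref{lemgreen} with one extra grounded vertex.
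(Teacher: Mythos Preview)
Your proposal is correct and matches the paper's intent: the paper does not actually prove Lemma~\ref{lemgreen2} but simply states it alongside Lemma~\ref{lemgreen}, which is itself cited as a classical exercise from Chapter~2 of~\cite{LP}; your argument is precisely the mechanical adaptation the paper has in mind, with $\{z,\Delta\}$ replacing $\{\Delta\}$ as the absorbing set and the exhaustion matching the definition of $R^{\omega}(x\leftrightarrow z\cup\Delta)$ given just before the lemma. The only caveat is the trivial edge case $x=z$, where the identity fails (the left side equals $1$ and the right side $0$), but the paper only invokes the lemma for $x\neq z$ anyway.
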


In a way similar to the proof of Proposition~\ref{insertedge}, we get
\begin{proposition}
\label{insertedge1}
Set $A=B^E(x,r)$, $\delta<1$, $z\in \Z^d$ and $\omega\in \Omega_0$. Suppose that $y,z\in K_{\infty}(\omega)$ and $\partial A \cap K_{\infty}(\omega) \neq \emptyset$. We have
\[
R^{\omega}(y\leftrightarrow z\cup\Delta)\leq 4R^{\omega^{A,1}}(y \leftrightarrow z\cup\Delta) + C_1L_A(\omega)^{C_2} e^{2\lambda (L_A(\omega)-x\cdot\vec \ell)}, 
\]
where $C_1$ and $C_2$ depend only on $d$ and $\ell$.
\end{proposition}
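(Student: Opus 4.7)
The plan is to mimic the proof of Proposition~\ref{insertedge} almost verbatim, replacing unit flows from $y$ to $\Delta$ by unit flows from $y$ to $z\cup\Delta$ in the weighted graph $\omega(\delta)$, and estimating the effective resistance via Thomson's principle. Concretely, I fix an increasing exhaustion $\omega_n$ of $\omega$, and for $n$ large compare $R^{\omega(\delta)}(y\leftrightarrow z\cup\{\omega(\delta)\setminus\omega_n\})$ with $R^{\omega^{A,1}(\delta)}(y\leftrightarrow z\cup\{\omega^{A,1}(\delta)\setminus\omega_n^{A,1}\})$ by constructing a unit flow $\theta_0$ in $\omega(\delta)$ from the unit current flow $i_0$ of the corresponding problem in $\omega^{A,1}(\delta)$.

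The construction of $\theta_0$ from $i_0$ is identical to that in Proposition~\ref{insertedge}: I match the inflow and outflow of $A$ through the vertices of $\partial A\cap K_\infty(\omega^{A,0})$ by a collection $\nu(i,j)$ and reroute along paths $\vec{\mathcal{P}}(i,j)\subset\omega^{A,0}\cap B^E(x,L_A^1)$; I divert the flow headed to $\Delta$ from vertices of $A^+$ first to $a_{j_0}$, then along the path $\vec{\mathcal{Q}}$ to a vertex $h_1\in H_A(\omega)$, and finally out through $[h_1,\Delta]$. The case $y\in K_\infty(\omega)\setminus K_\infty(\omega^{A,0})$ is absorbed by an auxiliary path $\vec{\mathcal{R}}\subset\omega\cap A^+$ as before.

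The only new issue is that $z$ has joined the set of sinks. If $z\notin A^+$, then $i_0$ reaches $z$ using edges outside $A\cup A^{+,\delta}$, and the original construction applies without change; if $z\in A^+$, I reserve an additional simple path $\vec{\mathcal{S}}$ inside $\omega\cap(A^+\cup B(x,L_A^1))$ from some $a_{j_2}\in\partial A\cap K_\infty(\omega^{A,0})$ to $z$, which exists because $z\in K_\infty(\omega)$. This path carries the amount of flow that $i_0$ absorbs at $z$ through $E(A^+)$, and contributes to the energy by the same order $O(L_A^{C_2}e^{2\lambda(L_A-x\cdot\vec\ell)})$ as $\vec{\mathcal{Q}}$ and $\vec{\mathcal{R}}$.

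The energy comparison is then identical to the one in Proposition~\ref{insertedge}: on the rerouted edges, flow values are bounded by $O(|\partial A|^2)$ and resistances by $e^{2\lambda(L_A-x\cdot\vec\ell)}$, yielding an additive term of order $L_A^{7d}e^{2\lambda(L_A-x\cdot\vec\ell)}$; the cemetery-edge terms at $h_1$ and on $A^{+,\delta}$ are handled by the same Cauchy--Schwarz step combined with~(\ref{def_eta}), producing the factor~$4$ in front of $R^{\omega^{A,1}}(y\leftrightarrow z\cup\Delta)$. Letting $n\to\infty$ yields the claim. The main mild obstacle is bookkeeping: checking that adding $\vec{\mathcal{S}}$ preserves flow conservation at every intermediate vertex and does not interact with $\vec{\mathcal{Q}}$, $\vec{\mathcal{R}}$, or the $\vec{\mathcal{P}}(i,j)$ in a way that inflates the per-edge flow bound beyond a constant multiple of $|\partial A|^2$.
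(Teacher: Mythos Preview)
Your proposal is correct and follows essentially the same approach as the paper: both reduce to the construction of Proposition~\ref{insertedge}, with the only new ingredient being an additional path $\vec{\mathcal{S}}$ to carry the flow absorbed at the new sink $z$ when $z$ is trapped inside $A^+$ (equivalently, when $z\notin K_\infty(\omega^{A,0})$). The paper organizes the case distinction slightly differently---splitting on $z\in K_\infty(\omega^{A,0})$ versus not, and then on $y\in K_\infty(\omega^{A,0})$ versus not---and makes the matching $\nu(i,j)$ explicit by adjoining a target $\{z\}$ with $\sum_i \nu(i,z)=i_0^z$, but the rerouting and the energy estimate are the same as what you describe.
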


We assume without loss of generality the constants are the same as Proposition~\ref{insertedge}.

\begin{proof}
This time let us denote $i_0(\cdot)$ by the unit current flow from $y$ to $z\cup \{ \omega(\delta)\setminus \omega_n\}$.

The case where $z\in K_{\infty}(\omega^{A,0})$ can be treated using the same flows as in the proof of Proposition~\ref{insertedge} and we will not give further details.

In order to treat the case where $z\notin K_{\infty}(\omega^{A,0})$ and $y\in K_{\infty}(\omega^{A,0})$. We keep the notations of the previous proof for the partition $(a_i)_{1\leq i\leq m}$ of $\partial A$, $i_0^+(\Delta)$, $i_0^-(\Delta)$, $A^+$ and $A^{+,\delta}$. We set
\[
i_0^z=\sum_{e\in \nu} i_0([z+e,z]).
\]

Similarly, we can find a familly $\nu(i,j)$ with $i\in[1,k]$ and $j\in [k+1,l]\cup \{\Delta\}\cup \{z\}$ such that
\begin{enumerate}
\item for all $i,j$, we have $\nu(i,j)\in [0,1]$,
\item for all $j\in [k+1,l]$, it holds that $\sum_{i \leq k} \nu(i,j)=-i_0^A(a_j)$,
\item we have $\sum_{i \leq k} \nu(i,\Delta)=i_0^-(\Delta)$,
\item it holds that $\sum_{i \leq k} \nu(i,z)=i_0^z$,
\item for all $i\in [1,k]$ we have $\sum_{j \in [k+1,l]\cup\{\Delta\}\cup \{z\}} \nu(i,j)=i_0^A(a_i)$.
\end{enumerate}

We use again the same notations for $\mathcal{P}(i,j)$, $\mathcal{Q}$, $j_0$ and $h_1$ and add an index $j_2\leq l$ such that $z$ is connected inside $A^+$ to $a_{j_2}$ and $\vec{\mathcal{S}}$ the corresponding directed path. We set
\[
\theta_0(\vec e)=\begin{cases}i_0^z(\vec e)&\text{ if } \vec e\in \vec{\mathcal{S}}, \\
                              0  &\text{ if } e\in A^{+,\delta}\cup E(A^+)\setminus\mathcal{S},  \\
                              i_0(\Delta)+i_0( [h_1,\Delta])  &\text{ if } \vec e = [h_1,\Delta] \\
 i_0(\vec e)+i_0^+(\Delta)  \1{\vec e \in \vec{\mathcal{Q}}}& \\
\qquad +\sum_{i \leq k, j\in [k+1,l]} \nu(i,j) \1{\vec e \in \vec{\mathcal{P}}(i,j)}& \\
\qquad +\sum_{i\leq k} \nu(i,\Delta) \1{\vec e \in \vec{\mathcal{P}}(i,j_0)} & \\
\qquad +\sum_{i\leq k} \nu(i,z) \1{\vec e \in \vec{\mathcal{P}}(i,j_2)}& \\
\qquad +\sum_{i\leq l} i_0([a_i,\Delta]) \1{\vec e \in \vec{\mathcal{P}}(i,j_0)} &\text{else,}
\end{cases}
\]
which is similar to the flow considered in Proposition~\ref{insertedge} except that the flow naturally supposed to escape at $z$ is, instead of entering $A$, redirected to $a_{j_2}$ and from there sent to $z$. Using this flow with Thomson's principle yields similar computations as in Proposition~\ref{insertedge} and thus we obtain a similar result.

The case where $z\notin K_{\infty}(\omega^{A,0})$ and $0\notin K_{\infty}(\omega^{A,0})$ can be easily adapted from the proof above and the second part of the proof of Proposition~\ref{insertedge}.

\end{proof}

\section{Percolation estimate}
\label{sect_perco}

We want to give tail estimates on $L_A^1$ and $L_A$ for some ball $A=B(x,r)$. More precisely we want to show for any $C>0$, we have ${\bf E}_{1-\epsilon}[e^{C L_A}]<\infty$ for $\epsilon$ small enough, the exact statement can be found in Proposition~\ref{perco}. Let us recall the definitions of $M_A$ and $T_A$ at~(\ref{defL}) and~(\ref{defT}). We see that all vertices of $\partial A$ are either in finite clusters of $\omega^{A,0}$ (which are included in $B(x,r+T_A)$) or in the infinite cluster and all those last ones are inter-connected in $B(x,r+M_A)$. Hence we get  by the two remarks above~(\ref{notation_L}) that
\begin{equation}
\label{upperb_LA1}
L_A^1 \leq r+\max (M_A,T_A).
\end{equation}

Recalling the definitions of $L_A$ and $H_A$ below~(\ref{def_eta}), our overall strategy for deriving an upper-bound on the tail of $L_A$ in the case $\partial A \cap K_{\infty} \neq \emptyset$ is the following: if $L_A$ is large then there are two cases.
\begin{enumerate}
\item The random variable $L_A^1$ is large. This means by~(\ref{upperb_LA1}) that either $M_A$ or $T_A$ is large. The random variable $M_A$ cannot be large with high probability, since the distance in the percolation cluster cannot be much larger than the distance in $\Z^d$ (see Lemma~\ref{corfirstpassage}) and neither can $T_A$ since finite cluster are small in the supercritical regime (see Lemma~\ref{trapsize}).
\item Otherwise the distance from $x$ to $H_A$ in the percolation cluster is large even though it is not large in $\Z^d$. Once again this is unlikely, in fact for technical reasons it appears to be easier to show that the distance to $H_A \cap \mathbb{T}_x$ is small, where $\mathbb{T}_x$ is some two-dimensional cone. For this we will need Lemma~\ref{orientcone}.
\end{enumerate}

The following is fairly classical result about first passage percolation with a minor twist due to the conditioning on the edges in $A$, we will outline the main idea of the proof while skipping a topological argument. To get a fully detailled proof of the topological argument, we refer the reader to the proof of Theorem 1.4 in~\cite{GM}.

\begin{lemma}
\label{firstpassage}
Set $A=B^E(x,r)$ and $y,z\in \Z^d\setminus B(x,r-1)$, there exists a non-increasing function $\alpha_1: [0,1] \to [0,1]$ such that for  $\epsilon<\epsilon_1$ and $n\in \N$,
\[
{\bf P}_{1-\epsilon}\Bigl[y\stackrel{\omega^{A,0}}{\leftrightarrow} z, d_{\omega^{A,0}}(y,z) \geq n+2d_{\Z^d\setminus B(x,r-1)}(y,z)\Bigr] \leq  2\alpha_1(\epsilon)^{n+d_{\Z^d\setminus B(x,r-1)}(y,z)},
\]
and
\[
{\bf P}_{1-\epsilon}\Bigl[y\stackrel{\omega^{A,0}}{\leftrightarrow} z, d_{\omega^{A,0}}(y,z) \geq n+2d_{\Z^d}(y,z)+4dr\Bigr] \leq  2\alpha_1(\epsilon)^{n+d_{\Z^d}(y,z)},
\]
where $\epsilon_1$ and $\alpha_1(\cdot)$ depend only on $d$ and $\displaystyle{\lim_{\epsilon \to 0} \alpha_1(\epsilon) =0}$.
\end{lemma}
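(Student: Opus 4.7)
The plan is to reduce this to a standard Antal--Pisztora / Grimmett--Marstrand chemical-distance estimate, modified to accommodate the deterministic closing of all edges of $A = B^E(x,r)$. The crucial preliminary observation is that any $\omega^{A,0}$-path from $y$ to $z$ must lie entirely in $V := \Z^d \setminus B(x,r-1)$, because every edge joining $B(x,r-1)$ to its complement belongs to $A$ and is therefore closed. This yields the deterministic bound $d_{\omega^{A,0}}(y,z) \geq d_{V}(y,z)$ on the event $\{y\stackrel{\omega^{A,0}}{\leftrightarrow} z\}$, which explains the appearance of $d_{V}$ in the statement. Since outside $A$ the edges of $\omega^{A,0}$ remain i.i.d.~Bernoulli$(1-\epsilon)$, what remains is a genuine chemical-distance estimate on the random subgraph of $V$.

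For the first inequality, I would run a renormalization argument at a block scale $L=L(d)$. Partition $\Z^d$ into cubes of side $L$ and declare a block \emph{good} if, in an enlarged neighborhood, the percolation configuration connects every pair of opposite faces through an open path of at most $\gamma L$ edges, for some fixed $\gamma=\gamma(d)$. For $\epsilon$ small enough each block is good with probability at least $1-\beta(\epsilon)$ with $\beta(\epsilon)\to 0$ as $\epsilon\to 0$, and the good-block process is $k(d)$-dependent, hence (by Liggett--Schonmann--Stacey) stochastically dominates a highly supercritical Bernoulli site percolation on the renormalized lattice. The topological step (precisely the part the authors defer to the proof of Theorem 1.4 in \cite{GM}) asserts that if $d_{\omega^{A,0}}(y,z) \geq n + 2 d_{V}(y,z)$ while $y$ and $z$ are connected in $\omega^{A,0}$, then along any renormalized route from the block of $y$ to the block of $z$ one can extract a connected lattice animal of size at least $c(d)(n+d_{V}(y,z))/L$ consisting entirely of bad blocks. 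A union bound over rooted animals of size $k$ (of which there are at most $C(d)^k$) combined with the probability $\beta(\epsilon)^{c'(d)k}$ that a given animal is bad then yields a geometric sum bounded by $\alpha_1(\epsilon)^{n+d_{V}(y,z)}$ with $\alpha_1(\epsilon) \to 0$.

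The deterministic closing imposed by $A$ affects only the $O(r^d)$ blocks meeting $B(x,r)$; since this set is fixed relative to the animal, it contributes at most a multiplicative constant that can be absorbed into the prefactor $2$ once $\epsilon$ is small. The second inequality follows from the first together with the routing estimate $d_{V}(y,z) \leq d_{\Z^d}(y,z) + 2 d r$: any $L^1$-geodesic in $\Z^d$ visits the convex diamond $B(x,r-1)$ in a single segment, which one can reroute outside via a corner such as $x + r e^{(1)}$ at the cost of at most $2 d r$ extra steps. Indeed this gives $\{d_{\omega^{A,0}}(y,z) \geq n + 2 d_{\Z^d}(y,z) + 4 d r\}\subseteq \{d_{\omega^{A,0}}(y,z) \geq n + 2 d_{V}(y,z)\}$, and since $d_{V}(y,z) \geq d_{\Z^d}(y,z)$ and $\alpha_1(\epsilon)<1$ the first inequality then upgrades the exponent from $n+d_{V}(y,z)$ to $n+d_{\Z^d}(y,z)$. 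The main technical obstacle, as the authors themselves acknowledge, is the topological/combinatorial step extracting a connected chain of bad blocks from a long detour of the cluster --- this is precisely the nontrivial content of \cite{GM}, and it carries over essentially unchanged here because the modification induced by $A$ is confined to an $O(r)$-neighborhood of $x$.
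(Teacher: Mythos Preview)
Your overall strategy (Liggett--Schonmann--Stacey domination combined with the topological detour argument from \cite{GM}) is the same as the paper's, but there is a real gap in how you handle the deterministically closed edges of $A$. You assert that the $O(r^d)$ blocks meeting $B(x,r)$, which are forced bad in $\omega^{A,0}$, contribute ``at most a multiplicative constant that can be absorbed into the prefactor $2$''. This is not correct: in your animal count, if the connected animal of size $k\geq c(n+d_V)/L$ overlaps the forced-bad region, only $k-O(r^d)$ of its blocks need be random-bad, and the union bound yields a factor of order $C^{k}\beta(\epsilon)^{k-O(r^d)}$. The resulting prefactor behaves like $\beta(\epsilon)^{-O(r^d)}$, which depends on $r$ (and blows up as $\epsilon\to 0$), whereas the lemma demands a bound of the form $2\alpha_1(\epsilon)^{n+d_V}$ with constants depending only on $d$.

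The paper avoids this by a simple but essential device: it works at unit scale and declares a vertex $u$ to be \emph{$\omega^A$-wired} if all edges in its $\ell^\infty$-neighborhood are open in $\omega^{A,1}$, i.e.\ after forcing every edge of $A$ open. With this definition, ${\bf P}_{1-\epsilon}[u\text{ is strongly wired}]\geq (1-\epsilon)^{\gamma_1}$ \emph{uniformly} in $u$, even for $u$ adjacent to $A$, so LSS gives a product-measure lower bound with no $r$-contamination. The topological step from \cite{GM} then produces an $\omega^{A,0}$-open path $\mathcal P$ from $y$ to $z$ lying in $\gamma\cup\bigl(V(\omega^A)+\{-2,\dots,2\}^d\bigr)$; on the event $d_{\omega^{A,0}}(y,z)\geq n+2d_V$, this path has length $m\geq n+2d_V+1$ and at least $m-d_V-1$ of its vertices are $\omega^A$-weakly-wired, so a straight path-count (not an animal count) gives $\sum_{m}(2d)^m(1-\alpha')^{m-d_V-1}$. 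Your block-renormalization route can be repaired by importing the same trick---define a block ``good'' via crossings in $\omega^{A,1}$ rather than $\omega^{A,0}$---but as written the absorption claim fails. Your derivation of the second inequality from the first via $d_{\Z^d}(y,z)\leq d_V(y,z)\leq d_{\Z^d}(y,z)+2dr$ is exactly what the paper does.
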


The main tool needed to prove Lemma~\ref{firstpassage} is a result of stochastic domination from~\cite{LSS}, next we will state a simplified version of this result which appeared as Proposition 2.1~in~\cite{GM}. We recall that a familly $\{Y_u, u\in \Z^d\}$ of random variables is said to be $k$-dependent if for every $a\in \Z^d$, $Y_a$ is independent of $\{Y_u: \abs{\abs{u-a}}_1 \geq k \}$.

\begin{proposition}
\label{domstoch}
 Let $d,k$ be positive integers. There exists a non-decreasing function $\alpha': [0,1] \to [0,1]$ satisfying $\lim_{\tau \to 1} \alpha'(\tau)=1$, such that the following holds: if $Y=\{Y_u, u \in \Z^d\}$ is a $k$-dependent familly of random variables taking values in $\{0,1\}$ satisfying
\[
\text{for all $u \in \Z^d$,} \qquad\ P(Y_u=1) \geq \tau,
\]
then $P_Y  \succ  (\alpha'(\tau)\delta_1+(1-\alpha'(\tau)) \delta_0)^{\otimes \Z^d}$, where \lq\lq  $\succ$\rq\rq means stochasticaly dominated.
\end{proposition}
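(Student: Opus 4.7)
The plan is to follow the Liggett--Schonmann--Stacey strategy from~\cite{LSS}, cited as Proposition~2.1 in~\cite{GM}. The argument proceeds in three stages: a geometric reduction from $\Z^d$ to $\Z$, a reduction from general $k$-dependence to $1$-dependence by a blocking construction, and finally a direct coupling argument for $1$-dependent $\{0,1\}$-sequences on $\Z$ based on a uniform lower bound for conditional probabilities.

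For the reduction to one dimension, I would enumerate the sites of $\Z^d$ as $u_1,u_2,\ldots$, for instance through a box-exhaustion ordering, and view $Y_{u_1},Y_{u_2},\ldots$ as a process indexed by $\N$. Any pair of indices $i,j$ with $\abs{i-j}$ larger than some $K=K(k,d)$ corresponds to lattice points at graph distance at least $k$ in $\Z^d$, so the enumerated sequence is at worst $K$-dependent. Grouping consecutive indices into blocks of length $K$ and setting $\tilde Y_m=\prod_{i\in\text{block }m} Y_{u_i}$ produces a $1$-dependent $\{0,1\}$-valued sequence with marginals at least $\tau^K$. A standard monotone coupling then transfers product domination of $\tilde Y$ into product domination of $Y$ with a slightly weaker parameter.

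The core step is therefore the following one-dimensional claim: if $\tilde Y_1,\tilde Y_2,\ldots$ is $1$-dependent with $P(\tilde Y_m=1)\geq\sigma$ for all $m$, then its law dominates a Bernoulli product measure of parameter $g(\sigma)$, for some $g$ with $g(\sigma)\to 1$ as $\sigma\to 1$. The LSS approach proves this by establishing inductively on $m$ a uniform lower bound
\[
P(\tilde Y_m=1 \mid \tilde Y_1=y_1,\ldots,\tilde Y_{m-1}=y_{m-1}) \geq g(\sigma),
\]
valid for all admissible $(y_1,\ldots,y_{m-1})$. One splits on the value of $\tilde Y_{m-1}$: the case $\tilde Y_{m-1}=1$ is immediate from $1$-dependence, since conditioning on $(y_1,\ldots,y_{m-2})$ becomes redundant and one recovers the marginal bound $\sigma$; the case $\tilde Y_{m-1}=0$ is handled through Bayes' formula together with the inductive bound on the marginal of $\tilde Y_{m-1}$, yielding a functional relation whose solution satisfies $g(\sigma)=1-h(1-\sigma)$ with $h(s)\to 0$ as $s\to 0$. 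Once this uniform bound on conditional probabilities is in hand, the dominating i.i.d.\ Bernoulli field is constructed greedily by an inverse-CDF (Strassen-type) coupling on the totally ordered index set, and one takes $\alpha'(\tau)=g(\tau^K)$.

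The principal obstacle is closing the recursion for $g$: the naive splitting estimate on the conditional probability loses a multiplicative factor that does not vanish as $\sigma\to 1$, so the inductive hypothesis must be bootstrapped into itself in order to preserve the limit $\alpha'(\tau)\to 1$. This is the delicate combinatorial step of the LSS proof; once it is in place, the two reductions described above are essentially mechanical and yield the full $\Z^d$ statement with $\alpha'$ depending only on $d$ and $k$, as required.
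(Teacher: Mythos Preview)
The paper does not prove this proposition: it is quoted from~\cite{LSS} (in the form of Proposition~2.1 in~\cite{GM}) and used as a black box, so there is no in-paper argument to compare your sketch against.

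That said, your first reduction contains a real error. You assert that a box-exhaustion enumeration $u_1,u_2,\ldots$ of $\Z^d$ turns the $k$-dependent family into a $K$-dependent sequence on $\N$, i.e.\ that $|i-j|>K$ forces $\|u_i-u_j\|_1\geq k$. For $d\geq 2$ no enumeration of $\Z^d$ has this property: it would make $u^{-1}:\Z^d\to\N$ Lipschitz, hence force the $\sim R^d$ points of a ball of radius $R$ into an interval of length $O(R)$. Concretely, in any box exhaustion two lattice-neighbouring sites on the shell of radius $n$ can sit at index distance of order $n^{d-1}$. So the blocking you perform afterwards (length-$K$ blocks on $\N$) does not produce a $1$-dependent sequence, and your Step~3 never applies.

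The actual argument in~\cite{LSS} does not pass through a one-dimensional finite-range sequence. One enumerates $\Z^d$ arbitrarily and uses only that, among the previously revealed $Y_{u_1},\ldots,Y_{u_{n-1}}$, at most a bounded number $N=N(k,d)$ of them (those with $\|u_m-u_n\|_1<k$) can be correlated with $Y_{u_n}$. The inductive lower bound on $P(Y_{u_n}=1\mid Y_{u_1},\ldots,Y_{u_{n-1}})$ is then established directly with this bounded-neighbour structure, splitting on the configuration at those at most $N$ sites; the recursion for $g$ you describe in Step~3 is essentially the $N=1$ prototype, but the general case must be carried out at this level rather than via your enumeration-plus-blocking reduction.
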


Two vertices $u,v$ are $*$-neighbours if $\abs{\abs{u-v}}_{\infty}= 1$, this topology naturally induces a notion of $*$-connected component on vertices.

Let us say that a vertex $u\in \Z^d$ is $\omega^{A}$-wired if all edges $[s,t]\in E(\Z^d)$ with $\abs{\abs{u-s}}_{\infty}\leq 1$ and $\abs{\abs{u-t}}_{\infty}\leq 1$ are open in $\omega^{A,1}$ (recall that $A=B^E(x,r)$), otherwise it is called $\omega^{A}$-unwired. 

We say that a vertex $u\in \Z^d\setminus B(x,r-1)$ is $\omega^{A}$-strongly-wired, if all $y\in \Z^d\setminus B(x,r-1)$ such that $\abs{\abs{u-y}}_{\infty}\leq 2$ are $\omega^{A}$-wired, otherwise $u$ is called $\omega^{A}$-weakly-wired. It is plain that $\1{u \text{ is $\omega^A$-strongly-wired}}$ are $\gamma_1$-dependent $\{0,1\}$-valued random variables where $\gamma_1$ depends only on $d$. We can thus use Proposition~\ref{domstoch} with this familly of random variables since we have
\[
\text{for all $u \in \Z^d$,}\qquad {\bf P}_{1-\epsilon}[\1{u \text{ is $\omega^A$-strongly-wired}}=1] \geq (1-\epsilon)^{\gamma_1},
\]
and that $\displaystyle{\lim_{\epsilon \to 0} (1-\epsilon)^{\gamma_1} =1}$. This yields a function $\alpha'(\cdot)$ which solely depends on $d$.

Let us start the proof of Lemma~\ref{firstpassage}

\begin{proof}
Let $\gamma$ be one of the shortest paths in $\Z^d\setminus B(x,r-1)$ connecting $y$ to $z$. For $u\in \Z^d\setminus B(x,r-1)$, we define $V(u)(\omega^{A})$ to be the $*$-connected component of the $\omega^{A}$-unwired vertices of $u$ and
\[
V(\omega^{A})= \bigcup_{u\in \gamma} V(u)(\omega^{A}).
\]

Since $y$ and $z$ are connected in $\omega^{A,0}$, a topological argument (see Section 3 of~\cite{GM} for details) proves there is an $\omega^{A,0}$-open path $\mathcal{P}$ from $y$ to $z$ using only vertices in $\gamma \cup(V(\omega^{A,0}) + \{-2,-1,0,1,2\}^d)$. On the event $d_{\omega^{A,0}}(y,z) \geq n+ 2d_{\Z^d \setminus B(x,r-1)}(y,z)$, this path $\mathcal{P}$ has $m\geq n+2d_{\Z^d \setminus B(x,r-1)}(y,z)+1$ vertices and all vertices which are not in $\gamma$ are $\omega^{A}$-weakly-wired thus there are at least $m-d_{\Z^d \setminus B(x,r-1)}(y,z)-1$ of them .

Since there are at most $(2d)^k$ paths of length $k$ in $\Z^d\setminus B(x,r-1)$ we get, through a straightforward counting argument, that 
\begin{align*}&
{\bf P}_{1-\epsilon}\Bigl[y \stackrel{\omega^{A,0}}{\leftrightarrow} z, d_{\omega^{A,0}}(y,z)\geq n+d_{\Z^d \setminus B(x,r-1)}(y,z)\Bigr]\\
 \leq & \sum_{m\geq n+2d_{\Z^d\setminus B(x,r-1)}(y,z)+1}(2d)^m (1-\alpha'((1-\epsilon)^{\gamma_1}))^{m-d_{\Z^d\setminus B(x,r-1)}(y,z)-1}\\
 \leq & \sum_{m\geq n+2d_{\Z^d\setminus B(x,r-1)}(y,z)+1}((2d)^3 (1-\alpha'((1-\epsilon)^{\gamma_1})))^{m-d_{\Z^d\setminus B(x,r-1)}(y,z)-1},
\end{align*}
where $\alpha'(\cdot)$ is given by Proposition~\ref{domstoch} and verifies $\displaystyle{\lim_{\epsilon \to 0} 1-\alpha'((1-\epsilon)^{\gamma_1}) =0}$. Thus, the first part of the proposition is verified with $\alpha_1(\epsilon):=1-\alpha'((1-\epsilon)^{\gamma_1})$ and $\epsilon_1$ small enough so that $1-\alpha'((1-\epsilon_1)^{\gamma_1})\leq (2d)^{-3}/2$.

The second part is a consequence of
\[
d(y,z)\leq d_{\Z^d\setminus B(x,r-1)}(y,z) \leq d(y,z) +2dr.
\]
\end{proof}

An easy consequence is the following tail estimate on $M_A$ (defined at~(\ref{defL})).
\begin{lemma}
\label{corfirstpassage}
Set $A=B^E(x,r)$, there exists a non-increasing function $\alpha_1: [0,1] \to [0,1]$ such that for  $\epsilon<\epsilon_1$ and $n\in \N$,
\[
{\bf P}_{1-\epsilon}[M_A \geq n+4dr] \leq  C_3r^{2d} \alpha_1(\epsilon)^{n},
\]
where $C_3$, $\epsilon_1$ and $\alpha_1(\cdot)$ depend only on $d$ and $\displaystyle{\lim_{\epsilon \to 0} \alpha_1(\epsilon) =0}$. The function $\alpha_1(\cdot)$ is the same as in Lemma~\ref{firstpassage}.
\end{lemma}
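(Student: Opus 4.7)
The plan is a union-bound reduction to Lemma~\ref{firstpassage}. First I would observe that by the definition of $M_A$ at~(\ref{defL}), on the event $\{M_A \geq n + 4dr\}$ there must exist a pair of vertices $y_1, y_2 \in \partial A \cap K_{\infty}(\omega^{A,0})$ with $y_1 \stackrel{\omega^{A,0}}{\leftrightarrow} y_2$ and $d_{\omega^{A,0}}(y_1, y_2) \geq n + 4dr$. A union bound over ordered pairs in $\partial A \times \partial A$ therefore reduces the problem to controlling, for each such pair, the probability that $y_1$ and $y_2$ are connected in $\omega^{A,0}$ at $\omega^{A,0}$-graph-distance at least $n + 4dr$.

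The key geometric input is a linear-in-$r$ upper bound on the external graph distance $D(y_1, y_2) := d_{\Z^d \setminus B(x, r-1)}(y_1, y_2)$. Since $\partial A$ is precisely the $\ell^1$-sphere of radius $r$ around $x$, any two of its vertices can be joined, for $d \geq 2$, by a zigzag path alternating between $\partial B(x, r)$ and $\partial B(x, r+1)$ of length at most $2dr$. Granting this, I would apply the first bound of Lemma~\ref{firstpassage} to each pair with the choice $m := n + 4dr - 2D(y_1, y_2)$, which is nonnegative by the above. This gives
\[
{\bf P}_{1-\epsilon}\Bigl[y_1 \stackrel{\omega^{A,0}}{\leftrightarrow} y_2,\ d_{\omega^{A,0}}(y_1, y_2) \geq n + 4dr\Bigr] \leq 2\alpha_1(\epsilon)^{m + D(y_1, y_2)} = 2\alpha_1(\epsilon)^{n + 4dr - D(y_1, y_2)} \leq 2\alpha_1(\epsilon)^n,
\]
the final step using $D(y_1, y_2) \leq 2dr \leq 4dr$ together with $\alpha_1 \leq 1$.

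Summing over the at most $|\partial A|^2 \leq \rho_d^2 r^{2(d-1)} \leq \rho_d^2 r^{2d}$ pairs then yields the announced bound with $C_3 := 2\rho_d^2$, and with the same function $\alpha_1$ and threshold $\epsilon_1$ as in Lemma~\ref{firstpassage}. The only step requiring any real care is the geometric estimate $D(y_1, y_2) \leq 2dr$, which follows from a standard coordinate-by-coordinate construction that keeps the $\ell^1$-norm in $\{r, r+1\}$ throughout; everything else is routine bookkeeping.
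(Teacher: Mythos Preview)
Your proposal is correct and follows essentially the same route as the paper: a union bound over pairs in $\partial A$, a linear-in-$r$ bound on the external graph distance $d_{\Z^d\setminus B(x,r-1)}(y_1,y_2)$, and then a direct application of Lemma~\ref{firstpassage}. The paper is slightly terser and quotes the looser bound $d_{\Z^d\setminus B(x,r-1)}(a,b)\leq 4dr$ (which follows from $d(y,z)\leq d_{\Z^d\setminus B(x,r-1)}(y,z)\leq d(y,z)+2dr$ together with $d(a,b)\leq 2r$), whereas you invoke the sharper $2dr$ via an explicit zigzag; either suffices, and the rest is the same bookkeeping.
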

\begin{proof}
Since $\abs{\partial A} \leq \rho_d r^d$, we have
\begin{align*}
& {\bf P}_{1-\epsilon}[M_A \geq n+4dr]\\
\leq & (\rho_d r^d)^2 \max_{a,b\in \partial A} {\bf P}_{1-\epsilon}\Bigl[a\stackrel{\omega^{A,0}}{\leftrightarrow} b, d_{\omega^{A,0}}(a,b) \geq n+4dr\Bigr] \leq  \gamma_1 r^{2d}\alpha_1(\epsilon)^{n},
\end{align*}
where we used Lemma~\ref{firstpassage} since $d_{\Z^d\setminus B(x,r-1)}(a,b)\leq 4dr$ for $a,b\in \partial A$.
\end{proof}

 A set of $n$ edges $F$ disconnecting $x$ from infinity in $\Z^d$, that is any infinite simple path starting from $x$ uses an edge of $F$, is called a Peierls' contour of size $n$. Asymptotics on the number $\mu_n$ of Peierls' contours of size $n$ have been intensively studied, see for example~\cite{LM}, we will use the following bound proved in~\cite{Ruelle} and cited in~\cite{LM},
\[
\mu_n \leq 3^n.
\]

This enables us to prove the following tail estimate on $T_A$ (defined at~(\ref{defT})).
\begin{lemma}
\label{trapsize}
Set $A=B^E(x,r)$, there exists a non-increasing function $\alpha_2: [0,1] \to [0,1]$ such that for  $\epsilon<\epsilon_2$v
\[
{\bf P}_{1-\epsilon}[T_A \geq n] \leq  C_4 r^d\alpha_2(\epsilon)^{n},
\]
where $C_4$, $\epsilon_2$ and $\alpha_2(\cdot)$ depend only on $d$ and $\displaystyle{\lim_{\epsilon \to 0} \alpha_2(\epsilon) =0}$.
\end{lemma}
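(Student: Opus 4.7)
The plan is to follow the scheme of Lemma~\ref{corfirstpassage}: I union-bound over $y\in\partial A$ and then control each fixed-$y$ summand by a Peierls-type contour argument. Since $\abs{\partial A}\leq\rho_d r^{d-1}$, it suffices to prove, for every $y\in\partial A$, that
\[
{\bf P}_{1-\epsilon}\bigl[y\notin K_\infty(\omega^{A,0}),\ \abs{\partial_E K_{\omega^{A,0}}(y)}\geq n\bigr]\leq C\,r\,\alpha_2(\epsilon)^n
\]
for some $\alpha_2(\epsilon)\to 0$ as $\epsilon\to 0$. The key geometric remark is that every edge from a sphere vertex $y\in\partial A$ leading either into $B(x,r-1)$ or to another sphere vertex belongs to $A$, hence is closed in $\omega^{A,0}$ by construction. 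Consequently the cluster $K:=K_{\omega^{A,0}}(y)$ is contained in $\partial A\cup(\Z^d\setminus B(x,r))$, which forces $K\cap V(A)\subseteq\partial A$ and therefore
\[
\abs{\partial_E K\cap A}\leq 2d\abs{K\cap\partial A}\leq 2d\rho_d r^{d-1}.
\]
On $\{\abs{\partial_E K}\geq n\}$ at least $n-2d\rho_d r^{d-1}$ edges of $\partial_E K$ lie outside $A$ and are genuinely closed in $\omega$ rather than deterministically closed by construction.

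Next I apply the Peierls bound $\mu_m\leq 3^m$ recalled just above the lemma statement. Since $\partial_E K$ is a cutset in $\Z^d$ around $y$ whose non-$A$ edges are independently $\epsilon$-closed, summing over cutset sizes $m\geq n$ produces
\[
\sum_{m\geq n}3^m\epsilon^{\max(m-2d\rho_d r^{d-1},0)}\leq\frac{(3\epsilon)^{\max(n-2d\rho_d r^{d-1},0)}}{1-3\epsilon}
\]
for $\epsilon<1/3$. Choosing for instance $\alpha_2(\epsilon)=\sqrt{3\epsilon}$, the regime $n\geq 4d\rho_d r^{d-1}$ gives $(3\epsilon)^{n/2}\leq\alpha_2(\epsilon)^n$, while the complementary regime is absorbed into $C_4 r^d$ by the trivial bound $1$ once $\epsilon_2$ is small enough.

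The hard part lies in the Peierls step itself, for two reasons. First, standard Peierls bounds control the outer contour of $K$, whereas $\abs{\partial_E K}$ can be much larger because it also counts edges bordering finite components of $K^c$ lying strictly inside $K$; one must therefore either decompose $\partial_E K$ into outer contour plus inner contributions and estimate each by Peierls, or invoke the classical supercritical cluster-size tail ${\bf P}[k\leq\abs{K(y)}<\infty]\leq e^{-\zeta(\epsilon)k^{(d-1)/d}}$, with $\zeta(\epsilon)\to\infty$ as $\epsilon\to 0$, together with $\abs{K}\geq n/(2d)$. Second, the $O(r^{d-1})$ offset produced by $\abs{\partial_E K\cap A}$ has to be absorbed into the prefactor $r^d$, which is exactly why $\alpha_2(\epsilon)$ must be chosen to tend to $0$ with $\epsilon$ rather than merely be bounded away from $1$.
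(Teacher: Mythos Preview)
Your approach has a genuine gap at the step where you try to absorb the $O(r^{d-1})$ offset into the prefactor. Your Peierls sum is actually
\[
\sum_{m\ge n} 3^m \epsilon^{\max(m-c,0)} \;=\; 3^{c}\sum_{m'\ge n-c}(3\epsilon)^{m'} \;=\; \frac{3^{c}(3\epsilon)^{n-c}}{1-3\epsilon}\qquad (n\ge c:=2d\rho_d r^{d-1}),
\]
so a factor $3^{2d\rho_d r^{d-1}}$ (equivalently $\epsilon^{-2d\rho_d r^{d-1}}$) appears in front, which is exponential in $r^{d-1}$ and cannot be bounded by any $C_4 r^d$ with $C_4$ depending only on $d$. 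The ``complementary regime'' fix does not work either: for $n\le 4d\rho_d r^{d-1}$ you would need $C_4 r^d \alpha_2(\epsilon)^n\ge 1$, but since $\alpha_2(\epsilon)\to 0$ this fails for small $\epsilon$ whenever $n\ge 1$. Letting $\alpha_2(\epsilon)\to 0$ makes the small-$n$ regime harder, not easier.

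The paper avoids this entirely by proving a sharper geometric fact than your bound $|\partial_E K\cap A|\le 2d\rho_d r^{d-1}$. Using that $A$ is a ball, it constructs an injection $\psi:\partial_E K\cap A\to \partial_E K\setminus A$ as follows: given $[u,v]\in A\cap\partial_E K$ with $u\in K$, follow the ray from $u$ in the direction $u-v$; the first edge along this ray that lies in $\partial_E K$ is shown to be outside $A$ (otherwise all edges adjacent to $u$ would be in $A$, contradicting $a\in\partial A$ being reachable from $u$ in $\omega^{A,0}$). This yields $|\partial_E K\cap A|\le \tfrac12|\partial_E K|$, so with $m=|\partial_E K|$ at least $\lceil m/2\rceil$ boundary edges are genuinely $\omega$-closed \emph{regardless of $r$}. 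The Peierls sum then becomes $\sum_{m\ge n}\binom{m}{\lceil m/2\rceil}\mu_m\,\epsilon^{m/2}\le \gamma\sum_{m\ge n}(6\sqrt{\epsilon})^m$, giving $\alpha_2(\epsilon)=36\epsilon^{1/2}$ with no $r$-dependent offset at all. This injection is the missing idea in your argument.
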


\begin{proof}
First we notice that for $n\geq 1$,
\[
{\bf P}_{1-\epsilon}[T_A \geq n] \leq \rho_d r^d \max_{a\in \partial A} {\bf P}_{1-\epsilon}\Bigl[ a \notin K_{\infty}(\omega^{A,0}),\abs{\partial_E K^{\omega^{A,0}}(a)} \geq n\Bigr].
\]

 For any $a\in \partial A$ such that $a\notin K_{\infty}(\omega^{A,0})$, we have that $\partial_E K^{\omega^{A,0}}(a)$ is a finite Peierls' contour of size $\abs{\partial_E K^{\omega^{A,0}}(a)}$ surrounding $a$ which has to be closed in $\omega^{A,0}$.

 Because $A$ is a ball at least half of the edges of $\partial_E K^{\omega^{A,0}}(a)$ have to be closed in $\omega$ as well. Indeed, take $[x,y]\in A\cap \partial_E K^{\omega^{A,0}}(a)$ and denote $x$ its endpoint in $K^{\omega^{A,0}}(a)$, then by definition of a Peierls' contour there is $i\geq 0$ such that $[x+i(x-y),x+(i+1)(x-y)]$ is in  $\partial_E K^{\omega^{A,0}}(a)$, let $i_0(x,y)$ be the smallest one (see Figure~4 for a drawing).
 
  If $[x+i_0(x,y)(x-y),x+(i_0(x,y)+1)(x-y)]$ were in $A$, since $A$ is a ball all edges between $x$ and $x+(i_0(x,y)+1)(x-y)$ would too. This would imply that all edges adjacent to $x$ are in $A$ but since $x$ is connected to $a$ in $\omega^{A,0}$, we have $a=x$. This is a contradiction since $a\in \partial A$ and all edges adjacent to $x=a$ are in $A$. Hence $[x+i_0(x,y)(x-y),x+(i_0(x,y)+1)(x-y)]\notin A$. 

\begin{figure}[htpd]
\centering
\epsfig{file=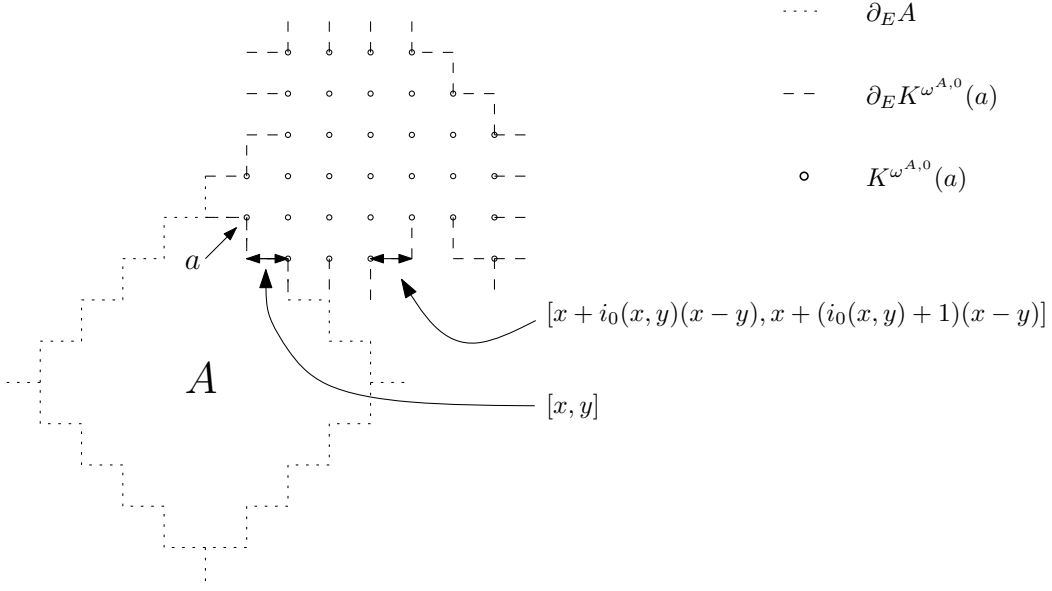, width=14cm}
\caption{Half of the edges of $\partial_E K^{\omega^{A,0}}(a)$ have to be closed in $\omega$}
\end{figure}

Hence
\[
\psi: \begin{cases} A\cap \partial_E K^{\omega^{A,0}}(a) & \to   \partial_E K^{\omega^{A,0}}(a) \setminus A \\
                                   [x,y] & \mapsto  [x+i_0(x,y)(x-y),x+(i_0(x,y)+1)(x-y)] ,\end{cases}
\]
is an injection so that at least half of the edges of $\partial_E K^{\omega^{A,0}}(a)$ are indeed closed in $\omega$. Let us denote $m=\abs{\partial_E K^{\omega^{A,0}}(a)}\geq n$. Then we know that at least $\lceil m/2 \rceil$ edges of $\partial_E K^{\omega^{A,0}}(a)$ are closed. There are at most ${ m \choose \lceil m/2 \rceil }\leq \gamma_1 2^m$ ways of choosing those edges, thus we get for any $a\in \partial  A$
\begin{align*}
{\bf P}_{1-\epsilon}\Bigl[ a \notin K_{\infty}(\omega^{A,0}), \abs{\partial_E K^{\omega^{A,0}}(a)}\geq n\Bigr] & \leq \sum_{m \geq n} { m \choose \lceil m/2 \rceil } \mu_n\epsilon^{m/2} \\
& \leq \gamma_1\sum_{m \geq n} 6^m \epsilon^{m/2}\leq \gamma_2 (\epsilon^{1/2})^{n}.
\end{align*}
\end{proof}

A direct consequence of~(\ref{upperb_LA1}), Lemma~\ref{corfirstpassage} and Lemma~\ref{trapsize} is the following tail estimate on $L_A^1$, defined below~(\ref{def_eta})
\begin{lemma}
\label{tail_l1}
Set $A=B^E(x,r)$, there exists a non-increasing function $\alpha_3: [0,1] \to [0,1]$ such that for  $\epsilon<\epsilon_3$ and $n\in \N$,
\[
{\bf P}_{1-\epsilon}[L_A^1 \geq n+C_5r] \leq  C_6 r^{2d}\alpha_3(\epsilon)^{n},
\]
where $C_5$, $C_6$, $\epsilon_3$ and $\alpha_3(\cdot)$ depend only on $d$ and $\displaystyle{\lim_{\epsilon \to 0} \alpha_3(\epsilon) =0}$.
\end{lemma}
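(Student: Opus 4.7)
The proof plan is essentially a one-line combinatorial assembly of the tail bounds already established for $M_A$ and $T_A$, together with the deterministic inequality
\[
L_A^1 \leq r+\max(M_A,T_A)
\]
stated at~(\ref{upperb_LA1}). The structure of the argument should therefore be: translate the event $\{L_A^1\geq n+C_5r\}$ into an event concerning $M_A$ or $T_A$, apply the union bound, and invoke Lemma~\ref{corfirstpassage} and Lemma~\ref{trapsize}.

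More precisely, I would fix the constant $C_5:=4d+1$ (this value is chosen so as to match the linear-in-$r$ shift appearing in Lemma~\ref{corfirstpassage}). On the event $\{L_A^1\geq n+C_5r\}$, the inequality~(\ref{upperb_LA1}) forces $\max(M_A,T_A)\geq n+(C_5-1)r=n+4dr$. A union bound then gives
\[
{\bf P}_{1-\epsilon}[L_A^1\geq n+C_5r]\leq {\bf P}_{1-\epsilon}[M_A\geq n+4dr]+{\bf P}_{1-\epsilon}[T_A\geq n+4dr].
\]
For $\epsilon<\min(\epsilon_1,\epsilon_2)$, Lemma~\ref{corfirstpassage} bounds the first term by $C_3 r^{2d}\alpha_1(\epsilon)^{n}$, and Lemma~\ref{trapsize} bounds the second by $C_4 r^{d}\alpha_2(\epsilon)^{n+4dr}\leq C_4 r^{d}\alpha_2(\epsilon)^{n}$ since $\alpha_2(\epsilon)\in[0,1]$.

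Setting $\alpha_3(\epsilon):=\max(\alpha_1(\epsilon),\alpha_2(\epsilon))$, $C_6:=C_3+C_4$, and $\epsilon_3:=\min(\epsilon_1,\epsilon_2)$, we obtain the required bound; moreover $\alpha_3(\epsilon)\to 0$ as $\epsilon\to 0$ because both $\alpha_1$ and $\alpha_2$ have this property, and $\alpha_3$ inherits monotonicity from its components. There is no real obstacle here: all the nontrivial work has been done in the preceding two lemmas, and this lemma is purely bookkeeping of constants and a union bound. The only minor point to watch is that the polynomial prefactor must be taken as $r^{2d}$ (dominating both $r^{2d}$ from $M_A$ and $r^d$ from $T_A$) so that a single exponent suffices for all $r\geq 1$.
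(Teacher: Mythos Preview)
Your proposal is correct and follows exactly the approach the paper indicates: the lemma is stated there as ``a direct consequence of~(\ref{upperb_LA1}), Lemma~\ref{corfirstpassage} and Lemma~\ref{trapsize}'', and you have simply spelled out the union bound and the bookkeeping of constants that this entails.
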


Recalling the definition of $H_A$ above~(\ref{notation_L}), let us introduce
\begin{equation}
\label{notation_LA}
L_A'(\omega)=\begin{cases}
\infty & \text{ if } \forall y\in \partial A,\ y\notin K_{\infty}(\omega^{A,0}) \\
d_{\omega^{A,0}}(\partial A,H_A(\omega)) & \text{otherwise,} \end{cases}
\end{equation}
it is plain that $L_A \leq L_A'+r$.

We need one more estimate before turning to the tail of $L_A'$ (and thus $L_A$). Define the cone $\mathbb{T}=\{ae^{(1)}+be^{(2)}, 0 \leq b \leq a/2 \text{ for } a,b \in \N \}$. It is a standard percolation result that $p_c(\mathbb{T})<1$  (see Section 11.5 of~\cite{Grimmett}) and well-known that the infinite cluster is unique. We denote $K_{\infty}^{\mathbb{T}}(\omega)$ the unique infinite cluster of $\mathbb{T}$ induced by the percolation $\omega$, provided $\epsilon<1-p_c(\mathbb{T})$.

\begin{lemma}
\label{orientcone}
There exists a non-increasing function $\alpha_4: [0,1] \to [0,1]$ so that for  $\epsilon<\epsilon_4$ and $n\in \N$,
\[
{\bf P}_{1-\epsilon}\Bigl[d_{\mathbb{T}}(0,K_{\infty}^{\mathbb{T}}(\omega)) \geq 1+n\Bigl] \leq  C_7\alpha_4(\epsilon)^{n},
\]
where $C_7$, $\alpha_4(\cdot)$ depend only on $d$ and $\displaystyle{\lim_{\epsilon \to 0} \alpha_4(\epsilon) =0}$.
\end{lemma}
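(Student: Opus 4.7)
The plan is to reduce the event $\{d_{\mathbb{T}}(0, K_{\infty}^{\mathbb{T}}) \geq n+1\}$ to the existence of a long contour of $\omega$-closed edges in $\mathbb{T}$ separating $0$ from infinity, and then to mimic the Peierls counting argument used in Lemma~\ref{trapsize}.

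First I would fix $\epsilon_4 < 1-p_c(\mathbb{T})$ so that for $\epsilon < \epsilon_4$ the infinite cluster $K_{\infty}^{\mathbb{T}}(\omega)$ is a.s.\ well-defined and unique. Note that the event $\{d_{\mathbb{T}}(0, K_{\infty}^{\mathbb{T}}) \geq n+1\}$ is exactly $\{B_{\mathbb{T}}(0,n) \cap K_{\infty}^{\mathbb{T}} = \emptyset\}$, i.e.\ every vertex in the $\mathbb{T}$-ball of radius $n$ lies in a finite component of $\omega$ restricted to $\mathbb{T}$. On this event let $S$ denote the union of these (necessarily finite) components. Then $S$ is a finite subset of $\mathbb{T}$ with $B_{\mathbb{T}}(0,n) \subseteq S$, and every edge of $\mathbb{T}$ with exactly one endpoint in $S$ must be $\omega$-closed (otherwise its two endpoints would be in the same cluster, contradicting the construction). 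Call $F$ this random set of closed boundary edges.

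Next I would bound $|F|$ from below by an isoperimetric argument. Since $\mathbb{T}$ is essentially a two-dimensional wedge with positive opening angle, one has $|B_{\mathbb{T}}(0,n)| = \Theta(n^2)$ and $|\partial^{\mathbb{T}} B_{\mathbb{T}}(0,n)| = \Theta(n)$, and thus the min-cut in $\mathbb{T}$ separating $B_{\mathbb{T}}(0,n)$ from infinity is of order $n$. By a max-flow/min-cut argument, since the $\Theta(n)$ vertices of $\partial^{\mathbb{T}} B_{\mathbb{T}}(0,n) \subseteq S$ must all reach infinity through edge-disjoint paths of $\mathbb{T}$ (up to the bounded degree of $\mathbb{Z}^d$), and each such path must cross $F$, one obtains $|F| \geq cn$ for some $c = c(d) > 0$.

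The third step is a Peierls-style counting: the number of edge subsets $F \subseteq E(\mathbb{T}) \subseteq E(\mathbb{Z}^d)$ of size $m$ that contain a minimal cut separating $0$ from infinity is bounded above by $\mu_m \leq 3^m$ (the same estimate cited before Lemma~\ref{trapsize}). Since each edge of $F$ is independently $\omega$-closed with probability $\epsilon$, a union bound yields
\[
{\bf P}_{1-\epsilon}\bigl[d_{\mathbb{T}}(0, K_{\infty}^{\mathbb{T}}) \geq n+1\bigr] \leq \sum_{m \geq \lceil cn \rceil} 3^m \epsilon^m \leq \frac{(3\epsilon)^{\lceil cn \rceil}}{1-3\epsilon},
\]
for $\epsilon < 1/3$, giving the desired bound with $\alpha_4(\epsilon) := (3\epsilon)^{c}$, which tends to $0$ as $\epsilon \to 0$, and a suitable constant $C_7$.

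The main technical obstacle is formalising the isoperimetric lower bound in the cone, namely verifying that every finite $S \supseteq B_{\mathbb{T}}(0,n)$ has $\mathbb{T}$-edge boundary of size $\geq cn$, uniformly in $S$. This is where the two-dimensional nature and positive opening angle of $\mathbb{T}$ are essential, ensuring that the sphere $\partial^{\mathbb{T}} B_{\mathbb{T}}(0,n)$ has linear size and that flows from it must traverse $F$; the argument is essentially topological and analogous to, but simpler than, the contour counting used for $T_A$.
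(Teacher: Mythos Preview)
Your counting step has a genuine gap. The bound $\mu_m \leq 3^m$ quoted before Lemma~\ref{trapsize} counts Peierls contours in $\mathbb{Z}^d$ that disconnect a fixed vertex from infinity; in two dimensions these are dual \emph{cycles} surrounding the vertex. Your set $F$, however, is a cut in the cone $\mathbb{T}$: by planar duality it corresponds to a dual path crossing the cone from one boundary ray to the other, not a closed cycle in $\mathbb{Z}^2$. The $\mu_m$ bound does not apply to such objects, and the statement ``the number of edge subsets $F$ of size $m$ that contain a minimal cut separating $0$ from infinity is bounded above by $\mu_m$'' is simply false as written. Even after passing to a minimal subcut $\hat F\subseteq F$, counting dual crossing paths of length $m$ requires anchoring the path at a specified dual vertex, which contributes a polynomial prefactor you have not produced. (Incidentally, the part you flag as the main obstacle --- the isoperimetric lower bound $|F|\geq cn$ --- is the routine step: the $\Theta(n)$ disjoint rays in the $e^{(1)}$ direction emanating from $\partial B_{\mathbb{T}}(0,n)$ give it immediately by max-flow/min-cut. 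The counting is where your argument actually breaks.)

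The paper's proof avoids the isoperimetric detour altogether and supplies the missing anchoring directly. On the event $\{d_{\mathbb{T}}(0,K_\infty^{\mathbb{T}}) = n+1\}$ it fixes a vertex $x\in K_\infty^{\mathbb{T}}$ realising the distance (there are $O(n)$ candidates), takes a neighbour $y\notin K_\infty^{\mathbb{T}}$ at distance $n$, and considers the dual edge $e'$ of $[x,y]$. The closed edge-boundary of the finite cluster of $y$ yields, by planar duality, a closed dual path through $e'$ separating $0$ from $K_\infty^{\mathbb{T}}$ that must exit $\mathbb{T}$; since $e'$ sits at $\mathbb{T}$-distance $n$ from the apex, one of its two halves has length at least $n/\gamma_2$. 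One then union-bounds over the $O(n)$ starting points and the at most $4^m$ dual paths of length $m\geq n/\gamma_2$ issuing from each. This is precisely the anchoring your argument is missing; with it, the Peierls-style counting goes through.
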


\begin{proof}
Choose $\epsilon<1-p_c(\mathbb{T})$, so that $K_{\infty}^{\mathbb{T}}(\omega)$ is well defined almost surely. We emphasize that the following reasoning is in essence two dimensional, so we are allowed to use duality arguments (see~\cite{Grimmett}, Section 11.2). We recall that an edge of the dual lattice (i.e. of $\Z^2+(1/2,1/2)$) is called closed when it crosses a closed edge of the original lattice.

The idea is to show that if $d_{\mathbb{T}}(0,K_{\infty}^{\mathbb{T}}(\omega))=n+1$, there is a closed interface, in the dual lattice, separating the infinite cluster from $0$ in $\mathbb{T}$. The length of this interface grows linearly with $n$ and so this event has very small probability.

If $d_{\mathbb{T}}(0,K_{\infty}^{\mathbb{T}}(\omega))=n+1$, then let $x$ be a point for which this distance is reached, $x$ belongs to a set of at most $\gamma_1n$ points. Consider an edge $e=[x,y]$ where $d_{\mathbb{T}}(0,y)=n$, implying that $y\notin K_{\infty}^{\mathbb{T}}(\omega)$. Let $e'$ denote the corresponding edge in the dual lattice (see Figure~5). From each endpoint of $e'$ there is a closed path in the dual lattice, such that the union of those path and $e'$ separates $K_{\infty}^{\mathbb{T}}$ from $0$. The union of $e'$ and the longest one of these paths has to be at least of length $n/\gamma_2$. Thus there has to be a closed path $\mathcal{P}$ in the dual lattice of length $m\geq n/\gamma_2$ starting from one of the endpoints of $e'$ and exiting $\mathbb{T}$.

\begin{figure}[htpd]
\centering
\epsfig{file=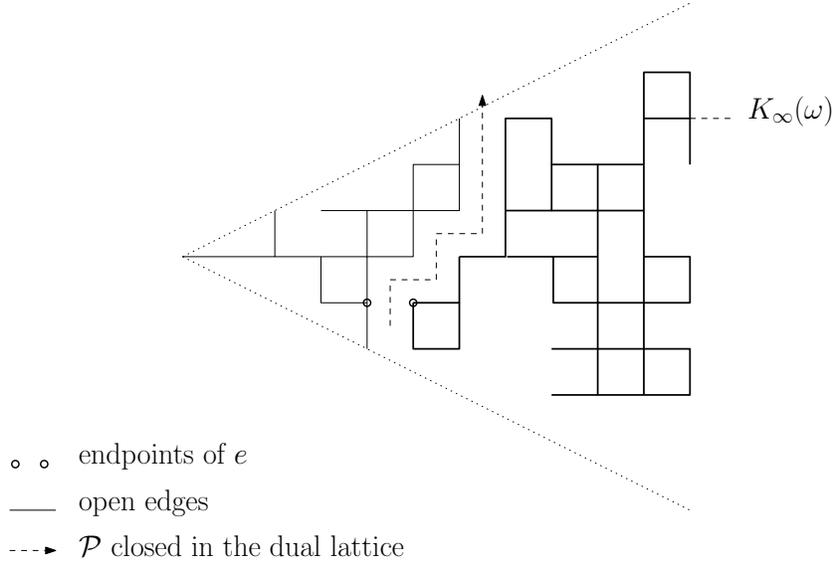, width=11cm}
\caption{The closed path in the dual lattice}
\end{figure}

 Thus since there are at most $4^m$ paths of length $m$ starting at a given point, we get for $\epsilon$ small enough
\[
{\bf P}_{1-\epsilon}\Bigl[d_{\mathbb{T}}(0,K_{\infty}^{\mathbb{T}}(\omega))= 1+n\Bigr] \leq 2\gamma_1n \sum_{m\geq \frac n{\gamma_2}} 4^m \epsilon^m\leq \gamma_3n (4\epsilon)^{n/\gamma_2},
\]
and the result follows since for $n$ large enough $\gamma_3 n \leq 2^{n}$ we have for $n$ large enough
\[
{\bf P}_{1-\epsilon}\Bigl[d_{\mathbb{T}}(0,K_{\infty}^{\mathbb{T}}(\omega))\geq 1+n\Bigr]\leq \sum_{m\geq n } \gamma_3m (4\epsilon)^{m/\gamma_2} \leq \gamma_4(2^{\gamma_5}\epsilon^{1/\gamma_2})^n.
\]
\end{proof}

Now we turn to the study of the asymptotics of $L_A$.

\begin{proposition}
\label{perco}
Set $A=B^E(x,r)$, there exists a non-increasing function $\alpha: [0,1] \to [0,1]$ so that for  $\epsilon<\epsilon_0$ and $n\in \N$,
\[
{\bf P}_{1-\epsilon}[L_A\geq n+C_8r] \leq  C_9r^{2d}n\alpha(\epsilon)^{n},
\]
where $C_8$, $C_9$, $\epsilon_0$ and $\alpha(\cdot)$ depend only on $d$ and $\ell$ and $\displaystyle{\lim_{\epsilon \to 0} \alpha(\epsilon) =0}$.
\end{proposition}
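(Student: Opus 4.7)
The plan is to split $\{L_A \geq n + C_8 r\}$ according to whether $\partial A \cap K_\infty(\omega^{A,0}) = \emptyset$, and within the complementary event according to whether $L_A^1$ is itself large. When $\partial A \cap K_\infty(\omega^{A,0}) = \emptyset$, equation~(\ref{acapk}) forces the existence in $\omega^{A,0}$ of a finite cluster meeting $\partial A$ and reaching $\partial B(x, n + C_8 r - 1)$; such a cluster has diameter at least $n$, hence edge-boundary at least $2n$, so $T_A \geq 2n$ and Lemma~\ref{trapsize} closes this branch. In the complementary event we have $L_A \leq r + L_A'$ with $L_A' = d_{\omega^{A,0}}(\partial A, H_A)$; if moreover $L_A^1 \geq n/K$ for a large constant $K = K(d,\ell)$ to be chosen, Lemma~\ref{tail_l1} directly supplies the required bound (after enlarging $C_8$ to absorb the $C_5 r$ correction).

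The core of the proof is the remaining subcase $L_A^1 \leq n/K$ and $L_A' \geq n$ (with $C_8$ chosen to absorb additive $O(r)$ discrepancies), and this is where I expect the main difficulty. The strategy is to produce, with exponentially high probability, an $\omega^{A,0}$-path from $\partial A$ into $H_A$ of length strictly smaller than $n$, which would contradict the definition of $L_A'$. I would work in the two-dimensional cone $\mathbb{T}_x = x + \mathbb{T}$ in the plane spanned by $e^{(1)}, e^{(2)}$, shifted further by $a^{\ast} e^{(1)}$ with $a^{\ast} := \lceil \sqrt{d}\,\eta L_A^1 \rceil + r$. Because $e^{(1)} \cdot \vec\ell \geq 1/\sqrt{d}$, every vertex of the shifted cone lies in $H_A$; and because $a^{\ast} \geq r$ it lies entirely outside $B(x, r)$, so its infinite cluster is unaffected by the finitely many modifications defining $\omega^{A,0}$ and is contained in $K_\infty(\omega^{A,0})$.

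Applying Lemma~\ref{orientcone} to this shifted cone, with probability at least $1 - C_7 \alpha_4(\epsilon)^{n/4}$ there exists a vertex $y^{\ast} \in H_A \cap K_\infty(\omega^{A,0})$ at $\Z^d$-distance at most $a^{\ast} + 1 + n/4$ from $x$. For any $a \in \partial A \cap K_\infty(\omega^{A,0})$, the second form of Lemma~\ref{firstpassage} applied to $a$ and $y^{\ast}$ then yields, with probability at least $1 - 2\alpha_1(\epsilon)^{n/8}$,
\[
d_{\omega^{A,0}}(a, y^{\ast}) \leq \tfrac{n}{4} + 2\, d(a, y^{\ast}) + 4dr \leq \tfrac{n}{4} + 2\bigl(3r + \sqrt{d}\,\eta\, n/K + 1 + \tfrac{n}{4}\bigr) + 4dr.
\]
Choosing $K$ with $2\sqrt{d}\,\eta/K \leq 1/4$ and $C_8$ large enough to swallow the remaining $O(r)$ terms, the right-hand side is strictly less than $n$, contradicting $L_A' \geq n$.

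Summing the three contributions and taking $\alpha(\epsilon)$ to be a suitable positive power of $\max\bigl(\alpha_1(\epsilon), \alpha_2(\epsilon), \alpha_3(\epsilon), \alpha_4(\epsilon)\bigr)$ then yields the announced rate; the polynomial factor $r^{2d} n$ comes from combining the prefactors of the three branches and cushioning the small-$n$ regime where the trivial bound $1$ applies. The main obstacle is the third subcase above: the cone shift must be simultaneously beyond $B(x, r)$, so that the cone percolation decouples from the conditioning implicit in $\omega^{A,0}$, and short enough in $\Z^d$ that the first-passage estimate in Lemma~\ref{firstpassage} still delivers a total path length below $n$. This is exactly what the interlocking choices $L_A^1 \leq n/K$ and $a^{\ast} = \lceil \sqrt{d}\,\eta L_A^1 \rceil + r$ achieve once $K$ is taken large.
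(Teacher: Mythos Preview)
Your proposal is correct and follows the paper's strategy closely: the same three-way split (the case $\partial A\cap K_\infty(\omega^{A,0})=\emptyset$ handled via Lemma~\ref{trapsize}, large $L_A^1$ via Lemma~\ref{tail_l1}, and the core case by combining the cone estimate Lemma~\ref{orientcone} with the chemical-distance estimate Lemma~\ref{firstpassage}). The one tactical difference is that the paper anchors the cone at a vertex $y\in\partial A$ and then argues separately (see~(\ref{ppstep1})--(\ref{ppstep2})) that the overlap with $A$ is harmless, whereas you push the apex out by $a^\ast e^{(1)}$ so that the cone misses $B(x,r)$ entirely; this is a clean simplification. One small repair is needed: as written your $a^\ast=\lceil\sqrt d\,\eta L_A^1\rceil+r$ depends on $\omega$, so Lemma~\ref{orientcone} cannot be applied directly to a random translate---just replace $L_A^1$ by its assumed upper bound $n/K$ in the definition of $a^\ast$, which makes the cone deterministic while still keeping it inside $H_A$ on the event $\{L_A^1\le n/K\}$.
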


\begin{proof}
Let us notice that two cases emerge. First let us consider that we are on the event $\{\partial A\cap K_{\infty}= \emptyset\}$ in which case we have by~(\ref{acapk})
\[
L_A(\omega) \leq \min\{n \geq 0,~\partial A \text{ is not connected to } \partial B(x,n)\} \leq r+T_A(\omega),
\]
hence because of Lemma~\ref{trapsize} we have for $C_8>1$
\begin{equation}
\label{UB_LA_part1}
{\bf P}[\partial A \cap K_{\infty}=\emptyset,~L_A\geq n+C_8r] \leq C_4r^{2d} \alpha_2(\epsilon)^n.
\end{equation}

We are now interested in the case where $\partial A\cap K_{\infty}\neq \emptyset$. It is sufficent to give an upper bound for $L_A'$ (defined at~(\ref{notation_LA})) since $L_A\leq L_A'+r$. Set $\epsilon<\epsilon_1\wedge\epsilon_2\wedge\epsilon_3\wedge \epsilon_4$, we notice using Lemma~\ref{tail_l1} that
\begin{align}
\label{pstep1}
  &{\bf P}_{1-\epsilon}[\partial A\cap K_{\infty}\neq \emptyset,~L_A' \geq n+(C_8-1)r] \\ \nonumber
 \leq &{\bf P}_{1-\epsilon}[L_A^1 \geq  n/(8\eta d)+C_5r]\\ \nonumber & \qquad + {\bf P}_{1-\epsilon}[\partial A\cap K_{\infty}\neq \emptyset,~L_A^1 \leq  n/(8\eta d)+C_5r,~L_A' \geq  n+(C_8-1)r] \\ \nonumber                                                        \leq& {\bf P}_{1-\epsilon}[\partial A\cap K_{\infty}\neq \emptyset,~L_A^1 \leq  n/(8\eta d)+C_5r,~L_A' \geq  n+(C_8-1)r]\\ \nonumber &\qquad \qquad \qquad \qquad \qquad \qquad+C_6 r^{2d}\alpha_1(\epsilon)^{ n/(8\eta d)}.
\end{align}

We denote $h_m^{x}$ the half-space $\{y,\ y\cdot\vec{\ell}\geq x\cdot\vec \ell + m\}$, we have
\begin{align}
\label{pstep2}
&{\bf P}_{1-\epsilon}[\partial A\cap K_{\infty}\neq \emptyset,~L_A^1 \leq  n/(8\eta d)+C_5r,~L_A' \geq  n+(C_8-1)r] \\ 
\nonumber\leq & {\bf P}_{1-\epsilon}\Bigl[\partial A\cap K_{\infty}\neq \emptyset,~d_{\omega^{A,0}}(\partial A,h_{n/(8  d)+\eta C_5 r}^{x} )\geq  n+(C_8-1)r \Bigr] \\ 
\nonumber \leq & \abs{\partial A} \max_{y\in \partial A}{\bf P}_{1-\epsilon}\Bigl[y \stackrel{\omega^{A,0}}{\leftrightarrow} \infty,~d_{\omega^{A,0}}(y,h_{n/(8 d)+\gamma_1 r}^{x}) \geq n+(C_8-1)r\Bigr].
\end{align}

Set $y\in \partial A$ and let us denote $\gamma_2$ a constant which will be chosen large enough. Using the uniqueness of the infinite cluster we get 
\begin{align}
\label{ppstep1}
 & {\bf P}_{1-\epsilon}\Bigl[d_{\Z^d}\Bigl(y,K_{\infty}(\omega^{A,0})\cap h_{n/(8 d)+\gamma_1 r}^{x}\cap \{y+ \mathbb{T}\}\Bigr)  \geq n/2+\gamma_2 r\Bigr] \\ \nonumber
 \leq & {\bf P}_{1-\epsilon}\Bigl[d_{y+\mathbb{T}}\Bigl(y,K_{\infty}^{y+\mathbb{T}}(\omega^{A,0})\cap h_{n/(8 d)+\gamma_1 r}^{x}\Bigr) \geq  n/2+\gamma_2 r\Bigr] \\ \nonumber
 \leq & {\bf P}_{1-\epsilon}\Bigl[d_{y+\mathbb{T}}\Bigl(y,K_{\infty}^{y+\mathbb{T}}(\omega)\cap h_{n/(8 d)+\gamma_1 r}^{x}\Bigr) \geq  n/2+\gamma_2 r\Bigr],
\end{align}
where we have to suppose that $\gamma_2\geq 2$ for the last inequality. Indeed then $d_{y+\mathbb{T}}(y,K_{\infty}^{y+\mathbb{T}}(\omega))=d_{y+\mathbb{T}}(y,K_{\infty}^{y+\mathbb{T}}(\omega^{A,0}))$ on the event $\{d_{y+\mathbb{T}}(y,K_{\infty}^{y+\mathbb{T}}(\omega))\geq \gamma_2 r\}$ since the distance to the infinite cluster is greater than the radius of $A$.

Moreover since $e^{(1)}\cdot\vec \ell \geq 1/\sqrt d$, we notice that 
\[
\max_{z\in  \partial h_m^x \cap \mathbb{T}}d_{z+\mathbb{T}}(x,y)\leq 2\sqrt{d}m.
\]

 Applying this for $m= n/(8 d)+\gamma_1 r$, we get that 
\begin{align}
\label{ppstep2}
 & {\bf P}_{1-\epsilon}\Bigl[d_{y+\mathbb{T}}\Bigl(y,K_{\infty}^{y+\mathbb{T}}(\omega)\cap h_{n/(8  d)+\gamma_1 r}^{x}\Bigr) \geq  n/2+\gamma_2 r\Bigr]\\ \nonumber
= & {\bf P}_{1-\epsilon}\Bigl[d_{y+\mathbb{T}}\Bigl(y,K_{\infty}^{y+\mathbb{T}}(\omega) \Bigr) \geq  n/2+\gamma_2 r\Bigr],
\end{align}
where $\gamma_2$ is large enough so that $2\sqrt d(n/(8 d)+ \gamma_1 r) \leq n/2+\gamma_2 r$. Indeed, if $d_{y+\mathbb{T}}\bigl(y,K_{\infty}^{y+\mathbb{T}}(\omega) \bigr) \geq  n/2+\gamma_2 r$ then $K_{\infty}^{y+\mathbb{T}}(\omega) \subset h_{n/(8  d)+\gamma_1 r}^{y}$.

The equations~(\ref{ppstep1}) and~(\ref{ppstep2}) used with Lemma~\ref{orientcone} yield that for $\gamma_3$ large enough and any $y\in \partial A$,
\[
{\bf P}_{1-\epsilon}\Bigl[d_{\Z^d}\Bigl(y,K_{\infty}(\omega^{A,0})\cap h_{n/(8d)+\gamma_1 r}^{x}\cap \{y+ \mathbb{T}\}\Bigr) \geq  n/2+\gamma_3 r\Bigr]  \leq \gamma_4 \alpha_4(\epsilon)^{n/2}.
\]

If we use Lemma~\ref{firstpassage} and the previous inequality, for $C_8$ large enough so that $n+(C_8-1)r\geq 2(n/2+\gamma_3 r)+4dr$,
\begin{align}
\label{pstep3}
       & {\bf P}_{1-\epsilon}\Bigl[y \in K_{\infty}^{\omega^{A,0}},~d_{\omega^{A,0}}\Bigl(y,h_{n/(8 d)+\gamma_1 r}^{x}\Bigr) \geq  n+C_8r\Bigr] \\ \nonumber
 \leq  & {\bf P}_{1-\epsilon}\Bigl[d_{\Z^d}\Bigl(y,K_{\infty}(\omega^{A,0})\cap h_{n/(8 d)+\gamma_1 r}^{x}\cap \{y+ \mathbb{T}\}\Bigr) \geq  n/2+\gamma_3 r\Bigr]\\ \nonumber
 +     & \sum_{z \in \partial  B_{\Z^d}(y,  \lceil n/2+\gamma_3 r \rceil)\cap \{y+ \mathbb{T}\}} {\bf P}_{1-\epsilon}\Bigl[z \stackrel{\omega^{A,0}}{\leftrightarrow} y, d_{\omega^{A,0}}(z,y) \geq 2d(y,z)+4dr\Bigr] \\\nonumber
 \leq &\gamma_4 \alpha_4(\epsilon)^{n/2} +\gamma_5( n+\gamma_3 r)  \alpha_1(\epsilon)^{ n/2} \leq \gamma_6 r n \alpha_5(\epsilon)^{n},
\end{align}
where $ \epsilon<\epsilon_5$ depends only on $d$ and $\ell$ for some $\alpha_5(\cdot)$ such that $\displaystyle{\lim_{\epsilon \to 0} \alpha_5(\epsilon) =0}$. 

Adding up~(\ref{pstep1}), (\ref{pstep2}) and~(\ref{pstep3}) we get
\begin{align*}
{\bf P}_{1-\epsilon}[\partial A\cap K_{\infty}\neq \emptyset, L_A' \geq n+C_8r] &\leq \gamma_7 nr^{2d}(\alpha_1(\epsilon)^{ n/(8\eta d)}+ \alpha_5(\epsilon)^{n}) \\
                      &\leq \gamma_8 nr^{2d} \alpha(\epsilon)^{n},
\end{align*}
where $\alpha(\epsilon):= \alpha_1(\epsilon)^{ 1/(8 \eta d)}+\alpha_5(\epsilon)$. As we have $\displaystyle{\lim_{\epsilon \to 0} \alpha(\epsilon) =0}$, this last equation and~(\ref{UB_LA_part1}) completes the proof of Proposition~\ref{perco}.
\end{proof}

Essentially by replacing $(\Z^d,E(\Z^d))$ by $(\Z^d,E(\Z^d\setminus [z,z+e]))$ and $\omega$ by $\omega^{z,e}$ (resp. $\omega^{(z,2)=e}$) along with some minor modifications we obtain

\begin{proposition}
\label{perco2}
Set $A=B^E(x,r)$, $z\in \Z^d$ and $e\in \nu$, there exists a non-increasing function $\alpha: [0,1] \to [0,1]$ so that for  $\epsilon<\epsilon_0$ and $n\in \N$,
\[
{\bf P}_{1-\epsilon}[ L_A(\omega^{z,e})\geq  n+C_{8}r] \leq  C_{9}r^{2d}n\alpha(\epsilon)^{n},
\]
and
\[
{\bf P}_{1-\epsilon}[ L_A(\omega^{(z,2)=e})\geq  n+C_{8}r] \leq  C_{9}r^{2d}n\alpha(\epsilon)^{n},
\]
where $C_{8}$, $C_{9}$, $\epsilon_0$ and $\alpha(\cdot)$ depend only on $d$ and $\ell$ and $\displaystyle{\lim_{\epsilon \to 0} \alpha(\epsilon) =0}$.
\end{proposition}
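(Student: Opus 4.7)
The plan is to follow the proof of Proposition~\ref{perco} essentially line by line, observing at each step that replacing $\omega$ by $\omega^{z,e}$ (resp.\ $\omega^{(z,2)=e}$) only amounts to fixing the state of a bounded number of edges contained in $B^E(z,2)$. Outside $B^E(z,2)$ the configuration coincides with an unconditioned Bernoulli $(1-\epsilon)$-percolation, so the underlying large-scale percolation structure --- and in particular the almost sure uniqueness of the infinite cluster --- is unaffected.

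First I would revisit the three ingredients feeding into the proof of Proposition~\ref{perco}. Lemma~\ref{firstpassage} was obtained by applying the stochastic domination Proposition~\ref{domstoch} to the $k$-dependent family of indicators of ``strongly wired'' vertices; the modification near $z$ can only alter the status of $O(1)$ of these indicators, so after conditioning on those finitely many values one may apply Proposition~\ref{domstoch} to the remaining field with a uniformly comparable lower bound. Lemma~\ref{trapsize} rests on the Peierls' contour count $\mu_n\leq 3^n$ together with the observation that at least half the edges of $\partial_E K^{\omega^{A,0}}(a)$ must be closed in $\omega$; modifying $O(1)$ edges around $z$ affects at most $O(1)$ edges of any given contour, so the exponential tail survives up to a harmless constant. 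Lemma~\ref{orientcone} is a two-dimensional duality estimate for the cone $y+\mathbb{T}$; in the application we only control distances from a generic $y\in\partial A$, and by translating the cone appropriately one ensures its apex is far from $z$, leaving the duality argument intact.

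With the three lemmas confirmed under the modification, the body of Proposition~\ref{perco} transfers unchanged: the case split on $\{\partial A\cap K_\infty\neq\emptyset\}$, the upper bound $L_A\leq L_A'+r$ in the second case, the decomposition~(\ref{pstep1})--(\ref{pstep3}) via $L_A^1$ and the auxiliary half-space $h^x_{n/(8d)+\gamma_1 r}$, and the final tail combination all carry through verbatim with $\omega$ replaced by $\omega^{z,e}$ or $\omega^{(z,2)=e}$, producing exactly the same form of estimate with constants $C_8$, $C_9$ and a function $\alpha(\cdot)$ depending only on $d$ and $\ell$.

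The main obstacle is purely bookkeeping: ensuring uniformity in $z$ and $e$. Translation invariance of the $(1-\epsilon)$-product measure, together with the fact that $|B^E(z,2)|$ is a fixed constant depending only on $d$, lets me reduce to the case $z=0$ and absorb the conditioning on $B^E(0,2)$ via a multiplicative prefactor bounded by $2^{|B^E(0,2)|}$, which does not spoil the exponential decay in $n$. Accordingly the estimates hold in the same form for both $\omega^{z,e}$ and $\omega^{(z,2)=e}$ uniformly in $z\in\mathbb{Z}^d$ and $e\in\nu$, as required.
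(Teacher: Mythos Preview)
Your overall approach is correct and matches the paper's own (one-line) justification, which simply says to replace $(\Z^d, E(\Z^d))$ by $(\Z^d, E(\Z^d)\setminus [z,z+e])$ and $\omega$ by $\omega^{z,e}$ (resp.\ $\omega^{(z,2)=e}$) and rerun Proposition~\ref{perco} with minor modifications. The lemma-by-lemma verification you sketch is exactly what ``minor modifications'' means here.

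Two points deserve correction. First, your justification for Lemma~\ref{orientcone} (``translating the cone so its apex is far from $z$'') does not work: the apex $y\in\partial A$ is imposed by the application in~(\ref{pstep2})--(\ref{pstep3}), and the two-dimensional cone $y+\mathbb{T}$ may well contain the modified edge. The correct fix is parallel to your Peierls argument: the closed dual path in the proof of Lemma~\ref{orientcone} can use the single forced-closed edge $[z,z+e]$ for free, which shifts the exponent in the bound $(4\epsilon)^{n/\gamma_2}$ by at most one and is absorbed into the constants.

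Second, your final ``bookkeeping'' claim that conditioning on the configuration in $B^E(0,2)$ costs only a factor $2^{|B^E(0,2)|}$ is wrong: forcing one edge to be \emph{closed} costs a Radon--Nikodym factor $1/\epsilon$ under $\mathbf{P}_{1-\epsilon}$, which is unbounded as $\epsilon\to 0$. Fortunately this paragraph is redundant once the three lemmas have been re-proved for the modified configuration: there the $O(1)$ forced edges enter only as additive shifts of $n$ in the exponents, which are absorbed into $C_8$ rather than into a multiplicative prefactor, and the resulting constants are automatically uniform in $z$ and $e$. So drop the naive conditioning step and rely solely on the direct re-proofs of Lemmas~\ref{firstpassage},~\ref{trapsize} and~\ref{orientcone}.
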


Also by changing $\omega$ by $\omega^{z,\emptyset}$ (resp. $\omega^{(z,2)=\emptyset}$) we can obtain

\begin{proposition}
\label{perco3}
Set $A=B^E(x,r)$, $z\in \Z^d$, there exists a non-increasing function $\alpha: [0,1] \to [0,1]$ so that for  $\epsilon<\epsilon_0$ and $n\in \N$,
\[
{\bf P}_{1-\epsilon}[ L_A(\omega^{z,\emptyset})\geq  n+C_{8}r] \leq  C_{9}r^{2d}n\alpha(\epsilon)^{n},
\]
and
\[
{\bf P}_{1-\epsilon}[ L_A(\omega^{(z,2)=\emptyset})\geq  n+C_{8}r] \leq  C_{9}r^{2d}n\alpha(\epsilon)^{n},
\]
where $C_{8}$, $C_{9}$, $\epsilon_0$ and $\alpha(\cdot)$ depend only on $d$ and $\ell$ and $\displaystyle{\lim_{\epsilon \to 0} \alpha(\epsilon) =0}$.
\end{proposition}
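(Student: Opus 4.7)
The plan is to deduce Proposition~\ref{perco3} from the already established Proposition~\ref{perco} via a short change-of-measure argument, rather than re-running the whole geometric derivation. The key observation is that $\omega^{z,\emptyset}$ is defined by forcing every edge of $B^E(z,1)$ to be open and leaving all other edges untouched, so the law of $\omega^{z,\emptyset}$ under ${\bf P}_{1-\epsilon}$ coincides with the conditional law of $\omega$ given the event $\mathcal{O}_z := \{\omega(e)=1~\forall e\in B^E(z,1)\}$: both are the product measure that is Bernoulli$(1-\epsilon)$ on edges outside $B^E(z,1)$ and deterministic~$1$ on edges inside.

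With this identification in hand, I would write, for the event $F_n := \{\omega' : L_A(\omega') \geq n+C_8 r\}$,
\[
{\bf P}_{1-\epsilon}\bigl[L_A(\omega^{z,\emptyset}) \geq n+C_8 r\bigr] \; = \; {\bf P}_{1-\epsilon}[\omega \in F_n \mid \mathcal{O}_z] \; \leq \; \frac{{\bf P}_{1-\epsilon}[\omega \in F_n]}{{\bf P}_{1-\epsilon}[\mathcal{O}_z]},
\]
and then lower-bound the denominator by ${\bf P}_{1-\epsilon}[\mathcal{O}_z] = (1-\epsilon)^{|B^E(z,1)|} \geq 2^{-2d}$, which is valid thanks to the standing assumption~(\ref{ass_eps}). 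Plugging the bound of Proposition~\ref{perco} into the numerator and absorbing the purely dimensional factor $4^d$ into the constant $C_9$ yields the first inequality of Proposition~\ref{perco3}, with the same function $\alpha(\cdot)$. The second inequality, concerning $\omega^{(z,2)=\emptyset}$, is obtained by the exact same argument with $B^E(z,1)$ replaced by $B^E(z,2)$; the resulting factor $(1-\epsilon)^{-|B^E(z,2)|}$ is again a constant depending only on $d$ and is absorbed into $C_9$.

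I do not anticipate any genuine obstacle here: the proof is a one-line reduction that uses stability of the tail estimate under opening a bounded number of edges near $z$. An alternative, considerably more tedious, route would be to repeat the proof of Proposition~\ref{perco} word-for-word with $\omega$ replaced by $\omega^{z,\emptyset}$, verifying that Lemma~\ref{firstpassage}, Lemma~\ref{corfirstpassage}, Lemma~\ref{trapsize} and Lemma~\ref{orientcone} remain valid because modifying finitely many edges alters only a local portion of the environment; this mirrors the sketch used for Proposition~\ref{perco2}, but the change-of-measure approach above is strictly cleaner and preserves the exact constant $\alpha(\cdot)$ of Proposition~\ref{perco}.
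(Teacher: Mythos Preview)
Your change-of-measure argument is correct and is genuinely different from the route the paper indicates. The paper simply states that one obtains Proposition~\ref{perco3} ``by changing $\omega$ by $\omega^{z,\emptyset}$ (resp.~$\omega^{(z,2)=\emptyset}$)'', i.e.\ by rerunning the entire proof of Proposition~\ref{perco} with the modified environment in place of $\omega$, checking that Lemmas~\ref{firstpassage}--\ref{orientcone} survive the local modification (the same device used for Proposition~\ref{perco2}). Your approach instead observes that forcing a bounded set of edges open is the same as conditioning on them being open, and that this conditioning costs only a dimensional factor $(1-\epsilon)^{-|B^E(z,1)|}\le 4^{d}$ (or $(1-\epsilon)^{-|B^E(z,2)|}$ in the second case) in front of the tail bound already supplied by Proposition~\ref{perco}. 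This is strictly shorter, reuses Proposition~\ref{perco} as a black box, and delivers the \emph{same} function $\alpha(\cdot)$ at the price of enlarging $C_9$ by a harmless constant depending only on $d$---which is exactly what the paper's closing remark ``without loss of generality the constants are the same'' allows. The paper's route, by contrast, would in principle let one keep $C_9$ unchanged but requires revisiting every lemma; your reduction is the cleaner choice here.
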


Here we assume without loss of generality that the constants are the same as in Proposition~\ref{perco}.

\section{Continuity of the speed at high density}
\label{sect_cont}

We now have the necessary tools to study the central quantities which appeared in~(\ref{intro_quotient}).
\begin{proposition}
\label{upperb}
For $0<\epsilon<\epsilon_5$, $A\subset \nu$, $A\neq \nu$ and $\delta \geq  1/2$
\[
\frac{{\bf E}\Bigl[\1{\mathcal{I}}G_{\delta}^{\omega}(0,z)|\C(z)=A\Bigr]}{{\bf E}_{1-\epsilon}\Bigl[\1{\mathcal{I}}G_{\delta}^{\omega}(0,z)\Bigr]}= \frac{{\bf E}_{1-\epsilon}\Bigl[\1{\mathcal{I}(\omega^{z,A})}G_{\delta}^{\omega^{z,A}}(0,z)\Bigr]}{{\bf E}_{1-\epsilon}\Bigl[\1{\mathcal{I}}G_{\delta}^{\omega}(0,z)\Bigr]}<C,
\]
where $C$ and $\epsilon_5$ depend only on $\ell$ and $d$ .
\end{proposition}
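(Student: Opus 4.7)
The plan is to implement the strategy outlined at the end of Section~\ref{s_kalikow}: lower bound the denominator by restricting to the event $\{\mathcal{C}(z)=\emptyset\}$, upper bound the numerator through the resistance estimate of Proposition~\ref{insertedge}, and control the resulting error using the tail bound on $L_z$ from Proposition~\ref{perco}. For the denominator, the random variable $\1{\mathcal{I}(\omega^{z,1})}G_{\delta}^{\omega^{z,1}}(0,z)$ depends only on edges outside $B^E(z,1)$, so it is independent of $\{\mathcal{C}(z)=\emptyset\}$; on this event $\omega$ and $\omega^{z,1}$ coincide, and using $\epsilon<1/2$,
\[
{\bf E}_{1-\epsilon}\bigl[\1{\mathcal{I}}\,G_{\delta}^{\omega}(0,z)\bigr]\geq(1-\epsilon)^{2d}\,{\bf E}_{1-\epsilon}\bigl[\1{\mathcal{I}(\omega^{z,1})}\,G_{\delta}^{\omega^{z,1}}(0,z)\bigr]\geq 2^{-2d}\,{\bf E}_{1-\epsilon}\bigl[\1{\mathcal{I}(\omega^{z,1})}\,G_{\delta}^{\omega^{z,1}}(0,z)\bigr].
\]

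For the numerator, I factor $G_{\delta}^{\omega^{z,A}}(0,z)=P_0^{\omega^{z,A}}[T_z<\tau_\delta]\,G_{\delta}^{\omega^{z,A}}(z,z)$. On $\mathcal{I}(\omega^{z,A})$ with $A\neq\nu$ the ball $B^E(z,1)$ satisfies $\partial B^E(z,1)\cap K_\infty(\omega^{z,A})\neq\emptyset$, so Lemma~\ref{lemgreen}, Proposition~\ref{insertedge}, the invariant measure comparison (\ref{kap1}) applied with $\delta\geq1/2$, and the identity $L_{B^E(z,1)}(\omega^{z,A})=L_z(\omega)$ from Proposition~\ref{threeprop}(1) combine to give
\[
G_{\delta}^{\omega^{z,A}}(z,z)\leq 4\kappa_1^2\,G_{\delta}^{\omega^{z,1}}(z,z)+\gamma_1\,L_z(\omega)^{C_2}e^{2\lambda L_z(\omega)}.
\]
Next I would establish the pointwise monotonicity
\[
\1{\mathcal{I}(\omega^{z,A})}\,P_0^{\omega^{z,A}}[T_z<\tau_\delta]\leq\1{\mathcal{I}(\omega^{z,1})}\,P_0^{\omega^{z,1}}[T_z<\tau_\delta]
\]
through the voltage characterization: $P_0^{\omega}[T_z<\tau_\delta]$ equals the voltage at $0$ in $\omega(\delta)$ with $v(z)=1,\,v(\Delta)=0$, and a short maximum-principle check shows that the function $v^{\omega^{z,A}}$ is subharmonic for the $\omega^{z,1}$-Laplacian at neighbors of $z$ (the new harmonic average at $y$ is a convex combination of $v^{\omega^{z,A}}(y)$ and $v(z)=1\geq v^{\omega^{z,A}}(y)$), hence $v^{\omega^{z,A}}\leq v^{\omega^{z,1}}$ pointwise. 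Multiplying the two displays yields the pointwise bound
\[
\1{\mathcal{I}(\omega^{z,A})}\,G_{\delta}^{\omega^{z,A}}(0,z)\leq 4\kappa_1^2\,\1{\mathcal{I}(\omega^{z,1})}\,G_{\delta}^{\omega^{z,1}}(0,z)+\gamma_1\,\1{\mathcal{I}(\omega^{z,1})}\,P_0^{\omega^{z,1}}[T_z<\tau_\delta]\,L_z^{C_2}e^{2\lambda L_z}.
\]

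The last step is to dispose of the expectation of the correction term. Here I invoke the decorrelation lemma Lemma~\ref{decorrelation} announced in Section~\ref{s_kalikow}: since $L_z$ is a local stopping time with respect to $(\mathcal{F}_{z,n})_n$ by Proposition~\ref{threeprop}(2), the factor $L_z^{C_2}e^{2\lambda L_z}$ essentially decouples from the global hitting event, so that
\[
{\bf E}_{1-\epsilon}\bigl[\1{\mathcal{I}(\omega^{z,1})}\,P_0^{\omega^{z,1}}[T_z<\tau_\delta]\,L_z^{C_2}e^{2\lambda L_z}\bigr]\leq\gamma_2\,{\bf E}_{1-\epsilon}\bigl[\1{\mathcal{I}(\omega^{z,1})}\,P_0^{\omega^{z,1}}[T_z<\tau_\delta]\bigr],
\]
where $\gamma_2$ is a suitable moment of $L_z^{C_2}e^{2\lambda L_z}$. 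The trivial bound $P_0^{\omega^{z,1}}[T_z<\tau_\delta]\leq G_{\delta}^{\omega^{z,1}}(0,z)$ (via $G_{\delta}^{\omega^{z,1}}(z,z)\geq 1$) and the denominator estimate from the first paragraph then close the argument. The principal obstacle is the finiteness of $\gamma_2$: this requires the exponential tail ${\bf P}_{1-\epsilon}[L_z\geq n]\leq\gamma_3 n\,\alpha(\epsilon)^n$ from Proposition~\ref{perco}, with $\alpha(\epsilon)\to 0$, to dominate the factor $e^{2\lambda n}$, which forces choosing $\epsilon_5$ small enough that $\alpha(\epsilon_5)e^{2\lambda}<1$; together with the decorrelation inequality itself, this is the main technical content of the proof.
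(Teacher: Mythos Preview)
Your proposal is correct and follows essentially the same route as the paper: lower bound the denominator by restricting to $\{\mathcal{C}(z)=\emptyset\}$, upper bound the numerator via Lemma~\ref{lemgreen} and Proposition~\ref{insertedge} together with the monotonicity $\1{\mathcal{I}(\omega^{z,A})}P_0^{\omega^{z,A}}[T_z<\tau_\delta]\leq\1{\mathcal{I}(\omega^{z,\emptyset})}P_0^{\omega^{z,\emptyset}}[T_z<\tau_\delta]$, then apply Lemma~\ref{decorrelation} and control the resulting moment of $L_z$ through Proposition~\ref{perco}. The only point to sharpen is that the decorrelation step in Lemma~\ref{decorrelation} produces an exponential factor larger than $e^{2\lambda L_z}$ (the path-forcing argument costs an extra $(8/\kappa_0)^{L_z}$), so the threshold on $\epsilon_5$ must be set so that $\alpha(\epsilon_5)$ beats this larger exponent $C_{12}$, not just $2\lambda$.
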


This section is devoted to the proof of this proposition. We have
\begin{align*}
{\bf E}\Bigl[\1{\mathcal{I}}G_{\delta}^{\omega}(0,z)\Bigr] &\geq {\bf E}\Bigl[\1{\C(z)=\emptyset}\1{\mathcal{I}}G_{\delta}^{\omega}(0,z)\Bigr]\\
                                          & = {\bf E}\Bigl[\1{\C(z)=\emptyset}\1{\mathcal{I}(\omega^{z,\emptyset})}G_{\delta}^{\omega^{z,\emptyset}}(0,z)\Bigr] \\
                                          &= {\bf P}[\C(z)=\emptyset]{\bf E}\Bigl[\1{\mathcal{I}(\omega^{z,\emptyset})}G_{\delta}^{\omega^{z,\emptyset}}(0,z)\Bigr].
\end{align*}

For $\epsilon< 1/4\leq 1-p_c(d)$, we have $ {\bf P}[\C(z)=\emptyset]>\gamma_1>0$ for $\gamma_1$ independent of $\epsilon$, so that
\begin{equation}
\label{denom0}
 {\bf E}\Bigl[\1{\mathcal{I}}G_{\delta}^{\omega}(0,z)\Bigr] \geq \gamma_1 {\bf E}\Bigl[\1{\mathcal{I}(\omega^{z,\emptyset})}G_{\delta}^{\omega^{z,\emptyset}}(0,z)\Bigr].
\end{equation}

Now we want a similar upper bound for the numerator of Proposition~\ref{upperb}. Let $A \subset \nu$, $A\neq\nu$, then by~(\ref{kap1}) and~(\ref{resdelta}) we obtain
\begin{equation}
\label{res_green}
\frac 1 {\kappa_1}e^{2\lambda z\cdot\vec \ell}\frac 1{\delta}\leq\pi^{\omega^{z,A}(\delta)}(z)\leq \kappa_1 e^{2\lambda z\cdot\vec \ell}\frac 1{\delta}.
\end{equation}

This equation combined with Lemma~\ref{lemgreen} yields
\begin{align}
\label{condit}
 &{\bf E}\Bigl[\1{\mathcal{I}(\omega^{z,A})}G_{\delta}^{\omega^{z,A}}(0,z)\Bigr]\\\nonumber
  \leq &  \frac {\kappa_1 e^{2\lambda z\cdot\vec \ell}}{\delta} {\bf E}\Bigl[\1{\mathcal{I}(\omega^{z,A})}P_0^{\omega^{z,A}}[T_z<\tau_{\delta}]R^{\omega^{z,A}}(z \leftrightarrow \Delta)\Bigr].
\end{align}

If $z\notin K_{\infty}(\omega^{z,A})$ then $P_0^{\omega^{z,A}}[T_z<\tau_{\delta}]=0$. Otherwise we can apply Proposition~\ref{insertedge} to get 
\begin{equation}
\label{apply_lem}
R^{\omega^{z,A}}_{\delta}(z \leftrightarrow \Delta) \leq 4R^{\omega^{z,\emptyset}}_{\delta}(z \leftrightarrow \Delta) + C_1 L_z(\omega)^{C_2}e^{2\lambda(L_z(\omega)-z\cdot\vec \ell)},
\end{equation}
where we used notations from~(\ref{notation_L}). 

Moreover we notice that $P_0^{\omega^{z,A}}[T_z<\tau_{\delta}]\leq P_0^{\omega^{z,\emptyset}}[T_z<\tau_{\delta}]$ and $\1{\mathcal{I}(\omega^{z,A})}\leq \1{\mathcal{I}(\omega^{z,\emptyset})}$. Then inserting~(\ref{apply_lem}) into~(\ref{condit}), using Lemma~\ref{lemgreen} and~(\ref{res_green}) we get since $\delta \geq 1/2$
\begin{align}
\label{inter}
&{\bf E}_{1-\epsilon}\Bigl[\1{\mathcal{I}(\omega^{z,A})}G_{\delta}^{\omega^{z,A}}(0,z)\Bigr] \leq 4\kappa_1^2 {\bf E}\Bigl[\1{\mathcal{I}(\omega^{z,\emptyset})}G_{\delta}^{\omega^{z,\emptyset}}(0,z)\Bigr] \\\nonumber
 & \qquad \qquad \qquad \qquad  +2C_1\kappa_1{\bf E}\Bigl[\1{\mathcal{I}(\omega^{z,\emptyset})}P_0^{\omega^{z,\emptyset}}[T_z<\tau_{\delta}]L_z(\omega)^{C_2}e^{2\lambda L_z(\omega)}\Bigr].
 \end{align}
 
 Now we want to prove that the even though hitting probabilities depend on the whole environment their correlation with \lq \lq local\rq \rq quantities are weak in some sense. Let us now make explicit the two properties which are crucial for what we call \lq\lq local quantity\rq\rq (such as $L_z$) which are \begin{enumerate}
\item the second property of Proposition~\ref{threeprop},
\item the existence of arbitrarily large exponential moments for $\epsilon$ small enough, such as those obtained in Proposition~\ref{perco}.
\end{enumerate}

 We obtain the following decorrelation lemma.
\begin{lemma}
\label{decorrelation}
Set $\delta \geq 1/2$, then
\begin{align*}
 & {\bf E}\Bigl[\1{\mathcal{I}(\omega^{z,\emptyset})}P_0^{\omega^{z,\emptyset}}[T_z<\tau_{\delta}]L_z(\omega)^{C_2}e^{2\lambda L_z(\omega)}\Bigr]\\
 \leq & C_{10} {\bf E}\Bigl[\1{\mathcal{I}(\omega^{z,\emptyset})}P_0^{\omega^{z,\emptyset}}[T_z<\tau_{\delta}]\Bigr] {\bf E}\Bigl[L_z(\omega)^{C_{11}}e^{C_{12}L_z(\omega)}\Bigr],
 \end{align*}
 where $C_{10}$, $C_{11}$ and $C_{12}$ depend only on $d$ and $\ell$.
\end{lemma}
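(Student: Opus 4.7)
My plan is to decompose the left-hand side according to the value of $L_z(\omega)$ and to exploit the stopping-time property from Proposition~\ref{threeprop}: the event $\{L_z=k\}$ is measurable with respect to the edges in $B^E(z,k)\setminus B^E(z,1)$ and is therefore independent, under the product measure ${\bf P}_{1-\epsilon}$, of any functional depending only on the edges outside $B^E(z,k)$. This ``locality'' of $L_z$ is what drives the asserted decorrelation.

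I would first bound $P_0^{\omega^{z,\emptyset}}[T_z<\tau_\delta]\le P_0^{\omega^{z,\emptyset}}[T_{\partial B(z,k)}<\tau_\delta]$ on $\{L_z=k\}$ (valid for $|z|>k$; the remaining finitely many $z$ are handled separately), and observe that the right-hand side depends only on $\omega$ restricted to $E(\Z^d)\setminus B^E(z,k)$, because before first hitting $\partial B(z,k)$ the walk stays in $\Z^d\setminus B(z,k)$ and thus only uses transitions involving edges not contained in $B^E(z,k)$. Bounding $\1{\mathcal{I}(\omega^{z,\emptyset})}\le 1$ and factorising by independence gives
\[
{\bf E}\!\Bigl[\1{L_z=k}\1{\mathcal{I}(\omega^{z,\emptyset})}P_0^{\omega^{z,\emptyset}}[T_z<\tau_\delta]L_z^{C_2}e^{2\lambda L_z}\Bigr]\le k^{C_2}e^{2\lambda k}\,{\bf P}[L_z=k]\,{\bf E}\!\Bigl[P_0^{\omega^{z,\emptyset}}[T_{\partial B(z,k)}<\tau_\delta]\Bigr].
\]

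The core of the argument is then the comparison
\[
{\bf E}\!\Bigl[P_0^{\omega^{z,\emptyset}}[T_{\partial B(z,k)}<\tau_\delta]\Bigr]\le \Gamma^k\,{\bf E}\!\Bigl[\1{\mathcal{I}(\omega^{z,\emptyset})}P_0^{\omega^{z,\emptyset}}[T_z<\tau_\delta]\Bigr]
\]
for some $\Gamma=\Gamma(d,\ell)<\infty$. The idea is to fix a deterministic $\ell^1$-geodesic $\pi$ of length $k$ from a reference vertex $y_0\in\partial B(z,k)$ to $z$ and, via an insertion-tolerance argument, to force all edges of $\pi$ to be open: this costs at most $(1-\epsilon)^{-k}\le 2^k$ in the annealed measure, but on this event the strong Markov property at $T_{\partial B(z,k)}$ combined with the definition~\eqref{kap0} of $\kappa_0$ supplies an additional factor $(\kappa_0\delta)^k$ for the walk, once it reaches $\partial B(z,k)$, to follow $\pi$ to $z$ without dying. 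The indicator $\1{\mathcal{I}(\omega^{z,\emptyset})}$ is recovered at uniform cost because opening edges of $\pi$ only helps the connection of $0$ to infinity.

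Summing over $k$, the left-hand side is bounded by
\[
\sum_{k\ge 1}k^{C_2}e^{(2\lambda+\log\Gamma)k}\,{\bf P}[L_z=k]\cdot{\bf E}\!\Bigl[\1{\mathcal{I}(\omega^{z,\emptyset})}P_0^{\omega^{z,\emptyset}}[T_z<\tau_\delta]\Bigr],
\]
which is of the required form with $C_{11}=C_2$ and $C_{12}=2\lambda+\log\Gamma$; the series is finite for $\epsilon$ small by Proposition~\ref{perco}. The main obstacle is the comparison step: the loss must be kept \emph{exponential} in $k$ rather than $k^d$, which forces the use of a deterministic lattice geodesic of length $k$ (any open path inside $B(z,k)$ could a priori have length $\rho_d k^d$, giving an $\exp(O(k^d))$ loss that the exponential tail of $L_z$ from Proposition~\ref{perco} could not absorb). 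Most of the technical work sits in checking that the path-insertion is compatible with the $\mathcal{I}$-conditioning without incurring super-exponential losses.
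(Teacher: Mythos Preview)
Your overall strategy matches the paper's: decompose over $\{L_z=k\}$, use the stopping-time/locality property of $L_z$ to factorise, and then recover $P_0^{\omega}[T_z<\tau_\delta]$ and $\1{\mathcal{I}}$ by inserting short paths inside $B^E(z,k)$ at cost exponential in $k$. However, two of your specific steps do not go through as written.

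\textbf{The fixed reference point $y_0$.} You bound $P_0^{\omega^{z,\emptyset}}[T_z<\tau_\delta]\le P_0^{\omega^{z,\emptyset}}[T_{\partial B(z,k)}<\tau_\delta]$ and then insert a single geodesic $\pi$ from a \emph{deterministic} $y_0\in\partial B(z,k)$ to $z$. But the strong Markov property at $T_{\partial B(z,k)}$ leaves the walk at the random point $X_{T_{\partial B(z,k)}}$, which need not equal $y_0$; on $\{\pi\text{ open}\}$ there is in general no open path from $X_{T_{\partial B(z,k)}}$ to $z$, so the claimed factor $(\kappa_0\delta)^k$ is unjustified. The paper handles this by writing
\[
P_0^{\omega^{z,\emptyset}}[T_z<\tau_\delta]\le \rho_d k^{d}\max_{x\in\partial B(z,k)} P_0^{\omega}\bigl[T_x=T_{\partial B(z,k)}<\tau_\delta\bigr],
\]
letting $x_1(\omega)$ denote the maximizer (which is measurable with respect to the edges outside $B^E(z,k)$), and only then inserting a length-$k$ path $\mathcal{P}_1$ from $z$ to $x_1(\omega)$. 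On $\{\mathcal{P}_1\subset\omega\}$ one has $(\delta\kappa_0)^k P_0^\omega[T_{x_1}<\tau_\delta]\le P_0^\omega[T_z<\tau_\delta]$. The extra polynomial $k^d$ is absorbed into $C_{11}$.

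\textbf{Recovering $\1{\mathcal{I}}$.} You drop $\1{\mathcal{I}(\omega^{z,\emptyset})}\le 1$ at the outset and then assert that the indicator ``is recovered at uniform cost because opening edges of $\pi$ only helps the connection of $0$ to infinity''. This is the wrong direction: you need to bound a quantity \emph{without} $\1{\mathcal{I}}$ from above by one \emph{with} $\1{\mathcal{I}}$, and merely opening $\pi$ does not force $0\in K_\infty$. The paper does not drop $\1{\mathcal{I}}$ to $1$; it relaxes it to $\1{\partial B(z,k)\leftrightarrow\infty}$ (still outside-measurable), takes a witness $x_0(\omega)\in\partial B(z,k)$ connected to infinity in $\omega$ without using edges of $B^E(z,k)$, and inserts a \emph{second} length-$k$ path $\mathcal{P}_0$ from $z$ to $x_0$. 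On $\{\mathcal{P}_0\cup\mathcal{P}_1\subset\omega\}\cap\{x_0\Leftrightarrow\infty\}\cap\{x_1\Leftrightarrow 0\}$ one has $0\leftrightarrow x_1\leftrightarrow z\leftrightarrow x_0\leftrightarrow\infty$, hence $\mathcal{I}$, which is exactly what is needed on the right-hand side.

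With these two modifications (random, outside-measurable targets $x_0,x_1$ in place of a fixed $y_0$, and retaining $\1{\partial B(z,k)\leftrightarrow\infty}$ rather than dropping $\1{\mathcal{I}}$), your argument becomes the paper's proof.
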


Let us prove this lemma.

\begin{proof}
First let us notice that the third property in Proposition~\ref{threeprop} implies that $L_z$ is finite. Set $k\in \N^*$, recall that the event $\{L_z=k\}$ depends only on edges in $B^E(z,k)$ by the second property of Proposition~\ref{threeprop}. 

We have $ \1{\mathcal{I}(\omega^{z,\emptyset})}P_0^{\omega^{z,\emptyset}}[T_z<\tau_{\delta}]\leq \1{\partial B(z,k) \leftrightarrow \infty}P_0^{\omega^{z,\emptyset}}[T_z<\tau_{\delta}]$. Assume first that $0 \notin B(z,k)$,

\begin{align}
\label{step1}
 &{\bf E}\Bigl[\1{\mathcal{I}(\omega^{z,\emptyset})}P_0^{\omega^{z,\emptyset}}[T_z<\tau_{\delta}]L_z(\omega)^{C_2}e^{2\lambda L_z(\omega)}|L_z=k\Bigr]\\\nonumber
  =&k^{C_2}e^{2\lambda k}{\bf E}\Bigl[\1{\partial B(z,k) \leftrightarrow \infty}P_0^{\omega^{z,\emptyset}}[T_z<\tau_{\delta}]\mid L_z=k\Bigr] \\\nonumber
               \leq & \rho_d k^{\gamma_1}e^{2\lambda k}{\bf E}\Bigl[\1{\partial B(z,k) \leftrightarrow \infty}\max_{x\in \partial B(z,k)}P_0^{\omega}[T_x<\tau_{\delta}, T_x=T_{\partial B(z,k)}]\Bigr],
\end{align}
indeed $\abs{ \partial B(z,k) } \leq \rho_d k^{d}$, here we implicitely used that $0\notin B(z,k)$. Now the integrand of the last term does not depend on the configuration of the edges in $B^E(z,k)$, which allowed us to get rid of the conditioning by the second property of Proposition~\ref{threeprop}.

We denote $x_0(\omega)$ a vertex of $\partial B(z,k)$ connected in $\omega$ to infinity without using edges of $B^E(z,k)$ and accordingly we introduce $\{ a\Leftrightarrow b \} $ the event that $a$ is connected in $\omega$ to $b$ using no edges of $B^E(z,k)$. Again we point out that the random variable $x_0(\omega)$ is measurable with respect to $\{\omega(e), e \notin B^E(z,k)\}$.

In case there are multiple choices in the definition of the random variable $x_0(\omega)$, we pick one of the choices according to some predetermined order on the vertices of $\Z^d$. In case $x_0(\omega)$ is not properly defined, i.e.~when $\partial B(z,k)$ is not connected to infinity, we set $x_0(\omega)=z$. With this definition we have $\{x_0 \Leftrightarrow \infty \}=\{\partial B(z,k) \leftrightarrow \infty\}$.

Let us set $x_1(\omega)$ the point for which the maximum in the last line of~(\ref{step1}) is achieved, this random point also depends only on the set of configurations in $E(\Z^d) \setminus B^E(z,k)$, the same is true for $P_0^{\omega}[T_{x_0}<\tau_{\delta}, T_{x_0}=T_{\partial B(z,k)}]$. Once again, if there are multiple choices in the definition of $x_1(\omega)$, we pick one of the choices according to some predetermined order on the vertices of $\Z^d$.

The definition of $x_1$ implies that 
\[
x_1(\omega)\Leftrightarrow 0 \text{ if } \max_{x\in \partial B(z,k)}P_0^{\omega}[T_x<\tau_{\delta}, T_x=T_{\partial B(z,k)}]>0.
\]

Now let $\mathcal{P}_0$ be a path of $k$ edges in $\Z^d$ between $z$ and $x_0$ and $\mathcal{P}_1$ a path of $k$ edges in $\Z^d$ between $z$ and $x_1$, which are not necessarily disjoint. As those paths are contained in $B^E(z,k)$, we get
\begin{align}
\label{step2}
& {\bf E}\Bigl[\1{\partial B(z,k) \leftrightarrow \infty} \1{x_1\Leftrightarrow 0} P_0^{\omega}[T_{x_1}<\tau_{\delta}, T_{x_1}=T_{\partial B(z,k)}]\Bigr] \\\nonumber
 = & {\bf E}\Bigl[\1{x_0\Leftrightarrow \infty} \1{x_1\Leftrightarrow 0} P_0^{\omega}[T_{x_1}<\tau_{\delta}, T_{x_1}=T_{\partial B(z,k)}]|\mathcal{P}_0 \cup \mathcal{P}_1 \in \omega \Bigr] \\ \nonumber
 \leq  &  \frac 1 {{\bf P}[\mathcal{P}_0 \cup \mathcal{P}_1\in \omega]} {\bf E}\Bigl[\1{\mathcal{P}_0 \cup \mathcal{P}_1 \in \omega} \1{x_0\Leftrightarrow \infty}\1{x_1\Leftrightarrow 0}  P_0^{\omega}[T_{x_1}<\tau_{\delta}]\Bigr].
\end{align}

Then we see that since we have $\epsilon < 1/2$ by assumption~(\ref{ass_eps})
\begin{equation}
\label{step3}
{\bf P}[\mathcal{P}_0 \cup \mathcal{P}_1 \in \omega] \geq (1-\epsilon)^{2k}\geq \frac 1 {4^k}.
\end{equation}

 Moreover, on the event $\mathcal{P}_0 \in \omega$, Markov's property yields
\begin{equation}
\label{step35}
(\delta \kappa_0)^k P_0^{\omega}[T_{x_1}<\tau_{\delta}]\leq  P_0^{\omega}[T_{z}<\tau_{\delta}].
\end{equation}

Since $\delta \geq 1/2$,
\begin{align}
\label{step4}
&{\bf E}\Bigl[\1{\mathcal{P}_0 \cup \mathcal{P}_1 \in \omega} \1{x_0\Leftrightarrow \infty} \1{x_1\Leftrightarrow 0} P_0^{\omega}[T_{x_1}<\tau_{\delta}]\Bigr]\\ \nonumber
\leq & (2/\kappa_0)^k {\bf E}\Bigl[\1{\mathcal{P}_0 \cup \mathcal{P}_1 \in \omega} \1{x_0\Leftrightarrow \infty}\1{x_1\Leftrightarrow 0}P_0^{\omega}[T_{z}<\tau_{\delta}]\Bigr] \\ \nonumber
   \leq  & (2/\kappa_0)^k {\bf E}\Bigl[\1{\mathcal{I}}P_0^{\omega}[T_{z}<\tau_{\delta}]\Bigr],
\end{align}
since on $\1{\mathcal{P}_0 \cup \mathcal{P}_1 \in \omega} \1{x_0\Leftrightarrow \infty}\1{x_1\Leftrightarrow 0}$ we have $0\leftrightarrow x_0 \leftrightarrow z \leftrightarrow x_1 \leftrightarrow \infty$ and which means that $\mathcal{I}$ occurs.

Collecting~(\ref{step1}),~(\ref{step2}),~(\ref{step3}),~(\ref{step4}), noticing that $\1{\mathcal{I}} \leq \1{\mathcal{I}(\omega^{z,\emptyset})}$ and $P_0^{\omega}[T_{z}<\tau_{\delta}]\leq P_0^{\omega^{z,\emptyset}}[T_{z}<\tau_{\delta}]$, we get
\begin{align}
\label{decopart1}
    &{\bf E}\Bigl[\1{\mathcal{I}(\omega^{z,\emptyset})}P_0^{\omega^{z,\emptyset}}[T_z<\tau_{\delta}]L_z(\omega)^{C_2}e^{2\lambda L_z(\omega)}\mid L_z=k\Bigr]\\ \nonumber
 \leq & \rho_d k^{\gamma_1} (8 e^{2\lambda}/\kappa_0)^k {\bf E}\Bigl[\1{\mathcal{I}(\omega^{z,\emptyset})}P_0^{\omega^{z,\emptyset}}[T_{z}<\tau_{\delta}]\Bigr].
 \end{align}

Let us come back to the case where $0 \in B(z,k)$.  We can obtain the same result by saying that $P_0^{\omega^{z,\emptyset}}[T_z<\tau_{\delta}]\leq 1$ in~(\ref{step1}) and formally replacing $P_0^{\omega}[T_x<\tau_{\delta}, T_x=T_{\partial B(z,k)}]$ by $1$ for any $x\in \partial B(z,k)$ and $x_1$ by $0$ in the whole previous proof. The conclusion of this is that~(\ref{decopart1}) holds in any case.

The result follows from an integration over all the events $\{L_z=k\}$ for $k\in \N$ since by~(\ref{decopart1}), we obtain
\begin{align*}
& {\bf E}\Bigl[\1{\mathcal{I}(\omega^{z,\emptyset})}P_0^{\omega^{z,\emptyset}}[T_z<\tau_{\delta}]L_z(\omega)^{C_2}e^{2\lambda L_z(\omega)}\Bigr]\\
\leq& {\bf E}\Bigl[\sum_{k=1}^{\infty}{\bf P}[L_z=k]  {\bf E}\Bigl[\1{\mathcal{I}(\omega^{z,\emptyset})}P_0^{\omega^{z,\emptyset}}[T_z<\tau_{\delta}]L_z(\omega)^{C_2}e^{2\lambda L_z(\omega)}\mid L_z=k\Bigr]\Bigr]\\
\leq& \rho_d {\bf E}\Bigl[\sum_{k=1}^{\infty}{\bf P}[L_z=k] k^{\gamma_1}(8e^{2\lambda}/\kappa_0)^k {\bf E}\Bigl[\1{\mathcal{I}(\omega^{z,\emptyset})}P_0^{\omega^{z,\emptyset}}[T_{z}<\tau_{\delta}]\Bigr]\Bigr]\\
 =& \rho_d {\bf E}\Bigl[L_z^{\gamma_1}(8e^{2\lambda}/\kappa_0)^{L_z}\Bigr]  {\bf E}\Bigl[\1{\mathcal{I}(\omega^{z,\emptyset})}P_0^{\omega^{z,\emptyset}}[T_{z}<\tau_{\delta}]\Bigr].
\end{align*}
\end{proof}

Let us now prove Proposition~\ref{upperb}. 

\begin{proof}

We can apply Proposition~\ref{perco} to get that for $0<\epsilon<\epsilon_6$
\[
{\bf E}\Bigl[L_z(\omega)^{C_{11}}e^{C_{12}L_z(\omega)}\Bigr] \leq  \sum_{k\geq 0}k^{C_{11}}e^{C_{12}k} {\bf P}[L_z\geq  k]<C_{13}<\infty,
\]
where $\epsilon_6$ is such that $\alpha_0(\epsilon_6)<e^{-C_{12}}/2$ and, as $C_{13}$, depends only on $d$ and $\ell$. Then recalling~(\ref{inter}), using Lemma~\ref{decorrelation} with the previous equation we obtain
\begin{align*}
 & {\bf E}\Bigl[\1{\mathcal{I}(\omega^{z,A})}G_{\delta}^{\omega^{z,A}}(0,z)\Bigr] \\
\leq &  4\kappa_1^2 {\bf E}\Bigl[\1{\mathcal{I}(\omega^{z,\emptyset})}G_{\delta}^{\omega^{z,\emptyset}}(0,z)\Bigr] + 2C_1C_{10}C_{13}\kappa_1 {\bf E}\Bigl[\1{\mathcal{I}(\omega^{z,\emptyset})}P_0^{\omega^{z,\emptyset}}[T_z<\tau_{\delta}]\Bigr]
\\
\leq  &  \gamma_2{\bf E}\Bigl[\1{\mathcal{I}(\omega^{z,\emptyset})}G_{\delta}^{\omega^{z,\emptyset}}(0,z)\Bigr].
\end{align*}

Using the preceding equation with~(\ref{denom0}) concludes the proof of Proposition~\ref{upperb}.

\end{proof}

We are now able to prove the following
\begin{proposition}
\label{continuity}
For any $d\geq 2$, $\epsilon<\epsilon_5\wedge \epsilon_6$ and $\ell \in \R^d$ we have
\[
v_{\ell}(1-\epsilon)=d_{\emptyset}+O(\epsilon).
\]
\end{proposition}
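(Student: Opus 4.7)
The plan is to establish the uniform pointwise bound $|\widehat{d}_{\delta}^{\epsilon}(z)-d_{\emptyset}|\leq C'\epsilon$ for every $z\in\Z^d$ and every $\delta\in[1/2,1)$, and then transfer it to $v_{\ell}(1-\epsilon)$ via Proposition~\ref{accu}. Starting from the decomposition~(\ref{drift}), I set
\[
w_{\delta,\epsilon}(z,A):={\bf P}[\C(z)=A]\,\frac{{\bf E}[\1{\mathcal{I}}G_{\delta}^{\omega}(0,z)\mid\C(z)=A]}{{\bf E}[\1{\mathcal{I}}G_{\delta}^{\omega}(0,z)]},
\]
so that $\widehat{d}_{\delta}^{\epsilon}(z)=\sum_{A\subsetneq\nu}w_{\delta,\epsilon}(z,A)\,d_A$. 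Running the computation of~(\ref{drift}) with the vector $e$ replaced by the scalar $1$, and using that on the support of $\1{\mathcal{I}}G_{\delta}^{\omega}(0,z)$ the vertex $z$ has at least one open neighbour (otherwise either $0$ is isolated, contradicting $\mathcal{I}$, or $z$ is unreachable from $0$), so that $\sum_{e\in\nu}p^{\omega}(z,z+e)=1$, I conclude $\sum_{A\subsetneq\nu}w_{\delta,\epsilon}(z,A)=1$.

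Using this normalisation I isolate the dominant term and write
\[
\widehat{d}_{\delta}^{\epsilon}(z)-d_{\emptyset}=\sum_{\emptyset\neq A\subsetneq\nu} w_{\delta,\epsilon}(z,A)\,(d_A-d_{\emptyset}).
\]
Each $d_A$ is a convex combination of unit vectors, hence $|d_A-d_{\emptyset}|\leq 2$. Moreover ${\bf P}[\C(z)=A]=\epsilon^{|A|}(1-\epsilon)^{2d-|A|}\leq\epsilon$ as soon as $A\neq\emptyset$, and Proposition~\ref{upperb} bounds the ratio appearing in $w_{\delta,\epsilon}(z,A)$ by an absolute constant $C=C(d,\ell)$, uniformly in $z\in\Z^d$, $\delta\in[1/2,1)$ and $\epsilon<\epsilon_5$. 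Since the set $\nu$ has only $2d$ elements, the number of non-empty proper subsets is at most $2^{2d}$, and summing yields $|\widehat{d}_{\delta}^{\epsilon}(z)-d_{\emptyset}|\leq C'\epsilon$ with $C'=C'(d,\ell)$.

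To conclude, I invoke Proposition~\ref{accu}: for every $\delta\in[1/2,1)$ the convex hull $C_{\delta}^{\epsilon}$ is contained in the closed Euclidean ball $\overline{B}(d_{\emptyset},C'\epsilon)$, since every one of its vertices $\widehat{d}_{\delta}^{\epsilon}(z)$ lies there. This ball being closed and convex, any accumulation point of $(C_{\delta}^{\epsilon})$ as $\delta\to 1$ also lies in it; in particular $|v_{\ell}(1-\epsilon)-d_{\emptyset}|\leq C'\epsilon$, which is the desired first-order estimate. The heavy machinery was packaged into Proposition~\ref{upperb}; given that result there is no substantive obstacle here, only the mild bookkeeping needed to check that the constant $C$ in Proposition~\ref{upperb} is genuinely uniform in $z$ and $\delta\in[1/2,1)$, which is manifest from the statement of that proposition.
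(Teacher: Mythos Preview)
Your proof is correct and follows essentially the same route as the paper: decompose $\widehat{d}_{\delta}^{\epsilon}(z)$ according to the configuration at $z$, use Proposition~\ref{upperb} to bound the contribution of each non-trivial configuration by a constant times ${\bf P}[\C(z)=A]=O(\epsilon)$, and conclude via Proposition~\ref{accu}. Your use of the normalisation $\sum_{A\subsetneq\nu}w_{\delta,\epsilon}(z,A)=1$ to write $\widehat{d}_{\delta}^{\epsilon}(z)-d_{\emptyset}$ directly as a sum over $A\neq\emptyset$ is slightly more streamlined than the paper's two-step version (which first isolates the $A=\emptyset$ term and then separately checks that its coefficient is $1+O(\epsilon)$), but the substance is identical.
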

\begin{proof}
First notice that
\[
{\bf P}[\C(z)=\emptyset]= 1+O(\epsilon) \text{ and } {\bf P}[\C(z)\neq \emptyset]=O(\epsilon).
\]
using~(3.1) and Proposition~\ref{upperb} we get for $\delta \geq 1/2$,
\begin{equation}
\label{driftpartial}
\widehat{d}_{\delta}^{\epsilon}(z)=d_{\emptyset} \frac{{\bf E}_{1-\epsilon}\Bigl[\1{\mathcal{I}}\1{\C(z)=\emptyset}G_{\delta}^{\omega}(0,z)
\Bigr]}{{\bf E}_{1-\epsilon}\Bigl[\1{\mathcal{I}}G_{\delta}^{\omega}(0,z)\Bigr]} +O(\epsilon),
\end{equation}
where the $O(\cdot)$ depends only on $d$ and $\ell$. But using Proposition~\ref{upperb} again yields
\begin{align*}
&\abs{\frac{{\bf E}_{1-\epsilon}\Bigl[\1{\mathcal{I}}\1{\C(z)=\emptyset}G_{\delta}^{\omega}(0,z)
\Bigr]}{{\bf E}_{1-\epsilon}\Bigl[\1{\mathcal{I}}G_{\delta}^{\omega}(0,z)\Bigr]}-1 }\\
=&
\frac{\sum_{A\subset \nu,~A\neq \emptyset}{\bf E}_{1-\epsilon}\Bigl[\1{\mathcal{I}}\1{\C(z)=A}G_{\delta}^{\omega}(0,z)
\Bigr]}{{\bf E}_{1-\epsilon}\Bigl[\1{\mathcal{I}}G_{\delta}^{\omega}(0,z)\Bigr]} \\
=& \sum_{A\subset \nu,~A\neq \emptyset} {\bf P}_{1-\epsilon}[\C(z)=A] \frac{{\bf E}_{1-\epsilon}\Bigl[\1{\mathcal{I}}G_{\delta}^{\omega}(0,z)\mid \C(z)=A\Bigr]}{{\bf E}_{1-\epsilon}\Bigl[\1{\mathcal{I}}G_{\delta}^{\omega}(0,z)\Bigr]} \leq O(\epsilon),
\end{align*}
and thus
\[
\widehat{d}_{\delta}^{\epsilon}(z)-d_{\emptyset}=O(\epsilon),
\]
where the $O(\cdot)$ depends only on $d$ and $\ell$. Recalling Proposition 3.3, we get
\[
v_{\ell}(1-\epsilon)=d_{\emptyset}+O(\epsilon).
\]
\end{proof}

\section{Derivative of the speed at high density}
\label{s_deriv}

 Next we want to obtain the derivative of the velocity with respect to the percolation parameter. 

In this section we fix $z\in \Z^d$. Using (\ref{drift}) with Proposition~\ref{upperb} we can get the first order of Kalikow's drift 
\begin{equation}
\label{first_order_0}
d_{\delta}^{\widehat{\omega}}(z)-d_{\emptyset}=\epsilon \Bigl(\sum_{e\in \nu} \frac{{\bf E}_{1-\epsilon}[\1{\mathcal{I}(\omega^{z,e})}G_{\delta}^{\omega^{z,e}}(0,z)]}{{\bf E}_{1-\epsilon}[\1{\mathcal{I}(\omega)}G_{\delta}^{\omega}(0,z)]}(d_{e}-d_{\emptyset}) \Bigr)+O_z(\epsilon^2),
\end{equation}
where
\begin{equation}
\label{def_oo1}
\sup_{z\in \Z^d} \abs{O_z(\epsilon^2)}\leq O(\epsilon^2).
\end{equation}
The remaining issue is the dependence of the expectation with respect to $\epsilon$.

For any $A\subset B^E(0,2)$ we denote 
\[
\{(z,2)=A\}=\bigl\{\{ e\in B^E(z,2),\ e\in \omega \}=B^E(z,2)\setminus \{z+A\} \bigr\}.
 \]
 
\subsection{Technical estimate}

Let us prove the following technical lemma, which will simplify the rest of the proof. In words it states that the configuration $B^E(z,2)$ is typically as open as it can be. For example without any condition all edges are open, if $[z,z+e]$ is imposed to be close then it will be the only closed edge in $B^E(z,2)$. One could continue like this, but those two cases are the only ones we need for the rest of the paper.

\begin{lemma}
\label{yet_another_technicality}
We have for $\delta\geq 1/2$, $z\in \Z^d$ and $e\in \nu$,
\[
{\bf E}[\1{\mathcal{I}}G_{\delta}^{\omega}(0,z)]\leq (1+O(\epsilon))
{\bf E}[\1{\mathcal{I}}\1{(z,2)=\emptyset}G_{\delta}^{\omega}(0,z)],
\]
and
\[
{\bf E}[\1{\mathcal{I}(\omega^{z,e})}G_{\delta}^{\omega^{z,e}}(0,z)]\leq (1+O(\epsilon))
{\bf E}[\1{\mathcal{I}(\omega^{z,e})}\1{(z,2)=\emptyset}G_{\delta}^{\omega^{z,e}}(0,z)],
\]
where the $O(\cdot)$ depends only on $d$ and $\ell$.
\end{lemma}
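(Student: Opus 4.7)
The plan is to decompose both sides according to the state of edges in $B^E(z,2)$. Set $N = |B^E(z,2)|$, which depends only on $d$, so ${\bf P}[(z,2)=\emptyset] = (1-\epsilon)^N = 1 - O(\epsilon)$. For the first inequality, I would write
\[
{\bf E}\bigl[\1{\mathcal{I}}G^\omega_\delta(0,z)\bigr] = \sum_{B \subset B^E(z,2)} {\bf P}[(z,2)=B]\, {\bf E}\bigl[\1{\mathcal{I}(\omega^{(z,2),B})}G^{\omega^{(z,2),B}}_\delta(0,z)\bigr],
\]
the $B=\emptyset$ term being exactly the RHS. The task reduces to showing the remaining sum is $O(\epsilon)$ times the RHS, which follows from the bound
\[
{\bf E}\bigl[\1{\mathcal{I}(\omega^{(z,2),B})}G^{\omega^{(z,2),B}}_\delta(0,z)\bigr] \leq C\, {\bf E}\bigl[\1{\mathcal{I}(\omega^{(z,2),\emptyset})}G^{\omega^{(z,2),\emptyset}}_\delta(0,z)\bigr],
\]
valid uniformly in $B \subsetneq B^E(z,2)$ (with $C$ depending only on $d$ and $\ell$), combined with $\sum_{B\neq\emptyset} {\bf P}[(z,2)=B] = O(\epsilon)$.

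This last inequality is the $B^E(z,2)$-analog of Proposition~\ref{upperb}, and I would prove it by mimicking that argument: write $G^{\omega^{(z,2),B}}_\delta(0,z) = P_0^{\omega^{(z,2),B}}[T_z<\tau_\delta]\, \pi^{\omega^{(z,2),B}(\delta)}(z)\, R^{\omega^{(z,2),B}}(z\leftrightarrow\Delta)$ via Lemma~\ref{lemgreen}; apply Proposition~\ref{insertedge} with $A=B^E(z,2)$ to get
\[
R^{\omega^{(z,2),B}}(z\leftrightarrow\Delta) \leq 4R^{\omega^{(z,2),\emptyset}}(z\leftrightarrow\Delta) + C_1 L_{z,2}(\omega)^{C_2} e^{2\lambda(L_{z,2}(\omega)-z\cdot\vec\ell)};
\]
use the monotonicity $P_0^{\omega^{(z,2),B}}[T_z<\tau_\delta] \leq P_0^{\omega^{(z,2),\emptyset}}[T_z<\tau_\delta]$ and $\1{\mathcal{I}(\omega^{(z,2),B})} \leq \1{\mathcal{I}(\omega^{(z,2),\emptyset})}$; handle the cross term with an adaptation of Lemma~\ref{decorrelation} to the local quantity $L_{z,2}$ (the only structural inputs being that $L_{z,2}$ is a stopping time with respect to $(\mathcal{F}_{z,n})_n$ by Proposition~\ref{threeprop} and that paths of length $\sim L_{z,2}$ in $B^E(z,L_{z,2})$ occur with at least the natural exponential cost); and finally invoke Proposition~\ref{perco3} to ensure the needed exponential moments of $L_{z,2}$ remain bounded as $\epsilon \to 0$.

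The second inequality is structurally the same, with one bookkeeping wrinkle: because $\omega^{z,e}$ overrides the state of edges in $B^E(z,1)$, all the events $\{(z,2)=B\}$ with $B \subset B^E(z,1)$ share the same integrand, namely ${\bf E}[\1{\mathcal{I}(\omega^{(z,2),\{[z,z+e]\}})} G^{\omega^{(z,2),\{[z,z+e]\}}}_\delta(0,z)]$. Their probabilities sum to $(1-\epsilon)^{N_2}$ with $N_2 = |B^E(z,2)\setminus B^E(z,1)|$, matching the RHS (which carries a factor $(1-\epsilon)^N$) up to $(1-\epsilon)^{-N_1} = 1+O(\epsilon)$. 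The remaining configurations, those with at least one closed edge in $B^E(z,2)\setminus B^E(z,1)$, again have total probability $O(\epsilon)$, and the same analog of Proposition~\ref{upperb} (now taking $\omega^{(z,2),\{[z,z+e]\}}$ as the reference and using Proposition~\ref{perco2} in place of Proposition~\ref{perco3}) makes their contribution $O(\epsilon)$ times the reference. The main obstacle is really the first: verifying that the proof of Proposition~\ref{upperb} goes through verbatim when $B^E(z,1)$ and $L_z$ are replaced by $B^E(z,2)$ and $L_{z,2}$; this is a matter of bookkeeping rather than new ideas, since Propositions~\ref{perco2} and~\ref{perco3} have already been set up precisely to make the required exponential moment estimates available.
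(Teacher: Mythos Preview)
Your proposal is correct and follows essentially the same route as the paper: decompose according to the configuration on $B^E(z,2)$, bound each non-trivial configuration by a constant multiple of the reference via the chain Lemma~\ref{lemgreen} $\to$ Proposition~\ref{insertedge} (with $A=B^E(z,2)$) $\to$ a decorrelation argument in the style of Lemma~\ref{decorrelation}, and conclude using the tail bounds on $L_{z,2}$. The paper treats the second inequality first and phrases the ratio bound with the full left-hand side in the denominator rather than the reference term, but these are cosmetic differences; your explicit ``bookkeeping wrinkle'' for the second inequality (grouping all $B\subset B^E(z,1)$ together) is a valid and slightly cleaner way to handle the fact that $\omega^{z,e}$ already overrides $B^E(z,1)$.
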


The proof of this lemma is independent of the rest of the paper so it can be skipped in a first reading.

\begin{proof}
Due to the strong similarities with the proof of Lemma~\ref{decorrelation} we will simply sketch the proof of the lemma.

Let us prove the second inequality which is the most complicated. We have
\begin{align}
\label{y_a_t_l}
{\bf E}[\1{\mathcal{I}(\omega^{z,e})}G_{\delta}^{\omega^{z,e}}(0,z)] &= \sum_{\substack{A\in \subset B^E(z,2) \\ A\neq \emptyset}} {\bf P}[(z,2)=A] \\ \nonumber
&           \qquad                                         \times {\bf E}[\1{\mathcal{I}(\omega^{z,e})}G_{\delta}^{\omega^{z,e}}(0,z) \mid (z,2)= A],
\end{align}
let us show that for any $A\subset B^E(z,2)$, $A\neq \emptyset$
\begin{equation}
\label{zzz}
\frac{{\bf E}[\1{\mathcal{I}(\omega^{z,e})}G_{\delta}^{\omega^{z,e}}(0,z) \mid(z,2)=A]}{{\bf E}[\1{\mathcal{I}(\omega^{z,e})}G_{\delta}^{\omega^{z,e}}(0,z)]} <\gamma_1,
\end{equation}
where $\gamma_1$ depends only on $d$ and $\ell$. The method is the same as before
\begin{enumerate}
\item We apply Lemma~\ref{lemgreen} to decompose the Green function into $P_0^{\omega^{z,e}_{(z,2),A}}[T_z<\tau_{\delta}] \times R^{\omega^{z,e}_{(z,2),A}}[z\leftrightarrow \Delta]e^{2\lambda z\cdot \vec \ell}$.
\item With Lemma~\ref{insertedge} we decompose the resistance appearing in $(1)$ into 
\[
R^{\omega^{z,e}}[z\leftrightarrow \Delta]\leq 4 R^{\omega^{z,e}_{(z,2),1}}[z\leftrightarrow \Delta] + C_1L_{z,2}(\omega^{z,e})^{C_2} e^{2\lambda (L_{z,2}(\omega^{z,e})-x\cdot\vec \ell)}.
\]
\item Similarly to~(\ref{step1}) in the case $k=2$ we can obtain
\begin{align*}
&{\bf E}\Bigl[\1{\mathcal{I}(\omega^{z,e}_{(z,2),A})}P_0^{\omega^{z,e}_{(z,2),A}}[T_z<\tau_{\delta}]R^{\omega^{z,e}_{(z,2),1}}[z\leftrightarrow \Delta]\Bigr] \\
\leq& \gamma_2 {\bf E}\Bigl[\1{\partial B(z,2) \leftrightarrow \infty}\max_{x\in \partial B(z,2)}P_0^{\omega}[T_x<\tau_{\delta}, T_x=T_{\partial B(z,2)}]R^{\omega^{z,e}_{(z,2),1}}[z\leftrightarrow \Delta]\Bigr] \Bigr]
\end{align*}
 and repeating the steps~(\ref{step1}),~(\ref{step2}),~(\ref{step3}),~(\ref{step35}) and~(\ref{step4}) for $k=2$ we prove that 
\[
\frac{{\bf E}\Bigl[\1{\mathcal{I}(\omega^{z,e}_{(z,2),A})}P_0^{\omega^{z,e}_{(z,2),A}}[T_z<\tau_{\delta}]R^{\omega^{z,e}_{(z,2),1}}[z\leftrightarrow \Delta]\Bigr]e^{2\lambda z\cdot \vec \ell}}{{\bf E}[\1{\mathcal{I}(\omega^{z,e})}G_{\delta}^{\omega^{z,e}}(0,z)]} <\gamma_3,
\]
the only difference is that we impose $\mathcal{P}_0$ (resp, $\mathcal{P}_1$) to be a path in $B^E(z,2)\setminus [z,z+e]$ of length at most 4 connecting $z$ and $x_0$ (resp. $x_1$) and that~(\ref{step35}) becomes
\[
(\delta \kappa_0)^4 P_0^{\omega}[T_{x_1}<\tau_{\delta}] \leq P_0^{\omega^{z,e}}[T_z<\tau_{\delta}].
\]
\item We can use arguments similar to the ones in the proof of Lemma~\ref{decorrelation} (essentially repeating the steps~(\ref{step1}),~(\ref{step2}),~(\ref{step3}),~(\ref{step35}) and~(\ref{step4})) to prove that
\[
\frac{{\bf E}\Bigl[\1{\mathcal{I}(\omega^{z,e}_{(z,2),A})}P_0^{\omega^{z,e}_{(z,2),A}}[T_z<\tau_{\delta}]L_{z,2}^{C_2}(\omega^{z,e})e^{2\lambda L_{z,2}(\omega^{z,e})}\Bigr]}{{\bf E}[\1{\mathcal{I}(\omega^{z,e})}G_{\delta}^{\omega^{z,\emptyset}}(0,z)]} <\gamma_4,
\]
since $L_{z,2}(\omega^{z,e})$ has arbitrarily large exponential moments under the measure ${\bf P}[\, \cdot \,]$, for $\epsilon$ small enough by Proposition~\ref{perco2}. Here we also need $\mathcal{P}_0$(resp, $\mathcal{P}_1$)  to be a path in $B^E(z,k)\setminus [z,z+e]$ of length at most $k+2$ connecting $z$ and $x_0$ (resp. $x_1$) and that~(\ref{step35}) becomes
\[
(\delta \kappa_0)^{k+2} P_0^{\omega}[T_{x_1}<\tau_{\delta}] \leq P_0^{\omega^{z,e}}[T_z<\tau_{\delta}].
\]
\end{enumerate}

This reasoning yields~(\ref{zzz}) with $\gamma_1=4\gamma_3+C_1\gamma_4$. Now, the equations~(\ref{zzz}) and~(\ref{y_a_t_l}) imply that
\begin{align*}
 &{\bf E}[\1{\mathcal{I}(\omega^{z,e})}G_{\delta}^{\omega^{z,e}}(0,z)]\leq O(\epsilon){\bf E}[\1{\mathcal{I}(\omega^{z,e})}G_{\delta}^{\omega^{z,e}}(0,z)] \\
&\qquad \qquad +{\bf E}[\1{\mathcal{I}(\omega^{z,e})}\1{(z,2)=\emptyset}G_{\delta}^{\omega^{z,e}}(0,z)],
\end{align*}
and it follows that
\[
{\bf E}[\1{\mathcal{I}(\omega^{z,e})}G_{\delta}^{\omega^{z,e}}(0,z)] 
\leq (1+O(\epsilon)){\bf E}[\1{\mathcal{I}}\1{(z,2)=\emptyset}G_{\delta}^{\omega^{z,e}}(0,z)].
\]

This completes the proof of the second inequality of the lemma. The proof for the first inequality is the same except that it uses Proposition~\ref{perco3}.
\end{proof}

\subsection{Another perturbed environment of Kalikow}

We recall that our aim is to compute
\[
\frac{{\bf E}_{1-\epsilon}[\1{\mathcal{I}(\omega^{z,e})}G_{\delta}^{\omega^{z,e}}(0,z)]}{{\bf E}_{1-\epsilon}[\1{\mathcal{I}}G_{\delta}^{\omega}(0,z)]},
\]
and we will start by studying the numerator. Our aim is to relate it to the denominator, for this we need to express the quantities appearing in the environment $\omega^{z,e}$ in terms of similar quantities in the environment $\omega^{z,\emptyset}$, which is the environment that naturally arises for $p=1-\epsilon$ close to $1$.

We can link the Green functions of two Markov operators $P$ and $P'$, since for $n \geq 0$
\begin{equation}
\label{green_exp}
G_{\delta}^{P'}=G_{\delta}^P+\sum_{k=1}^n \delta^k(G_{\delta}^P(P'-P))^kG_{\delta}^P+\delta^{n+1}(G_{\delta}^P(P'-P))^{n+1}G_{\delta}^{P'}.
\end{equation}

In our case we close one edge which changes the transition probabilities at two sites, so that the previous formula applied for $n=0$,
\begin{align}
\label{green_exp1}
 G^{\omega^{z,e}}_{\delta}(0,z)=&G^{\omega^{z,\emptyset}}_{\delta}(0,z)+\delta G^{\omega^{z,\emptyset}}_{\delta}(0,z) \sum_{e'\in \nu} (p^e(e')-p^{\emptyset}(e'))G^{\omega^{z,e}}_{\delta}(z+e',z)\\\nonumber
                     &+\delta G^{\omega^{z,\emptyset}}_{\delta}(0,z+e) \sum_{e'\in \nu} (p^{\omega^{z,e}}(z+e,z+e+e')-p^{\omega^{z,\emptyset}}(z+e,z+e+e')) \\ \nonumber 
& \qquad \qquad \qquad \qquad \times G^{\omega^{z,e}}_{\delta}(z+e+e',z),                 
\end{align}
where we used a notation from~(\ref{shorter}).

Typically the configuration at $z+e$ is $\{-e\}$. This intuition follows from an easy consequence of Lemma~\ref{yet_another_technicality} which is
\[
\text{for all $z\in \Z^d$ and $e\in \nu$,} \qquad \abs{\frac{{\bf E}_{1-\epsilon}[\1{\mathcal{I}(\omega^{z,e})}G_{\delta}^{\omega^{z,e}}(0,z)]}{{\bf E}_{1-\epsilon}[\1{\mathcal{I}(\omega^{z,e})}\1{(z,2)=\emptyset}G_{\delta}^{\omega^{z,e}}(0,z)]}-1}\leq O(\epsilon),
\]
which yields that
\begin{align}
\label{ugly_ass_ub}
& (1+O_{z,e}(\epsilon)){\bf E}_{1-\epsilon}[\1{\mathcal{I}(\omega^{z,e})}G^{\omega^{z,e}}_{\delta}(0,z)] \\ \nonumber
=&{\bf E}_{1-\epsilon}\Bigl[\1{\mathcal{I}(\omega^{z,e})} \1{(z,2)=\emptyset}  \\ \nonumber
    &\qquad \times \Bigl[G^{\omega^{z,\emptyset}}_{\delta}(0,z)+\delta G^{\omega^{z,\emptyset}}_{\delta}(0,z) \sum_{e'\in \nu} (p^e(e')-p^{\emptyset}(e'))G^{\omega^{z,e}}_{\delta}(z+e',z)\\\nonumber
                     & \qquad +\delta G^{\omega^{z,\emptyset}}_{\delta}(0,z+e) \sum_{e'\in \nu} (p^{-e}(e')-p^{\emptyset}(e'))G^{\omega^{z,e}}_{\delta}(z+e+e',z) \Bigr]  \Bigr], 
 \end{align}
where $\sup_{z\in \Z^d ,e\in \nu} \abs{O_{z,e}(\epsilon)} \leq O(\epsilon)$.                     

We have managed to express the quantities in the environment $\omega^{z,e}$ with quantities in $\omega^{z,\emptyset}$.  Now we are led to look at quantities such as
\begin{equation}
\label{termterm1}
{\bf E}_{1-\epsilon}[\1{\mathcal{I}(\omega^{z,e})} \1{(z,2)=\emptyset} G_{\delta}^{\omega^{z,\emptyset}}(0,z) G_{\delta}^{\omega^{z,e}}(z+e',z)]
\end{equation}
and
\begin{equation}
\label{termterm2}
\ {\bf E}_{1-\epsilon}[\1{\mathcal{I}(\omega^{z,e})} \1{(z,2)=\emptyset} G_{\delta}^{\omega^{z,\emptyset}}(0,z+e) G_{\delta}^{\omega^{z,e}}(z+e+e',z)].
\end{equation}

From now on we fix $e\in \nu$. In order to handle the first type of terms (the proof is similar for the second term) we introduce the measure
\[
d\tilde{\mu}^z= \frac{\1{\mathcal{I}} \1{(z,2)=e}G_{\delta}^{\omega^{z,\emptyset}}(0,z)}{{\bf E}_{1-\epsilon}\left[\1{\mathcal{I}}\1{(z,2)=e}G_{\delta}^{\omega^{z,\emptyset}}(0,z)\right]} d{\bf P}_{1-\epsilon},
\]
and for $e_+\in \nu$ we introduce the Kalikow environment, corresponding to this measure on the environment and the point $z+e_+$, defined by
\begin{align*}
& \tilde{p}_{z,e,z+e_+}(y,y+e')\\
=&\frac{E_{\tilde{\mu}^z}[G_{\delta}^{\omega}(z+e_+,y)p^{\omega}(y,y+e')]}{E_{\tilde{\mu}^z}[G_{\delta}^{\omega}(z+e_+,y)]} \\
=&\frac{{\bf E}_{1-\epsilon}[\1{\mathcal{I}}\1{(z,2)=e} G_{\delta}^{\omega^{z,\emptyset}}(0,z)G_{\delta}^{\omega}(z+e_+,y)p^{\omega}(y,y+e')]}{{\bf E}_{1-\epsilon}[\1{\mathcal{I}}\1{(z,2)=e}G_{\delta}^{\omega^{z,\emptyset}}(0,z)G_{\delta}^{\omega}(z+e_+,y)]} \\ 
=&\frac{{\bf E}_{1-\epsilon}[\1{\mathcal{I}(\omega^{(z,2)=e})}G_{\delta}^{\omega^{(z,2)=\emptyset}}(0,z)G_{\delta}^{\omega^{(z,2)=e}}(z+e_+,y)p^{\omega^{(z,2)=e}}(y,y+e')]}{{\bf E}_{1-\epsilon}[\1{\mathcal{I}(\omega^{(z,2)=e})}G_{\delta}^{\omega^{(z,2)=\emptyset}}(0,z)G_{\delta}^{\omega^{(z,2)=e}}(z+e_+,y)]},
\end{align*}
where we used the notations from~(\ref{notation_omega}) and~(\ref{notation_omega1}).

Once again since Kalikow's property geometrically killed random walks does not use any properties on the measure of the environment, we have for any $z\in \Z^d$ and $e,e'\in \nu$ a property similar to Proposion~\ref{kalikow_0}, which allows us to relate the quantity in~(\ref{termterm1}) to
\begin{align}
\label{kalikow2}
G_{\delta}^{\tilde p_{z,e,z+e'}}(z+e',z)&=\frac{{\bf E}_{1-\epsilon}[\1{\mathcal{I}}\1{(z,2)=e} G_{\delta}^{\omega^{z,\emptyset}}(0,z) G_{\delta}^{\omega}(z+e',z)]}{{\bf E}_{1-\epsilon}[\1{\mathcal{I}}\1{(z,2)=e}G_{\delta}^{\omega^{z,\emptyset}}(0,z)]}\\ \nonumber
                           &=\frac{{\bf E}_{1-\epsilon}[\1{\mathcal{I}(\omega^{z,e})}\1{(z,2)=e} G_{\delta}^{\omega^{z,\emptyset}}(0,z) G_{\delta}^{\omega^{z,e}}(z+e',z)]}{{\bf E}_{1-\epsilon}[\1{\mathcal{I}(\omega^{z,e})}\1{(z,2)=e} G_{\delta}^{\omega^{z,\emptyset}}(0,z)]} \\ \nonumber
                           &=\frac{{\bf E}_{1-\epsilon}[\1{\mathcal{I}(\omega^{z,e})}\1{(z,2)=\emptyset} G_{\delta}^{\omega^{z,\emptyset}}(0,z) G_{\delta}^{\omega^{z,e}}(z+e',z)]}{{\bf E}_{1-\epsilon}[\1{\mathcal{I}}\1{(z,2)=\emptyset} G_{\delta}^{\omega}(0,z)]}
\end{align}
since on $\{(z,2)=\emptyset\}$ or $\{(z,2)=e\}$ we have $\1{\mathcal{I}} =\1{\mathcal{I}(\omega^{z,e})}$. 

The numerator of the previous display is exactly~(\ref{termterm1}). Hence for us it is sufficient to approximate $G_{\delta}^{\tilde p_{z,e,z+e'}}(z+e',z)$ to understand it and consequently to understand the derivative of the speed. We are now led to studying $\tilde p_{z,e,z+e'}(y,y+e')$. A similar reasoning could be made to understand~(\ref{termterm2}) .

Decomposing $\tilde{p}(y,y+e')$ according to the configurations at $y$, we get 
\begin{align}
\label{kali_2}
&\tilde{p}_{z,e,z+e_+}(y,y+e')=\sum_{A \subset \nu, A \neq \nu} {\bf P}_{1-\epsilon}[\C(y)=A] \\ \nonumber
 &\times \frac{{\bf E}_{1-\epsilon}[\1{\mathcal{I}(\omega^{(z,2)=e})}G_{\delta}^{\omega^{(z,2)=\emptyset}}(0,z)G_{\delta}^{\omega^{(z,2)=e}}(z+e_+,y)\mid \C(y)=A]}{{\bf E}_{1-\epsilon}[\1{\mathcal{I}(\omega^{(z,2)=e})}G_{\delta}^{\omega^{(z,2)=\emptyset}}(0,z)G_{\delta}^{\omega^{(z,2)=e}}(z+e_+,y)]} p^{\omega_{y,A}^{(z,2)=e}}(y,y+e').
\end{align}

Let us denote $a^+=0\vee a$ and from now on we will omit the subscript in $\tilde{p}_{z,e,z+e_+}$. The following proposition states that $\tilde{p}$ and $p_0^{z,e}$ are close in some sense. Using this, we will prove in the next section that it implies that $G_{\delta}^{\tilde p_{z,e,z+e'}}(z+e',z)$ and $G_{\delta}^{p_0^{z,e}}(z+e',z)$ are close.

\begin{proposition}
\label{approx_trans_prob}
For $\epsilon<\epsilon_7$ and $z,e,e_+ \in \Z^d\times \nu^2$ and $\delta \in (1/2,1)$, we have for $y\in \Z^d$, $e'\in \nu$ 
\[
\abs{\tilde{p}(y,y+e')-p_0^{z,e}(y,y+e')}\leq   (C_{14} e^{C_{15}((z-y) \cdot \vec \ell)^+}) \epsilon,
\]
where $\epsilon_7$, $C_{14}$ and $C_{15}$ depends on $\ell$ and $d$. We recall that $p_0^{z,e}$ is the environment where only the edge $[z,z+e]$ is closed.
\end{proposition}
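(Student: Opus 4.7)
The plan is to decompose $\tilde p(y,y+e')$ via~(\ref{kali_2}) according to the configuration $A=\C(y)$ of edges adjacent to $y$, isolate the dominant term $A=\emptyset$, and control the rest as an $O(\epsilon)$ error. Since $p^{\omega_{y,A}^{(z,2)=e}}(y,y+e')$ depends only on edges of $B^E(y,1)$, and the constraint from $B^E(y,1)$ has priority over that from $B^E(z,2)$ in the doubly modified configuration, I first verify that $p^{\omega_{y,A}^{(z,2)=e}}(y,y+e')$ matches $p_0^{z,e}(y,y+e')$ when $A=\emptyset$ (with a short bookkeeping at the exceptional vertices $y\in\{z,z+e\}$, where the conditioning $\{\C(y)=\emptyset\}$ becomes incompatible with $(z,2)=e$), while for $A\neq\emptyset$ the transition differs from $p_0^{z,e}$ by at most a universal multiplicative factor.

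For the dominant term, I would show ${\bf P}[\C(y)=\emptyset]=1-O(\epsilon)$ together with the fact that the conditional ratio at $A=\emptyset$ appearing in~(\ref{kali_2}) equals $1+O(\epsilon)$. This is the two-point analog of Lemma~\ref{yet_another_technicality}: apply Proposition~\ref{insertedge1} to compare $R^{\omega^{(z,2)=e}}(y\leftrightarrow\Delta)$ with the corresponding resistance in the configuration where all edges of $B^E(y,1)$ are open, and then invoke the decorrelation mechanism of Lemma~\ref{decorrelation} after conditioning on the local quantity $L_y$, whose arbitrarily large exponential moments are guaranteed by Proposition~\ref{perco}. The Green function $G_{\delta}^{\omega^{(z,2)=\emptyset}}(0,z)$ is independent of the edges of $B^E(y,1)$ whenever $y\notin B(z,2)$, so it is carried along as a harmless weight in the bulk regime, and the finitely many exceptional cases are absorbed into the constants.

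For $A\neq\emptyset$ one has ${\bf P}[\C(y)=A]\leq\epsilon^{|A|}\leq\epsilon$, so it suffices to bound the corresponding conditional ratio by $C_{14}\,e^{C_{15}((z-y)\cdot\vec\ell)^+}$. The strategy mimics Proposition~\ref{upperb}: factor
\[
G_{\delta}^{\omega^{(z,2)=e}}(z+e_+,y)=P_{z+e_+}^{\omega^{(z,2)=e}}[T_y<\tau_{\delta}]\,G_{\delta}^{\omega^{(z,2)=e}}(y,y),
\]
handle the local resistance via Proposition~\ref{insertedge1}, and run the decorrelation argument centered at $y$. The key new ingredient appears in the analog of the path-surgery estimate~(\ref{step35}): the auxiliary path $\mathcal{P}_1$ used to force a deterministic connection from the Green-function endpoint $z+e_+$ to $\partial B(y,k)$ must now cover the full distance $|y-z|$, contributing a cost of order $(1/\kappa_0)^{|y-z|}$. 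Combined with the typical exponential decay $e^{-2\lambda((z-y)\cdot\vec\ell)^+}$ of the denominator (the walk needs to reach $y$ against the drift when $y$ lies behind $z$), this produces precisely the factor $e^{C_{15}((z-y)\cdot\vec\ell)^+}$ in the final bound.

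The main obstacle will be this last step: transporting the decorrelation argument from the single-point setting of Lemma~\ref{decorrelation} to a two-endpoint Green function while tracking exactly how the exponential cost of the reconnection path scales with $(z-y)\cdot\vec\ell$, and ensuring that the arbitrarily large exponential moments of $L_y$ still absorb the $e^{2\lambda L_y}$ correction coming from the resistance estimate. Once this is in place, summing the $A\neq\emptyset$ contributions against ${\bf P}[\C(y)=A]=O(\epsilon)$ and combining with the $A=\emptyset$ term yields the proposition.
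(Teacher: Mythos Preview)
Your high-level plan (decompose via~(\ref{kali_2}), isolate $A=\emptyset$, bound the ratio for $A\neq\emptyset$) is exactly what the paper does: Proposition~\ref{approx_trans_prob} is reduced immediately to Proposition~\ref{quotient_ub}. The gap is in how you propose to prove that ratio bound.

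Your claim that ``$G_{\delta}^{\omega^{(z,2)=\emptyset}}(0,z)$ is independent of the edges of $B^E(y,1)$ whenever $y\notin B(z,2)$'' is false. The configuration $\omega^{(z,2)=\emptyset}$ coincides with $\omega$ on every edge outside $B^E(z,2)$, so this Green function depends on $\omega(e)$ for every $e\notin B^E(z,2)$, in particular on the edges adjacent to $y$. Conditioning on $\C(y)=A$ therefore perturbs \emph{both} factors $G_{\delta}^{\omega^{(z,2)=\emptyset}}(0,z)$ and $G_{\delta}^{\omega^{(z,2)=e}}(z+e_+,y)$, not just the second one, and it is precisely the first factor that creates the main difficulty. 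Writing $G_{\delta}^{\omega^{(z,2)=\emptyset}}(0,z)=P_0[T_z<\tau_\delta]\,G_{\delta}(z,z)$, the hitting probability $P_0[T_z<\tau_\delta]$ is not controlled by a single application of Proposition~\ref{insertedge} at $y$: closing edges at $y$ can block the path from $0$ to $z$ in ways that are not captured by the return resistance at $y$. The paper devotes all of Section~\ref{s_tech} to this: Proposition~\ref{hitting_time2} splits $P_0[T_z<\tau_\delta]$ into a product $p_1\,p_2\,\overline{R}_*$ of two hitting probabilities and a resistance through $\partial B(y,k)$, and then Proposition~\ref{Q_upper_bound} applies Proposition~\ref{insertedge} \emph{recursively} (three nested scales $L_y^{(1)},L_y^{(2)},L_y^{(3)}$) because one now has three resistance-type quantities $\underline{R}_*,R_*(z),R_*(y)$ to decouple simultaneously. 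Two separate decorrelation lemmas (\ref{deco_2a} and~\ref{deco_2b}) are then needed.

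Your explanation of the exponential factor is also off. The decorrelation paths in Lemma~\ref{decorrelation} (and in Lemmas~\ref{deco_2a}--\ref{deco_2b}) run from $\partial B(y,k)$ to $y$, of length $\leq k+2$; they never have length $|y-z|$. If they did, you would get $e^{C|y-z|}$ rather than $e^{C((z-y)\cdot\vec\ell)^+}$, and such a bound would be useless in Proposition~\ref{approx_green_function}, which relies essentially on uniformity in the direction of the drift. The actual source of the factor is the resistance estimate: applying Proposition~\ref{insertedge} with center $y$ to $R^{\omega}(z\leftrightarrow\Delta)$ produces a correction $C_1 L_y^{C_2} e^{2\lambda(L_y-y\cdot\vec\ell)}$, and the normalization $R_*(z)=e^{2\lambda z\cdot\vec\ell}R^{\omega}(z\leftrightarrow\Delta)$ then yields the extra $e^{2\lambda(z-y)\cdot\vec\ell}$ sitting inside $Z_{y,k}$ at~(\ref{definition_Z}).
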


This proposition will be used to link the Green function of $\tilde{p}_{z,e,z+e_+}$ to the one of $p_0^{z,e}$. In view of~(\ref{kali_2}) the previous proposition comes from the following.
\begin{proposition}
\label{quotient_ub}
For $0<\epsilon<\epsilon_8$, $y,z\in \Z^d$ and $A\subset \nu$, $A\neq \nu$,
\[
\frac{{\bf E}_{1-\epsilon}[\1{\mathcal{I}(\omega^{(z,2)=e})}G_{\delta}^{\omega^{(z,2)=\emptyset}}(0,z)G_{\delta}^{\omega^{(z,2)=e}}(z+e_+,y)\mid \C(y)=A]}{{\bf E}_{1-\epsilon}[\1{\mathcal{I}(\omega^{(z,2)=e})}G_{\delta}^{\omega^{(z,2)=\emptyset}}(0,z)G_{\delta}^{\omega^{(z,2)=e}}(z+e_+,y)]\mid \C(y)=\emptyset ]}\leq C_{16} e^{C_{17}((z-y) \cdot \vec \ell)^+},
\]
for $\epsilon_8$, $C_{16}$, $C_{17}$ depending only on $\ell$ and $d$ for $\delta\geq 1/2$.
\end{proposition}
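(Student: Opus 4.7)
The plan is to mirror the proof of Proposition~\ref{upperb}, with the added complications of a second Green function factor, a shifted starting point $z+e_+$, and the extra modifications $\1{(z,2)=e}$. The exponential factor $e^{C_{17}((z-y)\cdot\vec\ell)^+}$ will emerge naturally from applying the resistance estimate of Proposition~\ref{insertedge} at $y$ to the Green function ending at $z$. I would first isolate the ``close'' case where $B^E(y,1)\cap B^E(z,2)\neq\emptyset$; then $|z-y|$ is bounded, the factor $e^{C_{17}((z-y)\cdot\vec\ell)^+}$ is a constant, and the argument reduces to the single-ball analogue of Proposition~\ref{upperb} applied to a slightly enlarged ball centered at $z$.

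In the disjoint case, I decompose both Green functions via Lemma~\ref{lemgreen}: writing
\[
G_\delta^{\omega^{y,A}}(0,z)=P_0^{\omega^{y,A}}[T_z<\tau_\delta]\cdot \pi^{\omega^{y,\emptyset}(\delta)}(z)\cdot R^{\omega^{y,A}}(z\leftrightarrow\Delta)
\]
(using disjointness to replace $\pi^{\omega^{y,A}(\delta)}(z)$ by $\pi^{\omega^{y,\emptyset}(\delta)}(z)$) and similarly for $G_\delta^{\omega^{y,A}}(z+e_+,y)$. Proposition~\ref{insertedge} applied at $B^E(y,1)$ yields
\[
R^{\omega^{y,A}}(z\leftrightarrow\Delta)\leq 4R^{\omega^{y,\emptyset}}(z\leftrightarrow\Delta)+C_1L_y^{C_2}e^{2\lambda(L_y-y\cdot\vec\ell)},
\]
and an analogous bound for $R^{\omega^{y,A}}(y\leftrightarrow\Delta)$. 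Multiplying the former by $\pi^{\omega(\delta)}(z)\asymp e^{2\lambda z\cdot\vec\ell}$ (cf.~(\ref{kap1})) produces a local correction of order $L_y^{C_2}e^{2\lambda(L_y+(z-y)\cdot\vec\ell)}$: the piece $e^{2\lambda((z-y)\cdot\vec\ell)^+}$ here is precisely the claimed exponential factor, with $C_{17}=2\lambda$.

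The hitting probability $P_{z+e_+}^{\omega^{y,A}}[T_y<\tau_\delta]\leq P_{z+e_+}^{\omega^{y,\emptyset}}[T_y<\tau_\delta]$ is monotone since closing edges around $y$ only hurts arrivals. The non-monotone factor $P_0^{\omega^{y,A}}[T_z<\tau_\delta]$ I would split as $P_0^{\omega^{y,A}}[T_z<\tau_\delta]\leq P_0^{\omega^{y,\emptyset}}[T_z<\tau_\delta]+P_0^{\omega^{y,\emptyset}}[T_y<\tau_\delta]$ according to whether the walk hits $y$ or not, and the second piece is absorbed into the local correction. Expanding the product of the two resistance bounds, the ``main-main'' term reproduces a constant multiple of the denominator and the remaining terms take the form
\[
{\bf E}[\1{\mathcal{I}(\omega^{(z,2)=e})}\cdot(\textrm{Green-like factor})\cdot L_y^{C'}e^{C''L_y}]\cdot e^{2\lambda((z-y)\cdot\vec\ell)^+}.
\]

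The final step is a decorrelation argument \`a la Lemma~\ref{decorrelation}: conditioning on $\{L_y=k\}$, which by Proposition~\ref{threeprop} is measurable w.r.t.\ edges outside $B^E(y,k)$, I force open short paths from $y$ to boundary points $x_0,x_1,x_2\in\partial B(y,k)$ connected respectively to $\infty$, to $0$, and to $z+e_+$ using only edges outside $B^E(y,k)\cup B^E(z,2)$, then invoke Markov's property at a $(\delta\kappa_0)^{O(k)}$ cost per path. The exponential moments of $L_y$ furnished by Proposition~\ref{perco} absorb the $k$-dependence and yield a constant independent of $z$, $y$, $e_+$, $\delta$. The main obstacle is exactly this multi-point decorrelation: one now has \emph{three} path-insertion requirements around $y$ together with the avoidance of the fixed-state ball $B^E(z,2)$, so the bookkeeping is more intricate than in Lemma~\ref{decorrelation}; once this extension is in place the remainder of the argument follows the pattern of Proposition~\ref{upperb}.
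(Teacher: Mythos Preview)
Your overall architecture (decompose each Green function as hitting probability times resistance, apply Proposition~\ref{insertedge} at $y$, then decorrelate) matches the paper, but there is a genuine gap in your treatment of the non-monotone factor $P_0^{\omega_{y,A}^{(z,2)=\emptyset}}[T_z<\tau_\delta]$.

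Your split
\[
P_0^{\omega^{y,A}}[T_z<\tau_\delta]\leq P_0^{\omega^{y,\emptyset}}[T_z<\tau_\delta]+P_0^{\omega^{y,\emptyset}}[T_y<\tau_\delta]
\]
is correct, but the second piece $P_0^{\omega^{y,\emptyset}}[T_y<\tau_\delta]$ is \emph{not} a local correction: it depends on the environment all the way from $0$ to $y$. You cannot ``absorb'' it as you claim, because in the decorrelation step you must eventually bound everything by the denominator, which contains $P_0[T_z<\tau_\delta]$ (hitting $z$, not $y$). Your path-insertion scheme forces short paths from $\partial B(y,k)$ to $y$, to $0$-side, to $z+e_+$-side, and to $\infty$; none of these produces a passage from the vicinity of $y$ to $z$, and forcing such a path would cost $(\kappa_0\delta)^{-d(y,z)}$, which is unbounded.

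The paper's remedy is Proposition~\ref{hitting_time2}: on the event that the walk does visit $\partial B(y,k)$ before $z$, one bounds
\[
P_0^{\omega}[T_z<\tau_\delta]\;\lesssim\; k^{2d}\,p_1^{\omega}(y,k)\,p_2^{\omega}(y,k)\,\overline{R}_*^{\omega}(y,k),
\]
where $p_1$ is the entrance probability into $\partial B(y,k)$ from $0$, $p_2$ is the exit-to-$z$ probability from $\partial B(y,k)$ without return, and $\overline{R}_*$ controls the number of returns to $\partial B(y,k)$. Crucially $p_1,p_2,\overline{R}_*$ depend only on edges \emph{outside} $B(y,k)$, so they can be decorrelated from $L_y$. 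But this introduces a \emph{third} resistance factor $\overline{R}_*$ alongside $R_*(z)$ and $R_*(y)$, and to get the required independence structure one must apply Proposition~\ref{insertedge} (and the companion Lemma~\ref{mult_lemma1}) \emph{recursively} at growing scales $L_y^{(0)}\le L_y^{(1)}\le L_y^{(2)}\le L_y^{(3)}$; this is the content of Proposition~\ref{Q_upper_bound}. A single application at $B^E(y,1)$ as you propose is not enough. Once this recursive expansion is in place, two separate decorrelation lemmas (Lemmas~\ref{deco_2a} and~\ref{deco_2b}) are needed, and the exponential moments are those of $L_y^{(3)}$, not $L_y$ (Lemma~\ref{integrability}).
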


In order to prove Proposition~\ref{approx_trans_prob}, once we have noticed that we have ${\bf P}[\C(y)=\emptyset]\geq \gamma_1$ and that 
\begin{align*}
&{\bf E}_{1-\epsilon}[\1{\mathcal{I}(\omega^{(z,2)=e})}G_{\delta}^{\omega^{(z,2)=\emptyset}}(0,z)G_{\delta}^{\omega^{(z,2)=e}}(z+e_+,y)]\\
\geq & {\bf P}_{1-\epsilon}[\C(y)=\emptyset] {\bf E}_{1-\epsilon}[\1{\mathcal{I}(\omega^{(z,2)=e})}G_{\delta}^{\omega^{(z,2)=\emptyset}}(0,z)G_{\delta}^{\omega^{(z,2)=e}}(z+e_+,y)]\mid \C(y)=\emptyset ],
\end{align*}
it suffices to substract $p_0^{z,e}(y,y+e')$ on both sides of~(\ref{kali_2}) and use the Proposition~\ref{quotient_ub}, to get Proposition~\ref{approx_trans_prob} with $C_{14}=2dC_{16}/ \gamma_1$ and $C_{15}=C_{17}$.

Obviously Proposition~\ref{quotient_ub} has strong similarities with Proposition~\ref{upperb}, since the only difference is that the upper bound is weaker, which is simply due to technical reasons. Moreover, since the proof is rather technical and independent of the rest of the argument, we prefer to defer it to Section~\ref{s_tech}.

\subsection{Expansion of Green functions}

Once Proposition~\ref{approx_trans_prob} is proved, we are able to approximate the Green functions appearing in~(\ref{kalikow2}) through the same type of arguments as given in~\cite{Sabot}. 

Heuristically, we may say that if environments are close then the Green functions should be close at least on short distance scales. 

Compared to~\cite{Sabot}, there is a twist due to the fact that we do not have uniform ellipticity and that our control on the environment in Proposition~\ref{approx_trans_prob} is only uniform in the direction of the drift. Moreover our \lq\lq limiting environment\rq\rq as $\epsilon$ goes to $0$ is not translation invariant (nor uniformly elliptic). Hence we need some extra work to adapt the methods of~\cite{Sabot}.
\begin{proposition}
\label{approx_green_function}
For any $z\in \Z^d$, $e,e_+ \in \nu$, $e',e''\in \nu \cup \{0\}$ we get 
\[
\sup_{\delta \in [1/2,1)}\abs{G_{\delta}^{\tilde{p}}(z+e'+e'',z)-G_{\delta}^{\omega_0^{z,e}}(z+e'+e'',z)}\leq o_{\epsilon}(1),
\]
where $o_{\epsilon}(\cdot)$ depends only on $\ell$ and $d$. We recall that $\tilde{p}$ represents $\tilde{p}_{z,e,z+e_+}$.
\end{proposition}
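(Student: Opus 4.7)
The plan is to follow the approach of~\cite{Sabot}, using the resolvent identity to compare the Green functions of two nearby Markov chains. Taking $P = p_0^{z,e}$, $P' = \tilde{p}$ and $n = 0$ in equation~(\ref{green_exp}) yields, for $x = z + e' + e''$,
\[
G_\delta^{\tilde{p}}(x,z) - G_\delta^{p_0^{z,e}}(x,z) = \delta \sum_{u \in \Z^d} G_\delta^{p_0^{z,e}}(x,u) \sum_{e''' \in \nu} (\tilde{p} - p_0^{z,e})(u, u+e''') \, G_\delta^{\tilde{p}}(u+e''', z).
\]
By Proposition~\ref{approx_trans_prob}, the middle factor is pointwise bounded by $C_{14}\, \epsilon\, e^{C_{15}((z-u)\cdot\vec\ell)^+}$, so the task reduces to showing that the resulting weighted double sum is $O(\epsilon)$ uniformly in $\delta \in [1/2, 1)$.

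The main inputs are decay estimates for the Green functions involved. For the deterministic reference environment $p_0^{z,e}$, reversibility with respect to $\pi(y) \propto e^{2\lambda y \cdot \vec\ell}$ together with the transience (ballisticity) of the biased random walk on $\Z^d$ gives
\[
G_\delta^{p_0^{z,e}}(x, u) \leq \gamma_1 \exp\bigl(-2\lambda ((x-u) \cdot \vec\ell)^+\bigr),
\]
uniformly in $\delta$. I would then split the sum according to the sign of $(u-z) \cdot \vec\ell$. For $u$ behind $z$, combining the bound above with the factor $e^{C_{15}((z-u)\cdot\vec\ell)^+}$ yields a summable kernel provided $C_{15} < 2\lambda$; this should follow from the explicit form of the constants coming from Proposition~\ref{approx_trans_prob} and the resistance estimates of Section~\ref{sect_res}. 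For $u$ ahead of $z$, the exponential factor equals one, and the product $G_\delta^{p_0^{z,e}}(x, u)\, G_\delta^{\tilde{p}}(u, z)$ is summable because $G_\delta^{\tilde{p}}(u, z)$ decays exponentially in $(u - z) \cdot \vec\ell$ once $u$ is far ahead of $z$.

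The required a priori bound on $G_\delta^{\tilde{p}}$ is obtained by a bootstrap argument: by Proposition~\ref{approx_trans_prob}, $\tilde{p}$ coincides with $p_0^{z,e}$ up to $O(\epsilon)$ on any ball of bounded radius around $z$, and since $p_0^{z,e}$ has a uniform positive drift along $\vec\ell$, so does $\tilde{p}$ outside such a ball for $\epsilon$ small enough. A standard exit-time argument, combined with a truncated version of the resolvent identity, then propagates uniform bounds and exponential decay from $G_\delta^{p_0^{z,e}}$ to $G_\delta^{\tilde{p}}$.

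The principal obstacle is the quantitative control on the constant $C_{15}$: ensuring $C_{15} < 2\lambda$ requires tracking constants through the technical proof of Proposition~\ref{approx_trans_prob} (deferred to Section~\ref{s_tech}). A secondary issue is the loss of uniform ellipticity of $\tilde{p}$ near $z$ (since the edge $[z, z+e]$ is almost closed in typical configurations contributing to $\tilde{p}$), but this degeneracy is confined to a bounded neighborhood and therefore contributes only a uniformly bounded error. Overall, the argument closely parallels~\cite{Sabot}, with minor adjustments for the fact that the limiting environment $p_0^{z,e}$ is not translation-invariant and not uniformly elliptic at $z$.
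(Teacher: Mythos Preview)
Your proposal has the right starting point (the resolvent identity) but contains two genuine gaps that the paper's proof is designed precisely to avoid.

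\textbf{The constant $C_{15}$.} You flag yourself that the argument requires $C_{15} < 2\lambda$, and you hope this ``should follow'' from tracking constants through Section~\ref{s_tech}. It does not. The constants in Proposition~\ref{quotient_ub} arise from repeated applications of the resistance estimate (Proposition~\ref{insertedge}) and the decorrelation lemmas, and accumulate multiplicatively; there is no mechanism keeping $C_{15}$ below $2\lambda$. The paper circumvents this entirely: in its Step~(1) it shows, via a martingale/Azuma argument, that under $\tilde p$ the walk reaches $\{x:(x-z)\cdot\vec\ell<-A\}$ with probability $\leq C e^{-cA}$, uniformly in $\delta$. This allows one to replace $\tilde p$ by an environment $\overline p$ equal to $p_0^{z,e}$ behind level $-A(\epsilon'')$, at the cost of an arbitrarily small error. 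For $\overline p$ the difference $\overline p - p_0^{z,e}$ is then \emph{uniformly} $O(\epsilon')$, and the exponential weight $e^{C_{15}((z-u)\cdot\vec\ell)^+}$ disappears from the problem.

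\textbf{Transversal summability.} Even granting a uniform perturbation bound, your estimate $G_\delta^{p_0^{z,e}}(x,u)\leq\gamma_1 e^{-2\lambda((x-u)\cdot\vec\ell)^+}$ gives decay only in the drift direction; it says nothing about $u$ varying in hyperplanes orthogonal to $\vec\ell$, and $\sum_u G_\delta^{p_0^{z,e}}(x,u)=(1-\delta)^{-1}$ blows up as $\delta\to1$. The paper handles this in its Step~(2) by the symmetrization $P^{s,\delta}=M\,\delta P^{\omega_0^{z,e}}M^{-1}$ with $M=(\pi^{\omega_0^{z,e}})^{1/2}$. The point is that $P^{s,\delta}(x,\Delta)\geq\gamma_6>0$ uniformly in $x$ and $\delta$, because $\sum_i\sqrt{p(e^{(i)})p(-e^{(i)})}<1$ strictly whenever the walk is biased. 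Hence $\sum_{x_1}G^{s,\delta}(x,x_1)\leq 1/\gamma_6$, and the resolvent series converges geometrically with a rate independent of $\delta$. Your approach lacks any substitute for this, and the ``bootstrap'' you sketch for $G_\delta^{\tilde p}$ does not supply one: you would still need uniform-in-$\delta$ control of $\sum_u G_\delta^{\tilde p}(u,z)$, which is exactly the missing ingredient.

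In short, the paper's two-step structure (cutoff behind $z$, then symmetrization) is not an arbitrary choice but addresses precisely the two obstacles you identify and leave unresolved.
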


The proof of this proposition is independent of the rest of the argument and can be skipped on first lecture to see how it actually leads to the computation of the derivative.

\begin{proof}
The proof will be divided in two main steps:
\begin{enumerate}
\item prove that there exists transition probabilities $\overline{p}$ that are uniformly close to those corresponding to the environment $\omega_0^{z,e}$ on the whole lattice and which has a Green function close to the one of the transition probabilities $\tilde{p}$,
\item prove the same statement as in Proposition~\ref{approx_green_function} but for the environment $\overline{p}$ instead of $\tilde{p}$. Since the control on the environment is now uniform we can use arguments close to those of the proof of Lemma 3 in~\cite{Sabot}.
\end{enumerate}

{\it Step (1)}

For the first step, we will show that the random walk is unlikely to visit often $z$ and go far away in the direction  opposite to the drift, i.e.~we want to show that for any $\epsilon'>0$
\begin{equation}
\label{object}
G^{\tilde{p}}_{\delta}(z+e'+e'',z) - G^{\tilde{p}}_{\delta, \{x \in \Z^d,~ x \cdot \vec \ell <z\cdot \vec \ell-A\}}(z+e'+e'',z) <\epsilon',
\end{equation}
for $A$ large and $\epsilon$ small, where we used a notation of~(\ref{fct_green_gen}). This inequality follows from the fact that except at $z$ and $z+e$ the local drift under $\tilde{p}$ can be set to be uniformly positive in the direction $\vec{\ell}$ in any half-space $\{x\in \Z^d,~x\cdot \vec{\ell} >-A\}$ for $\epsilon$ small by Proposition~\ref{approx_trans_prob}.

{\it Step (1)-(a)}

In a first time, we show that the escape probabilities from $z$, $z+e$ and $z+e'+e''$ (to $\Delta$) are lower bounded in the environment $\tilde{p}$. This ensures that there cannot be many visits at those three points. 

For this we use the result of a classical super-martingale argument, see Lemma~1.1 in~\cite{SZ-Slow}. Without entering further into the details, this argument yields that for any $\eta>0$ there exists $f(\eta)>0$ such that for any random walk on $\Z^d$ defined by a Markov operator $P(x,y)$ such that $\bigl(\sum_{y \sim x} P(x,y) (y-x)\bigr) \cdot \vec \ell >\eta$, for $x$ such that $x\cdot \vec \ell \geq 0$, we have
\begin{equation}
\label{lb_esc_prob}
	P_0[X_n \cdot \vec \ell \geq 0, \text{ for all }n>0]>f(\eta).
\end{equation}

Now by Proposition~\ref{approx_trans_prob}, it is possible to fix a percolation parameter $1-\epsilon$ where $\epsilon$ is chosen small enough depending solely on $d$ and $\ell$ so that 
\begin{itemize}
\item The drift $d^{\tilde{p}}(x)=\sum_{e \in \nu} \tilde{p}(x,x+e) e$ is such that $d^{\tilde{p}}(x) \cdot \vec \ell > d_{\emptyset}\cdot \vec \ell /2$ for $x$ such that $x\cdot \vec \ell \geq (z+2de^{(1)})\cdot \vec \ell$ (this way we avoid the transitions probabilities at the vertices $z$ and $z+e$ which are special).
\item The transition probabilities $\tilde{p}$ on the shortest paths from $z$, $z+e$ and $z+e'+e''$ to $z+2de^{(1)}$ which does not use the edge $[z,z+e]$ (they have some length inferior to some $\gamma_1$ depending only on $d$) are greater than $\kappa_0/2$. 
\end{itemize}

Hence we can get a lower bound for the escape probability under $\tilde{p}$:
\begin{align}
\label{no_return_step1}
& \min_{y\in \{z,z+e,z+e'+e''\}} P_{y}^{\tilde{p}}[T_{\{z,z+e,z+e'+e''\}}^+=\infty] \\ \nonumber
 \geq & \min_{y\in \{z,z+e,z+e'+e''\}}  P_{y}^{\tilde{p}}[T_{\{z,z+e,z+e'+e''\}}^+>T_{z+2de^{(1)}}] \\ \nonumber
      & \qquad \times P_{z+2de^{(1)}}^{\tilde{p}}[(X_n-(z+2de^{(1)})) \cdot \vec \ell \geq 0,n >0] \\ \nonumber
 \geq & f(d_{\emptyset} \cdot \vec{\ell}/2) \Bigl(\frac{\kappa_0}2\Bigr)^{\gamma_1} =\gamma_2,
\end{align}
where $\gamma_2$ depends only on $d$ and $\ell$.

{\it Step (1)-(b)}
Now in a second time we will show that the walk is unlikely to go far in the direction opposite to the drift during an excursion from $z$, $z+e$ or $z+e'+e''$. Once this is done, this will imply with~(\ref{no_return_step1}) that the walker is unlikely to reach the half-plane $\{x \in \Z^d,~ x \cdot \vec \ell <z\cdot \vec \ell-A\}$ and~(\ref{no_return_step1}) also that when it does the expected number of returns to $z$ remains bounded. This will be made rigorous in Step (1)-(c) and will prove~(\ref{object}).
 
Consider any random walk on $\Z^d$ given by a transition operator $P(x,y)$ such that for all $x\in \Z^ d$ we have $d^P(x)\cdot \vec \ell:=\sum_{y \sim x} P(x,y) (y-x) \cdot \vec \ell > (d_{\emptyset}\cdot \vec \ell )/2=\gamma_3$. We know that
\[
M_n^P=X_n-X_0-\sum_{i=0}^{n-1} d^P(X_i),
\]
is a martingale with jumps bounded by 2. Hence since $d^P(x) \geq \gamma_3$, we can use Azuma's inequality, see~\cite{Alon}, to get
\begin{align*}
P_0[T_{\{x \in \Z^d,~ x \cdot \vec \ell <-A\}}<\infty]&\leq \sum_{n \geq 0} P_0\Bigl[M_n^P\cdot \vec \ell <-A-\gamma_3 n\Bigr]\\
                                                                                   &\leq \sum_{n\geq 0} \exp\Bigl(-\frac{(A+\gamma_3 n)^2}{8n}\Bigr)\leq \gamma_4\exp\Bigl(-\frac{\gamma_3 A}4\Bigr). 
\end{align*}

Set $\epsilon''>0$. Taking $A=A(\epsilon'')$ large enough, depending also on $d$ and $\ell$, we can make the right-hand side lower than $\epsilon''$, i.e.~$A(\epsilon'')\geq -(4/\gamma_3)\ln(\epsilon''/\gamma_4)$.

Now let us choose $\epsilon$ small enough so that for any $y\in \{x \in \Z^d,\ x \cdot \vec {\ell} \geq z\cdot \vec \ell-A-3 \text{ and } x\notin \{z,z+e\} \}$ we have $d^{\tilde{p}}(y) \cdot \vec \ell > (d_{\emptyset}\cdot \vec \ell) /2$. Let us introduce the environment $\tilde{p}_A$ such that
\begin{enumerate}
\item $\tilde{p}_A(y,y+f)=\tilde{p}(y,y+f)$ for all $f\in \nu$ and $y\in \{x \in \Z^d,\ x \cdot \vec {\ell} \geq z\cdot \vec \ell-A-3 \text{ and } x\notin \{z,z+e\} \}$,
\item $\tilde{p}_A(y,y+f)=p^{\emptyset}(f)$ for all $f\in \nu$ otherwise,
\end{enumerate}
the same formulas holding when the target point is $\Delta$.

Then, the previous computations, valid for $\tilde{p}_A$, imply
\begin{align}
\label{no_return_step2}
&\max_{y\in \{z,z+e,z+e'+e''\}}P_{y}^{\tilde{p}}[T_{\{x \in \Z^d,~ x \cdot \vec \ell <z\cdot \vec \ell -A-3\}}<T_{\{z,z+e,z+e'+e''\}}^+]\\ \nonumber
\leq & \max_{y \in\partial \{z,z+e,z+e'+e''\}}P_{y}^{\tilde{p}}[T_{\{x \in \Z^d,~ x \cdot \vec \ell <z\cdot \vec \ell -A-3\}}<T_{\{z,z+e,z+e'+e''\}}] \\ \nonumber
\leq & \max_{y \in\partial \{z,z+e,z+e'+e''\}}P_{y}^{\tilde{p}_A}[T_{\{x \in \Z^d,~ x \cdot \vec \ell <z\cdot \vec \ell -A-3\}}<T_{\{z,z+e,z+e'+e''\}}] \\ \nonumber
\leq & \max_{y \in\partial \{z,z+e,z+e'+e''\}}P_{y}^{\tilde{p}_A}[T_{\{x \in \Z^d,~ x \cdot \vec \ell <z\cdot \vec \ell -A-3\}}<\infty\}] \leq \epsilon'',
\end{align}
where we used that the event on the second line depends only on the transitions probabilities at the vertices of $\{x \in \Z^d,\ x \cdot \vec \ell \geq z\cdot \vec \ell-A-3 \text{ and } x\notin \{z,z+e\} \}$.

{\it Step (1)-(c)}

Let us now turn to the proof of~(\ref{object}). By~(\ref{no_return_step1}) we have
\begin{align}
\label{will_this_ever_stop_1}
 G^{\tilde{p}}_{\delta}(z+e'+e'',z+e'+e'')&=P^{\tilde{p}}_{z+e'+e''}[T^+_{z+e'+e''}>\tau_{\delta}]^{-1}\leq \frac 1{\gamma_2} \\ \nonumber
G^{\tilde{p}}_{\delta}(z,z)\leq \frac 1{\gamma_2} & \text{ and } G^{\tilde{p}}_{\delta}(z+e,z+e)\leq \frac 1{\gamma_2}.
\end{align}

Now decomposing the event $T_{\{x \in \Z^d,~ x \cdot \vec \ell <z\cdot \vec \ell-A-3\}}<\infty$ first with respect to the number of excursions to $z+e'+e''$ and then with respect to $z$ and $z+e$ in addition with~(\ref{no_return_step2}) and~(\ref{will_this_ever_stop_1}) yields
\begin{align}
\label{will_this_ever_stop_2}
& P_{z+e'+e''}^{\tilde{p}}[T_{\{x \in \Z^d,~ x \cdot \vec \ell <z\cdot \vec \ell-A-3\}}<\infty] \\ \nonumber
\leq&  G_{\delta}^{\tilde{p}}(z+e'+e'',z+e'+e'') P_{z+e'+e''}^{\tilde{p}}[T_{\{x \in \Z^d,~ x \cdot \vec \ell <z\cdot \vec \ell-A-3\}}<T_{z+e'+e''}^+]  \\ \nonumber
\leq& \frac 1 {\gamma_2} \bigl(G_{\delta}^{\tilde{p}}(z+e'+e'',z)+G_{\delta}^{\tilde{p}}(z+e'+e'',z+e)\bigr) \\ \nonumber
 & \times \max_{y \in\partial \{z,z+e,z+e'+e''\}}P_{y}^{\tilde{p}}[T_{\{x \in \Z^d,~ x \cdot \vec \ell <z\cdot \vec \ell -A-3\}}<T_{\{z,z+e,z+e'+e''\}}^+]\\ \nonumber
\leq& \frac{2\epsilon''}{\gamma_2^2}.
\end{align}

For $\epsilon$ small enough to verify the previous conditions we have using~(\ref{will_this_ever_stop_1}) and~(\ref{will_this_ever_stop_2})
\begin{align}
\label{no_return_step3}
&G^{\tilde{p}}_{\delta}(z+e'+e'',z) - G^{\tilde{p}}_{\delta, \{x \in \Z^d,~ x \cdot \vec \ell <z\cdot \vec \ell-A-3\}}(z+e'+e'',z)\\ \nonumber
\leq & P_{z+e'+e''}^{\tilde{p}}[T_{\{x \in \Z^d,~ x \cdot \vec \ell <z\cdot \vec \ell-A-3\}}<\infty] \max_{y\in \{x \in \Z^d,~ x \cdot \vec \ell <z\cdot \vec \ell-A-3\}} G^{\tilde{p}}_{\delta}(y,z) \\ \nonumber
\leq & \frac{2\epsilon''}{\gamma_2^2} G^{\tilde{p}}_{\delta}(z,z)=\frac{2\epsilon''}{\gamma_2^3}.
\end{align}

So that introducing $\overline{p}(y,f)$ so that for $f\in \nu $
\begin{align*}
\overline{p}(y,y+f)&=\tilde{p}(y,y+f) \text{ for $y$ such that $(y-z)\cdot\vec \ell \geq - A(\epsilon'')-1$} \\
\overline{p}(y,y+f)&=p^{\omega_0^{z,e}}(y,y+f) \text{ for $y$ such that $(y-z)\cdot\vec \ell < - A(\epsilon'')-1$},
\end{align*}
the same formulas holding when the target point is $\Delta$.

The equation~(\ref{no_return_step3}) is also valid for $G_{\delta}^ {\overline{p}}$ so that
\begin{equation}
\label{no_returns}
\abs{G^{\tilde{p}}_{\delta}(z+e'+e'',z)-G_{\delta}^{\overline{p}}(z+e'+e'',z)} \leq \gamma_5\epsilon'',
\end{equation}
where, by Proposition~\ref{approx_trans_prob}, $\overline{p}$ (depending on $\epsilon ''$) is such that
\begin{equation}
\label{approx_trans_prob1}
\max_{f\in \nu, y \in \Z^d } \abs{\overline{p}(y,f)-p^{\omega_0^{z,e}}(y,f)}\leq C_{14}e^{C_{15}A(\epsilon'')}\epsilon \leq \epsilon',
\end{equation}
for $\epsilon$ small enough (depending on $\epsilon'$ and $\epsilon''$) given any arbitrary $\epsilon'$. This completes step (1).

{\it Step (2)}

Since our control on the environment is now uniform through the environment $\overline{p}$, it turns out that we can use methods similar to those of~\cite{Sabot} to prove that there exists a $O(\epsilon)$ depending only on $d$ and $\ell$ such that
\begin{equation}
\label{resultat11}
 \sup_{\delta\in [1/2,1)} \abs{G^{\omega_0^{z,e}}_{\delta}(z+e'+e'',z)-G_{\delta}^{\overline{p}}(z+e'+e'',z)}\leq O(\epsilon),
\end{equation}
which in view of~(\ref{no_returns}) and~(\ref{approx_trans_prob1}) is enough to prove Proposition~\ref{approx_green_function}.

Let us define $M$ the operator of multiplication by $(\pi^{\omega_0^{z,e}})^{1/2}$ given for $f:\Z^d\to \R$, by
\[
M(f)(y)=(\pi^{\omega_0^{z,e}}(y))^{1/2}f(y).
\]

We consider a transition operator $P^{s,\delta}$ of a random walk on $\Z^d\cup \{\Delta\}$ given by
\begin{align}\label{def_op1}
P^{s,\delta}(x,x+e^{(i)})&=P^{s,\delta}(x+e^{(i)},x)\\ \nonumber
&=\delta(\pi^{\omega_0^{z,e}}(x))^{1/2}p^{\omega_0^{z,e}}(x,x+e^{(i)})(\pi^{\omega_0^{z,e}}(x+e^{(i)}))^{-1/2}\\ \nonumber
& =\delta(\pi^{\omega_0^{z,e}}(x+e^{(i)}))^{1/2}p^{\omega_0^{z,e}}(x+e^{(i)},x)(\pi^{\omega_0^{z,e}}(x))^{-1/2}\\ \nonumber
&=\delta (p^{\omega_0^{z,e}}(x+e^{(i)},x)p^{\omega_0^{z,e}}(x,x+e^{(i)}))^{1/2},
\end{align}
for any $i=1,\ldots,2d$ and for $x\in \Z^d$ 
\begin{align}\label{def_op2}
P^{s,\delta}(x,\Delta)&=(1-\delta)+\delta\Bigl(1-\sum_{e^{(i)}\in \nu}(p^{\omega_0^{z,e}}(x+e^{(i)},x)p^{\omega_0^{z,e}}(x,x+e^{(i)}))^{1/2}\Bigr)\\ \nonumber
 &=(1-\delta)+\frac{\delta}2\Bigl( \sum_{e^{(i)}\in \nu}(p^{\omega_0^{z,e}}(x+e^{(i)},x)^{1/2}-p^{\omega_0^{z,e}}(x,x+e^{(i)})^{1/2})^2\Bigr.\\ \nonumber
 &\qquad\qquad +2( p^{\omega_0}(x-e,x)-p^{\omega_0^{z,e}}(x-e,x)) \Bigr) \\ \nonumber
\text{and }P^{s,\delta}(\Delta,\Delta)&=1 .
\end{align}

Let us consider the following transformation appearing in~\cite{Sabot} which will simplify the proof. For $x,y \neq \Delta$,
\begin{align*}
G_{\delta}^{\omega_0^{z,e}}(x,y)=((I-\delta P^{\omega_0^{z,e}})^{-1})(x,y)&=(M^{-1}(I-P^{s,\delta})^{-1}M)(x,y)\\
                                                                         &=(M^{-1}G^{s,\delta} M)(x,y),
\end{align*}
where $G^{s,\delta}$ is the Green function of $P^{s,\delta}$. We define the operator $P^{\overline{s},\delta}$ the same way as in~(\ref{def_op1}) and~(\ref{def_op2}) using the environment $\overline{p}$ instead of $\omega_0^{z,e}$. Recalling~(\ref{approx_trans_prob1}) we have
\begin{align*}
 &P^{\overline{p},\delta}(x,x+e)=P^{\omega_0^{z,e}}(x,x+e)+\epsilon \xi_{\epsilon}(x,e)\\
 \text{ and } &P^{\overline{p},\delta}(x,\Delta)=P^{\omega_0^{z,e}}(x,\Delta)+\epsilon \xi_{\epsilon}(x,\Delta),
\end{align*}
where $\xi_{\epsilon}(\cdot,\cdot)$ are uniformly bounded (independently of $\delta$). 

Now, we use the following expansion of Green functions. For any $n\geq 0$ and $P$, $P'$ two Markov operators on $\Z^d\cup\{\Delta\}$ such that 
\[
\text{for $x\in \Z^d$,} \qquad P(x,\Delta)\geq c \text{ and } P'(x,\Delta) \geq c.
\]
we get
\[
G^{P'}=G^P+\sum_{k=1}^n (G^P(P'-P))^kG^P+(G^P(P'-P))^{n+1}G^{P'} \qquad \text{on $\Z^d$}.
\]

Since $P^{\overline{p},\delta}(x,\Delta)>c(\delta)>0$ and $P^{\omega_0^{z,e}}_{\delta}(x,\Delta)>1-\delta>0$, we can apply the previous formula to obtain for $x,x'\in\Z^d$,
\[
G^{\overline{p},\delta}(x,x')-G^{\omega_0^{z,e}}_{\delta}(x,x')= \sum_{i=1}^n \epsilon^kS_k(x,x')+\epsilon^{n+1}R_n(x,x'),
\]
where
\begin{align*}
&S_n(x,x')=\sum_{x_1,\ldots, x_n} \sum_{e_1,\ldots,e_n} G^{\omega_0^{z,e}}_{\delta}(x,x_1) \xi_{\epsilon}(x_1,e_1) G^{\omega_0^{z,e}}_{\delta}(x_1+e_1,x_2) \cdots \\
&\qquad \qquad \qquad \qquad \qquad \qquad \times \xi_{\epsilon}(x_n,e_n)G^{\omega_0^{z,e}}_{\delta}(x_n+e_n,x'),
\end{align*}
and
\[
R_n(x,x')=\sum_{x^*\in \Z^d}S_n(x,x^*) \sum_{e^*\in \nu} \xi_{\epsilon}(x^*,e^*) G^{\overline{p},\delta}(x^*+e^*,x').
\]

Consider the transformation
\begin{align*}
S_n(x,x')&=\Bigl(\frac{\pi^{\omega_0^{z,e}}(x')}{\pi^{\omega_0^{z,e}}(x)}\Bigr)^{1/2}\sum_{\substack{x_1,\ldots, x_n \\ e_1,\ldots, e_n}} G^{s,\delta}(x,x_1)\xi_{\epsilon}(x_1,e_1)\Bigl(\frac{\pi^{\omega_0^{z,e}}(x_1)}{\pi^{\omega_0^{z,e}}(x_1+e_1)}\Bigr)^{1/2} \\
         & \times G^{s,\delta}(x_1+e_1,x_2)\cdots \Bigl(\frac{\pi^{\omega_0^{z,e}}(x_n)}{\pi^{\omega_0^{z,e}}(x_n+e_n)}\Bigr)^{1/2}G^{s,\delta}(x_n+e_n,x'),
         \end{align*}
and
\begin{align*}
R_n(x,x')&=\Bigl(\frac{\pi^{\omega_0^{z,e}}(x')}{\pi^{\omega_0^{z,e}}(x)}\Bigr)^{1/2}\sum_{\substack{x_1,\ldots, x_n \\ e_1,\ldots, e_n}} G^{s,\delta}(x,x_1)\xi_{\epsilon}(x_1,e_1)\Bigl(\frac{\pi^{\omega_0^{z,e}}(x_1)}{\pi^{\omega_0^{z,e}}(x_1+e_1)}\Bigr)^{1/2} \\
         & \times G^{s,\delta}(x_1+e_1,x_2)\cdots \Bigl(\frac{\pi^{\omega_0^{z,e}}(x_n)}{\pi^{\omega_0^{z,e}}(x_n+e_n)}\Bigr)^{1/2}G^{\overline{s},\delta}(x_n+e_n,x').
         \end{align*}

Moreover for any $x\in \Z^d$ and $e_i\in \nu$ we get by~(\ref{kap1}) that
\begin{equation}
\label{abc1}
\frac{\pi^{\omega_0^{z,e}}(x)}{\pi^{\omega_0^{z,e}}(x+e_i)}\leq \kappa_1^2 e^{2\lambda},
\end{equation}
and for $x,x' \in \Z^d$ we obtain
\begin{align}
\label{abc2}
     & \sum_{\substack{x_1,\ldots, x_n \\ e_1,\ldots, e_n}} G^{s,\delta}(x,x_1)G^{s,\delta}(x_1+e_1,x_2)\cdots G^{s,\delta}(x_n+e_n,x') \\ \nonumber
\leq & \Bigl(\sum_{x_1} G^{s,\delta}(x,x_1) (2d)\Bigr) \max_{x_*\in \Z^d} \sum_{\substack{x_2,\ldots, x_n \\ e_2,\ldots, e_n}} G^{s,\delta}(x_*,x_2)\cdots G^{s,\delta}(x_n+e_n,x') \\ \nonumber
\leq & \frac {2d} {\min_{x} P^{s,\delta}(x,\Delta)} \max_{x_*\in \Z^d} \sum_{\substack{x_2,\ldots, x_n \\ e_2,\ldots, e_n}} G^{s,\delta}(x_*,x_2)\cdots G^{s,\delta}(x_n+e_n,x') \\ \nonumber
\leq & \cdots \leq \Bigl( \frac{2d} {\gamma_6} \Bigr)^n,
\end{align}
where we used an easy recursion to obtain the last inequality and the fact that $\min_{x} P^{s,\delta}(x,\Delta)\geq \gamma_6$ for $\delta\geq 1/2$ where by~(\ref{def_op1})
\[
\gamma_6= \frac 14 \Bigl(\min_{i} \sum_{f^{(j)}\in\nu\setminus \{e^{(i)}\}}(p^{e^{(i)}}(f^{(j)})^{1/2}-p^{e^{(i)}}(-f^{(j)})^{1/2})^{2}\Bigr)
\]

 Finally using~(\ref{abc1}) and~(\ref{abc2}) in the definition of $S_n(x,x')$ we get
\begin{align*}
     \abs{ S_n(x,x')}&\leq\Bigl(\frac{\pi^{\omega_0^{z,e}}(x')}{\pi^{\omega_0^{z,e}}(x)}\Bigr)^{1/2}\Bigl( \kappa_1^2 e^{2\lambda}\Bigl(\sup_{y,e}\abs{\xi_{\epsilon}(y,e)}\Bigr) \frac {2d} {\gamma_6} \Bigr)^{n+1} \\
               &\leq \Bigl(\frac{\pi^{\omega_0^{z,e}}(x')}{\pi^{\omega_0^{z,e}}(x)}\Bigr)^{1/2} \gamma_7^{n+1},
\end{align*}
for some positive constant $\gamma_7$, depending only on $d$ and $\ell$. We can get a similar estimate for the remaining term $R_n(x,x')$ considering that $P^{s,\delta}(x,\Delta) \sim P^{\overline{s},\delta}(x,\Delta)$. This implies that for $\epsilon<\gamma_7^{-1}/2$ small enough, the series $\sum_{k=0}^{\infty} \epsilon^k\abs{S_k(x,x')}$ is convergent and upper bounded by a constant independent of $\delta$ and that
\begin{align*}
\text{for any $\delta\in[1/2,1)$,}\qquad \abs{G^{\overline{p},\delta}(x,x')-G^{\omega_0^{z,e}}_{\delta}(x,x')}&\leq\sum_{k=1}^{\infty} \epsilon^k\abs{S_k(x,x')} \\
                                                &=\Bigl(\frac{\pi^{\omega_0^{z,e}}(x')}{\pi^{\omega_0^{z,e}}(x)}\Bigr)^{1/2}O(\epsilon),
\end{align*}
where $O(\cdot)$ depends only on $d$ and $\ell$.

Applying this last result for all cases $x=z+e'+e''$ and $x'=z$ yields~(\ref{resultat11}) and thus the result.
\end{proof}

\subsection{First order expansion of the asymptotic speed}

We have now all the necessary tools to compute the asymptotic speed. Applying Proposition~\ref{approx_green_function}, we get 
\[
G_{\delta}^{\tilde p_{z,e,z+e'}}(z+e',z)=G_{\delta}^{\omega_0^{z,e}}(z+e',z)+o_{\delta,z,e,e'}(1),
\]
where the $o_{\delta,z,e,e'}(1)$ verifies,
\begin{equation}\label{def_o1}
\text{for all $\delta\geq 1/2$,}\qquad \qquad \abs{o_{\delta,z,e,e'}(1)}\leq \abs{o_{\epsilon}(1)},
\end{equation}
where the $o_{\epsilon}(1)$ depends only on $d$ and $\ell$ and vanishes as $\epsilon$ goes to $0$.

 Hence putting the previous equation together with~(\ref{kalikow2}) we obtain
\begin{align}
\label{expan_1}
 &{\bf E}_{1-\epsilon}[\1{\mathcal{I}(\omega^{z,e})}\1{(z,2)=\emptyset} G_{\delta}^{\omega^{z,\emptyset}}(0,z) G_{\delta}^{\omega^{z,e}}(z+e',z)]\\ \nonumber
=&(1+o_{\delta,z,e,e'}(1)){\bf E}_{1-\epsilon}[\1{\mathcal{I}}\1{(z,2)=\emptyset} G_{\delta}^{\omega}(0,z)]G_{\delta}^{\omega_0^{0,e}}(e',0) \\ \nonumber 
=&(1+o_{\delta,z,e,e'}(1)){\bf E}[\1{\mathcal{I}} G_{\delta}^{\omega}(0,z)]G_{\delta}^{\omega_0^{0,e}}(e',0),
\end{align}
where we used the following consequence of the first part of Lemma~\ref{yet_another_technicality}
\[
\text{for all $z\in \Z^d$ and $e\in \nu$,} \qquad \abs{\frac{{\bf E}_{1-\epsilon}[\1{\mathcal{I}}G_{\delta}^{\omega}(0,z)]}{{\bf E}_{1-\epsilon}[\1{\mathcal{I}}\1{(z,2)=\emptyset}G_{\delta}^{\omega}(0,z) ]}-1}\leq O(\epsilon).
\]

Adapting the same methods for $z+e$ yields
\begin{align}
\label{expan_2}
 &{\bf E}[\1{\mathcal{I}(\omega^{z,e})} \1{(z,2)=e}G_{\delta}^{\omega^{z,\emptyset}}(0,z+e) G_{\delta}^{\omega^{z,e}}(z+e+e',z)]\\ \nonumber
&=(1+o_{\delta,z,e,e'}(1)){\bf E}[\1{\mathcal{I}}G_{\delta}^{\omega}(0,z+e)]G_{\delta}^{\omega_0^{0,e}}(e+e',0),
\end{align}
where the $o_{\delta,z,e,e'}(\cdot)$ verified~(\ref{def_o1}).

Let us denote
\begin{equation}
\label{def_phi}
\phi(e)=\sum_{e'\in \nu} (p^e(e')-p^{\emptyset}(e')) G^{\omega_0^{0,e}}(e',0),
\end{equation}
and
\begin{equation}
\label{def_psi}
\psi(e)=\sum_{e'\in \nu} (p^{-e}(e')-p^{\emptyset}(e')) G^{\omega_0^{0,e}}(e+e',0).
\end{equation}

Hence inserting the estimates~(\ref{expan_1}) and~(\ref{expan_2}) into the expression of~(\ref{ugly_ass_ub}) we get
\begin{align*}
&{\bf E}_{1-\epsilon}[\1{\mathcal{I}(\omega^{z,e})}G^{\omega^{z,e}}_{\delta}(0,z)]\\
= & (1+o_{\delta,z,e}(1))\Bigl[ {\bf E}_{1-\epsilon}[\1{\mathcal{I}}G_{\delta}^{\omega}(0,z)] (1+\delta\phi(e)) +  {\bf E}_{1-\epsilon}[\1{\mathcal{I}}G_{\delta}^{\omega}(0,z+e)]\delta\psi(e) \Bigr].
 \end{align*} 
 
 Inserting the previous equation in~(\ref{first_order_0}) yields
\begin{align*}
& d_{\delta}^{\widehat{\omega}}(z)-d_{\emptyset}\\
=& \frac{\epsilon(1+o_{\delta,z}(1))}{{\bf E}_{1-\epsilon}[\1{\mathcal{I}}G_{\delta}^{\omega}(0,z)]}\Bigl[{\bf E}_{1-\epsilon}[\1{\mathcal{I}}G_{\delta}^{\omega}(0,z)] \Bigl(\sum_{e\in \nu} (1+\delta\phi(e))(d_{e}-d_{\emptyset}) \Bigr) \\ 
&+{\bf E}_{1-\epsilon}[\1{\mathcal{I}}G_{\delta}^{\omega}(0,z+e)]\Bigl(\sum_{e\in \nu} \delta\psi(e)(d_{e}-d_{\emptyset}) \Bigr)\Bigr]+O_z(\epsilon^2).
\end{align*}

We are not able to derive uniform estimates for $d_{\delta}^{\widehat{\omega}}(z)$, nevertheless we are still able to estimate the asymptotic speed. Recalling Proposition~\ref{kalikow_0}, the previous equation yields
\begin{align}
\label{this_is_long}
& \frac{\sum_{z\in \Z^d} G_{\delta}^{\widehat{\omega}_{\delta}^{\epsilon}}(0,z)\widehat{d}_{\delta}^{\epsilon}(z)}{\sum_{z\in \Z^d} G_{\delta}^{\widehat{\omega}_{\delta}^{\epsilon}}(0,z)} -d_{\emptyset} \\ \nonumber 
=&\epsilon(1+o_{\delta,z}(1))\frac{\sum_{z\in \Z^d} \sum_{e\in \nu}{\bf E}[\1{\mathcal{I}}G_{\delta}^{\omega}(0,z)]  (1+\delta\phi(e))(d_e-d_{\emptyset})}{\sum_{z\in \Z^d} {\bf E}[\1{\mathcal{I}}G_{\delta}^{\omega}(0,z)]}\\ \nonumber
& + \epsilon(1+o_{\delta,z}(1)) \frac{\sum_{z\in \Z^d} \sum_{e\in \nu}{\bf E}[\1{\mathcal{I}}G_{\delta}^{\omega}(0,z+e)] \delta \psi(e)(d_e-d_{\emptyset})}{\sum_{z\in \Z^d} {\bf E}[\1{\mathcal{I}}G_{\delta}^{\omega}(0,z)]}+o_{\delta}(\epsilon)\\ \nonumber
=&\epsilon \sum_{e\in \nu}(1+\delta(\phi(e)+\psi(e)))(d_e-d_{\emptyset}) +o_{\delta}(\epsilon),
\end{align}
since $\sum_z {\bf E}[\1{\mathcal{I}}G_{\delta}^{\omega}(0,z)] = \sum_z {\bf E}[\1{\mathcal{I}}G_{\delta}^{\omega}(0,z+e)]= {\bf P}[\mathcal{I}]/(1-\delta)$. We emphasize that we do actually get a $o_{\delta}(\epsilon)$ such that 
\begin{equation}
\label{def_o12}
\text{for $\delta \geq 1/2$,} \qquad \abs{o_{\delta}(\epsilon)}\leq \abs{o_{\epsilon}(\epsilon)},
\end{equation}
 since it is the sum of $2(2d)^2$ barycenters of all $(o_{\delta,z,e,e'}(\epsilon))_{z\in \Z^d,e,e'\in \nu}$ which verify the bound of~(\ref{def_o1}) and a barycenter of $(O_{z}(\epsilon^2))_{z\in \Z^d}$ verifying~(\ref{def_oo1}).

We can then obtain an expression of the speed using Proposition~\ref{speed_green} by letting $\delta$ go to $1$ in~(\ref{this_is_long})
\begin{equation}
\label{result_0}
v_{\ell}(1-\epsilon)=d_{\emptyset}+\epsilon \sum_{e\in \nu}(1+\delta(\phi(e)+\psi(e)))(d_e-d_{\emptyset})+o(\epsilon),
\end{equation}
since by~(\ref{def_o12}) all $o_{\delta}(\epsilon)$ are smaller than some $o_{\epsilon}(\epsilon)$ uniformly in $\delta\in[1/2,1)$.

\subsection{Simplifying the expression of the limiting velocity}

In order to simplify the expression of the limiting velocity we prove
\begin{lemma}
\label{simplify_tech}
We have
\begin{align*}
&\sum_{e'\in \nu} (p^e(e')-p^{\emptyset}(e')) G^{\omega_0^{0,e}}(e',0) +\sum_{e'\in \nu} (p^{-e}(e')-p^{\emptyset}(e')) G^{\omega_0^{0,e}}(e+e',0) \\
&=(p^{\emptyset}(e)-p^{\emptyset}(-e))(G^{\omega^{0,e}_0}(0,0)-G^{\omega^{0,e}_0}(e,0))-p^{\emptyset}(e).
\end{align*}
\end{lemma}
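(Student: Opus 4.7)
The plan is to prove the identity by purely algebraic manipulation, using two standard facts: the Markov first-step identity for Green functions, and the direct algebraic relation between $p^{\emptyset}$ and $p^{e}$ coming from the definition $p^{A}(e')=c(e')/\sum_{f\notin A}c(f)$. Throughout, I abbreviate $G:=G^{\omega^{0,e}_0}$, recall by translation invariance of the transition probabilities that $p^{\omega^{0,e}_0}(0,e')=p^{e}(e')$ and $p^{\omega^{0,e}_0}(e,e+e')=p^{-e}(e')$ (since closing $[0,e]$ looks like closing $[e,0]$ from the point $e$), and denote the quantities to be computed by $\phi(e)$ and $\psi(e)$ as in~(\ref{def_phi}) and~(\ref{def_psi}).

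First I will write down the first-step decompositions
\begin{equation*}
G(0,0)=1+\sum_{e'\in\nu}p^{e}(e')G(e',0),\qquad G(e,0)=\sum_{e'\in\nu}p^{-e}(e')G(e+e',0),
\end{equation*}
obtained by conditioning on $X_1$ starting from $0$ (which is in the support of the random walk) and from $e$ (which is not), keeping in mind that $p^{e}(e)=0=p^{-e}(-e)$ so that the closed edges drop out automatically. These two identities immediately rewrite the ``$p^{e}$'' and ``$p^{-e}$'' pieces of $\phi(e)$ and $\psi(e)$ as $G(0,0)-1$ and $G(e,0)$ respectively.

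Next I will handle the two ``$p^{\emptyset}$'' sums. From the definition one checks at once that $p^{\emptyset}(e')=(1-p^{\emptyset}(e))\,p^{e}(e')$ for every $e'\neq e$, and similarly $p^{\emptyset}(e')=(1-p^{\emptyset}(-e))\,p^{-e}(e')$ for every $e'\neq -e$. Extracting the singular terms $e'=e$ (resp.\ $e'=-e$) from the sums and applying the first-step identities above gives
\begin{align*}
\sum_{e'\in\nu}p^{\emptyset}(e')G(e',0)&=p^{\emptyset}(e)G(e,0)+(1-p^{\emptyset}(e))\bigl(G(0,0)-1\bigr),\\
\sum_{e'\in\nu}p^{\emptyset}(e')G(e+e',0)&=p^{\emptyset}(-e)G(0,0)+(1-p^{\emptyset}(-e))G(e,0).
\end{align*}

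Finally, subtracting yields
\begin{equation*}
\phi(e)=p^{\emptyset}(e)\bigl(G(0,0)-G(e,0)\bigr)-p^{\emptyset}(e),\qquad \psi(e)=-p^{\emptyset}(-e)\bigl(G(0,0)-G(e,0)\bigr),
\end{equation*}
and adding the two lines gives the claimed expression. There is no real analytical obstacle here: everything reduces to the two elementary identities above, and the only point demanding care is keeping track of the isolated terms $e'=e$ and $e'=-e$, which is where the $-p^{\emptyset}(e)$ remainder (and the absence of a symmetric $-p^{\emptyset}(-e)$ remainder) ultimately originates.
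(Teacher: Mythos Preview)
Your proof is correct and follows essentially the same route as the paper: both reduce to the Markov first-step identities $G(0,0)=1+\sum_{e'}p^{e}(e')G(e',0)$ and $G(e,0)=\sum_{e'}p^{-e}(e')G(e+e',0)$ together with the proportionality $p^{\emptyset}(e')=(1-p^{\emptyset}(e))p^{e}(e')$ for $e'\neq e$ (which the paper writes in the equivalent conductance form $p^{e}(e')-p^{\emptyset}(e')=c(e')c(e)/(\pi^{\emptyset}\pi^{e})$). Your organization of the algebra is if anything slightly cleaner, since you explicitly separate the individual contributions $\phi(e)$ and $\psi(e)$ before summing.
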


\begin{proof}
Recalling the notations~(\ref{shorter}), we get
\[
p^e(e')-p^{\emptyset}(e')=\begin{cases} \frac{c(e')c(e)}{\pi^{\emptyset}\pi^{e}} & \text{ if $e\neq e'$,}\\
                                                  -\frac{c(e')}{\pi^{\emptyset}} & \text{ if $e=e'$.}
                                    \end{cases}
\]

Hence we get that,
\begin{equation}
\label{simplify_tech1}
\sum_{e'\neq e} \frac{c(e')c(e)}{\pi^{\emptyset}\pi^{e}} G^{\omega_0^{0,e}}(e',0)= \frac{c(e)}{\pi^{\emptyset}} (G^{\omega_0^{0,e}}(0,0)-1),
\end{equation}
and
\begin{equation}
\label{simplify_tech2}
\sum_{e'\neq e} \frac{c(e')c(-e)}{\pi^{\emptyset}\pi^{-e}} G^{\omega_0^{0,e}}(e+e',0)= \frac{c(-e)}{\pi^{\emptyset}} G^{\omega_0^{0,e}}(e,0).
\end{equation}

Finally using $\frac{c(e)}{\pi^{\emptyset}}=p^{\emptyset}(e)$ and the previous equations, the computations are straightforward.
\end{proof}

Recalling that $p^{\emptyset}(e)-p^{\emptyset}(-e)=d_{\emptyset}\cdot e$ and $1-p^{\emptyset}(e)=\pi^e/\pi^{\emptyset}$, we see that the previous lemma means that
\[
\alpha(e)=\phi(e)+\psi(e)=\frac{\pi^{e}}{\pi^{\emptyset}}+(d_{\emptyset}\cdot e)(G^{\omega^{0,e}_0}(0,0)-G^{\omega^{0,e}_0}(e,0)),
\]
where we used notations from~(\ref{def_phi}) and~(\ref{def_psi}). So~(\ref{result_0}) becomes
\begin{equation}
\label{result_1}
v_{\ell}(1-\epsilon)=d_{\emptyset}+\epsilon \sum_{e\in \nu} \alpha(e)(d_e-d_{\emptyset})+o(\epsilon).
\end{equation}

We still may simplify slightly the expression of the speed we obtained using the following 
\[
\sum_{e\in \nu} \pi^{e} d_e=\sum_{i=1}^{2d} \sum_{e \neq e^{(i)}} c(e)e =(2d-1)\sum_{e\in \nu} c(e)e=(2d-1) \pi^{\emptyset} d_{\emptyset}=\sum_{e \in \nu} \pi^{e} d_{\emptyset},
\]

Inserting this last equation into~(\ref{result_1}) yields 
\[
v_{\ell}(1-\epsilon)=d_{\emptyset}+\epsilon \sum_{e\in \nu} (d_{\emptyset}\cdot e)(G^{\omega^{0,e}_0}(0,0)-G^{\omega^{0,e}_0}(e,0))(d_e-d_{\emptyset})+o(\epsilon),
\]
which proves Theorem~\ref{asymptoticspeed}. \qed

\section{Estimate on Kalikow's environment}
\label{s_tech}

The section is devoted to the proof of Proposition~\ref{quotient_ub} in which we assumed to have fixed $y,z\in \Z^d$, $A\subset \nu$, $A\neq \nu$, $e \in \nu$ and $e_+,e_-\in \nu \cup \{0\}$.  Before entering into the details let us present the main steps of the proof of the previous proposition which are rather similar to the ones in the proof of Proposition~\ref{upperb}. Let us study the numerator of the quotient of Proposition~\ref{quotient_ub}.
\begin{enumerate}
\item The Green functions behave essentially as a hitting probability multiplied by a resistance (normalized by the invariant measure). See~(\ref{demo_step1}).
\item  In order to transform the conditioning around $\C(y)=A$ into $\C(y)=\emptyset$ we use the estimates on resistances of Proposition~\ref{insertedge}. This procedure will essentially give an upper bound on the numerator in Proposition~\ref{quotient_ub} as a finite sum of terms which ressemble the denominator but with a local correlation around $y$ due to the presence of random variables $Z_y^{(i)}$ (see~(\ref{definition_Z})) reminiscent of the random variable \lq\lq $L_z$\rq\rq which appeared in the proof of Proposition~\ref{upperb}. 

Moreover we will need some extra work to get an expression with some sort of independence property between our local correlation term and the other terms appearing in the upper-bound. This is necessary for step (3) of the proof. See~(\ref{finite_sum}) and~(\ref{finite_sum1}) for the upper-bound.

\item We finish the proof by decorrelation lemmas similar to Lemma~\ref{decorrelation} to show that the local correlation terms have a limited effect. This will imply that the numerator and the denominator of Proposition~\ref{quotient_ub} are of the same order. See sub-section~\ref{s_deco}
\end{enumerate}

Compared to Proposition~\ref{upperb} there is an extra difficulty added by the fact that we need to handle two Green functions instead of only one (in some sense we will even have three), hence we will apply Proposition~\ref{insertedge} recursively, this is done in Proposition~\ref{Q_upper_bound}.

Before actually starting the proof, we point out that in addition, we cannot prove directly a decorrelation lemma. Indeed one of the hitting probabilities coming from the Green functions appearing in Proposition~\ref{quotient_ub} behaves badly when a local modification of the environment is made at $y$. Hence we need to transform this hitting probability into an expression which we will be able to decorrelate from a local modification of the environment and this will change slightly the outline of the proof given above. The aim of the next sub-section is to take care of this problem.

\subsection{The perturbed hitting probabilites}

We want to understand the effect of the change of configuration around $y$ on the hitting probabilities $P_{z+e_+}^{\omega_{y,A}^{(z,2)=e}}[T_{y}<\tau_{\delta}]$ and $P_0^{\omega_{y,A}^{(z,2)=\emptyset}}[T_z\leq \tau_{\delta}]$. The former term can be estimated easily. If we denote the (deterministic) set 
\begin{equation}
\label{def_b_star}
B^*(y,k)=\Bigl\{ t\in B(y,k),~t \text{ is connected to }y \text{ in }B^E(y,k)\setminus \{[z,z+e]\} \Bigr\},
\end{equation}
 and
\begin{equation}
\label{notation_PZ}
p_z^{\omega}(y,k)=\begin{cases} \displaystyle{\max_{u \in \partial B^{*}(y,k)}} P_{z+e_+}^{\omega}[T_u=T_{\partial B^{*}(y,k)}<\tau_{\delta}] & \text{ if } z+e_+ \notin B^*(y,k) , \\
                                1 & \text{ otherwise.} \end{cases}
\end{equation}
then for any $k\geq 1$ such that $z+e_+\notin B^*(y,k)$, we have
\begin{equation}
\label{hitting_time1}
 P_{z+e_+}^{\omega_{y,A}^{(z,2)=e}}[T_{y}<\tau_{\delta}] \leq  \rho_d k^d p_z^{\omega^{(z,2)=e}_{y,A}}(y,k),
\end{equation}
the special notation $B^*(y,k)$ is useful because in the configuration $\omega^{(z,2)=e}$ the walker can only reach $B(y,k)\setminus \partial B(y,k)$ (and hence $y$) from $z+e_+$ by entering the ball $B(y,k)$ through $B^*(y,k)$. The technical reason will only appear in the proof of Lemma~\ref{deco_2a}

As we announced previously, the second hitting probability is more difficult to treat. Let us introduce the following notations
\begin{equation}
\label{notation_P1}
p_1^{\omega}(y,k)=\begin{cases} \displaystyle{\max_{u\in \partial B(y,k)}} P_0^{\omega}[T_u=T_{\partial B(y,k)}<\tau_{\delta}] & \text{ if } 0\notin B(y,k), \\
                                1 & \text{ otherwise,} \end{cases}
\end{equation}
\begin{equation}
\label{notation_P2}
p_2^{\omega}(y,k)=\begin{cases} \displaystyle{\max_{u \in \partial B(y,k)}} P_u^{\omega}[T_z<\tau_{\delta}\wedge T_{\partial B(y,k)}^+] & \text{ if } z \notin B(y,k), \\
                                1 & \text{ otherwise.} \end{cases}
\end{equation}

To make notations lighter we also set
\begin{equation}
\label{notation_R1}
\text{for any $x\in \Z^d$,} \qquad R_*^{\omega}(x)=e^{2\lambda x \cdot \vec \ell} R^{\omega}[x\leftrightarrow \Delta],
\end{equation}
and moreover we introduce
\begin{equation}
\label{notation_R5}
\overline{R}_*^{\omega}(y,k)=\begin{cases} \displaystyle{\max_{u\in \partial B(y,k)}} R^{\omega}_*[u \leftrightarrow z \cup \Delta] & \text{ if } z \notin B(y,k),\\
                                1 & \text{ otherwise,} \end{cases}
\end{equation}
where
\begin{equation}
\label{notation_R4}
\text{for any $u\in \Z^d$,} \qquad R^{\omega}_*[u \leftrightarrow z \cup \Delta]=e^{2\lambda u\cdot \vec \ell} R^{\omega}[u\leftrightarrow z \cup \Delta].
\end{equation}

We can obtain an upper-bound on $P_0^{\omega}[T_z< \tau_{\delta}]$ through the following proposition.

\begin{proposition}
\label{hitting_time2}
Take any configuration $\omega$ and set $y,z \in \Z^d$ and $B=B(y,r)$ with $r\geq 1$ and $\delta \geq 1/2$. If $0,z \notin B$ and $P_0^{\omega}[T_z< \tau_{\delta}] >2P_0^{\omega}[T_z< T_{\partial B}\wedge \tau_{\delta}]$, then we have
\[
P_0^{\omega}[T_z< \tau_{\delta}]\leq C_{19} r^{2d} p_1^{\omega}(y,k) p_2^{\omega}(y,k) \overline{R}^{\omega}_*(y,k).
\]

If $0\in B$, $z\notin B$ and $P_0^{\omega}[T_z< \tau_{\delta}] >2P_0^{\omega}[T_z< T_{\partial B}\wedge \tau_{\delta}]$, then
\[
P_0^{\omega}[T_z< \tau_{\delta}]\leq C_{19} r^{2d}p_2^{\omega}(y,k) \overline{R}^{\omega}_*(y,k).
\]

Finally if $0\notin B$ and $z\in B$,
\[
P_0^{\omega}[T_z< \tau_{\delta}]\leq C_{19}r^{2d} p_1^{\omega}(y,k).
\]
\end{proposition}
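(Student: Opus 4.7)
The plan is to condition on the first visit to $\partial B$, which produces the factor $p_1^\omega(y,r)$, then to bound the subsequent hitting probability $P_u^\omega[T_z < \tau_\delta]$ for $u \in \partial B$ by decomposing the walk into excursions between successive returns to $\partial B$. The expected number of such excursions will be controlled via the Green function/resistance identity of Lemma~\ref{lemgreen2}, producing the factors $p_2^\omega(y,r)$ and $\overline{R}^\omega_*(y,r)$.

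For the first case ($0, z \notin B$), the hypothesis rewrites as $P_0^\omega[T_z < \tau_\delta,\ T_{\partial B} \leq T_z] \geq \tfrac{1}{2} P_0^\omega[T_z < \tau_\delta]$, and applying the strong Markov property at $T_{\partial B}$ gives
\[
P_0^\omega[T_z < \tau_\delta] \leq 2 \sum_{u \in \partial B} P_0^\omega[T_u = T_{\partial B} < \tau_\delta]\, P_u^\omega[T_z < \tau_\delta] \leq 2 \rho_d r^d\, p_1^\omega(y,r) \max_{u \in \partial B} P_u^\omega[T_z < \tau_\delta].
\]
To bound the remaining factor, set $q(v) := P_v^\omega[T_z < T_{\partial B}^+ \wedge \tau_\delta] \leq p_2^\omega(y,r)$ for $v \in \partial B$. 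Partitioning the event $\{T_z < \tau_\delta\}$ according to the vertex $v \in \partial B$ from which the (necessarily unique) successful excursion to $z$ departs, and applying strong Markov at each successive visit to $v$, yields the exact identity
\[
P_u^\omega[T_z < \tau_\delta] = \sum_{v \in \partial B} G_{\delta,\{z\}}^\omega(u,v)\, q(v) \leq p_2^\omega(y,r) \sum_{v \in \partial B} G_{\delta,\{z\}}^\omega(v,v),
\]
where I used $G_{\delta,\{z\}}^\omega(u,v) \leq G_{\delta,\{z\}}^\omega(v,v)$. By Lemma~\ref{lemgreen2}, $G_{\delta,\{z\}}^\omega(v,v) = \pi^{\omega(\delta)}(v) R^\omega(v \leftrightarrow z \cup \Delta)$, and combining (\ref{resdelta}) and (\ref{kap1}) with $\delta \geq 1/2$ and definition (\ref{notation_R5}) yields $G_{\delta,\{z\}}^\omega(v,v) \leq 2\kappa_1 \overline{R}^\omega_*(y,r)$. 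Hence $P_u^\omega[T_z < \tau_\delta] \leq 2\kappa_1 \rho_d r^d\, p_2^\omega(y,r)\, \overline{R}^\omega_*(y,r)$, which, substituted into the previous display, gives the first inequality.

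The remaining two cases are degenerate simplifications of the same argument. When $0 \in B$ and $z \notin B$, any path from $0$ to $z$ must cross $\partial B$, so $\{T_z < \tau_\delta\} \subseteq \{T_{\partial B} \leq T_z\}$, the factor of $2$ disappears, and the step producing $p_1^\omega(y,r)$ collapses to the trivial $\sum_u P_0^\omega[T_u = T_{\partial B} < \tau_\delta] \leq 1$; the missing factor $r^d$ is harmlessly absorbed into $r^{2d}$. When $0 \notin B$ and $z \in B$, one simply writes $P_0^\omega[T_z < \tau_\delta] \leq P_0^\omega[T_{\partial B} < \tau_\delta] \leq \rho_d r^d\, p_1^\omega(y,r)$, again absorbed by $r^{2d}$. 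The only mildly non-routine point of the proof is the excursion decomposition in the second paragraph, but this is a standard consequence of the strong Markov property and of Lemma~\ref{lemgreen2}.
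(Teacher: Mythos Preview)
Your proof is correct and follows essentially the same approach as the paper's: both condition on the first hit of $\partial B$ to extract $p_1^\omega$, then bound $P_u^\omega[T_z<\tau_\delta]$ for $u\in\partial B$ via an excursion decomposition that produces $p_2^\omega$ times $G_{\delta,\{z\}}^\omega(v,v)$, which is converted to $\overline{R}^\omega_*$ by Lemma~\ref{lemgreen2}. The only cosmetic difference is that you use a last-exit identity $P_u^\omega[T_z<\tau_\delta]=\sum_{v\in\partial B} G_{\delta,\{z\}}^\omega(u,v)\,q(v)$ together with $G_{\delta,\{z\}}^\omega(u,v)\le G_{\delta,\{z\}}^\omega(v,v)$, whereas the paper counts the sequence of successive visits to $\partial B$ to reach $G_{\delta,\{z\}}^\omega(u,\partial B)\le |\partial B|\max_v G_{\delta,\{z\}}^\omega(v,v)$; both routes yield the same bound.
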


Thanks to this lemma we can say that $P_0^{\omega}[T_z< \tau_{\delta}]$ is either not influenced much by a local modification around $y$ (in the case where typically the walk will not visit $y$ when it goes from $0$ to $z$), or upper bounded by a product of at most three random variables. Two of them behave as hitting probabilities which are well suited for our future decorrelation purposes, the third random variable is essentially a resistance for which we have estimates as well.

In the case where $P_0^{\omega}[T_z< \tau_{\delta}] >2P_0^{\omega}[T_z< T_{\partial B}\wedge \tau_{\delta}]$, we will not have any issues for the decorrelation lemma.

\begin{proof}
We will only consider the case $0,z \notin B$, the other being similar but simpler. Our hypothesis implies
\[
P_0^{\omega}[T_z< \tau_{\delta}]\leq 2 P_0^{\omega}[T_{\partial B}\leq T_z < \tau_{\delta}],
\]
we can get an upper bound on the right-hand term by Markov's property 
\begin{align}
\label{hitting_decompo}
P_0^{\omega}[T_{\partial B}\leq T_z < \tau_{\delta}]&= \sum_{u\in \partial B} P_0^{\omega}[T_u=T_{\partial B}<\tau_{\delta}]P_u^{\omega}[T_z<\tau_{\delta}] \\ \nonumber
                                         &\leq \abs{\partial B} \max_{u\in \partial B} P_0^{\omega}[T_u=T_{\partial B}<\tau_{\delta}] \max_{u\in \partial B}P_u^{\omega}[T_z<\tau_{\delta}].
\end{align}

Denoting $z_1\rightarrow \cdots \rightarrow z_n$ the event that the $n$ first vertices of $\partial B \cup z \cup \Delta$ visited are, in order, $z_1$, $z_2, \ldots, z_n$, we can write for $u\in \partial B$
\begin{align}
\label{hit_dec_1}
P_u^{\omega}[T_z<\tau_{\delta}]&=E_u^{\omega}\Bigl[\sum_{n}\sum_{z_1,\ldots, z_n \in \partial B } \1{z_1 \rightarrow  \cdots \rightarrow z_n \rightarrow z} \Bigr] \\ \nonumber
 & =\sum_{n}\sum_{z_1,\ldots, z_n \in \partial B } E_u^ {\omega} [ \1{z_1 \rightarrow  \cdots \rightarrow z_n} ]P_{z_n}^ {\omega}[T_z<T_{\partial B}^ +\wedge \tau_{\delta}] \\ \nonumber
 & \leq \max_{v \in \partial B} P_v^{\omega}[T_z<\tau_{\delta}\wedge T_{\partial B}^+] E_u^ {\omega} \Bigl[\sum_{n}\sum_{z_1,\ldots, z_n \in \partial B } \1{z_1 \rightarrow  \cdots \rightarrow z_n} \Bigr] \\ \nonumber
 & = \max_{v \in \partial B} P_v^{\omega}[T_z<\tau_{\delta}\wedge T_{\partial B}^+] G_{\delta,\{ z\}}^{\omega}(u, \partial B),
\end{align}
where 
\begin{equation}
\label{hit_dec_2}
G_{\delta, \{z\}}^{\omega}(u, \partial B)=E_u^{\omega}\Bigl[\sum_{n=0}^{\tau_{\delta} \wedge T_z} \1{X_n \in \partial B} \Bigr] \leq \abs{\partial B} \max_{v\in \partial B} G_{\delta, \{z\}}^{\omega}(v, v).
\end{equation}

Since by Lemma~\ref{lemgreen2},~(\ref{kap1}) and~(\ref{resdelta}) we have for $\delta \geq 1/2$ and any $v\in \partial B$
\begin{equation}
\label{hit_dec_3}
G_{\delta,\{z\}}^{\omega}(v, v) = \pi^{\omega(\delta)}(v)R^ {\omega}(v \leftrightarrow z\cup \Delta) \leq  \gamma_1 \max_{u\in \partial B} R^{\omega}_*[u \leftrightarrow z \cup \Delta].
\end{equation}

 Since $\abs{\partial B} \leq \rho_d r^d$ adding up~(\ref{hit_dec_1}),~(\ref{hit_dec_2}) and~(\ref{hit_dec_3}) we get 
\[
\max_{u \in \partial B} P_u^{\omega}[T_z<\tau_{\delta}] \leq \gamma_2 r^d \max_{u\in \partial B} R^{\omega}_*[u \leftrightarrow z \cup \Delta] \max_{u \in \partial B} P_u^{\omega}[T_z<\tau_{\delta}\wedge T_{\partial B}^+].
\]

Using the previous equation with~(\ref{hitting_decompo}) concludes the proof of the Proposition.
\end{proof}

Recalling the notation from~(\ref{notation_R4}) , let us introduce
\begin{equation}
\label{notation_R6}
\underline{R}_*^{\omega}(y,k)=\begin{cases} \displaystyle{\min_{u\in \partial B(y,k)}} R_*^{\omega}[u \leftrightarrow z \cup \Delta] & \text{ if } z \notin B(y,k),\\
                                1 & \text{ otherwise.} \end{cases}
                                \end{equation}

We do not yet have for the random variable $\overline{R}_*^{\omega_{(y,r),1}}(y,r) $ a property similar to Proposition~\ref{insertedge}. For the future decorrelation part it is in fact better to rewrite $\overline{R}_*^{\omega_{(y,r),1}}(y,r)$ in terms of $\underline{R}^{\omega}_*(y,r')$ and local quantities. This is done in the following lemma.

\begin{lemma}
\label{mult_lemma1}
For any $B=B_E(y,r)$ and $r'\geq r$. Suppose that $y\in K_{\infty}(\omega_{(y,r),1})$ and $\partial B \cap K_{\infty}(\omega) \neq \emptyset$, we have
\[
\overline{R}_*^{\omega_{(y,r),1}}(y,r) \leq 4e^{4\lambda r'}\underline{R}^{\omega_{(y,r),1}}_*(y,r') + C_{20}L_{y,r'}^{C_{21}} e^{C_{22}L_{y,r'}}.
\]
\end{lemma}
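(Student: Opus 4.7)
The plan is to mimic the strategy of Proposition~\ref{insertedge1}: first ``open up'' the enlarged ball $B^E(y,r')$ to absorb the effect of the local modification, and then bound the resulting resistance by the minimum over $\partial B(y,r')$ via a series-connection argument.

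As a first step, I would apply Proposition~\ref{insertedge1} to the configuration $\omega_{(y,r),1}$ with $A=B^E(y,r')$, starting vertex any $u\in\partial B(y,r)$ and target $z\cup\Delta$. Since $L_A$ depends only on edges outside $A$ and $B^E(y,r)\subset A$, one has $L_A(\omega_{(y,r),1})=L_{y,r'}$, and $(\omega_{(y,r),1})^{A,1}=\omega^{(y,r'),1}$. The hypothesis $\partial A\cap K_{\infty}(\omega_{(y,r),1})\neq\emptyset$ is inherited from the hypothesis on $B$: $K_{\infty}(\omega)\subset K_{\infty}(\omega_{(y,r),1})$ (opening edges only enlarges the cluster), and any infinite path starting in $B(y,r)$ must cross $\partial B(y,r')$. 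This yields
\[
R^{\omega_{(y,r),1}}(u\leftrightarrow z\cup\Delta)\leq 4R^{\omega^{(y,r'),1}}(u\leftrightarrow z\cup\Delta)+C_{1}L_{y,r'}^{C_{2}}e^{2\lambda(L_{y,r'}-y\cdot\vec\ell)}.
\]

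As a second step, I would observe that in $\omega^{(y,r'),1}$ every edge of $B^E(y,r')$ is open, so $u$ can be connected to any prescribed $u'\in\partial B(y,r')$ by a path (through $y$, say) of length at most $r+r'\leq 2r'$ whose edges each have resistance at most $e^{-2\lambda y\cdot\vec\ell+2\lambda r'}$. Thomson's principle applied to the flow that first follows this path and then continues as the unit current flow from $u'$ to $z\cup\Delta$ gives the series bound
\[
R^{\omega^{(y,r'),1}}(u\leftrightarrow z\cup\Delta)\leq R^{\omega^{(y,r'),1}}(u\leftrightarrow u')+R^{\omega^{(y,r'),1}}(u'\leftrightarrow z\cup\Delta).
\]
Rayleigh's monotonicity then bounds the second resistance by $R^{\omega_{(y,r),1}}(u'\leftrightarrow z\cup\Delta)$ (since $\omega^{(y,r'),1}$ has more open edges than $\omega_{(y,r),1}$), and $u'$ is taken to achieve the minimum of $R_*^{\omega_{(y,r),1}}[\,\cdot\,\leftrightarrow z\cup\Delta]$ over $\partial B(y,r')$.

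Finally, multiplying the composite bound by $e^{2\lambda u\cdot\vec\ell}$ and using $(u-y)\cdot\vec\ell\leq r\leq r'$ and $(u-u')\cdot\vec\ell\leq 2r'$, the ``good'' term produces the announced $4e^{4\lambda r'}\underline{R}_*^{\omega_{(y,r),1}}(y,r')$. The path-resistance contribution is bounded by $8r'\,e^{4\lambda r'}$, and the Proposition~\ref{insertedge1} correction is bounded by $C_1L_{y,r'}^{C_2}e^{4\lambda L_{y,r'}}$; since $L_{y,r'}\geq r'$ by Proposition~\ref{threeprop}, both corrections fit into a single term of the form $C_{20}L_{y,r'}^{C_{21}}e^{C_{22}L_{y,r'}}$. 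Taking the maximum over $u\in\partial B(y,r)$ concludes. The only real obstacle is bookkeeping: one must track the several factors of $e^{\lambda r'}$ that arise from the non-centered definition of $R_*^\omega$ and combine them into the single prefactor $e^{4\lambda r'}$, and one must check that the degenerate cases where $z\in B(y,r)$ or $z\in B(y,r')$ (in which $\overline R_*$ or $\underline R_*$ is set to $1$) are handled by the same argument with the path from $u$ routed directly to $z$ inside the fully open ball.
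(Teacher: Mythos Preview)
Your proposal is correct and follows essentially the same approach as the paper's proof: apply Proposition~\ref{insertedge1} with $A=B^E(y,r')$ to pass from $\omega_{(y,r),1}$ to $\omega^{(y,r'),1}$, then connect any $u\in\partial B(y,r)$ to a chosen $u'\in\partial B(y,r')$ through the fully open ball, and finally absorb the exponential factors using $L_{y,r'}\geq r'$. The only cosmetic differences are that the paper fixes the maximizer $v\in\partial B(y,r)$ at the outset and constructs the connecting flow explicitly (getting the additive $8r'e^{2\lambda(-y\cdot\vec\ell+r')}$ directly), whereas you invoke the triangle inequality for effective resistance and Rayleigh monotonicity; both routes yield the same bound.
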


\begin{proof}
Let us denote $v\in \partial B(y,r)$ such that
\begin{equation}
\label{subdemo_part1}
\overline{R}_*^{\omega_{(y,r),1}}(y,r)=\max_{u\in \partial B(y,k)}R_*^{\omega_{(y,r),1}}(u \leftrightarrow z \cup \Delta)=R^{\omega_{(y,r),1}}(v \leftrightarrow z \cup \Delta)e^{2\lambda v\cdot \vec \ell},
\end{equation}
applying Proposition~\ref{insertedge1} we get for any $r'\geq r$
\begin{align}
\label{subdemo_part2}
R_*^{\omega_{(y,r),1}}(v \leftrightarrow z \cup \Delta) & \leq 4R^{\omega_{(y,r'),1}}(v \leftrightarrow z \cup \Delta)+C_1L_{y,r'}^{C_2} e^{2\lambda (-y \cdot \vec \ell+ L_{y,r'})} \\ \nonumber
 & \leq 4R^{\omega_{(y,r'),1}}(v \leftrightarrow z \cup \Delta)+C_1L_{y,r'}^{C_2} e^{2\lambda (-v \cdot \vec \ell+2 L_{y,r'})} ,
\end{align}
where we used that $y \cdot \vec \ell \geq v \cdot \vec \ell -r$ and that $L_{y,r'}\geq r'\geq r$ by the third property of Proposition~\ref{threeprop}.

For any $u \in \partial B(y,r')$, let us denote $i_0(\cdot)$ the unit current from $u$ to $z\cup\{\Delta\}$ in $\omega_{(y,r'),1}$ and $\vec{\mathcal Q}$ one of the shortest directed path from $v$ to $u$ included in $ B(y,r')$. Let $\omega_n$ be an increasing exhaustion of subgraphs of $\omega$. Consider the unit flow from $v$ to $z\cup\{\Delta\}$ given by $\theta(e)=i_0(e)+(\1{e\in \vec{\mathcal Q}}-\1{-e \in \vec{\mathcal Q}})$. By taking the trace on $\omega_n \cup \{\delta\}$, $\theta(\cdot)$ induces naturally a familly of unit flows $\theta_n(\cdot)$  from $v$ to $z \cup \{\Delta\} \cup\{\omega\setminus \omega_n\}$  on $\omega_n$, for $n$ large enough. Applying Thompson's principle for $\theta_n$ and taking the limit as $n$ goes to infinity yields 
\begin{equation}
\label{subdemo_part3}
R^{\omega_{(y,r'),1}}(v \leftrightarrow z \cup \Delta)\leq R^{\omega_{(y,r'),1}}(u \leftrightarrow z \cup \Delta)+8r'e^{2\lambda (-y\cdot\vec \ell+ r')}.
\end{equation}

Hence adding up~(\ref{subdemo_part2}) and~(\ref{subdemo_part3}), we get
\begin{align*}
R^{\omega_{(y,r),1}}(v \leftrightarrow z \cup \Delta) \leq & 4\min_{u\in \partial B(y,r')} R^{\omega_{(y,r'),1}}(u \leftrightarrow z \cup \Delta)\\
        &  \qquad \qquad  +\gamma_1(L_{y,r'})^{\gamma_2}\gamma_3^{L_{y,r'}}e^{-2\lambda y\cdot\vec \ell},
\end{align*}
since $L_{y,r'}\geq r'\geq r$.

We get multiplying the left side by $e^{2\lambda v \cdot \vec \ell}$ and the right one by $e^{2\lambda r'}e^{2 \lambda y \cdot \vec \ell}$ (which is greater than $e^{2\lambda v \cdot \vec \ell}$) that
\[
R^{\omega_{(y,r),1}}(v\leftrightarrow z \cup \Delta) e^{2\lambda v \cdot \vec \ell}\leq 4 e^{4\lambda r'}\underline{R}^{\omega_{(y,r),1}}_*(y,r') + \gamma_4(L_{y,r'})^{\gamma_5}e^{\gamma_6 L_{y,r'}}
\]
where we used that $\max_{u \in \partial B(y,r')} e^{2\lambda u \cdot \vec \ell} \leq e^{2\lambda r'} e^{2\lambda y \cdot \vec \ell}$. So by~(\ref{subdemo_part1}) we obtain the lemma.
\end{proof}

\subsection{Quenched estimates on perturbed Green functions}

The aim of this subsection is to complete the first two steps of the sketch of proof at the beginning of Section~\ref{s_tech}. Let us introduce
\begin{equation}
\label{notation_R2}
R_*(z)=R_*^{\omega_{y,A}^{(z,2)=\emptyset}}(z)\text{ and } R_*(y)=R_*^{\omega_{y,A}^{(z,2)=e}}(y),
\end{equation}
where we emphasize those are not functions of $y$ and $z$ which are fixed vertices in this section.

{\it Step (1)}

We reduce our problem of studying Green functions to studying resistances, indeed using Lemma~\ref{lemgreen} and~(\ref{res_green}) we get for $\delta\geq 1/2$,
\begin{equation}
\label{equiv2_green_res}
\frac 1 {\kappa_1} G_{\delta}^{\omega_{y,A}^{(z,2)=\emptyset}}(z,z) \leq R_*(z) \leq 2\kappa_1 G_{\delta}^{\omega_{y,A}^{(z,2)=\emptyset}}(z,z),
\end{equation}
and
\begin{equation}
\label{equiv2_green_res_2}
\frac 1 {\kappa_1} G_{\delta}^{\omega_{y,A}^{(z,2)=e}}(y,y) \leq R_*(y) \leq 2\kappa_1 G_{\delta}^{\omega_{y,A}^{(z,2)=e}}(y,y).
\end{equation}

Moreover we can now easily obtain the first step of our proof since
\begin{equation}
\label{demo_step1}
G_{\delta}^{\omega_{y,A}^{(z,2)=\emptyset}}(0,z)G_{\delta}^{\omega^{(z,2)=e}_{y,A}}(z+e_+,y)
\leq  4\kappa_1^2 P_0^{\omega_{y,A}^{(z,2)=\emptyset}}[T_z\leq \tau_{\delta}]P_{z+e_+}^{\omega_{y,A}^{(z,2)=e}}[T_{y}\leq \tau_{\delta}]R_*(z)R_*(y).
\end{equation}

{\it Step (2)-(a):} Notations 

Now our aim is to remove the condition appearing for the configuration at $y$. This is done in way pretty similar to the first part of the proof of Proposition~\ref{upperb}. As mentionned before, we will apply recursively the resistance estimates of Proposition~\ref{insertedge}, for this we introduce
\[
l^{(0)}_{y}=1,~l^{(1)}_{y}=L_{y,1},~l^{(2)}_{y}=L_{y,l^{(1)}_{y}} \text{ and } l^{(3)}_{y}=L_{y,l^{(2)}_{y}},
\]
$L^{(i)}_{y}(\omega)=l^{(i)}_{y}(\omega^{(z,2)=\emptyset})\vee l^{(i)}_{y}(\omega^{(z,2)=e})$ and $B_{y}^{(i)}=B^E(y,L_{y}^{(i)})$. Moreover we set
\begin{equation}
\label{definition_Z}
Z_{y,k}=C_{23} k^{C_{24}} e^{C_{25}k}e^{2\lambda((z-y)\cdot\vec{\ell})} \text{ and } Z_{y}^{(i)}=Z_{y,L_{y}^{(i)}},
\end{equation}
where and $C_{23}=64 \vee C_1\vee C_{19} \vee C_{20}$, $C_{24}=C_2 \vee C_{21} \vee 2d$ and $C_{25}=4\lambda \vee C_{22}$. Moreover set, for $i=0,\ldots,3$
\begin{equation}
\label{notation_R3}
R_*^{(i)}(y)=R_*^{\omega_{B_{y}^ {(i)},1}^{(z,2)=e}}(y) \text{ and } R_*^{(i)}(z)=R_*^{\omega_{B_{y}^ {(i)},1}^{(z,2)=\emptyset}}(z).
\end{equation}

Also recalling~(\ref{notation_R6}), we set 
\[
\underline{R}_*^{(i)}=\underline{R}_*^{\omega^{(z,2)=\emptyset}_{B_{y}^{(i)},1}}(y,L_{y}^{(i)}) \text{ and } \overline{R}_*^{(i)}=\overline{R}_*^{\omega^{(z,2)=\emptyset}_{B_{y}^{(i)},1}}(y,L_{y}^{(i)}).
\]

 Finally we denote for $i=1,2$ and $j=0,1,2$ 
\[
 p_i^{(j)}=p_i^{\omega^{(z,2)=\emptyset}}(y,L_{y}^{(j)}) \text{ and } p_z^{(j)}=p_z^{\omega^{(z,2)=e}}(y,L_{y}^{(j)}).
\]

Let us state how the inequality previously proved are expressed in terms of $Z_{y}^{(i)}$. From our choice of $Z_{y}^{(i)}$, we can write Proposition~\ref{hitting_time2} as follows: for any $z\in \Z^d$ and $i\in \{0,1,2\}$, 
\begin{equation}
\label{demo_step4}
P_0^{\omega_{y,A}^{(z,2)=\emptyset}}[T_z< \tau_{\delta}]\leq Z_{y}^{(i)} p_1^{(i)}p_2^{(i)}\overline{R}^{(i)}_* +2P_0^{\omega_{y,A}^{(z,2)=\emptyset}}[T_z< T_{\partial B_{y}^ {(i)}}\wedge \tau_{\delta}],
\end{equation}
which is a way to get rid of the conditioning around $y$ for the hitting probabilities.

Also from Lemma~\ref{mult_lemma1} we obtain that for any $y\in K_{\infty}(\omega_{y,A}^{(z,2)=\emptyset})$, we have for any $i\leq j$
\begin{equation}
\label{demo_step8}
\overline{R}_*^{(i)} \leq Z_{y}^{(j)}\underline{R}_*^{(j)} + Z_{y}^{(j+1)}.
\end{equation}

Moreover for $y,z \in K_{\infty}(\omega_{y,A}^{(z,2)=e})=K_{\infty}(\omega_{y,A}^{(z,2)=\emptyset})$, Proposition~\ref{insertedge} imply that for $i\leq j \in \{0,1,2\}$
\begin{equation}
\label{res_maj_1}
R_*(z) \leq R_*^{(i)}(z) \leq  64R_*^{(j)}(z) +  Z_{y}^{(j+1)} \text{ and } R_*(y) \leq R_*^{(i)}(y) \leq  64R_*^{(j)}(y) +Z_{y}^{(j+1)}.
\end{equation}

{\it Step (2)-(b):} Upper-bounding 
\[
P_0^{\omega_{y,A}^{(z,2)=\emptyset}}[T_z< \tau_{\delta}] R_*(z)R_*(y).
\]

Those three inequalities and are enough to study~(\ref{demo_step1}). We recall that the term $P_{z+e_+}^{\omega_{y,A}^{(z,2)=e}}[T_{y}\leq \tau_{\delta}]$ appearing in~(\ref{demo_step1}) has already been treated at~(\ref{hitting_time1}). The equations~(\ref{demo_step4}) and~(\ref{demo_step8}) yield that for any $y,z\in K_{\infty}(\omega_{y,A}^{(z,2)=e})$
\begin{align}
\label{step_init}
&P_0^{\omega_{y,A}^{(z,2)=\emptyset}}[T_z< \tau_{\delta}] R_*(z)R_*(y) \\ \nonumber
\leq &\Bigl(p_1^{(0)}p_2^{(0)}(Z_{y}^{(0)}\underline{R}^{(0)}_*+Z_{y}^{(1)}) +2P_0^{\omega_{y,A}^{(z,2)=\emptyset}}[T_z< T_{\partial B_{y}^ {(0)}}\wedge \tau_{\delta}] \Bigr)R_*(z)R_*(y), 
\end{align}

The idea is now to use recursively~(\ref{res_maj_1}) and~(\ref{demo_step8}) to obtain the following proposition
\begin{proposition}
\label{Q_upper_bound}
For any $\omega$ such that $y,z\in K_{\infty}(\omega_{y,A}^{(z,2)=e})$,
\begin{align*}
\underline{R}^{(0)}_*R_*(z)R_*(y) \leq & C_{26}\Bigl[\underline{R}_*^{(0)}R_*^{(0)}(z)R_*^{(0)}(y) \\
&+ (Z_{y}^{(1)})^2(\underline{R}_*^{(1)}R_*^{(1)}(z)+\underline{R}_*^{(1)}R_*^{(1)}(y)+R_*^{(1)}(z)R_*^{(1)}(y)) \\
      &+  (Z_{y}^{(2)})^4 (\underline{R}_*^{(2)}+R_*^{(2)}(z)+R_*^{(2)}(y))+(Z_{y}^{(3)})^4\Bigr],
\end{align*}
and
\[
R_*(z)R_*(y)
\leq C_{27}\Bigl[R_*^{(1)}(z)R_*^{(1)}(y)+Z_{y}^{(2)}(R_*^{(2)}(z)+R_*^{(2)}(y))+(Z_{y}^{(3)})^2\Bigr].
\]
\end{proposition}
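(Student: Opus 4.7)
The two inequalities of Proposition~\ref{Q_upper_bound} are obtained by iteratively applying the two resistance controls~\eqref{res_maj_1} and~\eqref{demo_step8}, together with the monotonicity $Z_y^{(i)}\leq Z_y^{(j)}$ whenever $i\leq j$, which follows because the sequence $L_y^{(i)}$ is non-decreasing in $i$ by the third property of Proposition~\ref{threeprop}, and the map $k\mapsto Z_{y,k}$ is increasing. Since the hypothesis $y,z\in K_{\infty}(\omega_{y,A}^{(z,2)=e})$ is imposed, the two cited bounds apply at every scale $B_y^{(i)}$ we use.

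For the second inequality I would apply~\eqref{res_maj_1} with $i=0,\,j=1$ to each factor separately, obtaining $R_*(z)\leq 64R_*^{(1)}(z)+Z_y^{(1)}$ and similarly for $R_*(y)$. Multiplying gives
\[
R_*(z)R_*(y)\leq 64^{2}R_*^{(1)}(z)R_*^{(1)}(y)+64\,Z_y^{(1)}\bigl(R_*^{(1)}(z)+R_*^{(1)}(y)\bigr)+(Z_y^{(1)})^{2}.
\]
I would then apply~\eqref{res_maj_1} once more at levels $i=1,\,j=2$ to each $R_*^{(1)}$ appearing in the two cross-terms, converting $Z_y^{(1)}R_*^{(1)}(\cdot)$ into contributions of the form $Z_y^{(2)}R_*^{(2)}(\cdot)$ (using $Z_y^{(1)}\leq Z_y^{(2)}$) plus residuals bounded by $Z_y^{(1)}Z_y^{(3)}\leq(Z_y^{(3)})^{2}$. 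The residual $(Z_y^{(1)})^{2}$ is likewise dominated by $(Z_y^{(3)})^{2}$, and a choice of $C_{27}$ large enough to absorb the numerical prefactors yields the claim.

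For the first inequality, the trivial half of~\eqref{res_maj_1} already gives $R_*(z)R_*(y)\leq R_*^{(0)}(z)R_*^{(0)}(y)$, hence $\underline{R}_*^{(0)}R_*(z)R_*(y)\leq\underline{R}_*^{(0)}R_*^{(0)}(z)R_*^{(0)}(y)$, so the stated inequality would close with only the first right-hand summand. The further summands are structural: they are demanded by the decorrelation step in Subsection~\ref{s_deco}, not in order to close the bound. To produce them one runs the expansion procedure on all three factors simultaneously, using~\eqref{res_maj_1} for $R_*(z)$ and $R_*(y)$, and for $\underline{R}_*^{(0)}$ the composition $\underline{R}_*^{(0)}\leq\overline{R}_*^{(0)}\leq Z_y^{(1)}\underline{R}_*^{(1)}+Z_y^{(2)}$ coming from~\eqref{demo_step8}; one iterates once more at the next level, using $\underline{R}_*^{(1)}\leq Z_y^{(2)}\underline{R}_*^{(2)}+Z_y^{(3)}$ and $R_*^{(1)}(\cdot)\leq 64R_*^{(2)}(\cdot)+Z_y^{(3)}$. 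Expanding the resulting triple product produces a finite list of cross-terms of the schematic shape (factor at level $0$, $1$, $2$, or pure residual)$^{3}$, and each such term is then matched, after absorbing lower-level $Z$-factors into higher-level pure powers via monotonicity, to one of the seven summands listed on the right.

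The only real difficulty I foresee is combinatorial bookkeeping: the triple expansion generates roughly $2^{3}$ cross-terms at each iteration depth, and one must verify case by case that every such term is dominated by a listed summand once the $Z_y^{(i)}$-factors are absorbed via monotonicity into the appropriate $(Z_y^{(1)})^{2}$, $(Z_y^{(2)})^{4}$ or $(Z_y^{(3)})^{4}$ weight. Since all intermediate inequalities are linear with explicit numerical prefactors, this verification is ultimately mechanical and the stated bounds hold with constants $C_{26},C_{27}$ depending only on $\ell$ and $d$.
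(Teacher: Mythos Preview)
Your overall approach---iterating~\eqref{res_maj_1} and~\eqref{demo_step8} together with the monotonicity $Z_y^{(i)}\leq Z_y^{(j)}$ for $i\leq j$---is the same as the paper's. For the second inequality your argument is essentially the paper's proof, modulo an index slip: applying~\eqref{res_maj_1} with $j=1$ yields a residual $Z_y^{(j+1)}=Z_y^{(2)}$, not $Z_y^{(1)}$; with this correction the chain of bounds closes exactly as you describe.

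There is, however, a genuine gap in your treatment of the first inequality. You claim the ``trivial half'' of~\eqref{res_maj_1}, namely $R_*(z)\leq R_*^{(0)}(z)$, closes the bound with only the first right-hand summand. But that inequality points the wrong way. The quantity $R_*^{(0)}(z)$ is computed in the environment $\omega_{B_y^{(0)},1}^{(z,2)=\emptyset}$, which has \emph{more} open edges than $\omega_{y,A}^{(z,2)=\emptyset}$ (all edges of $B^E(y,1)$ are open rather than those in $A$ closed); by Rayleigh's monotonicity principle one gets $R_*^{(0)}(z)\leq R_*(z)$, not the reverse. The first inequality sign in the paper's display~\eqref{res_maj_1} is evidently a typo; what Proposition~\ref{insertedge} actually gives, and what the paper's proof uses, is $R_*(z)\leq 64R_*^{(0)}(z)+Z_y^{(1)}$ and, more generally, $R_*^{(i)}(z)\leq 64R_*^{(j)}(z)+Z_y^{(j+1)}$ for $i\leq j$. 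So the first inequality of the proposition is \emph{not} trivial: the recursive expansion you describe as merely ``structural'' is in fact required to prove it. The paper carries this out exactly as you sketch afterwards---one first bounds $R_*(z)R_*(y)$ by $(64R_*^{(0)}(z)+Z_y^{(1)})(64R_*^{(0)}(y)+Z_y^{(1)})$, then handles each cross-term such as $Z_y^{(1)}\underline{R}_*^{(0)}R_*^{(0)}(z)$ by pushing $\underline{R}_*^{(0)}$ to level~$1$ via~\eqref{demo_step8} and $R_*^{(0)}(z)$ to level~$1$ via~\eqref{res_maj_1}, and iterates once more---but this expansion is the proof, not an optional refinement.
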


This is an interesting upper bound since the resistances are only multiplied with some local quantities which are in some sense independent of those resistances. More precisely, for example the local quantity~$Z_{y}^{(2)}$ is independent of $R_*^{(2)}(y)$ conditionnaly on $\{L_{y}^{(2)}=k\}$ under the mesure ${\bf P}[\;\cdot\;]$, since $Z_{y}^{(2)}$ depends only on the \lq\lq stopping time\rq\rq $L_{y}^{(2)}$, i.e.~only on the edges of $B^E(y,L_{y}^{(2)})$ by the second property of Proposition~\ref{threeprop}.

\begin{proof}
Let us prove the first upper bound, we use~(\ref{res_maj_1}) to get
\begin{align*}
       &\underline{R}_*^{(0)}R_*(z)R_*(y) \leq \underline{R}_*^{(0)}(64R_*^{(0)}(z)+Z_{y}^{(1)})(64R_*^{(0)}(y)+Z_{y}^{(1)}) \\
\leq & 64^2 \underline{R}_*^{(0)}R_*^{(0)}(z)R_*^{(0)}(y) + 64Z_{y}^{(1)}(\underline{R}_*^{(0)}R_*^{(0)}(z)+\underline{R}_*^{(0)}R_*^{(0)}(y))+(Z_{y}^{(1)})^2\underline{R}_*^{(0)}.
\end{align*}

The first term of the right-hand side is of the form announced in the proposition. We need to simplify the remaining terms, we will continue the expansion for $\underline{R}_*^{(0)}R_*^{(0)}(z)$ (the method is similar for $\underline{R}_*^{(0)}R_*^{(0)}(y)$). Emphasizing that 
\[
\text{for $i=0,1,2$,} \qquad \underline{R}_*^{(i)} \leq \overline{R}_*^{(i)},
\]
where we used Rayleigh's monotonicity principle. We may now use~(\ref{res_maj_1}) and~(\ref{demo_step8}) to get
\begin{align*}
& \underline{R}_*^{(0)}R_*^{(0)}(z) \\
 \leq & (Z_{y}^{(1)}\underline{R}_*^{(1)}+Z_{y}^{(2)})(64R_*^{(1)}(z)+Z_{y}^{(2)}) \\
 \leq & 64\bigl[Z_{y}^{(1)}\underline{R}_*^{(1)}R_*^{(1)}(z)+Z_{y}^{(2)}(Z_{y}^{(1)}\underline{R}_*^{(1)}+R_*^{(1)}(z))+(Z_{y}^{(2)})^2\bigr] \\
 \leq & 64\bigl[Z_{y}^{(1)}\underline{R}_*^{(1)}R_*^{(1)}(z) +(Z_{y}^{(2)} )^2(Z_{y}^{(2)}\underline{R}_*^{(2)}+64R_*^{(2)}(z)+2Z_{y}^{(3)})+(Z_{y}^{(2)})^2\bigr] \\
 \leq & (64)^2 \bigl[ Z_{y}^{(1)}\underline{R}_*^{(1)}R_*^{(1)}(z) +(Z_{y}^{(2)} )^3(\underline{R}_*^{(2)}+R_*^{(2)}(z))+3(Z_{y}^{(3)})^3\bigr],
\end{align*}
where we used that for any $i\leq j$ we have $1\leq Z_{y}^{(i)} \leq Z_{y}^{(j)}$. All terms here are of the same type as in the proposition.

The expansion for the term $(Z_{y}^{(1)})^2\underline{R}_*^{(0)}$ is handled by applying~(\ref{demo_step8}) for $i=0$ and $j=1$. Once again our upper bound is correct.

The second upper bound is similar and simpler since it uses only~(\ref{res_maj_1}), so we skip the details.
\end{proof}

On the event $\{y,z \in K_{\infty}(\omega_{y,A}^{(z,2)=e})\}$ (which will turn out to be verified) we want to give an upper bound of~(\ref{step_init}) with a finite sum of terms of the form
\begin{equation}
\label{grrr_1}
(Z_{y}^{(i)})^{\gamma_1}p_1^{(i)}p_2^{(i)} \underline{R}_*^{(i)}R_*^{(i)}(z)R_*^{(i)}(y),
\end{equation}
and
\begin{equation}
\label{grrr_2}
(Z_{y}^{(i)})^{\gamma_1}P_0^{\omega_{y,A}^{(z,2)=\emptyset}}[T_z< T_{\partial B_{y}^ {(i)}}\wedge \tau_{\delta}]R_*^{(i)}(y)R_*^{(i)}(z),
\end{equation}
for $i\leq 3$ and also similar terms where $\underline{R}_*^{(i)}$, $R_*^{(i)}(z)$ or $R_*^{(i)}(y)$ are possibly replaced by $1$.

Recalling the notations~(\ref{notation_P1}),~(\ref{notation_P2}) and~(\ref{notation_PZ}) we have for $j\in \{z,1,2\}$
\begin{equation}
\label{demo_step6}
\text{for $y\in \Z^d$ and $k_1<k_2$, }\qquad p_j^{\omega}(y,k_1)\leq \rho_d k_2^{d-1} p_j^{\omega}(y,k_2),
\end{equation}
so that for $j\in \{z,1,2\}$ and $k_1<k_2 \in \{0,1,2,3\}$,
\begin{equation}
\label{demo_step7}
p_j^{(k_1)} \leq \rho_d Z_{y}^{(k_2)} p_j^{(k_2)}.
\end{equation}
 
Using the inequalities~(\ref{demo_step6}) and~(\ref{demo_step7}) and Proposition~\ref{Q_upper_bound} we can give an upper-bound of $p_1^{(0)}p_2^{(0)}(Z_{y}^{(0)}\underline{R}^{(0)}_*+Z_{y}^{(1)})R_*(z)R_*(y)$ in term of elements described in~(\ref{grrr_1}). We recall here that Proposition~\ref{Q_upper_bound} can be applied since $y,z \in K_{\infty}(\omega_{y,A}^{(z,2)=e})$ by the hypothesis made just above~(\ref{grrr_1}).

For the second term appearing in~(\ref{step_init}), let us take notice that
\[
P_0^{\omega_{y,A}^{(z,2)=\emptyset}}[T_z< T_{\partial B_{y}^ {(0)}}\wedge \tau_{\delta}]R_*(z)R_*(y)=P_0^{\omega_{y,A}^{(z,2)=\emptyset}}[T_z< T_{\partial B_{y}^ {(0)}}\wedge \tau_{\delta}]R_*^{(0)}(y)R_*^{(0)}(z),
\]
which proves that the left-hand side can be upper-bounded using the terms described in~(\ref{grrr_2}).
 
{\it Step (2)-(c):}  Upper-bounding 
 \[
 \1{\mathcal{I}(\omega^{(z,2)=e}_{y,A})} G_{\delta}^{\omega_{y,A}^{(z,2)=\emptyset}}(0,z)G_{\delta}^{\omega_{y,A}^{(z,2)=e}}(z+e_+,y).
 \]
 
  If this term is positive then
 \begin{enumerate}
 \item $\1{\mathcal{I}(\omega^{(z,2)=e}_{y,A})}>0$ implies that $0 \in  K_{\infty}(\omega_{y,A}^{(z,2)=e})$,
 \item $G_{\delta}^{\omega_{y,A}^{(z,2)=\emptyset}}(0,z)>0$ implies that $0$ is connected to $z$ in $\omega_{y,A}^{(z,2)=\emptyset}$,
 \item $G_{\delta}^{\omega_{y,A}^{(z,2)=e}}(z+e_+,y)>0$ implies that $z+e_+$ is connected to $y$ in $\omega_{y,A}^{(z,2)=e}$,
 \end{enumerate}
which means that $y,z \in K_{\infty}(\omega_{y,A}^{(z,2)=e})=K_{\infty}(\omega_{y,A}^{(z,2)=\emptyset})$.
 
Hence we can use the upper-bound of~(\ref{step_init}) obtained at~(\ref{grrr_1}) and~(\ref{grrr_2}) and insert it into~(\ref{demo_step1}). Using also~(\ref{hitting_time1}) we can show that it is possible to give an upper bound on $\1{\mathcal{I}(\omega^{(z,2)=e}_{y,A})}G_{\delta}^{\omega_{y,A}^{(z,2)=\emptyset}}(0,z)G_{\delta}^{\omega_{y,A}^{(z,2)=e}}(z+e_+,y)$ with a finite sum of terms of the form
\begin{equation}
\label{finite_sum}
\1{\mathcal{I}(\omega^{(z,2)=e}_{y,A})}(Z_{y}^{(i)})^{C_{28}}p_z^{(i)}p_1^{(i)}p_2^{(i)} \underline{R}_*^{(i)}R_*^{(i)}(z)R_*^{(i)}(y),
\end{equation}
and
\begin{equation}
\label{finite_sum1}
\1{\mathcal{I}(\omega^{(z,2)=e}_{y,A})}(Z_{y}^{(i)})^{C_{28}}p_z^{(i)}P_0^{\omega_{y,A}^{(z,2)=\emptyset}}[T_z< T_{\partial B_{y}^ {(i)}}\wedge \tau_{\delta}]R_*^{(i)}(y)R_*^{(i)}(z),
\end{equation}
for $i\in \{0,1,2,3\}$ and also similar terms where $\underline{R}_*^{(i)}$, $R_*^{(i)}(z)$ or $R_*^{(i)}(y)$ are possibly replaced by $1$.

This completes step (2) of the proof of Proposition~\ref{quotient_ub}: the correlation term $Z_{y}^{(i)}$ is associated only with terms with which it has some independence property. Indeed except for $\1{\mathcal{I}(\omega^{(z,2)=e}_{y,A})}$ which is only a minor detail, conditionaly on $\{L_{y}^{(i)}=k\}$
\begin{enumerate}
\item $Z_{y}^{(i)}$ depends only on the \lq\lq stopping time\rq\rq $L_{y}^{(i)}$, i.e.~only on the edges of $B^E(y,k)$ by the second property of Proposition~\ref{threeprop},
\item all the other terms depend only on the edges of $E(\Z^d)\setminus B^E(y,L_{y}^{(i)})$,
\end{enumerate}
so these terms are in fact independent conditionaly on $\{L_{y}^{(i)}=k\}$.

Now we can use those independence properties to prove the third step of our proof that is the decorrelation part. We want to give an upper bound of ${\bf E}\Bigl[\1{\mathcal{I}(\omega^{(z,2)=e}_{y,A})}G_{\delta}^{\omega_{y,A}^{(z,2)=\emptyset}}(0,z)G_{\delta}^{\omega_{y,A}^{(z,2)=e}}(z+e_+,y)\Bigr]$, so we shall look for an upper bound on the expectations of~(\ref{finite_sum}) and~(\ref{finite_sum1}), which is the subject of the next sub-section.

\subsection{Decorrelation part}
\label{s_deco}

Recall the definition of $Z_{y}^{(i)}$ at~(\ref{definition_Z}). Let us prove the first decorrelation lemma.
\begin{lemma}
\label{deco_2a}
We have for $i\in \{0,1,2,3\}$ and $\delta\geq 1/2$
\begin{align*}
&{\bf E}\Bigl[\1{\mathcal{I}(\omega^{(z,2)=e}_{y,A})}(Z_{y}^{(i)})^{C_{28}}p_z^{(i)}p_1^{(i)}p_2^{(i)}\underline{R}_*^{(i)}R_*^{(i)}(z)R_*^{(i)}(y)\Bigr] \\
\leq & C_{29}{\bf E}\Bigl[(L_{y}^{(i)})^{C_{30}}e^{C_{31}L_{y}^{(i)}}\Bigr] {\bf E}\Bigl[\1{\mathcal{I}(\omega^{(z,2)=e}_{y,\emptyset})}G_{\delta}^{\omega^{(z,2)=\emptyset}_{y,\emptyset}}(0,z)G_{\delta}^{\omega^{(z,2)=e}_{y,\emptyset}}(z+e_+,y)\Bigr] e^{C_{32}((y-z) \cdot \vec \ell)^+},
\end{align*}
where $C_{29}$, $C_{30}$, $C_{31}$  and $C_{32}$ depend only on $d$ and $\ell$.
\end{lemma}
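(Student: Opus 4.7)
The plan is to mirror the strategy of Lemma~\ref{decorrelation}: partition according to $\{L_y^{(i)}=k\}$, use the stopping-time property of $L_y^{(i)}$ to isolate the deterministic factor $Z_y^{(i)}=Z_{y,k}$, and then bridge the remaining localised quantities to the target Green functions via open-path constructions inside $B^E(y,k)$. By the second property of Proposition~\ref{threeprop}, the event $\{L_y^{(i)}=k\}$ is measurable with respect to $\sigma\{\omega(e):e\in B^E(y,k)\}$, whereas each of $p_z^{(i)},p_1^{(i)},p_2^{(i)},\underline{R}_*^{(i)},R_*^{(i)}(z),R_*^{(i)}(y)$ is, on this event, a function of $\omega$ restricted to $E(\Z^d)\setminus(B^E(y,k)\cup B^E(z,2))$, since the modifications inside the two balls are prescribed deterministically. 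These outside quantities are therefore independent under ${\bf P}$ of the inside event $\{L_y^{(i)}=k\}$, and on that event $(Z_y^{(i)})^{C_{28}}$ factors out as the constant $(Z_{y,k})^{C_{28}}$.

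Three elementary reductions dispose of most of the extra factors. First, $\1{\mathcal{I}(\omega^{(z,2)=e}_{y,A})}\leq \1{\mathcal{I}(\omega^{(z,2)=e}_{y,\emptyset})}$, since opening additional edges around $y$ preserves membership of $0$ in the (unique, by finite-modification invariance) infinite cluster. Second, Rayleigh's monotonicity applied to $\omega^{(z,2)=\bullet}_{B^E(y,k),1}$ versus $\omega^{(z,2)=\bullet}_{y,\emptyset}$ gives $R_*^{(i)}(z)\leq R_*^{\omega^{(z,2)=\emptyset}_{y,\emptyset}}(z)$ and $R_*^{(i)}(y)\leq R_*^{\omega^{(z,2)=e}_{y,\emptyset}}(y)$, matching exactly the resistances extracted from the target Green functions through Lemma~\ref{lemgreen}. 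Third, by Lemma~\ref{lemgreen2} in the modified environment, $R_*^{(i)}(u\leftrightarrow z\cup\Delta)\leq \kappa_1\,G_{\delta,\{z\}}(u,u)$, whereas $p_2^{(i)}(u)\leq P_u[T_z\wedge\tau_\delta<T_u^+]=1/G_{\delta,\{z\}}(u,u)$ since $u\in\partial B(y,k)$ implies $T_u^+\geq T_{\partial B(y,k)}^+$. Evaluating at the argmax $u_2$ of $p_2^{(i)}$ and using $\underline{R}_*^{(i)}\leq R_*^{(i)}(u_2\leftrightarrow z\cup\Delta)$ yields $p_2^{(i)}\underline{R}_*^{(i)}\leq \kappa_1$, collapsing two factors into a constant.

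It remains to bridge $p_z^{(i)}p_1^{(i)}$ to the hitting probabilities extracted from the target. For $p_z^{(i)}$ let $u_z$ attain its max and take $\mathcal{P}_z\subset B^E(y,k)\setminus\{[z,z+e]\}$ a path of length at most $2k$ from $u_z$ to $y$ (guaranteed by the definition of $B^*(y,k)$). Conditioning on $\{\mathcal{P}_z\subset\omega\}$, of probability $\geq 4^{-k}$ and independent of the outside edges, Markov's property and $\kappa_0$-ellipticity give $P_{z+e_+}^{\omega^{(z,2)=e}_{y,\emptyset}}[T_y<\tau_\delta]\geq (\delta\kappa_0)^{2k}p_z^{(i)}$. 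For $p_1^{(i)}$, with $u_1$ the argmax, a lower bound on $P_{u_1}^{\omega^{(z,2)=\emptyset}_{y,\emptyset}}[T_z<\tau_\delta]$ is needed: when $(z-y)\cdot\vec\ell\geq 0$ the drift pushes the walker from $u_1$ towards $z$ and the supermartingale argument of Step~(1)(a) in the proof of Proposition~\ref{approx_green_function} gives a constant lower bound, with no extra factor; when $(y-z)\cdot\vec\ell>0$ the walker must travel against the drift across distance $O(k+(y-z)\cdot\vec\ell)$, and conditioning on an explicit open path of that length contributes a factor $\gamma^k e^{C_{32}((y-z)\cdot\vec\ell)^+}$.

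Summing the per-$k$ bound against ${\bf P}[L_y^{(i)}=k]$, the factor $Z_{y,k}^{C_{28}}$ times the bridging constants is polynomial-times-exponential in $k$ and integrates to ${\bf E}[(L_y^{(i)})^{C_{30}}e^{C_{31}L_y^{(i)}}]$, finite for $\epsilon$ small by Propositions~\ref{perco2} and~\ref{perco3}. The main technical obstacle is the direction-asymmetric $p_1^{(i)}$ bridge: combining the open-path construction (exponentially costly against the drift) with the drift-based hitting estimate (uniformly bounded below when the drift helps) is the only place where the factor $e^{C_{32}((y-z)\cdot\vec\ell)^+}$ appears, the positive part reflecting that no extra cost is incurred when $z$ is downstream of $y$.
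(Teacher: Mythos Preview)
Your collapse $p_2^{(i)}\underline{R}_*^{(i)}\leq \kappa_1$ is essentially correct, but it discards exactly the random factor you need. The target quantity contains $G_{\delta}^{\omega^{(z,2)=\emptyset}_{y,\emptyset}}(0,z)\asymp P_0^{\omega^{(z,2)=\emptyset}_{y,\emptyset}}[T_z<\tau_\delta]\,R_*(z)$; after your collapse you only have $p_1^{(i)}\asymp P_0[T_{u_1}<\tau_\delta]$ together with $R_*^{(i)}(z)$, so you must produce a \emph{quenched} lower bound on $P_{u_1}^{\omega^{(z,2)=\emptyset}_{y,\emptyset}}[T_z<\tau_\delta]$ depending only on $k$ and $((y-z)\cdot\vec\ell)^+$. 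No such bound exists in the percolated environment: $y$ and $z$ can be arbitrarily far apart in directions orthogonal to $\vec\ell$, and even when $z$ is downstream of $u_1$ there may be traps between them that make this probability arbitrarily small. The supermartingale argument you invoke from Step~(1)(a) of Proposition~\ref{approx_green_function} applies to the deterministic (or near-deterministic) Kalikow environment $\tilde p$, not to the random percolated environment $\omega^{(z,2)=\emptyset}_{y,\emptyset}$. Likewise your proposed ``open path of length $O(k+(y-z)\cdot\vec\ell)$'' from $u_1$ to $z$ does not exist in general, since $d(u_1,z)$ is not controlled by that quantity.

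The paper avoids this by \emph{not} collapsing $p_2^{(i)}\underline{R}_*^{(i)}$ to a constant. Instead, with $x_2$ the argmax of $p_2$, it bounds (in the common environment $\omega^{(z,2)=\emptyset}_{y,\emptyset}$, after Rayleigh)
\[
P_{x_2}[T_z<\tau_\delta\wedge T_{\partial B(y,k)}^+]\,R_*[x_2\leftrightarrow z\cup\Delta]\;\leq\;\gamma\,P_{x_2}[T_z<\tau_\delta],
\]
keeping the random hitting probability $P_{x_2}[T_z<\tau_\delta]$. Then the bridging is entirely \emph{local}: one conditions on short open paths inside $B^E(y,k)$ joining $x_1$ to $x_2$ (and $x_0$ to $y$), at cost $(\delta\kappa_0)^{-O(k)}$, and chains via Markov
\[
P_0[T_{x_1}<\tau_\delta]\,P_{x_1}[T_{x_2}<\tau_\delta]\,P_{x_2}[T_z<\tau_\delta]\;\leq\;P_0[T_z<\tau_\delta].
\]
No lower bound on any quenched hitting probability across the bulk of the environment is ever needed; the only long-range factor is the random $P_{x_2}[T_z<\tau_\delta]$ that was carried along. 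Finally, the exponential factor in $((z-y)\cdot\vec\ell)^+$ in the statement comes directly from the definition of $Z_{y,k}$ (which contains $e^{2\lambda(z-y)\cdot\vec\ell}$), not from a bridging argument.
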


This lemma is essentially similar to Lemma~\ref{decorrelation}, since $Z_{y}^{(i)}$ is in fact a function of $L_{y}^{(i)}$. Notice that the second expectation on the right-hand side is equal to the numerator of Proposition~\ref{quotient_ub}.

The same lemma holds, with different constants, if we replace $\underline{R}_*^{(i)}$, $R_*^{(i)}(z)$ or $R_*^{(i)}(y)$ by $1$. Indeed it can be seen using Rayleigh's monotonicity principle that for $\delta\geq 1/2$, these three quantities are lower bounded by 
\[
R^{\omega_0}(0\leftrightarrow \Delta ) \wedge \min_{k \in \N} \min_{u\in \partial B(0,k),~z\notin B(0,k)} R_*^{\omega_0}(u\leftrightarrow z \cup \Delta) \geq \gamma_1,
\]
where $\gamma_1$ can be chosen independent of $y$, $i$, $z$ and $A$. Indeed by Lemma~\ref{lemgreen2}, 
\[
R_*^{\omega_0}(u\leftrightarrow z \cup \Delta)\geq \gamma_2 G^{\omega_0}_{\delta, \{z\}}(u,u)\geq \gamma _2.
\]

\begin{proof}
We recall $L_{y}^{(i)}<\infty$ by Proposition~\ref{threeprop}.

Let us condition on the event $\{L_{y}^{(i)}=k\}$ for $k<\infty$. First suppose that $0 \notin B(y,k)$, $z\notin B(y,k)$ and $z+e_+\notin B^*(y,k)$, where we used a notation appearing above~(\ref{notation_PZ}). Recalling the notations~(\ref{notation_PZ}),~(\ref{notation_P1}) and~(\ref{notation_P2}), we may denote $x_0 \in \partial B^*(y,k)$ and $x_1,~x_2\in \partial B(y,k)$ such that
\begin{align*}
p_z^{\omega}(y,k)&=P_{z+e_+}^{\omega^{(z,2)=e}}[T_{x_0}=T_{\partial B^*(y,k)}<\tau_{\delta}], \\
p_1^{\omega}(y,k)&=P_0^{\omega^{(z,2)=\emptyset}}[T_{x_1}=T_{B(y,k)}<\tau_{\delta}], \\
p_2^{\omega}(y,k)&=P_{x_2}^{\omega^{(z,2)=\emptyset}}[T_{z}<\tau_{\delta}\wedge T_{B(y,k)}^+],
\end{align*}
where $x_0$ is connected to $y$ in $B^E(y,k)\setminus [z,z+e]$ and we denote $\mathcal{P}_0$ one of the corresponding shortest such paths (hence of length $\leq k+2$). This is possible by the definition of $\partial B^*(y,k)$ at~(\ref{def_b_star}).

  We also introduce the event 
  \[
  \{0\Leftrightarrow y \Leftrightarrow \infty\}= \Bigl\{ 0 \leftrightarrow \partial B(y,k),\ \partial B(y,k)\stackrel{\omega^{(y,k),0}}{\leftrightarrow} \infty \Bigr\} ,
  \]
   which is true when $ \1{\mathcal{I}(\omega^{y,A})}p_1^{(i)}$ is positive. Moreover let us set $x_3$ connecting $\partial B(y,k)$ to infinity without edges of $B(y,k)$. Thus
\begin{align*}
&{\bf E}\Bigl[ \1{\mathcal{I}(\omega^{y,A})}(Z_{y}^{(i)})^{C_{28}}p_z^{(i)}p_1^{(i)}p_2^{(i)}\underline{R}_*^{(i)}R_*^{(i)}(y)R_*^{(i)}(z)\mid L_{y}^{(i)}=k\Bigr] \\
\leq & \gamma_1 k^{\gamma_2}e^{\gamma_3 k} e^{\gamma_4((z-y)\cdot\vec \ell)^+ } {\bf E}\Bigl[\1{0\Leftrightarrow y \Leftrightarrow \infty}p_z^{(i)}p_1^{(i)}p_2^{(i)}  \underline{R}_*^{(i)}R_*^{(i)}(y)R_*^{(i)}(z)\mid L_{y}^{(i)}=k\Bigr],
\end{align*}
where the integrand of the right-hand side depends only on the edges of $E(\Z^d) \setminus B^E(y,k)$, so that the conditionning inside the corresponding ball can be modified. 

We emphasize that seemingly $p_z^{(i)}$ may depend on the state of the edges in $B^E(y,k)$, but the walk cannot reach $B(y,k)\setminus \partial B(y,k) $ without going through $\partial B^*(y,k)$. Hence $p_z^{(i)}$ can only depend on the edges of $B^E(y,k)$ through the transition probabilities in $\omega^{(z,2)=e}$ of a vertex in $\partial B(y,k) \setminus  \partial B^*(y,k)$. But if such a vertex exists it is unique and the only edge adjacent
to this vertex which lies in $B^E(y,k)$ is necessarily $[z,z+e]$ and is closed in $\omega^{(z,2)=e}$. Hence there is in fact no dependence.

Let us denote  $\mathcal{P}_1$, $\mathcal{P}_2$ and $\mathcal{P}_3$ one of the shortest paths from respectively $x_1$, $x_2$ and $x_3$ to $y$ and $\mathcal{P}=\mathcal{P}_0 \cup\mathcal{P}_1 \cup \mathcal{P}_2\cup \mathcal{P}_3\cup \{y+e,~e\in \nu\}$. Hence we need to control
\begin{align*}
 &{\bf E}\Bigl[ \1{0\Leftrightarrow y \Leftrightarrow \infty}p_z^{(i)}p_1^{(i)}p_2^{(i)} \underline{R}_*^{(i)}R_*^{(i)}(z)R_*^{(i)}(y)\mid L_{y}^{(i)}=k\Bigr]\\
\leq &{\bf E}\Bigl[\1{0\Leftrightarrow y \Leftrightarrow \infty}P_{z+e_+}^{\omega^{(z,2)=e}}[T_{x_0}=T_{\partial B^*(y,k)}<\tau_{\delta}]
P_0^{\omega^{(z,2)=\emptyset}}[T_{x_1}=T_{\partial B(y,k)}<\tau_{\delta}] \\
 & \times P_{x_2}^{\omega^{(z,2)=\emptyset}}[T_{z}<\tau_{\delta}\wedge T_{\partial B(y,k)}^+] R_*^{\omega^{(y,k),1}}[x_2 \leftrightarrow z\cup \Delta]R_*^{(0)}(z)R_*^{(0)}(y) \mid \mathcal{P} \in \omega \Bigr] \\
\leq & 2^{4k+2d+2}{\bf E}\Bigl[\1{0\Leftrightarrow y \Leftrightarrow \infty}\1{\mathcal{P} \in \omega}P_0^{\omega_{y,\emptyset}^{(z,2)=\emptyset}}[T_{x_1}=T_{\partial B(y,k)}<\tau_{\delta}]R_*^{(0)}(z)  R_*^{(0)}(y)  \\
 & \times P_{x_2}^{\omega_{y,\emptyset}^{(z,2)=\emptyset}}[T_{z}<\tau_{\delta}\wedge T_{\partial B(y,k)}^+]  P_{z+e_+}^{\omega_{y,\emptyset}^{(z,2)=e}}[T_{x_0}=T_{\partial B^*(y,k)}<\tau_{\delta}] R_*^{\omega_{y,\emptyset}^{(z,2)=\emptyset}}[x_2 \leftrightarrow z\cup \Delta] \Bigr] ,
\end{align*}
where we used that
\begin{enumerate}
\item ${\bf P}[\mathcal{P}\in \omega] \geq 2^{-(4k+2d+2)}$,
\item equalities such as $P_{z+e_+}^{\omega^{(z,2)=e}}[T_{x_0}=T_{\partial B^*(y,k)}<\tau_{\delta}]=P_{z+e_+}^{\omega_{y,\emptyset}^{(z,2)=e}}[T_{x_0}=T_{\partial B^*(y,k)}<\tau_{\delta}]$,
\item Rayleigh's monotonicity principle to say, for example, that 
\[
R_*^{(i)}(y)\leq R_*^{(0)}(y) \text{ and } R_*^{\omega_{(y,k),1}^{(z,2)=\emptyset}}[x_2 \leftrightarrow z\cup \Delta]\leq R_*^{\omega_{y,\emptyset}^{(z,2)=\emptyset}}[x_2 \leftrightarrow z\cup \Delta].
\]
\end{enumerate}

Using Lemma~\ref{lemgreen} and $x_2\in B(y,k)$, we get
\begin{align*}
& P_{x_2}^{\omega_{y,\emptyset}^{(z,2)=\emptyset}}[T_{z}<\tau_{\delta}\wedge T_{\partial B(y,k)}^+] R_*^{\omega_{y,\emptyset}^{(z,2)=\emptyset}}[x_2 \leftrightarrow z\cup \Delta]\\
\leq &\gamma_5 P_{x_2}^{\omega_{y,\emptyset}^{(z,2)=\emptyset}}[T_z<\tau_{\delta}\wedge T_{x_2}^+]  \Bigl(P_{x_2}^{\omega_{y,\emptyset}^{(z,2)=\emptyset}}[T_z \wedge \tau_{\delta} < T_{x_2}^+]\Bigr)^{-1} \\
=& \gamma_5P_{x_2}^{\omega_{y,\emptyset}^{(z,2)=\emptyset}}[T_z<\tau_{\delta}\wedge T_{x_2}^+ \mid T_z \wedge \tau_{\delta} < T_{x_2}^+] \\
=& \gamma_5 P_{x_2}^{\omega_{y,\emptyset}^{(z,2)=\emptyset}}[T_{z}<\tau_{\delta}],
\end{align*}
where for the last equality we simply notice that the probability of the event $\{T_{z}<\tau_{\delta}\}$ can be computed on the last excursion from $x_2$ before reaching $\Delta$ or $z$. Moreover on $\omega$ such that $\1{\mathcal{P} \in \omega}$,
\[
P^{\omega_{y,\emptyset}^{(z,2)=\emptyset}}_{x_1}[T_{x_2}<\tau_{\delta}]\geq (\delta \kappa_0)^{2k},
\]
and putting these last two equations together we get
\begin{align*}
& P_0^{\omega_{y,\emptyset}^{(z,2)=\emptyset}}[T_{x_1}=T_{\partial B(y,k)}<\tau_{\delta}]P_{x_2}^{\omega_{y,\emptyset}^{(z,2)=\emptyset}}[T_{z}<\tau_{\delta}\wedge T_{\partial B(y,k)}^+] R_*^{\omega_{y,\emptyset}^{(z,2)=\emptyset}}[x_2 \leftrightarrow z\cup \Delta] \\
\leq & \gamma_5(\delta \kappa_0)^{-2k}P_0^{\omega_{y,\emptyset}^{(z,2)=\emptyset}}[T_{x_1}<\tau_{\delta}]P^{\omega_{y,\emptyset}^{(z,2)=\emptyset}}_{x_1}[T_{x_2}<\tau_{\delta}]P_{x_2}^{\omega_{y,\emptyset}^{(z,2)=\emptyset}}[T_{z}<\tau_{\delta}] \\
\leq & \gamma_5(\delta \kappa_0)^{-2k} P_0^{\omega_{y,\emptyset}^{(z,2)=\emptyset}}[T_z<\delta].
\end{align*}

In a similar way, we get by Markov's property that
\[
P_{z+e_+}^{\omega_{y,\emptyset}^{(z,2)=e}}[T_{x_0}=T_{\partial B^*(y,k)}<\tau_{\delta}]\leq (\delta\kappa_0)^{-(k+2)}P_{z+e_+}^{\omega_{y,\emptyset}^{(z,2)=e}}[T_{y}< \tau_{\delta}].
\]

Finally
\[
\1{0\Leftrightarrow y \Leftrightarrow \infty}\1{\mathcal{P} \in \omega}\leq \1{\mathcal{I}}.
\]

Hence for $\omega$ such that $\mathcal{P}\in \omega$ we have, using $\delta \geq 1/2$
\begin{align*}
& \1{0\Leftrightarrow y \Leftrightarrow \infty}\1{\mathcal{P} \in \omega} P_0^{\omega_{y,\emptyset}^{(z,2)=\emptyset}}[T_{x_0}=T_{\partial B(y,k)}<\tau_{\delta}]
P_{z+e_+}^{\omega_{y,\emptyset}^{(z,2)=e}}[T_{x_0}=T_{\partial B^*(y,k)}<\tau_{\delta}] \\
 & \qquad \qquad \times P_{x_2}^{\omega_{y,\emptyset}^{(z,2)=\emptyset}}[T_{z}<\tau_{\delta}\wedge T_{\partial B(y,k)}^+] R^{\omega_{y,\emptyset}^{(z,2)=\emptyset}}_*[x_2 \leftrightarrow z\cup \Delta]R_*^{(0)}(z)R_*^{(0)}(y)\\
\leq & \gamma_6 (2/\kappa_0)^{3k} \1{\mathcal{I}}P_0^{\omega_{y,\emptyset}^{(z,2)=\emptyset}}[T_z< \tau_{\delta}]P_{z+e_+}^{\omega_{y,\emptyset}^{(z,2)=e}}[T_{y}<\delta]R_*^{(0)}(z)R_*^{(0)}(y) \\
\leq &  \gamma_7 e^{\gamma_8 k}  \1{\mathcal{I}} G_{\delta}^{\omega_{y,\emptyset}^{(z,2)=\emptyset}}(0,z)G_{\delta}^{\omega_{y,\emptyset}^{(z,2)=e}}(z+e_+,y),
\end{align*}
where we used that $R_*^{(0)}(y)= R^{\omega_{y,\emptyset}^{(z,2)=e}}[y \leftrightarrow \Delta]$,~(\ref{equiv2_green_res}) and~(\ref{equiv2_green_res_2}).

The result follows by integrating over all possible values of $L_{y}^{(i)}$, since we have just proved that
 \begin{align*}
&{\bf E}\Bigl[ \1{\mathcal{I}(\omega^{y,A})}(Z_{y}^{(i)})^{C_{28}}p_z^{(i)}p_1^{(i)}p_2^{(i)}\underline{R}_*^{(i)}R_*^{(i)}(z)R_*^{(i)}(y)\mid L_{y}^{(i)}=k\Bigr] \\
\leq & \gamma_9 k^{\gamma_{10}}e^{\gamma_{11}k} {\bf E}\Bigl[ \1{\mathcal{I}(\omega^{y,\emptyset})}G_{\delta}^{\omega^{(z,2)=\emptyset}_{y,\emptyset}}(0,z)G_{\delta}^{\omega^{(z,2)=e}_{y,\emptyset}}(z+e_+,y)\Bigr] e^{\gamma_{12}((y-z) \cdot \vec \ell)^+}.
\end{align*}

For the remaining cases, we proceed as follows
\begin{enumerate}
\item if $0 \in B(y,k)$, then we formally replace $P_0^{\omega^{(z,2)=\emptyset}}[T_x=T_{\partial B(z,k)}<\tau_{\delta}]$ by $1$ for any $x\in \partial B(z,k)$ and $x_1$ by $0$,
\item if $z+e_+\notin B^*(y,k)$, then we formally replace $P_{z+e^+}^{\omega^{(z,2)=e}}[T_x=T_{\partial B^*(z,k)}<\tau_{\delta}]$ by $1$ for any $x\in \partial B^*(z,k)$ and $x_0$ by $z+e^+$,
\item if $z\in B(y,k)$, then we formally replace $P_{x_2}^{\omega^{(z,2)=\emptyset}}[T_z=T_{\partial B(z,k)}^+\wedge \tau_{\delta}]$ by $1$ for any $x\in \partial B(z,k)$, $R_*^{\omega_{y,\emptyset}^{(z,2)=\emptyset}}[x_2 \leftrightarrow z\cup \Delta]$ by $1$ and $x_2$ by $z$,
\end{enumerate}
and the previous proof carries over easily.
\end{proof}

We need another decorrelation lemma, which is essentially similar to the previous one but simpler to prove.

\begin{lemma}
\label{deco_2b}
We have for $i\in \{0,1,2,3\}$ and $\delta \geq 1/2$,
\begin{align*}
&{\bf E}\Bigl[ \1{\mathcal{I}(\omega^{(z,2)=e}_{y,A})}(Z_{y}^{(i)})^{C_{28}}p_z^{(i)}P_0^{\omega_{y,A}^{(z,2)=\emptyset}}[T_z< T_{\partial B_{y}^ {(i)}}\wedge \tau_{\delta}]R_*^{(i)}(y)R_*^{(i)}(z)\Bigr] \\
\leq &C_{33}{\bf E}\Bigl[(L_{y}^{(i)})^{C_{34}}e^{C_{35}L_{y}^{(i)}}\Bigr] {\bf E}\Bigl[\1{\mathcal{I}(\omega^{(z,2)=e}_{y,\emptyset})}G_{\delta}^{\omega^{(z,2)=\emptyset}_{y,\emptyset}}(0,z)G_{\delta}^{\omega^{(z,2)=e}_{y,\emptyset}}(z+e_+,y)\Bigr] e^{C_{36}((y-z) \cdot \vec \ell)^+},
\end{align*}
where the constants depend only on $d$ and $\ell$.
\end{lemma}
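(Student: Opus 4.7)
The proof will follow closely the structure of Lemma~\ref{deco_2a} and is in fact strictly simpler, because the interior hitting probability $P_0^{\omega_{y,A}^{(z,2)=\emptyset}}[T_z<T_{\partial B_y^{(i)}}\wedge\tau_\delta]$ may be bounded trivially by $1$, removing the need for the boundary-crossing decomposition that required the factors $p_1^{(i)}$ and $p_2^{(i)}$ there. My plan is to condition on $\{L_y^{(i)}=k\}$ for each $k\geq 1$. By the second property of Proposition~\ref{threeprop} this event is measurable with respect to the edges of $B^E(y,k)$, and on it $(Z_y^{(i)})^{C_{28}}$ factors out as the deterministic constant $\gamma_1 k^{\gamma_2}e^{\gamma_3 k}e^{\gamma_4(z-y)\cdot\vec\ell}$, which I shall immediately upper-bound by $\gamma_1 k^{\gamma_2}e^{\gamma_3 k}e^{\gamma_4((z-y)\cdot\vec\ell)^+}$. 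Furthermore, the event $\{T_z<T_{\partial B_y^{(i)}}\wedge\tau_\delta\}$ has positive probability only when both $0$ and $z$ lie inside $B(y,k)$, so I may restrict from the outset to this regime.

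The central step is then a path-opening construction analogous to the one in the proof of Lemma~\ref{deco_2a}. Let $x_0\in\partial B^*(y,k)$ achieve the maximum in the definition of $p_z^{(i)}$, and let $x_3\in\partial B(y,k)$ be a point connected to infinity without any edge of $B^E(y,k)$ (which will witness $\mathcal{I}$). Pick shortest paths $\mathcal{P}_0$ from $y$ to $x_0$, $\mathcal{P}_{0z}$ from $y$ to $0$, and $\mathcal{P}_{yz}$ from $y$ to $z$, all in $B^E(y,k)\setminus\{[z,z+e]\}$, together with a short path $\mathcal{P}_3$ from $y$ to $x_3$ in $B^E(y,k)$. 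Their union $\mathcal{P}$ has length $O(k)$, and the probability of the event $\{\mathcal{P}\subset\omega\}$ is at least $2^{-O(k)}$ by assumption~(\ref{ass_eps}). After conditioning on this event and applying the Markov property in the same spirit as in~(\ref{step35}),
\[
p_z^{(i)}\leq (\delta\kappa_0)^{-(k+2)}P_{z+e_+}^{\omega_{y,\emptyset}^{(z,2)=e}}[T_y<\tau_\delta],
\]
and following the concatenation of $\mathcal{P}_{0z}$ with $\mathcal{P}_{yz}$ in the now-deterministically-open environment yields
\[
1\leq (\delta\kappa_0)^{-2k}P_0^{\omega_{y,\emptyset}^{(z,2)=\emptyset}}[T_z<\tau_\delta].
\]

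Combining these two bounds with Rayleigh's monotonicity principle (which gives $R_*^{(i)}(y)\leq R_*^{(0)}(y)$ and $R_*^{(i)}(z)\leq R_*^{(0)}(z)$ since $B^E(y,1)\subset B_y^{(i)}$), and then applying Lemma~\ref{lemgreen} together with~(\ref{equiv2_green_res}) and~(\ref{equiv2_green_res_2}), converts the product $P_0^{\omega_{y,\emptyset}^{(z,2)=\emptyset}}[T_z<\tau_\delta]\cdot R_*^{(0)}(z)\cdot P_{z+e_+}^{\omega_{y,\emptyset}^{(z,2)=e}}[T_y<\tau_\delta]\cdot R_*^{(0)}(y)$ into a constant multiple of $G_\delta^{\omega_{y,\emptyset}^{(z,2)=\emptyset}}(0,z)\cdot G_\delta^{\omega_{y,\emptyset}^{(z,2)=e}}(z+e_+,y)$. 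Using the monotonicity $\1{\mathcal{I}(\omega_{y,A}^{(z,2)=e})}\leq \1{\mathcal{I}(\omega_{y,\emptyset}^{(z,2)=e})}$ and summing over $k\geq 1$ produces the claimed bound, with the finite factor ${\bf E}[(L_y^{(i)})^{C_{34}}e^{C_{35}L_y^{(i)}}]$ controlled by Propositions~\ref{perco}, \ref{perco2} and~\ref{perco3} for $\epsilon$ small enough. The main obstacle will be the careful treatment of the degenerate subcases $0\in B(y,k)$, $z+e_+\in B^*(y,k)$ or $z\in B(y,k)$, handled exactly as at the very end of the proof of Lemma~\ref{deco_2a} by formally replacing the relevant maximum hitting probabilities (or the resistance $R_*[x_2\leftrightarrow z\cup\Delta]$ appearing in that argument) by $1$ and adjusting the vertex definitions accordingly.
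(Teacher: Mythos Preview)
There is a genuine gap. Your claim that the event $\{T_z < T_{\partial B_y^{(i)}} \wedge \tau_\delta\}$ (under $P_0$) has positive probability only when both $0$ and $z$ lie inside $B(y,k)$ is false: if both $0$ and $z$ lie \emph{outside} $B(y,k)$, the walk can go from $0$ to $z$ while staying entirely in $\Z^d \setminus B(y,k)$, and this is in fact the generic case since $L_y^{(i)}$ is typically small. In that regime your path-opening trick collapses: once you have replaced the hitting probability by $1$, you try to recover $P_0^{\omega_{y,\emptyset}^{(z,2)=\emptyset}}[T_z<\tau_\delta]$ by following a deterministic path from $0$ to $z$ through $y$ inside $B(y,k)$, but no such path exists when $0,z\notin B(y,k)$. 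Any deterministic path from $0$ to $z$ then has length at least $d(0,z)$, which is not bounded in terms of $k$, so the resulting cost $(\delta\kappa_0)^{-d(0,z)}$ cannot be absorbed into the factor ${\bf E}[(L_y^{(i)})^{C_{34}}e^{C_{35}L_y^{(i)}}]$.

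The paper's proof handles the main case $0,z\notin B(y,k)$ by a different and much simpler observation: on $\{T_z<T_{\partial B(y,k)}\}$ the walk never enters $B(y,k)$, so it never uses any transition probability that depends on the edges adjacent to $y$. Hence
\[
P_0^{\omega_{y,A}^{(z,2)=\emptyset}}\bigl[T_z< T_{\partial B_{y}^{(i)}}\wedge \tau_{\delta}\bigr]
= P_0^{\omega_{y,\emptyset}^{(z,2)=\emptyset}}\bigl[T_z< T_{\partial B_{y}^{(i)}}\wedge \tau_{\delta}\bigr]
\leq P_0^{\omega_{y,\emptyset}^{(z,2)=\emptyset}}[T_z<\tau_{\delta}],
\]
with no path-opening needed for this factor. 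The path-opening argument is then used only for $p_z^{(i)}$ and for the indicator of $\mathcal{I}$ (via points $x_0,x_1\in\partial B(y,k)$), exactly as in Lemma~\ref{deco_2a}. Your approach is essentially correct for the special subcase $0,z\in B(y,k)$, but that subcase alone does not yield the lemma.
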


\begin{proof}
Once again we condition on $\{L_{y}^{(i)}=k\}$ for $k<\infty$ and suppose that $0\notin B(y,k)$ and $z\notin B(y,k)$, the other cases can be handled in the same way as before. We see that
\[
\1{\mathcal{I}(\omega^{y,A})}\leq \1{0\Leftrightarrow y \Leftrightarrow \infty},
\]
and we denote $x_0,x_1 \in \partial B(y,k)$ such that
\[
p_z^{(i)}= P_z^{\omega^{(z,2)=e}}[T_{x_0}=T_{\partial B(y,k)}<\tau_{\delta}],
\]
and $x_1$ is connected to $\infty$ without edges from $B(y,k)$. Moreover denote $\mathcal{P}_0$ one of the shortest paths connecting $x_0$ to $y$ and $\mathcal{P}_1$ one of the shortest paths connecting $x_1$ to $y$.

Then, using the same type of arguments as in the proof of Lemma~\ref{deco_2a}, we get for $\mathcal{P}=\mathcal{P}_0 \cup \mathcal{P}_1\cup \{y+e,~e\in \nu\}$, on $\omega$ such that $\{L_{y}^{(i)}=k\}$,
\begin{align*}
&{\bf E}\Bigl[\1{\mathcal{I}(\omega^{y,A})} (Z_{y}^{(i)})^{C_{28}}p_z^{(i)}P_0^{\omega_{y,A}^{(z,2)=\emptyset}}[T_z< T_{\partial B_{y}^ {(i)}}\wedge \tau_{\delta}] R_*^{(i)}(y)R_*^{(i)}(z) \mid L_{y}^{i}=k \Bigr]\\
\leq & \gamma_1 k^{\gamma_2} e^{\gamma_3 k} e^{\gamma_4((y-z)\cdot \vec \ell)^+} {\bf E}\Bigl[\1{0\Leftrightarrow y \Leftrightarrow \infty} P_z^{\omega^{(z,2)=e}}[T_{x_0}=T_{\partial B^*(y,k)}<\tau_{\delta}]\\ 
& \qquad \qquad\qquad \qquad \times P_0^{\omega_{y,A}^{(z,2)=\emptyset}}[T_z< T_{\partial B(y,k)}\wedge \tau_{\delta}] R_*^{(0)}(y)R_*^{(0)}(z)\mid \mathcal{P} \in \omega \Bigr] \\
\leq & \gamma_1 k^{\gamma_2} 2^{2k+2d+2} e^{\gamma_3 k} e^{\gamma_4((y-z)\cdot \vec \ell)^+}{\bf E}\Bigl[\1{ \mathcal{P} \in \omega}\1{0\Leftrightarrow y \Leftrightarrow \infty} \\ 
& \qquad \times P_z^{\omega^{(z,2)=e}}[T_{x_0}=T_{\partial B^*(y,k)}<\tau_{\delta}] P_0^{\omega_{y,A}^{(z,2)=\emptyset}}[T_z< T_{\partial B(y,k)}\wedge \tau_{\delta}] R_*^{(0)}(y)R_*^{(0)}(z) \Bigr].\end{align*}

Now on $\omega$ such that $\{\mathcal{P} \in \omega\}$, we have
\[
P_0^{\omega_{y,A}^{(z,2)=\emptyset}}[T_z< T_{\partial B_{y}^ {(i)}}\wedge \tau_{\delta}]= P_0^{\omega_{y,\emptyset}^{(z,2)=\emptyset}}[T_z< T_{\partial B_{y}^ {(i)}}\wedge \tau_{\delta}] \leq P_0^{\omega_{y,\emptyset}^{(z,2)=\emptyset}}[T_z<\tau_{\delta}],
\]
and
\[
P_z^{\omega^{(z,2)=e}}[T_{x_0}=T_{\partial B^*(y,k)}<\tau_{\delta}](\delta \kappa_0)^{k}\leq P_{z+e_+}^{\omega^{(z,2)=\emptyset}}[T_{y}<\tau_{\delta}].
\]

Since we also have $\1{ \mathcal{P} \in \omega}\1{0\Leftrightarrow y \Leftrightarrow \infty}\leq\1{\mathcal{I}(\omega^{(z,2)=\emptyset})}$ and $\delta \geq 1/2$ so that,
\begin{align*}
&\1{\mathcal{P} \in \omega}\1{0\Leftrightarrow y \Leftrightarrow \infty} p_z^{\omega^{(z,2)=e}}(y,k)P_0^{\omega_{y,A}^{(z,2)=\emptyset}}[T_z< T_{\partial B(y,k)}\wedge \tau_{\delta}]\\ 
& \qquad \qquad \qquad \qquad \qquad \qquad \times R_*^{(0)}(y)R_*^{(0)}(z) \\
\leq & \gamma_5 k^{\gamma_6}e^{\gamma_7 k} \1{\mathcal{I}(\omega^{y,\emptyset})}G_{\delta}^{\omega^{(z,2)=\emptyset}_{y,\emptyset}}(0,z)G_{\delta}^{\omega^{(z,2)=e}_{y,\emptyset}}(z+e_+,y) ,
\end{align*}
and the results follows by integration over the values of $L_{y}^{(i)}$.
\end{proof}

Now, as we did to obtain the continuity of the speed, we need to show that the contribution due to the local modifications of the environment has a limited effect. Hence we want to prove that the expectations appearing in Lemma~\ref{deco_2a} and Lemma~\ref{deco_2b} are finite for $\epsilon$ small enough. This is proved using the following lemma.

\begin{lemma}
\label{integrability}
For $\epsilon_9$ small enough and any $\epsilon<\epsilon_9$ we have
\[
{\bf E}[(L_{y}^{(i)})^{C_{30}+C_{34}}e^{(C_{31}+C_{35})L_{y}^{(i)}}]<C_{37},
\]
where $C_{37}$ depends only on $d$ and $\ell$.
\end{lemma}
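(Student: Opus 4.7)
\textbf{Proof proposal for Lemma~\ref{integrability}.} The plan is to use the two defining features of the \lq\lq local\rq\rq quantities emphasized earlier in the paper, namely the stopping-time character of $L_{y,k}$ (Proposition~\ref{threeprop}(2)) and the uniform tail estimates from Propositions~\ref{perco2} and~\ref{perco3}, to run an induction on $i\in\{0,1,2,3\}$.

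Since $L_y^{(i)}=l_y^{(i)}(\omega^{(z,2)=\emptyset})\vee l_y^{(i)}(\omega^{(z,2)=e})$ and $e^{\lambda(a\vee b)}\leq e^{\lambda a}+e^{\lambda b}$, it suffices to treat each modified configuration $\omega'\in\{\omega^{(z,2)=\emptyset},\omega^{(z,2)=e}\}$ separately. The crucial conditional independence is the following: under ${\bf P}_{1-\epsilon}$, the edges of $\omega'$ inside $B^E(y,m)$ and those outside are independent (each family consists of deterministic edges on $B^E(z,2)$ and independent Bernoullis supported on disjoint sets). Since $\{l_y^{(i-1)}(\omega')=m\}$ is $\mathcal{F}_{y,m}$-measurable by the stopping-time property, while $L_{y,m}(\omega')$ does not depend on edges in $B^E(y,m)$ by Proposition~\ref{threeprop}(1), we obtain
\[
E\bigl[f(L_{y,m}(\omega'))\bigm| l_y^{(i-1)}(\omega')=m\bigr]=E\bigl[f(L_{y,m}(\omega'))\bigr].
\]

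Applying Proposition~\ref{perco2} or~\ref{perco3} with $r=m$ and splitting the expectation according to whether $L_{y,m}(\omega')\leq C_8m$ or not, one checks that for any $\lambda<-\log\alpha(\epsilon)$,
\[
E\bigl[(L_{y,m}(\omega'))^K e^{\lambda L_{y,m}(\omega')}\bigr]\leq \gamma(\epsilon,\lambda,K)\,m^{K+2d}\,e^{\lambda C_8 m}.
\]
Combined with the conditional independence, this yields the recursive bound
\[
E\bigl[(l_y^{(i)}(\omega'))^K e^{\lambda l_y^{(i)}(\omega')}\bigr]\leq \gamma(\epsilon,\lambda,K)\,E\bigl[(l_y^{(i-1)}(\omega'))^{K+2d} e^{\lambda C_8\, l_y^{(i-1)}(\omega')}\bigr].
\]

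Starting from $l_y^{(0)}=1$, which has all exponential moments trivially, three iterations of the recursion show that $E[(L_y^{(3)})^{C_{30}+C_{34}} e^{(C_{31}+C_{35})L_y^{(3)}}]$ is finite as soon as $(C_{31}+C_{35})C_8^3<-\log\alpha(\epsilon)$, a condition that is satisfied for $\epsilon$ small enough by virtue of $\alpha(\epsilon)\to 0$ as $\epsilon\to 0$. The monotonicity $l_y^{(0)}\leq l_y^{(1)}\leq l_y^{(2)}\leq l_y^{(3)}$ (from $L_A(\omega)\geq r$ in Proposition~\ref{threeprop}(3)) settles the cases $i=0,1,2$ as well. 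The main technical obstacle is the $C_8$-factor blow-up of the required exponent at each induction step; since only three iterations are required, this is easily absorbed by taking $\epsilon_9$ small enough so that $\alpha(\epsilon_9)<\exp(-(C_{31}+C_{35})C_8^3)$, but it precludes a one-step argument and is the source of the smallness condition on $\epsilon$.
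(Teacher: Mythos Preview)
Your proof is correct and rests on the same two ingredients as the paper's argument: the stopping-time property of $L_{y,k}$ (Proposition~\ref{threeprop}) and the tail bounds of Propositions~\ref{perco2}--\ref{perco3}. The packaging differs slightly: the paper bounds the tail ${\bf P}[L_y^{(3)}\geq n]$ directly by the nested decomposition
\[
{\bf P}[L_y^{(3)}\geq n]\leq {\bf P}[L_y^{(3)}\geq n,\,L_y^{(2)}\leq n/(2C_8)]+{\bf P}[L_y^{(2)}\geq n/(2C_8),\,L_y^{(1)}\leq n/(2C_8)^2]+{\bf P}[L_y^{(1)}\geq n/(2C_8)^2],
\]
and then reads off the exponential moment from the resulting bound $\lesssim n^{d+1}\alpha(\epsilon)^{f(n)}$ with $f(n)\sim n/(2C_8)^2$; you instead run a moment recursion level by level. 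Both routes produce the same smallness requirement $\alpha(\epsilon)<\exp(-c\,C_8^{2})$ (your $C_8^3$ is one power more than needed, since $l_y^{(0)}=1$ requires no tail estimate), and neither gains anything the other does not.
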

\begin{proof}
Since $L_{y}^{(i)} \leq L_{y}^{(3)}$, it is enough to give an upper bound on the tail of $L_{y}^{(3)}$, we have 
\begin{align*}
{\bf P}[L_{y}^{(3)}\geq n] & \leq {\bf P}[L_{y}^{(3)}\geq n ,~L_{y}^{(2)}\leq n/(2C_8)] \\
 &  + {\bf P}[ L_{y}^{(2)} \geq n/(2C_8),~L_{y}^{(1)}\leq n/(2C_8)^2] \\
                         & + {\bf P}[L_{y}^{(1)} \geq n/(2C_8)^2 ],
\end{align*}
and recalling Proposition~\ref{perco2} and Proposition~\ref{perco3} we get for $A=B(x,r)$
\[
{\bf P}_{1-\epsilon}[ L_A(\omega^{(z,2)=\emptyset})\vee L_A(\omega^{(z,2)=e}) \geq n+C_8r ] \leq 2C_9r^d n \alpha(\epsilon)^n,
\]
so that we may use the second property of Proposition~\ref{threeprop}
\[
{\bf P}[L_{y}^{(3)}\geq n] \leq 6C_9 \Bigl(\frac n {2C_8}\Bigr)^d n \alpha(\epsilon)^{f(n)},
\]
where $f(n)=(n/(2C_8)^2-C_8)$ and $\alpha(\epsilon)$ can be arbitrarily small if we take $\epsilon$ small enough. The result follows easily.
\end{proof}

Now, Proposition~\ref{quotient_ub} follows from the decomposition obtained at~(\ref{finite_sum}) and~(\ref{finite_sum1}), the decorrelation part being handled by Lemma~\ref{deco_2a}, Lemma~\ref{deco_2b} where the multiplicative terms appearing in these lemmas are finite by Lemma~\ref{integrability} for $\epsilon$ small enough.

\section{An increasing speed}
\label{s_simplification}

We want to prove Proposition~\ref{incr_speed} and show that the walk slows down when we percolate, i.e.~$v_{\ell}(1)\cdot v_{\ell}'(1)>0$ under certain conditions. We recall $J^e=G^{\omega_0}(0,0)-G^{\omega_0}(e,0)>0$ and we introduce $J^e_e=G^{\omega^{0,e}_0}(0,0)-G^{\omega^{0,e}_0}(e,0)>0$.

We use~(\ref{green_exp}) to prove that
\begin{align*}
G^{\omega_0^{0,e}}(0,0)=&G^{\omega_0}(0,0)+G^{\omega_0}(0,0)\sum_{e'\in  \nu}(p^e(e')-p^{\emptyset}(e'))G^{\omega_0^{0,e}}(e',0)\\ & +G^{\omega_0}(0,e)\sum_{e'\in \nu}(p^{-e}(e')-p^{\emptyset}(e'))G^{\omega_0^{0,e}}(e+e',0),
\end{align*}
and
\begin{align*}
G^{\omega_0^{0,e}}(e,0)=&G^{\omega_0}(e,0)+G^{\omega_0}(e,0)\sum_{e'\in  \nu}(p^e(e')-p^{\emptyset}(e'))G^{\omega_0^{0,e}}(e',0)\\ &+G^{\omega_0}(e,e)\sum_{e'\in \nu}(p^{-e}(e')-p^{\emptyset}(e'))G^{\omega_0^{0,e}}(e+e',0).
\end{align*}

Now, recalling the proof of Lemma~\ref{simplify_tech} (in particular~(\ref{simplify_tech1}) and~(\ref{simplify_tech2})), noticing the relations, $G^{\omega_0}(e,e)=G^{\omega_0}(0,0)$ and by reversibility $G^{\omega_0}(e,0)=(\pi^{\omega_0}(0)/\pi^{\omega_0}(e))G^{\omega_0}(0,e)=(c(e)/c(-e))G^{\omega_0}(0,e)$, we get
\begin{align*}
J^e_e=&J^e+G^{\omega_0}(0,0)\bigl[p(e)(G^{\omega_0^{0,e}}(0,0)-1)-p(e)G^{\omega_0^{0,e}}(e,0)\\
 & \qquad \qquad \qquad \qquad-(p(-e)G^{\omega_0^{0,e}}(e,0)-p(-e)G^{\omega_0^{0,e}}(0,0))\bigr] \\
            &+ G^{\omega_0}(e,0)\bigl[(c(e)/c(-e))(p(-e)G^{\omega_0^{0,e}}(e,0)-p(-e)G^{\omega_0^{0,e}}(0,0))\\
 & \qquad \qquad \qquad \qquad -(p(e)(G^{\omega_0^{0,e}}(0,0)-1)-p(e)G^{\omega_0^{0,e}}(e,0))\bigr],
\end{align*}
which, recalling $p(e)c(-e)=p(-e)c(e)$, means that
\[
J_e^e=J^e+G^{\omega_0}(0,0)((p(e)+p(-e))J^e_e-p(e))+G^{\omega_0}(e,0)(-2p(e)J^e_e+p(e)).
\]

Now rewriting, using reversibility $p(e)G^{\omega_0}(e,0)=p(-e)G^{\omega_0}(0,e)=p(-e)G^{\omega_0}(-e,0)$, we get
\[
J_e^e=J^e+p(e)J^eJ^e_e+p(-e)J^{-e}J^e_e-p(e)J_e,
\]
i.e.
\begin{equation}
\label{Jee}
J_e^e=\frac{(1-p(e))J^e}{1-p(e)J^e-p(-e)J^{-e}}.
\end{equation}

In order to obtain the alternative form of the derivative we only need to rewrite the term $1-p(e)=\pi^e/\pi^{\emptyset}$ using
\[
\pi^e(d_{\emptyset}-d_e)=\pi^e\Bigl(-\sum_{e'\neq e} \frac{c(e')c(e)}{\pi^{\emptyset}\pi^e} e' +\frac{c(e)}{\pi^{\emptyset}} e\Bigr)=c(e)(e-d_{\emptyset}),
\]
hence recalling~(\ref{Jee}) we get
\[
J_e^e (v_{\ell}(1)-d_e)=\frac{p(e)J^e}{1-p(e)J^e-p(-e)J^{-e}}(e-d_{\emptyset}),
\]
which proves the first part of Proposition~\ref{incr_speed}. 

Now, we need to show that this derivative is in the same direction as $v_{\ell}(1)$, for this let us first notice that
\begin{align*}
& 1-p(e)J^e-p(-e)J^{-e} \\=&1-G^{\omega_0}(0,0)(p(e)P_e^{\omega_0}[T_0^+=\infty]+p(-e)P_{-e}^{\omega_0}[T_0^+=\infty])>0,
\end{align*}
since $G^{\omega_0}(0,0)^{-1}=P_0^{\omega_0}[T_0^+=\infty]=\sum_{e'\in \nu}p(e')P_{e'}^{\omega_0}[T_0^+=\infty]$.

Notice that the quantity in the previous display is the same for $e$ and $-e$. 

Now, fix $e\in \nu$ such that $e\cdot d_{\emptyset}>0$, we will show that the common contribution of the terms corresponding to $e$ and $-e$ in the derivative have a positive scalar product with $d_{\emptyset}$ under our assumptions $v_{\ell}(1)$. In fact it is
\[
H(\abs{e}):=(d_{\emptyset}\cdot e)\Bigl[\frac{p(e)J^e+p(-e)J^{-e}}{1-p(e)J^e-p(-e)J^{-e}}e-\frac{p(e)J^e-p(-e)J^{-e}}{1-p(e)J^e-p(-e)J^{-e}}d_{\emptyset}\Bigr],
\]
and since $\beta(\abs{e})=:(d_{\emptyset}\cdot e)/(1-p(e)J^e-p(-e)J^{-e})>0$ we get
\[
H(\abs{e})\cdot d_{\emptyset}=\beta(\abs{e})\bigl[(p(e)J^e+p(-e)J^{-e})(d_{\emptyset}\cdot e)-(p(e)J^e-p(-e)J^{-e})(d_{\emptyset}\cdot d_{\emptyset})\bigr]>0,
\]
if we suppose that 
\[
\text{for $i=1,\ldots,d$ such that $d_{\emptyset} \cdot e^{(i)}>0$,} \qquad d_{\emptyset} \cdot e^{(i)}\geq\abs{\abs{d_{\emptyset}}}^2.
\]

Finally $v_{\ell}(1)\cdot v_{\ell}'(1)=\sum_{i=1}^d H(\abs{e^{(i)}})\cdot d_{\emptyset}>0$, so that Proposition~\ref{incr_speed} is proved.
\qed

\section*{Acknowledgements}
I would like to thank my advisor Christophe Sabot for suggesting this problem and for his support. 

I also would like to thank Pierre Mathieu for useful discussions and comments.

Finally I am grateful to the anonymous referee for his careful reading and for all his remarks.

My research was supported by the A.N.R.~\lq\lq MEMEMO\rq\rq.


\begin{thebibliography}{99}
   \bibitem{Alon} Alon, N., Spencer, J. and Erd\"os, P. (1992). {\it The Probabilistic Method}. Wiley, New York.
   \bibitem{FG} Ben Arous, G., Fribergh, A., Gantert, N. and Hammond, A. (2008). Biased random walks on Galton-Watson trees with leaves. arXiv:math/07113686.
   \bibitem{BH} Ben Arous, G. and Hammond, A.. Stable limit law for randomly biased walk on trees. In preparation.
   \bibitem{BB} Berger, N. and Biskup, M. (2007). Quenched invariance principle for simple random walks on percolation clusters. {\it Probab. Theory Relat. Fields}. {\bf 130 (1-2)}, 83--120.
   \bibitem{BBHK} Berger, N., Biskup, M., Hoffman, C.E. and Kozma, G. (2008) Anomalous heat-kernel decay for random walk among bounded random conductances. {\it Ann. Inst. H. Poincar\'e Probab. Statist}. {\bf 44 (2)}, 374--392. 
   \bibitem{Berger} Berger, N., Gantert, N. and Peres, Y. (2003). The speed of biased random walk on percolation clusters. {\it Probab. Theory Relat. Fields}. {\bf 126 (2)}, 221--242.
   \bibitem{Chen} Chen, D. and Zhang, F. (2007). On the monotonicity of the speed of random walks on a percolation cluster of trees. {\it Acta Math. Sin.}. {\bf 23 (11)}, 1949--1954.
   \bibitem{Dhar} Dhar, D. (1984). Diffusion and drift on percolation networks in an external field. {\it J. Phys. A}. {\bf 17 (5)}, 257--259.
   \bibitem{DS} Dhar, D. and Stauffer D. (1998). Drift and trapping in biased diffusion on disordered lattices. {\it Int. J. Mod. Phys. C}. {\bf 9 (2)}, 349--355.
   \bibitem{DoyleSnell} Doyle, P.G. and Snell E.J. (1984). {\it Random walks and electrical networks}. Carus Math. Monographs {\bf 22}, Math. Assoc. Amer., Washington, D.C.
   \bibitem{GM} Garet, O. and Marchand, R. (2007). Large deviations for the chemical distance in supercritical Bernoulli percolation. {\it Ann. Probab.}. {\bf 35 (3)}, 833--866.
   \bibitem{Grimmett} Grimmett, G. (1999). {\it Percolation}. Second edition, Berlin: Springer. 
   \bibitem{GKZ} Grimmett, G., Kesten, H. and Zhang, Y. (1993). Random walk on the infinite percolation cluster of the percolation model. {\it Probab. Theory Relat. Fields}. {\bf 96 (1)}, 33--44.
   \bibitem{Kalikow} Kalikow, S.A. (1981). Generalized random walk in a random environment. {\it Ann. of Probab.}. {\bf 9}, 753--768.
   \bibitem{LM} Lebowitz, J.L. and Mazel, A.E. (1998). Improved Peierls argument for high dimensional Ising models. {\it J. Stat. Phys.}. {\bf 90 (3-4)}, 1051--1059.
   \bibitem{LP} Lyons, R. and Peres, Y. {\it Probability on trees and networks}. Available at \url{http://mypage.iu.edu/~rdlyons/prbtree/prbtree.html}
   \bibitem{LPP} Lyons, R., Pemantle, R. and Peres, Y. (1998). Biased random walks on Galton-Watson trees. {\it Probab. Theory Relat. Fields}. {\bf 106}, 254--268.
   \bibitem{LSS} Liggett, T.M., Schonmann, R.H. and Stacey, A.M. (1996). Domination by product measures. {\it Ann. Probab.}. {\bf 24}, 1711--1726.
   \bibitem{Mathieu} Mathieu, P. (2007). Quenched invariance principle for random walks among random conductances. To appear in {\it J. Stat. Phys.}.
   \bibitem{MP} Mathieu, P. and Piatnitski, A. (2007). Quenched invariance principles for random walks on percolation clusters. {\it Proceedings A of the Royal Society}. {\bf 463}, 2287--2307.
   \bibitem{Ruelle} Ruelle, D. (1969). {\it Statistical Mechanics. Rigorous Results}. New York: Benjamin.
   \bibitem{Sabot} Sabot, C. (2004). Ballistic random walks in random environment at low disorder. {\it Ann. of Probab.}. {\bf 32 (4)}, 2996--3023.
   \bibitem{Solomon} Solomon, F. (1975). Random walks in a random environment. {\it Ann. of Probab.}. {\bf 3}, 1--31.
   \bibitem{Sznitman} Sznitman, A.-S. (2003). On the anisotropic random walk on the percolation cluster. {\it Com. Math. Phys.}. {\bf 240 (1-2)}, 123--148.
   \bibitem{SZ-Slow} Sznitman, A.-S. (2000). Slowdown and central limit theorem for random walks in random environment. {\it J. Eur. Math. Soc.}. {\bf 2}, 93--143.
   \bibitem{SZ2} Sznitman, A.-S. (2004). {\it Topics in random walks in random environment}. School and Conference on Probability Theory, ICTP Lecture Notes Series, Trieste, {\bf 17}, 203--266.
   \bibitem{Zeitouni} Zeitouni, O. (2004). {\it Random Walks in Random Environment}, XXXI summer school in probability, St Flour (2001),  {\it Lecture Notes in Math.} {\bf 1837}, pp.~193--312. Springer, Berlin.


\end{thebibliography}
\end{document}